\newtheorem{thm}{Theorem}[section]
\newtheorem{conjecture}[thm]{Conjecture}
\newtheorem{cor}[thm]{Corollary}
\newtheorem{lem}[thm]{Lemma}
\newtheorem{prop}[thm]{Proposition}
\newtheorem{rem}[thm]{Remark}
\newtheorem{notation}[thm]{Notation}
\newtheorem{ex}[thm]{Example}
\newtheorem{comment}[thm]{Comment}
\newcommand{\boldq}{{u}}
\newcommand{\Zset}{\mathbb{Z}}
\newcommand{\Rset}{\mathbb{R}}
\newcommand{\CC}{\mathbb{C}}
\newcommand{\Cset}{\mathbb{C}}
\def\ss{{\rm ss}}
\def\top{{\rm top}}
\newcommand{\Nset}{\mathbb{N}}
\newcommand{\ab}{{\rm ab}}
\def\fo{{\mathfrak o}}
\def\fS{{\mathfrak S}}
\def\Stab{{\rm Stab}}
\def\PPhi{{\Phi}}
\def\rG{{{\rm G}}}
\def\cD{{\mathcal D}}
\def\cK{{\mathcal K}}
\def\Cent{{\rm Z}}
\def\Z{{\rm Z}}
\def\Nor{{\rm N}}
\def\rec{{\rm rec}}
\def\ie{{\it i.e.,\,}}
\def\cI{{\mathcal I}}
\def\cJ{{\mathcal J}}
\def\im{{\rm im\,}}
\def\identity{{\rm Id}}
\def\H{{\rm H}}
\def\HP{{\rm HP}}
\def\KLR{{\rm KLR}}
\def\SL{{\rm SL}}
\def\SO{{\rm SO}}
\def\GL{{\rm GL}}
\def\der{{\rm der}}
\def\Aut{{\rm Aut}}
\def\Hom{{\rm Hom}}
\def\End{{\rm End}}
\def\Id{{\rm I}}
\def\Ind{{\rm Ind}}
\def\top{{\rm top}}
\def\triv{{\rm triv}}
\def\unr{{\rm unr}}
\def\JL{{\rm{JL}}}
\def\cEesq{{\rm \bf Irr}_{\rm{ess}L^2}} 
\def\bI{{\mathbf I}}
\def\bW{{\mathbf W}}
\def\WD{{\bW_F\times\SL_2(\Cset)}}
\def\spe{{R}}
\def\Morita{{\rm Morita}}
\def\der{{\rm der}}
\def\Mat{{\rm M}}
\def\Irr{{\mathbf {Irr}}}
\def\Prim{{\rm Prim}}
\def\cInd{{\rm c\!\!-\!\!Ind}}
\def\tr{{\rm tr}}
\def\cG{{\mathcal G}}
\def\cH{{\mathcal H}}
\def\cM{{\mathcal M}}
\def\cO{{\mathcal O}}
\def\cS{{\mathcal S}}
\def\cT{{\mathcal T}}
\def\cV{{\mathcal V}}
\def\cW{{\mathcal W}}
\def\fs{{\mathfrak s}}
\def\fB{{\mathfrak B}}
\def\fR{{\mathfrak R}}
\def\fA{{\mathfrak A}}
\def\fB{{\mathfrak B}}
\def\fR{{\mathfrak R}}
\def\Ad{{\rm Ad}}
\def\triv{{\rm triv}}
\def\Sp{{\rm Sp}}
\def\tW{{\widetilde W}}
\def\q{{/\!/}}
\def\ab{{\rm ab}}
\def\der{{\rm der}}
\def\PPhi_F{{\rm Frob}}
\def\Frob{{\rm Frob}}
\def\Flag{{\rm Flag}}
\def\pt{{\rm pt}}
\def\cpt{{\rm cpt}}
\def\square{{\natural}}
\newcommand{\nnu}{\varphi}
\newcommand{\Q}{\mathbb Q}
\newcommand{\C}{\mathbb C}
\newcommand{\matje}[4]{\left(\begin{smallmatrix} #1 & #2 \\ 
#3 & #4 \end{smallmatrix}\right)}
\def\lexp#1#2{{\kern\scriptspace\vphantom{#2}^{#1}\kern-\scriptspace#2}}
\begin{document}
\title[Geometric structure]{Geometric structure and the local Langlands conjecture}   


\author[A.-M. Aubert]{Anne-Marie Aubert}
\address{Institut de Math\'ematiques de Jussieu, U.M.R. 7586 du C.N.R.S., U.P.M.C., Paris, France}
\email{aubert@math.jussieu.fr}
\author[P. Baum]{Paul Baum}
\address{Mathematics Department, Pennsylvania State University,  University Park, PA 16802, USA}
\email{baum@math.psu.edu}
\thanks{The second author was partially supported by NSF grant DMS-0701184}
\author[R. Plymen]{Roger Plymen}
\address{School of Mathematics, Southampton University, Southampton SO17 1BJ,  England \emph{and} School of Mathematics, Manchester University,
Manchester M13 9PL, England}
\email{r.j.plymen@soton.ac.uk \quad plymen@manchester.ac.uk}
\author[M. Solleveld]{Maarten Solleveld}
\address{Radboud Universiteit Nijmegen, Heyendaalseweg 135, 6525AJ Nijmegen, the Netherlands}
\email{m.solleveld@science.ru.nl}

\date{\today}
\subjclass[2010]{20G05, 22E50}
\keywords{reductive $p$-adic group, representation theory, geometric structure, local Langlands conjecture}
\maketitle

\begin{abstract}   
We prove that a strengthened form of the local Langlands conjecture is valid throughout the principal
series of any connected split reductive $p$-adic group. The method of proof is to establish the presence  
of a very simple geometric structure, in both the smooth dual and the Langlands parameters.  
We  prove that this geometric structure is present, in the same way, for the general linear group, 
including all of its inner forms.
With these results as evidence, we give a detailed formulation of a general geometric structure conjecture.
\end{abstract}

\tableofcontents

\section{Introduction}
Let $\mathcal{G}$ be a connected reductive $p$-adic group. The smooth dual of $\mathcal{G}$ 
--- denoted $\Irr (\cG)$ --- is the set of equivalence classes of smooth irreducible 
representations of $\mathcal{G}$. In this paper we state a conjecture
based on \cite{ABP1,ABP2,ABP3,ABP4} which asserts that a
very simple geometric structure is present in $\Irr(\mathcal{G})$. A first feature of our 
conjecture is that it provides a guide to determining $\Irr(\mathcal{G})$.
A second feature of the conjecture is that it connects very closely to the
local Langlands conjecture.

For any connected reductive $p$-adic group $\mathcal{G}$, validity of 
the conjecture gives an explicit description of Bernstein's infinitesimal 
character \cite{BeDe} and of the intersections of L-packets
with Bernstein components in $\Irr(\mathcal{G})$.

The conjecture can be stated at four levels:
\begin{itemize}
\item $K$-theory of $C^*$-algebras
\item Periodic cyclic homology of finite type algebras
\item Geometric equivalence of finite type algebras
\item Representation theory
\end{itemize}
At the level of $K$-theory, the conjecture interacts with the Baum--Connes 
conjecture \cite{BCH}. 
BC has been proved for reductive p-adic groups by V. Lafforgue \cite{L}. 
The conjecture of this paper ABPS can be viewed as a ``lifting"
of BC from $K$-theory to representation theory. In this paper ABPS will be stated at the level 
of representation theory. 

The overall point of view of the paper is as follows. Denote the $L$-group of the 
$p$-adic group $\mathcal{G}$ by $^{L}\mathcal{G}$. 
If $\cG$ is split, then $^{L}\mathcal{G}$  is a connected reductive complex algebraic group. 
A Langlands parameter is a homomorphism of topological groups
\[
\bW_F\times \SL_2 (\mathbb{C})\longrightarrow  ^{L}\!\mathcal{G} 
\]
which is required to satisfy some conditions. Here $\bW_F$ is the Weil group of the $p$-adic 
field $F$ over which $\mathcal{G}$ is defined.  Let $G$ denote the complex dual group of $\cG$, and let 
$\{\mathrm{Langlands\:\: parameters}\}/G$ be the set of all the Langlands parameters for 
$\mathcal{G}$ modulo the action of $G$.
The local Langlands correspondence asserts that there is a surjective finite-to-one map
\[
\Irr(\cG)\longrightarrow \{\mathrm{Langlands\:\: parameters}\}/G ,
\]
which is natural in various ways. A more subtle version \cite{Lusztig1983, KL, ArNote,Vog} conjectures that 
one can naturally enhance the Langlands parameters with irreducible representations of 
certain finite groups, such that the map becomes bijective.

The aim of the paper is to introduce into this context a countable disjoint union of complex 
affine varieties, denoted \{Extended quotients\}, such that there is
a commutative triangle of maps
\[ 
\xymatrix{   
& \{\text{Extended quotients}\} \ar[dr] \ar[dl] & \\ 
\Irr(\mathcal{G}) \ar[rr] & & 
\{\text{Langlands parameters}\}/G \ }
\] 
in which the left slanted arrow is bijective and the other two arrows are surjective and 
finite to one. 

The key point is that 
in practice \{Extended quotients\} is much more easily calculated than either 
$\Irr(\mathcal{G})$ or the $G$-conjugacy classes of Langlands parameters.
In examples, bijectivity of the left slanted map 
is proved by using results on the representation theory of affine Hecke algebras. 
The right slanted map is defined and studied by using an appropriate generalization of 
the Springer correspondence.\\ 

The paper is divided into three parts:
\begin{itemize}
\item  Part 1: Statement of the conjecture
\item  Part 2: Examples
\item  Part 3: Principal series of connected split reductive $p$-adic groups
\end{itemize}

We should emphasize that the  conjecture in Part 1 of this article is a \emph{strengthening} 
of the geometric conjecture formulated in 
\cite{ABP1, ABP2, ABP3, ABP4}.   In \cite{ABP1, ABP2, ABP3, ABP4} the local Langlands 
correspondence was not part of the conjecture.
Now, by contrast, the local Langlands correspondence is locked into our conjecture. 

Let $\fs$ be a point in the Bernstein spectrum of $\cG$, let $\Irr^\fs (\cG)$ be the part 
of $\Irr (\cG)$ belonging to $\fs$ and let \{Langlands parameters$\}^\fs$ be the set of Langlands 
parameters whose L-packets contain elements of $\Irr^\fs (\cG)$.  
Let $H^{\fs}$ be the stabilizer in $G$ of this set of Langlands parameters.

Furthermore, let $T^{\fs}$ and 
$W^{\fs}$ be the complex torus and finite group assigned by Bernstein to 
$\fs$, and let $(T^{\fs}\q W^{\fs})_2$ is the extended quotient (of the second kind) 
for the action of $W^{\fs}$ on $T^{\fs}$. 
According to our conjecture, the local Langlands correspondence, restricted to the objects
attached to $\fs$, \emph{factors through} the extended quotient $(T^{\fs}\q W^{\fs})_2$. 
In this precise sense, our conjecture reveals a geometric structure latent in the local 
Langlands conjecture.

The essence of our conjecture is:

\begin{conjecture} \label{conj:I1}
Let $\cG$ be a quasi-split connected reductive $p$-adic group or
an inner form of $\GL_n(F)$, and let $\Irr(\cG)^{\fs}$ 
be any Bernstein component in $\Irr(\cG)$.   Then there is a commutative triangle 
\[ 
\xymatrix{   
& (T^{\fs}/\!/W^\fs)_2 \ar[dr]\ar[dl] & \\  
\Irr(\mathcal{G})^\fs \ar[rr] & & \{\mathrm{Langlands}\:\:\mathrm{parameters}\}^\fs/ H^{\fs}}
\]
in which the horizontal arrow is the map of the local Langlands conjecture, and the two 
slanted arrows are canonically defined maps with the left slanted arrow bijective and the
right slanted arrow surjective and finite-to-one.
\end{conjecture}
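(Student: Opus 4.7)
The plan is to reduce the conjecture, one Bernstein block at a time, to a statement about modules over an affine Hecke algebra, and then to construct both slanted arrows using a Kazhdan--Lusztig-type parametrization and a generalized Springer correspondence. First I would invoke Bushnell--Kutzko type theory to choose, for the point $\fs$ of the Bernstein spectrum, an $\fs$-type $(K,\rho)$; this yields a Hecke algebra $\cH(\cG,K,\rho)$ whose category of modules is equivalent to the Bernstein block $\Irr(\cG)^{\fs}$. In all presently understood cases this Hecke algebra is closely related to an affine Hecke algebra on the based root datum naturally attached to $(T^{\fs},W^{\fs})$, possibly with unequal parameters and a twist by a finite group acting on the extended Dynkin data; so the left-hand half of the triangle becomes a statement about parametrizing irreducible modules of that Hecke algebra in terms of $(T^{\fs}\q W^{\fs})_2$.

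To construct the left slanted arrow I would use the Kazhdan--Lusztig/Deligne--Langlands classification (in its extension by Lusztig to unequal parameters) for the relevant affine Hecke algebra. That classification attaches irreducible modules to pairs consisting of a semisimple element in the dual group together with an irreducible representation of an appropriate component group. Translated through the isomorphism between the Bernstein torus $T^{\fs}$ and a torus in the dual group, such pairs are precisely the points of the extended quotient of the second kind $(T^{\fs}\q W^{\fs})_2$: a point $t\in T^{\fs}$ together with an irreducible representation of its $W^{\fs}$-stabilizer. The verification that this gives a well-defined bijection $(T^{\fs}\q W^{\fs})_2\to\Irr(\cG)^{\fs}$ would then reduce to a standard but careful check against the module classification.

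For the right slanted arrow I would send the pair $(t,\sigma)\in(T^{\fs}\q W^{\fs})_2$ to a Langlands parameter as follows. The torus point $t$ determines a semisimple conjugacy class in $G$, hence a semisimple part $\pphi_{\ss}\colon\bW_F\to\lexp{L}\cG$ of the desired parameter. To produce the unipotent part $\pphi|_{\SL_2(\Cset)}$, I would apply a generalized Springer correspondence to $\sigma$, viewed as an irreducible representation of the stabilizer $W^{\fs}_t$: such a correspondence associates to $\sigma$ a unipotent class in the centralizer of $\pphi_{\ss}$ together with an irreducible local system, which is exactly the data of an enhanced $\SL_2$-part of a Langlands parameter. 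Gluing these pieces gives the map to $\{\text{Langlands parameters}\}^{\fs}/H^{\fs}$, and commutativity of the triangle would be established by matching this geometric construction with the Kazhdan--Lusztig geometric realization of the affine Hecke algebra modules.

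The hard part will be threefold. Most seriously, not every Bernstein block of a general quasi-split reductive $p$-adic group is yet known to admit a type whose Hecke algebra is a standard, understood affine Hecke algebra; work of Heiermann, Solleveld and others progressively enlarges the known cases, but gaps remain, especially outside the principal series and for exceptional groups. Second, the generalized Springer correspondence needed on the dual side must accommodate the component groups that appear in the enhanced LLC, and matching it with the module classification on the $p$-adic side depends on comparing unequal parameters on the Hecke side with isotypic decompositions on the geometric side. Third, commutativity with the horizontal arrow is essentially the enhanced local Langlands correspondence for the block, which is not known in full generality; so the conjecture and a precise form of LLC for the block will have to be proved in parallel. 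This is why the authors can currently establish the full triangle only for principal series components of split groups and for inner forms of $\GL_n(F)$, as carried out in Part 3.
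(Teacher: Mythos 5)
This is a conjecture, not a theorem, and the paper does not prove it in the generality stated; you correctly recognize this and, rather than fabricating a proof, sketch the strategy the authors use in the cases they can actually settle.

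Your outline matches the paper's method in Parts 2 and 3 quite closely: reduce each block to a Hecke algebra via types (S\'echerre--Stevens for $\GL_m(D)$, Roche for the principal series), build the left slanted arrow from a Kazhdan--Lusztig-type parametrization, build the right slanted arrow from a (generalized/affine) Springer correspondence on the dual side, and verify commutativity by comparing the two geometric constructions. One genuinely different emphasis: you invoke the Kazhdan--Lusztig/Deligne--Langlands classification directly, whereas the paper routes the left slanted map through Lusztig's asymptotic algebra $\cJ$ and the notion of geometric equivalence (Example~3 of the Appendix and Theorem~\ref{thm:ps}). The asymptotic-algebra route has the advantage that it is what makes the bijection between $\Irr(\cH_q)$ and $(T^\fs\q W^\fs)_2$ canonical and uniform in $q$, and it is what connects the construction to the finite-type algebra formulation (statement~(4) of the conjecture); a direct Kazhdan--Lusztig parametrization gives you a bijection but, on its own, not the $q$-independence that underlies the ``correcting cocharacter'' and $c$-$\Irr$ system structure. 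Your assessment of the obstructions is accurate and is essentially why the paper states the general case as a conjecture and establishes it only for inner forms of $\GL_n(F)$ and for principal series of split groups under mild restrictions on the residual characteristic.

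One small point worth adding: the right slanted arrow in the paper is constructed not from a purely classical generalized Springer correspondence, but from an extension of it to possibly disconnected reductive groups (Proposition~\ref{prop:S.2} and the affine Springer correspondence of Section~\ref{sec:affSpringer}, building on Kato). The disconnectedness of $H = \Z_G(\im c^\fs)$ when $\cG$ does not have simply connected dual, and the associated twisted Clifford theory, is a genuine technical ingredient that your sketch glosses over; in the paper it is precisely what is needed to remove Reeder's connected-centre hypothesis and treat, e.g., $\SL_n(F)$.
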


In Part 2, we give  an  account of the general linear 
group $\GL_n (F)$ and its inner forms
$\GL_m (D)$.  Here,  $D$ is an $F$-division algebra of dimension $d^2$ over its 
centre $F$ and $n = md$.  Except for $\GL_n (F)$, these groups are non-split.
The main result of Part 2 is:

\begin{thm} \label{thm:I2}
Conjecture \ref{conj:I1} is valid for  $\cG = \GL_m (D)$.   
In this case, for each Bernstein component $\Irr(\cG)^{\fs} \subset \Irr(\cG)$, 
all three maps in the commutative triangle are bijective.   
\end{thm}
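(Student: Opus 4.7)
The strategy is to reduce each Bernstein block of $\GL_m(D)$ to modules over an explicit affine Hecke algebra of type $A$ via the type theory of Sécherre-Stevens, match the resulting parameterization of irreducibles to the extended quotient of the second kind, and then carry out the analogous analysis on the Langlands parameter side using the Jacquet-Langlands correspondence.

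First I would fix a Bernstein point $\fs$ for $\cG = \GL_m(D)$, corresponding to an inertial class of cuspidal pairs $(M,\sigma)$. After grouping isomorphic factors one has $M \simeq \prod_i \GL_{m_i}(D)^{e_i}$ and $\sigma = \bigotimes_i \sigma_i^{\boxtimes e_i}$; hence $T^{\fs}$ is canonically a product over $i$ of $e_i$ copies of the unramified twisting torus of $\GL_{m_i}(D)$, and $W^{\fs} = \prod_i S_{e_i}$ acts by permutation. Sécherre-Stevens produce a simple type $(J,\lambda)$ attached to $\fs$ for which the Bernstein block is Morita equivalent to modules over $\cH(\cG,J,\lambda)$, and they identify this Hecke algebra with a tensor product over $i$ of extended affine Hecke algebras of type $\GL_{e_i}$, each with a single parameter $q_i = q^{f_i}$ determined by $\sigma_i$.

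For each factor $\cH(\GL_{e_i},q_i)$, Kazhdan-Lusztig/Zelevinsky/Rogawski classify the irreducible modules by multisegments with fixed cuspidal support in $T_i^{\fs}$. This classification is precisely captured by $(T_i^{\fs} \q S_{e_i})_2$: a point lying above a $W^{\fs}$-orbit records both a semisimple parameter in $T_i^{\fs}$ and an irreducible representation of its stabilizer, i.e.\ a Young diagram indexing the Zelevinsky data. Taking the product over $i$ and composing with the Morita equivalence yields the left slanted bijection $(T^{\fs} \q W^{\fs})_2 \to \Irr(\cG)^{\fs}$. For the right slanted map I would use $\JL$ to transfer the cuspidal data on $M$ to essentially square-integrable representations of a Levi of $\GL_n(F)$, then invoke local Langlands for $\GL_n(F)$ (Harris-Taylor, Henniart) to obtain Weil-Deligne parameters. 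The Langlands parameters in $\fs$ then take the form $\bigoplus_i \phi_i \otimes \Sym^{k_i-1}$, where $\phi_i$ is a tame twist of the parameter of $\sigma_i$ and the integers $k_i$ record the Deligne-$\SL_2$ part; these data are again indexed by $(T^{\fs} \q W^{\fs})_2$. Since L-packets for inner forms of $\GL_n$ are singletons, the horizontal LLC arrow is bijective on each Bernstein component, and commutativity of the triangle then holds by construction.

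The main obstacle is the compatibility of the two parameterizations of $(T^{\fs} \q W^{\fs})_2$: one must show that the Hecke-algebra labeling of $\Irr^{\fs}(\cG)$ and the $\JL$/LLC labeling of Langlands parameters reach the \emph{same} point of the extended quotient. Concretely this comes down to matching the Hecke parameter $q^{f_i}$ produced by Sécherre-Stevens with the correct tame twist of the Langlands parameter on the dual side, and to identifying the partition indexing a Zelevinsky multisegment with the partition cutting out the Deligne-$\SL_2$ representation via the Springer correspondence; both reduce, after a careful bookkeeping of torsion numbers of the cuspidals $\sigma_i$, to the classical case of $\GL_n(F)$.
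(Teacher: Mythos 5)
Your overall strategy coincides with the paper's: Sécherre--Stevens types identify each Bernstein block with modules over a tensor product of type-$A$ affine Hecke algebras $\cH_{q_i}(\tW_{e_i})$, Lusztig's asymptotic algebra gives the bijection with the extended quotient of the second kind, and the Jacquet--Langlands correspondence plus the LLC for $\GL_n(F)$ (Harris--Taylor, Henniart) produce the Langlands parameters, which then decompose as $\bigoplus_i \nu_i\otimes\spe(p_{j,i})\otimes\eta_i$. You have also correctly located the genuine content in the theorem: the two a priori different parameterizations of $(T^\fs\q W^\fs)_2$ must agree.

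However, where you write that ``commutativity of the triangle then holds by construction,'' you are passing over precisely the step the paper flags as not at all obvious. The arrows are all canonical bijections, yet compatibility is nontrivial because the Hecke-algebra side produces Lusztig's parameterization while the Langlands side produces Zelevinsky data transported through $\JL$, and one must check that the isomorphism of Hecke algebras from Sécherre--Stevens translates between them correctly. The paper's argument proceeds as follows: first one reduces to the case $\fs=[\GL_r(D)^e,\sigma^{\otimes e}]_\cG$ and looks at the extended diagram involving $\Irr(\cH_{\tilde q}(\tW_e))$; for essentially square-integrable representations one must verify that $\kappa^\fs\bigl(\rec_{D,er}^{-1}(R(e)\otimes\eta)\bigr)\cong\mathrm{St}_e$, which requires knowing that the Sécherre--Stevens isomorphism (which preserves supports) preserves temperedness, essential square-integrability and real infinitesimal central character, so that one can appeal to the characterization of $\mathrm{St}_e$ as the unique tempered essentially square-integrable module with real infinitesimal central character. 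One then passes to arbitrary $(t,\tau)$ via Langlands quotients of parabolically induced modules, using that the support-preserving isomorphism respects Langlands data for affine Hecke algebras, and finally invokes Theorem \ref{thm:ps} from Part~3 to confirm that Lusztig's bijection sends the resulting module to the expected element of the extended quotient. Your proposal does not ``reduce to $\GL_n(F)$'' quite so directly: what reduces is the identification of partitions, but the bridge between the Hecke-module side and that identification is carried by the Steinberg characterization and by the support-preservation of the $\fs$-type isomorphism, neither of which appears in your sketch. Without those two ingredients your compatibility claim is unsupported.
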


Calculations involving two  other examples --- $\cG = \Sp_{2n}(F)$ and $\cG = G_2(F)$ --- 
are also given in Part 2.

\bigskip

In Part 3 we assume that the local field $F$ satisfies a mild restriction on its residual 
characteristic, depending on $\mathcal G$. For the principal series we then prove that the 
conjectured geometric structure (i.e. extended quotients) is present in the enhanced \cite{Lusztig1983, KL, ArNote,Vog}  Langlands 
parameters. In this case the enhanced Langlands parameters reduce to 
simpler data, which we call Kazhdan--Lusztig--Reeder parameters. 

More precisely, the main result in Part 3, namely Theorem \ref{thm:main}, is:

\begin{thm} \label{thm:I3}
Let $\cG$ be a connected split reductive $p$-adic group.   
Assume that the residual characteristic of the local field $F$ 
is not a torsion prime for $\cG$.  Let $\Irr(\cG)^{\fs}$ be a Bernstein component in the 
principal series of $\cG$. Then Conjecture \ref{conj:I1} is valid for $\Irr(\cG)^{\fs}$ 
i.e. there is a commutative triangle of natural bijections
\[ 
\xymatrix{   
& (T^{\fs}/\!/W^\fs)_2 \ar[dr]\ar[dl] & \\  
\Irr(\mathcal{G})^\fs    \ar[rr] & & \{\KLR\:\:\mathrm{parameters}\}^\fs/ H^\fs }
\]
In this triangle
$\{\KLR \; \mathrm{parameters }\}^\fs/H^\fs$ is the set of Kazhdan--Lusztig--Reeder 
parameters for the Bernstein component $\Irr(\mathcal{G})^\fs$, modulo conjugation 
by $H^{\fs}$. 
\end{thm}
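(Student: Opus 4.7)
The plan is to assemble the triangle from three independently constructed bijections, and then verify commutativity. Fix a Bernstein component in the principal series, $\fs = [\bT,\chi]_{\cG}$, where $\bT$ is a split maximal torus and $\chi$ a smooth character. Then $T^{\fs}$ is the complex torus of unramified twists of $\chi$, and $W^{\fs}$ is the stabilizer of the inertial class of $\chi$ in the Weyl group $W(\cG,\bT)$.

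The first step is to realise $\Irr(\cG)^{\fs}$ via an affine Hecke algebra. Under the assumption on the residual characteristic, Roche's theorem supplies a type whose Hecke algebra is, at worst up to an extension by diagram automorphisms, an affine Hecke algebra $\cH(W^{\fs},q^{\fs})$ with parameters $q^{\fs}$ given by explicit powers of the residue cardinality of $F$. The second step classifies simple modules of this algebra by \KLR parameters: Kazhdan--Lusztig handle the equal-parameter case, and Reeder extends the result to the unequal-parameter situation that arises here. Composing with step one yields the right slanted arrow and shows it is bijective (not merely finite-to-one), with parameter space the set of $H^{\fs}$-conjugacy classes of \KLR triples $(s,u,\rho)$ whose semisimple part $s$ lies in the image of $T^{\fs}$ inside $G$.

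The crucial step is to exhibit the left slanted arrow $(T^{\fs}\q W^{\fs})_2 \to \Irr(\cG)^{\fs}$ as a bijection. I would argue fibrewise over the ordinary quotient $T^{\fs}/W^{\fs}$: above a $W^{\fs}$-orbit of $t \in T^{\fs}$ with isotropy $W^{\fs}_t$, the extended quotient of the second kind contributes one point for each class in $\Irr(W^{\fs}_t)$. On the Langlands side, after identifying $t$ with a semisimple element $s \in G$, the remaining data of a \KLR parameter supported at $s$ are controlled by the centralizer $\Cent_G(s)$: its component group and the unipotent classes in $\Cent_G(s)^{\circ}$, matched via the generalised Springer correspondence. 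Under the torsion-prime hypothesis the finite group attached to $\Cent_G(s)$ by Springer theory is canonically $W^{\fs}_t$, and the correspondence produces exactly the set $\Irr(W^{\fs}_t)$ without extraneous characters.

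Commutativity of the triangle is then essentially formal: both slanted arrows were constructed to factor through the semisimple class (a $W^{\fs}$-orbit in $T^{\fs}$, equivalently a $G$-conjugacy class of semisimple $s$), so commutativity reduces to compatibility fibre by fibre, which is built into the Springer-theoretic matching of the previous step. The principal obstacle is this third step: making the identification between $\Irr(W^{\fs}_t)$ and the remaining \KLR data canonical and uniform across the whole Bernstein component, and verifying its compatibility with the Hecke-algebraic bijection produced by steps one and two. This compatibility should come from Lusztig's geometric construction of affine Hecke algebras, while the torsion-prime assumption on the residue characteristic is precisely what guarantees that the centralisers $\Cent_G(s)$ remain well-behaved enough for the generalised Springer correspondence to apply cleanly.
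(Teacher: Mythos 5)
Your strategic outline matches the paper's route: Roche's type theory to pass to an (extended) affine Hecke algebra, Kazhdan--Lusztig--Reeder to parametrize its simple modules by KLR triples, a Springer-correspondence argument to identify the extended quotient with those triples fibre-by-fibre over $T^\fs/W^\fs$, and commutativity via Lusztig's geometric/asymptotic constructions. You also correctly flag where the real work lies.

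However, there is a substantive gap in your third step that your proposal asserts rather than resolves. Writing $H=\Cent_G(\operatorname{im} c^\fs)$ and $M=\Z_H(t)$, both of these groups are in general disconnected, and the isotropy group is $W^\fs_t \cong \cW^{M^\circ}\rtimes \pi_0(M)$. The ordinary Springer correspondence is a statement about the connected group $M^\circ$, so to parametrize $\Irr(W^\fs_t)$ one must invoke Clifford theory for $\cW^{M^\circ}\rtimes \pi_0(M)$, and Clifford theory \emph{a priori} produces a $2$-cocycle on the stabilizer of each $\cW^{M^\circ}$-representation. If that cocycle were nontrivial, the fibre over $W^\fs t$ would be a \emph{twisted} extended quotient, not $(T^\fs\q W^\fs)_2$. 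Your phrase ``the correspondence produces exactly the set $\Irr(W^\fs_t)$ without extraneous characters'' is precisely the assertion that all these cocycles are trivial, and this is not automatic: it requires a genuine argument (the paper proves it by constructing an explicit splitting of a short exact sequence of component groups, using Bala--Carter theory for distinguished nilpotents and a pinning-preserving lift of diagram automorphisms). Closely related is the need to compare the Borel-subgroup varieties and component groups for $G$, $H$, $H^\circ$ and $M^\circ$ in a way that is compatible with the conjugation actions of the disconnected pieces; without this, the ``canonical'' identification of KLR data with affine Springer parameters does not hold uniformly as $t$ varies.

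One smaller point: the torsion-prime hypothesis on the residue characteristic is there to make Roche's theory of types for ramified principal-series components go through (and, when $G$ has simply connected derived group, it even forces $H$ to be connected); it is not what keeps $\Cent_G(s)$ ``well-behaved'' for Springer theory, which works for any complex reductive group regardless of $p$.
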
 

The construction of the bottom horizontal map in the triangle generalizes results of 
Reeder \cite{R}. Reeder requires, when the inducing character is ramified, that $\cG$ 
shall have connected centre; we have removed this restriction.  Therefore, our result 
applies to the principal series of $\SL_n(F)$.   

Additional points in Part 3 are ---  labelling by unipotent classes, 
correcting cocharacters, and proof of the L-packet conjecture stated in \cite{ABP4}.

Finally, the appendix defines \emph{geometric equivalence}. This is an equivalence relation on 
finite type algebras which is a weakening of Morita equivalence. Geometric equivalence 
underlies and is the foundation of Conjecture \ref{conj:I1}   

\bigskip

\textbf{Acknowledgements.}
Thanks to Mark Reeder for drawing our attention to the article of Kato \cite{Kat}.   
We thank Joseph Bernstein, David Kazhdan, George Lusztig, and David Vogan for enlightening comments and discussions.

\part{Statement of the conjecture}

\section{Extended quotient}
\label{sec:extquot}
Let $\Gamma$ be a finite group acting on  a complex affine variety $X$ 
as automorphisms of the affine variety  
\[ 
\Gamma \times X \to X.
\]
The quotient variety $X/\Gamma$ is obtained by collapsing each orbit to a point. 

 For $x\in X $, $\Gamma_x$ denotes the stabilizer group of $x$:
$$ 
\Gamma_x = \{\gamma\in \Gamma : \gamma x = x\}.
$$
$c(\Gamma_x)$ denotes the set of conjugacy classes of  $\Gamma_x$. The extended quotient 
is obtained by replacing the orbit of $x$ by $c(\Gamma_x)$. This is done as follows:\\

\noindent Set $\widetilde{X} = \{(\gamma, x) \in \Gamma \times X : \gamma x = x\}$.  
$\widetilde{X}$ is an affine variety and is a subvariety of $\Gamma \times X$. 
The group $\Gamma$ acts on $\widetilde{X}$:
\begin{align*}
\Gamma &\times \widetilde{X} \to \widetilde{X}\\
\alpha(\gamma, x) = & (\alpha\gamma \alpha^{-1}, \alpha x), \quad\quad \alpha \in \Gamma,
\quad (\gamma, x) \in \widetilde{X}.
\end{align*}

\noindent The extended quotient, denoted $ X/\!/\Gamma $,  is $\widetilde{X}/\Gamma$. 
Thus the extended quotient  $ X/\!/\Gamma $ is the usual quotient for the action of 
$\Gamma$ on $\widetilde{X}$. 
The projection $\widetilde{X} \to X$,  $(\gamma, x) \mapsto x$ is $\Gamma$-equivariant 
and so passes to quotient spaces to give a morphism of affine varieties
\[
\rho\colon  X/\!/\Gamma \to X/\Gamma.
\]   
This map will be referred to as the projection of the extended quotient onto the ordinary quotient.\\

\noindent The inclusion 
\begin{align*}
X&\hookrightarrow \widetilde{X}\\
x&\mapsto (e,x)\qquad e=\text{identity element of }\Gamma
\end{align*}
is $\Gamma$-equivariant and so passes to quotient spaces to give an inclusion of affine 
varieties $X/\Gamma\hookrightarrow X/\!/\Gamma$. This will be referred to as the inclusion 
of the ordinary quotient in the extended quotient. We will denote $X/\!/\Gamma$ with 
$X/\Gamma$ removed by $X/\!/\Gamma-X/\Gamma$.

\section{Bernstein spectrum}
\label{sec:Bernstein}
We recall some well-known parts of Bernstein's work on $p$-adic groups,
which can be found for example in \cite{BeDe,Renard}.

With $\mathcal G$ fixed, a cuspidal pair is a pair $(\cM, \sigma)$ where 
$\cM$ is a Levi factor of a parabolic subgroup $\mathcal P$ of $\cG$ 
and $\sigma$ is an irreducible supercuspidal representation of $\cM$. 
Here supercuspidal means that the support of any matrix coefficient of
such a representation is compact modulo the centre of the group. Pairs
$(\cM,\sigma)$ and $(\cM,\sigma')$ with $\sigma$ isomorphic to $\sigma'$ are
considered equal.
The group $\cG$ acts on the space of cuspidal pairs by conjugation:
\[
g \cdot (\cM ,\sigma) = (g \cM g^{-1}, \sigma \circ \mathrm{Ad}_g^{-1} ) .
\]
We denote the space of $\cG$-conjugacy classes by $\Omega (\cG)$. We can inflate 
$\sigma$ to an irreducible smooth $\mathcal P$-representation. Normalized smooth 
induction then produces a $\cG$-representation $I_{\mathcal P}^{\cG} (\sigma)$.

For any irreducible smooth $\cG$-representation $\pi$ there is a cuspidal pair 
$(\cM,\sigma)$, unique up to conjugation, such that $\pi$ is a subquotient of
$I_{\mathcal P}^{\cG} (\sigma)$. (The collection of irreducible subquotients of 
the latter representation does not depend on the choice of $\mathcal P$.) 
The $\cG$-conjugacy class of $(M,\sigma)$ is called the \emph{cuspidal support}
of $\pi$. We write the cuspidal support map as
\[
\mathbf{Sc} : \Irr (\cG) \to \Omega (\cG) .
\]
For any unramified character $\nu$ of $\cM ,\; (\cM,\sigma \otimes \nu)$ is again
a cuspidal pair. Two cuspidal pairs $(\cM,\sigma)$ and $(\cM',\sigma')$ are said
to be inertially equivalent, written $(\cM,\sigma) \sim (\cM',\sigma')$, 
if there exists an unramified character 
$\nu : \cM \to \C^\times$ and an element $g \in \mathcal G$ such that
\[
g \cdot (\mathcal M, \psi \otimes \sigma) = (\mathcal M', \sigma ') .
\]
The Bernstein spectrum of $\mathcal G$, denoted $\mathfrak{B}(\mathcal G)$, 
is the set of inertial equivalence classes of cuspidal pairs. It is a countable set,
infinite unless $\mathcal G = 1$. Let $\fs = [\cM ,\sigma]_\cG \in \mathfrak{B} (\cG)$
be the inertial equivalence class of $(\cM,\sigma)$ and let $\Irr (\cG)^\fs$ be the 
subset of $\Irr (\cG)$ of representations that have cuspidal support in $\fs$. Then 
$\Irr (\mathcal G)$ is the disjoint union of the Bernstein components $\Irr (\cG )^\fs$:
\[
\Irr (\mathcal G) = \bigsqcup_{\mathfrak{s}\in \mathfrak{B}(\mathcal G)} 
\Irr (\mathcal G)^\fs .
\] 
The space $X_{\unr}(\cM)$ of unramified characters of $\cM$ is a natural way a
complex algebraic torus. Put 
\begin{equation} \label{eqn:Psis}
\mathrm{Stab}(\sigma) = \{ \nu \in X_{\unr}(\cM) \mid \sigma \otimes \nu \cong \sigma \} .
\end{equation}
This is known to be a finite group, so $X_{\unr}(M) / \mathrm{Stab}(\sigma)$
is again a complex algebraic torus. The map
\begin{equation} \label{eq:TsIrr}
X_{\unr}(\cM) / \mathrm{Stab}(\sigma) \to \Irr (\cM)^{[\cM,\sigma]_\cM} ,\;
\nu \mapsto \sigma \otimes \nu 
\end{equation}
is bijective and thus provides $\Irr (\cM)^{[\cM,\sigma]_\cM}$ with the structure of 
an algebraic torus. This structure is canonical, in the sense that it does not 
depend on the choice of $\sigma$ in $\Irr (\cM)^{[\cM,\sigma]_\cM}$.

The Weyl group of $(\cG,\cM)$ is defined as
\[
W(\cG ,\cM) := N_\cG (\cM) / \cM .
\]
It is a finite group which generalizes the notion of the Weyl group associated to
a maximal torus. The Weyl group of $(\cG ,\cM)$ acts naturally on $\Irr (\cM)$, via
the conjugation action on $\cM$. The subgroup
\begin{equation}\label{eq:defWs}
W^\fs := \{ w \in W (\cG,\cM) \mid w \text{ stabilizes } [\cM,\sigma ]_\cM \} .
\end{equation}
acts on $\Irr (\cM)^{[\cM,\sigma]_\cM}$. We define
\begin{equation}\label{eq:defTs}
T^\fs := \Irr (\cM)^{[\cM,\sigma]_\cM} 
\end{equation}
with the structure \eqref{eq:TsIrr} as algebraic torus and the $W^\fs$-action \eqref{eq:defWs}.
We note that the $W^\fs$-action is literally by automorphisms of the algebraic variety $T^\fs$, 
via \eqref{eq:TsIrr} they need not become group automorphisms. Two elements of $T^\fs$ are
$\cG$-conjugate if and only if they are in the same $W^\fs$-orbit.

An inertially equivalent cuspidal pair $(\cM',\sigma')$ would yield a torus $T'^\fs$ which
is isomorphic to $T^\fs$ via conjugation in $\cG$. Such an isomorphism $T^\fs \cong T'^\fs$
is unique up to the action of $W^\fs$.

The element of $T^\fs / W^\fs$ associated to  any $\pi \in \Irr (\cG)^\fs$ is called its 
\emph{infinitesimal central character}, denoted $\pi^\fs (\pi)$.
Another result of Bernstein is the existence of a unital finite type 
$\mathcal O (T^\fs / W^\fs)$-algebra $\mathcal H^\fs$, whose irreducible modules 
are in natural bijection with $\Irr (\cG )^\fs$. The construction is such that $\mathcal H^\fs$
has centre $\mathcal O (T^\fs / W^\fs)$ and that $\pi^\fs (\pi)$ is precisely the central 
character of the corresponding $\mathcal H^\fs$-module. 

Since $\Irr (\mathcal H^\fs)$ is
in bijection with the collection of primitive ideals of $\mathcal H^\fs$, we can endow
it with the Jacobson topology. By transferring this topology to $\Irr (\cG )^\fs$, we make 
the latter into a (nonseparated) algebraic variety. (In fact this topology agrees with the
topology on $\Irr (\cG )^\fs$ considered as a subspace of $\Irr (\cG )$, endowed with the
Jacobson topology from the Hecke algebra of $\cG$.) 

\noindent \underline{Summary}: 
For each Bernstein component $\fs \in\mathfrak{B}(\mathcal G)$ there are:
\begin{enumerate}
\item A finite group $W^{\mathfrak{s}}$ acting on a complex torus $T^\fs$;
\item A subset $\Irr (\mathcal G)^\fs$ of $\Irr (\mathcal G)$;
\item A morphism of algebraic varieties 
\[
\pi^\fs \colon \Irr (\mathcal G)^\fs \longrightarrow T^\fs /W^\fs ;
\]
\item A unital finite-type $\mathcal{O}(T^\fs/W^\fs)$-algebra $\mathcal H^\fs$ with
\[
\Irr (\mathcal H^\fs) = \Irr (\mathcal G)^\fs .
\]
\end{enumerate}

\section{Approximate statement of the conjecture}
\label{sec:approx}

As above, $\mathcal G$ is a quasi-split connected reductive $p$-adic group or an inner form of $\GL_n(F)$, and 
$\mathfrak{s}$ is a point in the Bernstein spectrum of $\mathcal G$.

Consider the two maps indicated by vertical arrows:
\begin{displaymath} 
\xymatrix{
T^\fs/\!/W^\fs \ar[dd]_{\rho^\fs}&  & \Irr (\mathcal G)^\fs \ar[dd] ^{\pi^\fs}  \\
&   & \\
T^\fs / W^\fs  & & T^\fs / W^\fs }
\end{displaymath}
Here $\pi^\fs$ is the infinitesimal character and $\rho^\fs$ is the 
projection of the extended quotient on the ordinary quotient.  In practice 
$T^\fs /\!/ W^\fs$ and $\rho^\fs$ 
are much easier to calculate than $\Irr (\mathcal G)^\fs$ and $\pi^\fs$.

An approximate statement of the conjecture is:
\[
\pi^\fs \colon \Irr (\mathcal G)^\fs \longrightarrow T^\fs /W^\fs 
\quad \mathrm{and} \quad
\rho^\fs \colon T^\fs /\!/W^\fs \longrightarrow T^\fs/W^\fs \: \mathrm{are\: almost\: the \:same}.
\]
The precise statement of the conjecture --- in particular, precise meaning of ``are almost the same" 
--- is given in section \ref{sec:statement} below. 

$\pi^\fs$ and $\rho^\fs$ are both surjective finite-to-one maps and morphisms of algebraic varieties.
For $x\in T^\fs / W^\fs$, denote by $\#(x, \rho^\fs) ,\; \#(x, \pi^\fs)$
the number of points in the pre-image of $x$ using $\rho^\fs ,\; \pi^\fs$. The numbers
$\#(x, \pi^\fs)$ are of interest in describing exactly what happens when 
$\Irr (\mathcal G)^\fs$ is constructed by parabolic induction. 

Within $T^\fs / W^\fs$ there are algebraic sub-varieties $R(\rho^\fs) ,\; R(\pi^\fs)$ defined by
\[
R(\rho^\fs) :=\{x\in T^\fs / W^\fs
\thinspace \mid \thinspace \#(x, \rho^\fs) > 1\}
\]
\[
R(\pi^\fs) :=\{x\in T^\fs / W^\fs
\thinspace \mid \thinspace \#(x, \pi^\fs) > 1\}
\]
It is immediate that
\[
R(\rho^\fs) = \rho^\fs (T^\fs /\!/ W^\fs - T^\fs / W^\fs)
\]
$R(\pi^\fs)$ will be referred to as the \emph{sub-variety of reducibility}.

In many examples $R(\rho^\fs) \neq R(\pi^\fs)$. 
Hence in these examples it is impossible to have a bijection 
\[
T^\fs /\!/ W^\fs \longrightarrow \Irr (\mathcal G)^\fs   
\]
with commutativity in the diagram
\begin{displaymath} 
\xymatrix{
T^\fs /\!/ W^\fs \ar[dr]_{\rho^\fs} \ar[rr]  &  &     
\Irr (\mathcal G)^\fs \ar[dl] ^{\pi^\fs}  \\
& T^\fs / W^\fs & }
\end{displaymath}
A more precise statement of the conjecture is that after a simple algebraic correction
(``correcting cocharacters") $\rho^\fs$ becomes isomorphic to $\pi^\fs$. Thus $\rho^\fs$ 
is an easily calculable map which can be algebraically corrected to give $\pi^\fs$.
An implication of this is that within the algebraic variety $T^\fs/W^\fs$ there is a 
flat family of sub-varieties connecting $R(\rho^\fs)$ and $R(\pi^\fs)$.

\section{Extended quotient of the second kind}
\label{sec:extquot2}
With $\Gamma ,\: X ,\: \Gamma_x$ as in Section \ref{sec:extquot} above, let $\Irr (\Gamma_x)$ 
be the set of (equivalence classes of) irreducible representations of $\Gamma_x$. 
The \emph{extended quotient of the second kind}, 
denoted $(X/\!/\Gamma)_2$, is constructed by replacing the orbit of x (for the given action of 
$\Gamma$ on $X$) by $\Irr(\Gamma_x)$. This is done as follows : \\

\noindent Set $\widetilde{X}_2 = \{(x, \tau)\thinspace \big|\thinspace x \in X$ and
$\tau \in \Irr(\Gamma_x)\}$. Then $\Gamma$ acts on $\widetilde{X}_2.$
\begin{align*}
& \Gamma \times \widetilde{X}_2 \to \widetilde{X}_2 , \\
& \gamma(x, \tau) = (\gamma x, \gamma_*\tau) ,
\end{align*}
where $\gamma_*\colon \Irr(\Gamma_x)\rightarrow \Irr(\Gamma_{\gamma x})$.
Now we define
\[
(X/\!/\Gamma)_2  := \:\widetilde{X}_2/\Gamma , 
\]
i.e. $ (X/\!/\Gamma)_2 $ is the usual quotient for the action of $\Gamma$ on $\widetilde{X}_2$.
The projection $\widetilde{X}_2\rightarrow  X$\quad $(x, \tau) \mapsto x$\quad is 
$\Gamma$-equivariant  and so passes to quotient spaces to give the projection of 
$(X/\!/\Gamma)_2$ onto $X/\Gamma$.
\[
\rho_2 \colon (X/\!/\Gamma)_2 \longrightarrow X/\Gamma  
\]
Denote by $\mathrm{triv}_x$ the trivial one-dimensional representation of $\Gamma_{x}$.
\noindent The inclusion 
\begin{align*}
X&\hookrightarrow \widetilde{X}_2\\
x&\mapsto (x, \mathrm{triv}_x)
\end{align*}
is $\Gamma$-equivariant and so passes to quotient spaces to give an inclusion
\[
X/\Gamma\hookrightarrow (X/\!/\Gamma)_2
\] 
This will be referred to as the inclusion of the ordinary quotient in the extended quotient
of the second kind.

\section{Comparison of the two extended quotients}
\label{sec:extquots}
With $X, \Gamma$ as above, there is a non-canonical bijection 
$\epsilon \colon X/\!/\Gamma\rightarrow (X/\!/\Gamma)_2$ with commutativity in the diagrams
\begin{equation}\label{eq:diagramsExtquots} 
\xymatrix{
X/\!/\Gamma \ar[dr]_{\rho} \ar[rr]^{\epsilon}  &  &  (X/\!/\Gamma)_2 \ar[dl]^{\rho_2}  &
X/\!/\Gamma  \ar[rr]^{\epsilon}  &  &  (X/\!/\Gamma)_2  \\
& X/\Gamma & & & X/\Gamma \ar[ul] \ar[ur] & }
\end{equation}
To construct the bijection $\epsilon$, some choices must be made. 
We use a family $\psi$ of bijections
\[
\psi_x : c (\Gamma_x) \to \Irr (\Gamma_x)
\]
such that for all $x \in X$:
\begin{enumerate}
\item $\psi_x ([1]) = \mathrm{triv}_x$;
\item $\psi_{\gamma x} ([\gamma g \gamma^{-1}]) = \phi_x ([g]) \circ \mathrm{Ad}_\gamma^{-1}$
for all $g \in \Gamma_x, \gamma \in \Gamma$;
\item $\psi_x = \psi_y$ if $\Gamma_x = \Gamma_y$ and $x,y$ belong to the same connected 
component of the variety $X^{\Gamma_x}$.
\end{enumerate}
We shall refer to such a family of bijections as a $c$-$\Irr$ system. Clearly $\psi$
induces a map $\widetilde X \to {\widetilde X}_2$ which preserves the $X$-coordinates.
By property (2) this map is $\Gamma$-equivariant, so it descends to a map
\[
\epsilon = \epsilon_\psi : X /\!/ \Gamma \to (X /\!/ \Gamma )_2 .
\]
Observe that $\epsilon_\psi$ makes the diagrams from \eqref{eq:diagramsExtquots} commute,
the first by construction and the second by property (1).
The restriction of $\epsilon_\psi$ to the fiber over $\Gamma x \in X / \Gamma$
is $\psi_x$, and in particular is bijective. Therefore $\epsilon_\psi$ is bijective.
Property (3) is not really needed, it serves to exclude ugly examples.

One way to topologize $(X \q \Gamma )_2$ is via the $X$-coordinate, in other words, by simply
pulling back the topology from $X / \Gamma$ via the natural projection. With respect to this 
naive topology $\epsilon_\psi$ is continuous. This continuous bijection, however, is not usually
a homeomorphism. In most cases $X/\!/\Gamma$ is more separated than $(X/\!/\Gamma)_2.$ 

But there are other useful topologies. Let $\mathcal{O}(X)$ be the coordinate algebra of the 
affine variety $X$ and let $\mathcal{O}(X)\rtimes \Gamma$ be the crossed-product algebra for 
the action of $\Gamma$ on $\mathcal{O}(X)$. There are canonical bijections
\[
\Irr (\mathcal{O}(X)\rtimes \Gamma) \longleftrightarrow 
\mathrm{Prim}(\mathcal{O}(X)\rtimes\Gamma) \longleftrightarrow (X/\!/ \Gamma)_2 ,
\]
where Prim($\mathcal{O}(X)\rtimes \Gamma$) is the set of primitive ideals in this algebra.
The irreducible module associated to $(x,\tau) \in (X/\!/ \Gamma)_2$ is
\begin{equation} \label{eq:extquot2Module}
\mathrm{Ind}_{\mathcal O (X) \rtimes \Gamma_x}^{\mathcal O (X) \rtimes \Gamma} (\C_x \otimes \tau) .
\end{equation}
The space Prim($\mathcal{O}(X)\rtimes \Gamma$) is endowed with the Jacobson topology, which
makes it a nonseparated algebraic variety. This can be transferred to a topology on 
$(X/\!/ \Gamma)_2$, which we also call the Jacobson topology. It is slightly coarser than 
the naive topology described above.

The bijection $\epsilon_\psi$ is not always continuous with respect to the Jacobson topology.
In fact, it follows readily from \eqref{eq:extquot2Module} that $\epsilon_\psi$ is continuous 
in this sense if and only the following additional condition is satisfied:
\begin{itemize}
\item[(4)] Suppose that $x,y$ lie in the same connected component of the variety $X^{\Gamma_x}$,
that $\Gamma_y \supset \Gamma_x$ and that $\gamma \in \Gamma_x$. Then the 
$\Gamma_y$-representation $\psi_y ([\gamma])$ appears in $\mathrm{Ind}_{\Gamma_x}^{\Gamma_y} 
\psi_x ([\gamma])$.
\end{itemize}
While the first three conditions are easy to fulfill, the fourth can be rather difficult.

The two finite-type $\mathcal{O}(X/\Gamma)$-algebras $\mathcal{O}(X/\!/\Gamma)$ and 
$\mathcal{O}(X)\rtimes \Gamma$ are usually (i.e. if the action of $\Gamma$ on $X$ is neither 
trivial nor free) not Morita equivalent. In examples relevant to the representation theory of 
reductive $p$-adic groups these two finite-type $\mathcal{O}(X/\Gamma)$-algebras are
equivalent via a weakening of Morita equivalence referred to as ``geometric equivalence",
see the appendix.

\section{Statement of the conjecture}
\label{sec:statement}
As above, $\mathcal{G}$ is a quasi-split connected reductive $p$-adic group or
an inner form of $\GL_n(F)$, and 
$\mathfrak{s}\in\mathfrak{B}(\mathcal G)$.
Let $\{\mathrm{Langlands\:\: parameters}\}^\fs$ be the set of Langlands 
parameters for $\mathfrak{s}\in\mathfrak{B}(\mathcal G)$ and $H^\fs$ the 
stabilizer of this set in the dual group $G$.

The conjecture consists of the following five statements. 
\begin{enumerate}
\item  The infinitesimal character 
\[
\pi^\mathfrak{s} : \Irr (\mathcal G)^\fs \to T^\mathfrak{s} / W^\mathfrak{s}
\]  
is one-to-one if and only if the action of $ W^\mathfrak{s}$ on $T^\mathfrak{s}$ is free.
\item The extended quotient of the second kind $(T^{\mathfrak{s}} /\!/ W^{\mathfrak{s}})_2$ 
surjects by a canonical finite-to-one map onto 
$\{\mathrm{Langlands\:\: parameters}\}^\fs / H^\fs$.
\item The extended quotient of the second kind $(T^{\mathfrak{s}}/\!/W^{\mathfrak{s}})_2$ 
is canonically in bijection with the Bernstein component $\Irr (\mathcal G)^\fs$.
\item The canonical bijection 
\[
(T^{\mathfrak{s}}/\!/W^{\mathfrak{s}})_2 \longleftrightarrow \Irr (\mathcal G)^\fs
\]
comes from a canonical geometric equivalence of the two unital finite-type
$\mathcal{O}(T^{\mathfrak{s}}/W^{\mathfrak{s}})$-algebras 
$\mathcal{O}(T^{\mathfrak{s}})\rtimes W^{\mathfrak{s}}$ and $\mathcal H^\fs$.
See the appendix for details on ``geometric equivalence".
\item The above maps and the local Langlands correspondence fit in a commutative triangle
\begin{equation*}  
\xymatrix{   
& (T^\fs \q W^\fs)_2 \ar[dr]\ar[dl] & \\ 
\Irr(\mathcal{G})^\fs   \ar[rr] & &   \{\mathrm{Langlands\:\: parameters}\}^\fs/H^{\fs}} 
\end{equation*}
\item A $c$-$\Irr$ system can be chosen for the action of $W^{\mathfrak{s}}$ on 
$T^{\mathfrak{s}}$ such that the resulting bijection
\[
\epsilon \colon T^{\mathfrak{s}}/\!/W^{\mathfrak{s}}\longrightarrow 
(T^{\mathfrak{s}} /\!/ W^\fs )_2
\]
when composed with the canonical bijection 
$(T^\fs /\!/ W^\fs )_2 \rightarrow \Irr (\mathcal G)^\fs$
gives a bijection
\[
\mu^\fs : T^\fs /\!/ W^\fs \longrightarrow \Irr (\cG)^\fs
\]
which has the following six properties:
\end{enumerate}

\noindent Notation for Property 1:\\
Within the smooth dual $\mathrm{Irr}(\mathcal G)$, we have the tempered dual \\
$\Irr (\mathcal G)_{\mathrm{temp}}$ = 
\{smooth tempered irreducible representations of $\mathcal G\} /\sim$\\
$T_{\cpt}^\fs$ = maximal compact subgroup of  $T^\fs$.\\
$T_{\cpt}^\fs$ is a compact real torus.
The action of  $W^\fs$  on $T^\fs$ preserves $T_{\cpt}^\fs$, so we can form 
the compact orbifold $T_{\cpt}^\fs /\!/ W^\fs$.\\

\noindent\underline{Property 1 of the bijection $\mu^\fs$} :\\
The bijection $\mu^\fs : T^\fs /\!/ W^\fs \longrightarrow  \Irr (\cG)^\fs $ maps 
$T_{\cpt}^\fs /\!/ W^\fs$ onto $\Irr (\mathcal G)^\fs \cap 
\Irr (\mathcal G)_{\mathrm{temp}}$, and hence restricts to a bijection
\[
\mu^\fs \colon T_{\cpt}^\fs /\!/ W^\fs 
\longleftrightarrow \Irr (\mathcal G)^\fs \cap \Irr (\mathcal G)_{\mathrm{temp}}
\]

\noindent\underline{Property 2 of the bijection $\mu^\fs$} :

\noindent For many $\mathfrak{s}$ the diagram 
\begin{displaymath} 
\xymatrix{
T^\fs /\!/W^\fs \ar[dr]_{\rho^\fs} \ar[rr]^{\mu^\fs} 
&  &     \Irr (\mathcal G)^\fs \ar[dl] ^{\pi^\fs}  \\
 & T^\fs / W^\fs & }
\end{displaymath}
does not commute.
\vspace{3mm}

\noindent\underline{Property 3 of the bijection $\mu^\fs$}:

\noindent In the possibly non-commutative diagram
\begin{displaymath} 
\xymatrix{
T^\fs /\!/W^\fs \ar[dr]_{\rho^\fs} \ar[rr]^{\mu^\fs}       
&  &     \Irr (\mathcal G)^\fs \ar[dl] ^{\pi^\fs}  \\
 & T^\fs / W^\fs & }
\end{displaymath}
the bijection $\mu^\fs : T^\fs /\!/W^\fs \longrightarrow \Irr (\mathcal G)^\fs$ 
is continuous where $T^\fs /\!/W^\fs$ has the Zariski topology and 
$\Irr (\mathcal G)^\fs$ has the Jacobson topology  --- and the composition 
\[
\pi^\fs \circ \mu^\fs : T^\fs /\!/W^\fs 
\longrightarrow T^\fs/ W^\fs
\]
is a morphism of algebraic varieties.
\vspace{3mm}

\noindent\underline{Property 4 of the bijection $\mu^\fs$}:

\noindent  There is an algebraic family
\[
\theta_z : T^\fs/\!/W^\fs \longrightarrow T^\fs/W^\fs
\]
of finite morphisms of algebraic varieties, with $z \in \Cset^{\times}$, such that
\[
\theta_{1} = \rho^\fs,  \quad  \theta_{\sqrt{q}} = 
\pi^\fs \circ \mu^\fs,\quad\text{and}\quad 
\theta_{\sqrt{q}}(T^\fs /\!/W^\fs - T^\fs/W^\fs) = R(\pi^\fs).
\]
Here $q$ is the order of the residue field of the p-adic field $F$ over which $\mathcal G$ 
is defined and $R(\pi^\fs)\subset T^\fs/W^\fs$ is the sub-variety of reducibility. Setting 
\[
Y_{z} = \theta_{z}(T^\fs/\!/W^\fs - T^\fs/W^\fs)
\]
a flat family of sub-varieties of $T^\fs/W^\fs$ is obtained with 
\[
Y_1 =  R(\rho^\fs) , \qquad Y_{\sqrt{q}} = R(\pi^\fs) .
\]
\vspace{1mm}

\noindent\underline{Property 5 of the bijection $\mu^\fs$ (Correcting cocharacters)}:

\noindent For each irreducible component  $\bf{c}$ of the affine variety    
$T^\fs/\!/W^\fs$ there is a cocharacter (i.e. a homomorphism of algebraic groups)
\[
h_{c} : \Cset^{\times} \longrightarrow  T^\fs
\]
such that 
\[ 
\theta_{z} [w,t] = b(h_{c}(z) \cdot t) 
\]
for all $[w, t]\in\bf{c}$. \\ 
Let $b\colon T^\fs \longrightarrow T^\fs/W^\fs$ be the quotient map.
Here, as above, points of $\widetilde{T}_\fs$ are pairs $(w, t)$ with 
$w\in W^\fs,\:t\in T^\fs$ and $wt=t$. $[w, t]$ is the 
point in $T^\fs/\!/W^\fs$ obtained by applying the quotient map 
$\widetilde{T}^\fs \rightarrow T^\fs /\!/W^\fs$ to $(w, t)$.\\
\noindent Remark. The equality 
$$ \theta_{z} [w,t] = b(h_{c}(z) \cdot t) $$
is to be interpreted thus:\\
Let $Z_1$, $Z_2$, $\ldots$, $Z_r$ be the irreducible components of the affine 
variety 
$T^\fs /\!/ W^\fs$ and let $h_1$, $h_2$, $\ldots$, $h_r $ be the cocharacters
as in the statement of Property 5. Let 
\[
\nu^\fs \colon \widetilde{T^\fs} \longrightarrow T^\fs /\!/W^\fs
\]
be the quotient map.\\
Then irreducible components $X_1$, $X_2$, $\ldots$, $X_r$
of the affine variety $\widetilde{T^\fs}$ can be chosen with
\begin{itemize}
\item $\nu^\fs (X_j) = Z_j$ for $j =1, 2, \dots, r$
\item For each $z \in \mathbb{C}^{\times}$ the map 
$m_z \colon X_j \rightarrow T^\fs /W^\fs$, which is the composition    
\[
X_j\longrightarrow \quad T^\fs \quad \longrightarrow T^\fs /W^\fs
\]
\[
(w, t) \longmapsto h_j(z)t \longmapsto b(h_j(z)t) ,
\]
makes the diagram
\begin{displaymath} 
\xymatrix{
X_j\ar[dr]_{m_z} \ar[rr]^{\nu^\fs}  &  & Z_j \ar[dl] ^{\theta_z}  \\
& T^\fs / W^\fs & }
\end{displaymath}
commutative. Note that $h_j(z)t$ is the product of $h_j(z)$ and $t$ 
in the algebraic group $T^{\fs}$.
\end{itemize}
Remark. The conjecture asserts that to calculate 
\[
\pi^\mathfrak{s} \colon {\Irr}^\fs(\cG)\longrightarrow T^\mathfrak{s}/W^\mathfrak{s}
\]
two steps suffice:\\
Step 1: Calculate $\rho^\mathfrak{s} \colon T^\mathfrak{s}/\!/W^\mathfrak{s} \longrightarrow  
T^\mathfrak{s}/W^\mathfrak{s}.$\\
Step 2:  Determine the correcting cocharacters. \\

\vspace{2mm} 
\noindent The cocharacter assigned to $T^\fs/W^\fs \hookrightarrow T^\fs/\!/W^\fs$ 
is always the trivial cocharacter mapping $\C^{\times}$
to the unit element of $T^\fs$. So all the non-trivial correcting is taking place on 
$T^\fs/\!/W^\fs - T^\fs/W^\fs$.

\vspace{3mm}
\noindent Notation for Property 6.\\
If $S$ and $V$ are sets, a \emph{labelling} of $S$ by $V$ is a  map of sets 
$\lambda \colon S\rightarrow V$.
\noindent\underline{Property 6 of the bijection $\mu_\mathfrak{s}$ (L-packets)}:\\
As in Property 5, let $\{Z_1, \ldots, Z_r\}$ be irreducible components of the affine 
variety $T^\fs /\!/W^\fs$. Then a labelling $\lambda\colon \{Z_1, Z_2, \ldots Z_r\} 
\rightarrow V$ exists such that:\\
For every two points $[w,t]$ and $[w',t]$ of $T^\fs/\!/W^\fs$:
\[
\mu^\fs [w,t] \;\mathrm{and}\; \mu^\fs [w',t'] \text{ are in the same L-packet}
\]
if and only
\begin{itemize}
\item[(i)] $\theta_z [w,t] = \theta_z [w',t']$ for all $z \in \C^\times$; 
\item[(ii)] $\lambda [w,t] = \lambda [w',t']$, where we lifted $\lambda$ to a
labelling of $T^\fs /\!/ W^\fs$ in the obvious way.
\end{itemize}

\noindent Remark. An L-packet can have non-empty intersection with more than one 
Bernstein component. The conjecture does not
address this issue. The conjecture only describes the intersections of L-packets 
with any one given Bernstein component.\\
 
\noindent In brief, the conjecture asserts that --- once a Bernstein component 
has been fixed --- intersections of L-packets with that Bernstein component 
consisting of more than one point are ``caused" by repetitions among the 
correcting cocharacters. If, for any one given Bernstein component, the 
correcting cocharacters 
$h_1$, $h_2$, $\ldots$, $h_r$ are all distinct, then
(according to the conjecture) the intersections of L-packets with that Bernstein 
component are singletons.\\

\noindent A Langlands parameter can be taken to be a homomorphism of topological groups 
\[
\mathbf W_F \times \SL_2 (\mathbb{C}) \longrightarrow {}^L\mathcal G ,
\]
where $F$ is the $p$-adic field over which $\mathcal G$ is defined,  $\bW_F$ is the Weil 
group of $F$, and ${}^L\mathcal G$ is the $L$-group of $\mathcal G$.  
Let $G$, as before, denote the complex dual group of $\cG$.
By restricting a 
Langlands parameter to the standard maximal torus of $\SL_2 (\mathbb{C})$ a cocharacter
\[
\mathbb{C}^{\times} \longrightarrow T
\]
is obtained, where $T$ is a maximal torus of $G$.    
In examples, these give the correcting cocharacters. \\

\noindent For any $\mathcal G$ and any $\fs\in\mathfrak{B}(\mathcal G)$ the finite group 
$W^\fs$ is an extended finite Coxeter group i.e. is a semi-direct product for the action 
of a finite abelian group $\Gamma$ on a finite Weyl group $W$:
\[
W^\fs = W\rtimes \Gamma .
\]
Due to this restriction on which finite groups can actually occur as a $W^\fs$, 
in examples there is often a clear preferred choice
of $c$-$\Irr$ system for the action of $W^\fs$ on $T^\fs$.

\smallskip
What happens if $\cG$ is a connected reductive $p$-adic group which is
not quasi-split? Many Bernstein components $\Irr (\cG)^\fs$ have the
geometric structure as in the statement of the conjecture above. However, in some examples 
there are Bernstein components $\Irr (\cG)^\fs$ which are canonically in bijection not
with $(T^{\fs}/\!/W^\fs)_2$ but with $(T^{\fs}/\!/W^\fs)_2$-twisted by a 2-cocycle. 
See Section~\ref{sec:teq} for the definition of
$(T^{\fs}/\!/W^\fs)_2$-twisted by a 2-cocycle. 
The authors of this paper are currently formulating a precise statement of the conjecture 
for connected reductive p-adic groups which are not quasi-split. 
Our precise statement will be given elsewhere.

\part{Examples}

\section{Remarks on the supercuspidal case}
\label{sec:sc}
Recall the group $X_{\unr}(\cM)$ of unramified characters of $\cM$, its
finite subgroup  
$\Stab(\sigma)$ defined in (\ref{eqn:Psis}, and the complex algebraic
torus $T^\fs$, defined in (\ref{eq:defTs}) as the quotient of $X_{\unr}(\cM)$ 
by $\Stab(\sigma)$.
We first consider the special case in which $\fs\in\fB(\cG)$ is a
\emph{supercuspidal inertial pair} in $\cG$, that is, $\fs=[\cG,\pi]_G$,
where $\pi$ is a supercuspidal irreducible representation of $\cG$. 
We have 
\[T^\fs\simeq \Irr(\cG)^\fs.\]
On the other hand, the group $W^\fs$ is the trivial group $\{1\}$.
Hence we have $(T^\fs\q W^\fs)_2=T^\fs/W^\fs=T^\fs$. 
It follows that the left slanted map in the commutative triangle from statement
(5) in Section \ref{sec:statement} is here the identity map.
In other words, \emph{when $\fs$ is supercuspidal, 
the triangle collapses into the horizontal map}, \ie 
the existence of the commutative triangle of maps is equivalent to the 
existence of the local Langlands map.

\smallskip

The other parts of the conjecture are trivially true when $\fs$ is
supercuspidal, except Property~6. 
Each of the maps $\mu^\fs$,  $\pi^\fs$, and $\rho^\fs$ being equal to the 
identity map, we have 
\[\theta_{\sqrt q}=\theta_1\,(=\identity).\]
It implies that only the trivial cocharacter can occur. 
On the other hand, the labelling should consist in a \emph{unique label}.
Hence the conditions~(i) and~(ii) in Property~6 are empty. 
It follows that Property~6 is equivalent to the following property:

\smallskip

\noindent\underline{Property S:}
When $\fs$ is supercuspidal, the set of representations of $\cG$ which belong 
to the intersection of $\Irr(\cG)^\fs$ with an L-packet is always a singleton.

\smallskip

Property S is obviously true if $Z(\cG)$ is compact, because in that case there 
is no non-trivial unramified character of $\cG$, thus $\Irr(\cG)^\fs$ itself is 
reduced to a singleton. It is also valid if $\cG=\GL_n(F)$, because 
each L-packet is a singleton.

Property S is expected to hold in general. For instance, it is true for the 
supercuspidal L-packets constructed in \cite{DR}.

\section{The general linear group and its inner forms}
\label{sec:GLn}
Let $F$ be a local non-archimedean field. 
Let $\cG$ be an inner form of
the general linear group $\cG^*:=\GL_n(F)$, $n\ge 1$, that is, $\cG$ is a 
group of the form $\GL_m(D)$, where $D$ is an $F$-division algebra, of 
dimension $d^2$ over its centre $F$, and where
$n=md$ (see for instance \cite[\S~25]{ArIT}). By $F$-division algebra we mean 
a finite dimensional $F$-algebra with centre $F$, in which every nonzero
element is invertible.

We have $\cG=A^\times$, where $A$ is a simple central $F$-algebra. Let 
$V$ be a simple left $A$-module. The algebra $\End_A(V)$ is an $F$-division 
algebra, the opposite of which we denote by $D$. 
Considering $V$ as a right $D$-vector space, we have a canonical
isomorphism of $F$-algebras between $A$ and $\End_D(V)$. 

Heiermann \cite{Hei} proved that
every Bernstein component of the category of smooth $\cG$-modules is 
equivalent to the
module category of an affine Hecke algebra. Together with
\cite[Theorem 5.4.2]{Sol} this proves a large
part of the ABPS conjecture for $\cG$: properties 1--5 from Section
\ref{sec:statement}. In particular, this provides a bijection between
$T^\fs/\!/W^\fs$ and $\Irr(\mathcal{G})^\fs$ for any point $\fs\in\fB(\cG)$. 
Moreover, $T^\fs/\!/W^\fs$ and $(T^\fs/\!/W^\fs)_2$ are canonically isomorphic. 
In subsection~\ref{subsec:left}, we shall construct a canonical bijection
from $(T^\fs/\!/W^\fs)_2$ to $\Irr(\mathcal{G})^\fs$ by following a 
different approach based on
the fact due to S\'echerre and Stevens \cite{SeSt} that the affine Hecke 
algebra which occurs in the picture here is a tensor product of affine
Hecke algebras of equal parameter type. 

\smallskip

Let $\bW_F$ denote the Weil group of $F$. 
The complex dual group of the group $\cG$ is $G=\GL_m(\Cset)$. An $L$-group for
a given $p$-adic group can take one of several forms. The $L$-group
of an inner form of a (quasi-)split group may be identified with the
$L$-group of the latter \cite[\S~26]{ArIT}. Moreover, the $L$-group of a
split group can be taken to be equal to its complex dual group. Hence we may and
we do define the $L$-group of $\cG$ as 
\[
{}^L\cG=\GL_n(\Cset).
\]
Then a Langlands parameter for $\cG$ is a \emph{relevant} continuous 
homomorphism of topological groups 
\[
\Phi\colon \bW_F\times\SL_2(\Cset)\longrightarrow  \GL_n(\Cset),
\]
for which the image in $G$ of any element is semisimple, and which
commutes with the projections of $\WD$ and $G$ onto $\bW_F$. 
Two Langlands parameters are equivalent if they are conjugate under ${}^L\cG$. 
The set of equivalence classes of Langlands parameters is denoted $\Phi(\cG)$.

We recall that ``relevant'' means that if the image of $\Phi$ is contained 
in a Levi subgroup
$M$ of $G$, then $M$ must be the $L$-group of a Levi subgroup $\cM$ of
$\cG$. Hence here it means that $d$ must divide all the $n_i$ if 
$M\simeq\GL(n_1,\Cset)\times\cdots\times\GL(n_h,\Cset)$. Then we set $m_i := n_i/d$ 
and define $\cM=\GL(m_1,D)\times\cdots\times\GL(m_h,D)$. In the particular case of
$d=1$, \ie for the group $\cG^*$, every Langlands parameter is relevant. Hence we get
\[
\Phi(\cG) \subset \Phi(\cG^*).
\] 
In the next three subsections we shall construct, for every Bernstein component 
of $\cG$, a commutative triangle of natural bijections as Property 7 of 
Section \ref{sec:statement}.

\subsection{Types and Hecke algebras for $\GL_m(D)$}
\label{subsec:left} \ 

Let $\fs$ be any point in $\fB(\cG)$. 
Recall from \cite{BKtyp} that an \emph{$\fs$-type} is a pair $(\cK,\lambda)$,
with $\cK$ an open compact subgroup of $\cG$ and $(\lambda,\cV)$ is an
irreducible smooth representation of $\cK$, such that $\Irr^\fs(\cG)$ is precisely
the set of irreducible smooth representations of $\cG$ which contain $\lambda$.
We shall denote by $(\check\lambda,\cV^\vee)$ the contragredient representation of 
$\lambda$. The endomorphism-valued Hecke algebra $\cH(\cG,\lambda)$ attached to
$(\cK,\lambda)$ is defined to be the space of compactly supported functions
$f\colon \cG\to\End_{\Cset}(\cV^\vee)$, such that   
\[
f(k_1gk_2)=\check\lambda(k_1)f(g)\check\lambda(k_2),\quad\text{where }
k_1,k_2 \in \cK \text{ and } g\in \cG .
\]
The standard convolution gives $\cH(\cG,\lambda)$ the structure of an
associative $\Cset$-algebra with unity. There is a canonical bijection 
\begin{equation} \label{eqn:SS}
\Irr^\fs(\cG)\to \Irr(\cH(\cG,\lambda)).
\end{equation}
Generalizing the work of Bushnell and Kutzko \cite{BKsstyp}, S\'echerre and 
Stevens have constructed in \cite{SeSt} an $\fs$-type $(\cK^\fs,\lambda^\fs)$
for each $\fs\in\fB(\cG)$ and explicitly described the structure of the
algebra $\cH(\cG,\lambda^\fs)$. 

We shall recall the results from \cite{SeSt} that we need.
Let $\fs=[\cM,\sigma]_\cG$. The Levi subgroup $\cM$ is the stabilizer
of some decomposition $V=\bigoplus_{j=1}^h V_j$ into subspaces, which
gives an identification 
\begin{equation} \label{eqn:LeviM}
\cM\simeq\prod\nolimits_{j=1}^h \GL_{m_j}(D),\quad\text{where } m_j = \dim_D V_j .
\end{equation}
We can then write 
\[
\sigma = \bigotimes\nolimits_{j=1}^h\sigma_j,
\]
where, for each $j$, the representation $\sigma_j$ is an irreducible 
unitary supercuspidal representation of $\cG_j:=\GL_{m_j}(D)$. 

We define an equivalence relation on $\{1,2,\ldots,h\}$ by
\begin{equation} \label{eqn:equivrel}
j\sim k\quad  \text{if and only if} \quad m_j = m_k \quad \text{and} \quad
[\cG_j,\sigma_j]_{\cG_j}=[\cG_k,\sigma_k]_{\cG_k} ,
\end{equation}
where we have identified $\cG_j$ with $\cG_k$ whenever $m_j=m_k$. We may,
and do, assume that $\sigma_j=\sigma_k$ whenever $j\sim k$, since this
does not change the inertial class $\fs$. Denote by $S_1$, $S_2$,
$\ldots$, $S_l$ the equivalence classes. For $i=1,2,\ldots, l$, we denote
by $e_i$ the cardinality of $S_i$. We call the integers $e_1$, $e_2$, 
$\ldots$, $e_l$ the \emph{exponents} of $\fs$. Hence we get
\begin{equation} \label{eqn:Msigma}
\cM \simeq \prod\nolimits_{i=1}^l\GL_{m_i}(D)^{e_i} \quad\text{and}\quad
\sigma \simeq \sigma_1^{e_1}\otimes\cdots\otimes \sigma_l^{e_l},
\end{equation}
where $\sigma_1 , \ldots , \sigma_l$ are pairwise distinct (after
unramified twist). We abbreviate 
$\fs_i := [\GL_{m_i}(D)^{e_i},\sigma_i^{\otimes e_i}]_{\GL_{m_i e_i}(D)}$
and we say that $\fs$ has \emph{exponents} $e_1, \ldots, e_l$.
In the setting of \eqref{eqn:Msigma}
\begin{align} \label{eq:Ws}
& W^\fs = N_{\cG}(\cM,\sigma) / \cM = 
\prod_{i=1}^l W^{\fs_i} \cong \prod_{i=1}^l \fS_{e_i} , \\
& \text{Stab}(\sigma) = \{ \chi \in X_\unr (\cM) : \sigma \otimes \chi \} =
\prod\nolimits_{i=1}^l \text{Stab}(\sigma_i)^{e_i} .
\end{align}
Recall that every unramified character of $\cG_i = \GL_{m_i}(D)$ is of the form
$g \mapsto |\text{Nrd}(g)|_F^z$ for some $z \in \C$, where Nrd $: \Mat_{m_i}(D) \to F$ 
is the reduced norm. This sets up natural isomorphisms
\begin{align*}
& X_{\unr}(\cG_i) \cong \C \Big/ \frac{2 \pi \sqrt{-1}}{\log q_F} 
\mathbb Z \cong \C^\times , \\
& X_{\unr}(\cM) \cong \prod\nolimits_{i=1}^l (\C^\times )^{e_i} .
\end{align*}
Let $n(\sigma_i)$ be the torsion number of $\sigma_i$, that is,
the order of Stab$(\sigma_i)$. Let $T^\fs$ and $T_i$ be the Bernstein tori associated 
to $\fs$ and $[\cG_i,\sigma_i]_{\cG_i}$, as in \eqref{eq:defTs}. There are isomorphisms
\begin{align}
& \nonumber T_i \cong X_{\unr}(\cG_i) / \text{Stab}(\sigma_i) \cong 
\C \Big/ \frac{2 \pi \sqrt{-1}}{n(\sigma_i) \log q_F} \mathbb Z \cong \C^\times, \\
& \label{eq:TfsM} T^\fs = \prod_{i=1}^l T^{\fs_i} = 
\prod_{i=1}^l ( T_i )^{e_i} \cong X_{\unr}(\cM) / \text{Stab}(\sigma) .  
\end{align}
As $W^\fs$ stabilizes $\sigma \in \Irr (\cM)$, the bijection \eqref{eq:TfsM} is 
$W^\fs$-equivariant. Although the above isomorphisms are not canonical, a consequence of 
the next theorem is that they can be made so by an appropriate choice of the $\sigma_i$.

\begin{thm} \label{thm:GLDleft}
The extended quotient of the second kind $(T^\fs\q W^\fs)_2$ is canonically 
in bijection with the Bernstein component $\Irr(\cG)^\fs$.
\end{thm}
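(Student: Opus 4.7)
The plan is to exploit the $\fs$-type $(\cK^\fs,\lambda^\fs)$ of S\'echerre and Stevens together with their explicit tensor product description of the endomorphism algebra, reducing the theorem to a family of affine Hecke algebras of type $A$ with equal parameters, and then matching irreducible modules of each factor with the extended quotient of the second kind via the Kazhdan--Lusztig parametrization.

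More precisely, I would first invoke the canonical bijection \eqref{eqn:SS}, namely $\Irr^\fs(\cG)\cong\Irr(\cH(\cG,\lambda^\fs))$, combined with the S\'echerre--Stevens tensor decomposition
\[
\cH(\cG,\lambda^\fs)\;\cong\;\bigotimes_{i=1}^{l}\cH_i,
\]
in which each factor $\cH_i$ is an extended affine Hecke algebra of type $\GL_{e_i}$ with a single equal parameter $q^{s_i}$ determined by $\sigma_i$. Since irreducible modules over a tensor product of $\C$-algebras are tensor products of irreducible modules over the factors, and since
\[
(T^\fs\q W^\fs)_2\;=\;\prod_{i=1}^{l}(T^{\fs_i}\q\fS_{e_i})_2,
\]
the problem reduces to producing, for each $i$, a canonical bijection $\Irr(\cH_i)\leftrightarrow(T^{\fs_i}\q\fS_{e_i})_2$.

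For a single factor, $T^{\fs_i}=(T_i)^{e_i}\cong(\C^\times)^{e_i}$ and $\fS_{e_i}$ acts by permutation. An $\fS_{e_i}$-orbit of pairs in $\widetilde X_2$ amounts to an unordered $e_i$-tuple of distinct values $z_1,\ldots,z_k\in\C^\times$ with multiplicities $a_1,\ldots,a_k$ summing to $e_i$, together with an irreducible representation of the stabilizer $\prod_j\fS_{a_j}$. Using the standard identification $\Irr(\fS_a)\leftrightarrow\{\text{partitions of }a\}$, this is precisely a finitely supported partition-valued function on $\C^\times$ of total weight $e_i$. On the other hand, Kazhdan--Lusztig \cite{KL} classifies $\Irr(\cH_i)$ by $\GL_{e_i}(\C)$-conjugacy classes of commuting pairs $(s,u)$ with $s$ semisimple and $u$ unipotent satisfying $sus^{-1}=u^{q^{s_i}}$; since the centralizer of such an $s$ with eigenvalue multiplicities $(a_j)$ is $\prod_j\GL_{a_j}(\C)$, unipotent classes in this centralizer are exactly tuples of partitions of the $a_j$. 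Matching the two combinatorial descriptions gives the sought bijection.

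The main obstacle is canonicity. The construction depends a priori on the choice of type $(\cK^\fs,\lambda^\fs)$, on the labelling and ordering of the equivalence classes $S_i$, on the identifications $\cH_i\cong$ an abstract affine Hecke algebra of type $\GL_{e_i}$, and on the identification $T_i\cong\C^\times$. Each of these must be shown to be harmless: different $\fs$-types produce Morita equivalent Hecke algebras in a canonical way (via the theory of covers of Bushnell--Kutzko); the permutation of factors acts symmetrically on both sides; and the identification $T_i\cong\C^\times$, which depends on the base point $\sigma_i\in\Irr(\cG_i)^{\fs_i}$, varies $\fS_{e_i}$-equivariantly under change of base point. I expect that verifying these naturalities and compatibility with the $W^\fs$-action, rather than the existence of the bijection itself, will constitute the technical heart of the argument.
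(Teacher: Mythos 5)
Your reduction to the Séchere--Stevens tensor decomposition \eqref{eqn:HH}, and thence to a per-factor bijection
$\Irr(\cH_{q_i}(\tW_{e_i}))\leftrightarrow (T^{\fs_i}\q\fS_{e_i})_2$ using the multiplicativity of the extended quotient of the second kind, is exactly the paper's strategy. Where you diverge is in how the per-factor bijection is obtained. You propose matching Kazhdan--Lusztig parameters $(s,u)$ with $s u s^{-1}=u^{q_i}$, using that centralizers of semisimple elements in $\GL_{e_i}(\C)$ are products of general linear groups (so all component groups are trivial and unipotent classes in the centralizer are tuples of partitions), and reading off the combinatorics against the stabilizers $\prod_j \fS_{a_j}$ in the extended quotient. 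The paper instead invokes Corollary~\ref{cor:J}, i.e.\ Lusztig's asymptotic algebra $\cJ$: the chain $\cH_{q_i}(\tW_{e_i})\to\cJ_{q_i}\to\cJ_1\to\C[X^*(T_i)^{e_i}\rtimes\fS_{e_i}]$ is a geometric equivalence, and $\Irr(\cO((T_i)^{e_i})\rtimes\fS_{e_i})=\big((T_i)^{e_i}\q\fS_{e_i}\big)_2$ by Clifford theory. Both routes are correct for type $\GL$ and in fact produce the same bijection; the paper itself remarks (in the proof of Theorem~\ref{thm:bijphi}) that the Lusztig map can also be deduced from \cite[Theorem 8.3]{KL}. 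The paper's route is chosen because (a) it realizes the bijection as a geometric equivalence of finite type $\cO(T^\fs/W^\fs)$-algebras, which is statement~(4) of the conjecture and cannot be extracted from a bare set-theoretic matching of Kazhdan--Lusztig data; and (b) passing through $\cJ$ gives a base-point normalization that makes the bijection canonical once $\sigma_i$ is fixed to correspond to the unit of the torus as in \eqref{eq:cInd}, which the paper uses explicitly when proving the commutativity of the triangle in Theorem~\ref{thm:bijphi}. Your concerns about canonicity are well placed: the paper also addresses them by fixing the $\sigma_i$ via \eqref{eq:cInd} and appealing to the support-preserving property of the Séchere--Stevens isomorphism, rather than by re-checking each identification by hand as you propose.
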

\begin{proof}
For every $i\in\{1,2,\ldots,l\}$, as proved in \cite{SeSt}, there exists a pair
$(\cK_i,\lambda_i)$, formed by an open compact
subgroup $\cK_i$ of $\cG_i=\GL_{m_i}(D)$ and a smooth irreducible representation
$\lambda_i$ such that the representation $\widetilde\lambda_i$ extends $\lambda_i$ 
to the normalizer $\widetilde \cK_i$ of $\cK_i$ in $\cG_i$ and the
supercuspidal representation $\sigma_i$ is compactly induced
from $\widetilde\lambda_i$: 
\begin{equation} \label{eq:cInd}
\sigma_i=\cInd_{\widetilde \cK_i}^{\cG_i}\widetilde\lambda_i.
\end{equation}
The group $\cK_i$ is a \emph{maximal simple type} in the
terminology of \cite{Sec}. Its construction (given in \cite{Sec})
generalizes the construction made by Bushnell and Kutzko in \cite{BK} in
the case of the general linear group. 

We shall denote by $\fo_D$ the ring of integers of $D$, and by $q_D$ the
order of the residue field of $D$. 

The group $\cK_i=\cK(\fA_i,\beta_i)$ is built from a \emph{simple stratum} 
$[\fA_i,a_i,0,\beta_i]$ of the $F$-algebra $A_i:=\End_D(V_i)$. 
The definition of a simple stratum is given in \cite[Def~1.18]{Sec}. Here 
we just recall that $\beta_i$ is an element 
of $A_i$ such that the $F$-algebra $E_i:=F[\beta_i]$ is a field, 
$\fA_i$ is an $\fo_F$-hereditary order of $A_i$ that is normalized by
$E_i^\times$, and $a_i$ is a positive integer.
Then the centralizer of $E_i$ in $A_i$, denoted $B_i$, is a simple
central $E_i$-algebra. We fix a simple left $B_i$-module $V_{E_i}$ and write 
$D_{E_i}$ for the algebra opposite to $\End_{B_i}(V_{E_i})$. We define
\begin{equation} \label{eqn:qi} 
\begin{aligned}
& m_{E_i}:=\dim_{D_{E_i}}V_{E_i} ,\qquad q_i:=q_{D_{E_i}}^{m_{E_i}} , \\
& \tW_{e_i} := X^* \big( T_i \big)^{e_i} \rtimes \fS_{e_i} =
X^* \big( T^{\fs_i} \big) \rtimes W^{\fs_i} .
\end{aligned}
\end{equation}
Notice that $\tW_{e_i}$ is an extended affine Weyl group of type $\GL_{e_i}$.
Now we quote \cite[Main Theorem and Prop.~5.7]{SeSt}: there is an
isomorphism of unital $\mathbb{C}$-algebras
\begin{equation} \label{eqn:HH}
\mathcal{H}(\cG,\lambda^\fs) \cong \bigotimes\nolimits_{i=1}^l \cH_{q_i}(\tW_{e_i}) .
\end{equation} 
For later use we also mention that this isomorphism preserves supports
\cite[Proposition 7.1 and Theorem 7.2]{SeSt}.
The factors $\mathcal{H}_{q_i}(\tW_{e_i})$ are (extended) affine Hecke algebras 
whose structure is given explicitly for instance in \cite[5.4.6, 5.6.6]{BK}.

By combining Corollary~\ref{cor:J} from Example~3 in the Appendix 
with the multiplicativity property of
the extended quotient of the second kind, we obtain a canonical bijection
\begin{equation} \label{eqn:bijs}
\bigotimes\nolimits_{i=1}^l
\cH_{q_i}(\tW_{e_i})\to(T^\fs\q W^\fs)_2.
\end{equation}
The composition of \eqref{eqn:SS}, \eqref{eqn:HH} and \eqref{eqn:bijs} 
gives a canonical bijection
\[
\Irr^\fs(\cG) \to (T^\fs\q W^\fs)_2. \qedhere
\] 
\end{proof}

Let $\cH(\cG)$ be the Hecke algebra associated to $\cG$, \ie 
\[
\cH(\cG): = \bigcup\nolimits_\cK \cH(\cG//\cK),
\]
where $\cK$ are open compact subgroups of $\cG$, and
$\cH(\cG//\cK)$ is the convolution algebra of all complex-valued,
compactly-supported functions on $\cG$ which are $\cK$-biinvariant.
The Hecke algebra $\cH(\cG)$ is a non-commutative, non-unital,
non-finitely-generated $\Cset$-algebra. It admits a canonical
decomposition into ideals, the Bernstein decomposition:
\[
\cH(\cG) = \bigoplus_{\fs \in \fB(\cG)} \cH(\cG)^\fs.
\]
The following result extends Theorem~3.1 in \cite{BP} from $\GL_n (F)$ to its
inner forms. This can be viewed as a ``topological shadow'' of conjecture 1.1.

\begin{thm} \label{thm:HPHdR}
Let $\fs\in\fB(\cG)$. Then the periodic cyclic homology of $\cH(\cG)^{\fs}$ 
is isomorphic to the periodised de Rham cohomology of $T^\fs /\!/ W^\fs$:
\[
\HP_* (\cH(\cG)^{\fs}) \simeq H^*(T^\fs /\!/ W^\fs;\Cset).
\] 
\end{thm}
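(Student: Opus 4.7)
The plan is to reduce the computation of $\HP_*(\cH(\cG)^\fs)$ to the computation of periodic cyclic homology of a crossed product algebra, by travelling through the chain of algebras furnished by Theorem~\ref{thm:GLDleft}, and then to invoke the Hochschild--Kostant--Rosenberg theorem in its equivariant form.

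First, I would use the theory of types: because $(\cK^\fs,\lambda^\fs)$ is an $\fs$-type for $\cG$, the functor $V\mapsto \Hom_{\cK^\fs}(\lambda^\fs,V)$ gives a Morita equivalence between $\cH(\cG)^\fs$ and the endomorphism-valued Hecke algebra $\cH(\cG,\lambda^\fs)$. Since periodic cyclic homology is invariant under Morita equivalence,
\[
\HP_*(\cH(\cG)^\fs)\;\cong\;\HP_*(\cH(\cG,\lambda^\fs)).
\]
Combining this with the S\'echerre--Stevens isomorphism \eqref{eqn:HH} and the K\"unneth formula for periodic cyclic homology reduces the problem to the evaluation of $\HP_*(\cH_{q_i}(\tW_{e_i}))$ for each extended affine Hecke algebra of type $\GL_{e_i}$.

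Second, I would appeal to the deformation-invariance of periodic cyclic homology for affine Hecke algebras (proved for type $A$ in \cite{BP} and in full generality by Solleveld): $\HP_*(\cH_{q_i}(\tW_{e_i}))$ does not depend on the parameter $q_i$, so one may specialize to $q_i=1$. This yields the crossed product
\[
\cH_1(\tW_{e_i})\;\cong\;\cO(T^{\fs_i})\rtimes W^{\fs_i},
\]
with $T^{\fs_i}\cong(\C^\times)^{e_i}$ and $W^{\fs_i}\cong\fS_{e_i}$ acting by permutation of coordinates, as recorded in \eqref{eq:Ws}--\eqref{eq:TfsM}. At this point the equivariant Hochschild--Kostant--Rosenberg theorem, applied to the smooth affine action of a finite group, gives the canonical isomorphism
\[
\HP_*(\cO(T^{\fs_i})\rtimes W^{\fs_i})\;\cong\;H^*(T^{\fs_i}\q W^{\fs_i};\C).
\]
Finally, the multiplicativity of the extended quotient, together with the K\"unneth formula for de Rham cohomology, assembles the tensor product over $i$ into $H^*(T^\fs\q W^\fs;\C)$.

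The main obstacle is the second step: one must verify that the deformation-invariance of $\HP_*$ applies uniformly to all the extended affine Hecke algebras $\cH_{q_i}(\tW_{e_i})$ occurring in \eqref{eqn:HH}, keeping careful track of the torsion numbers $n(\sigma_i)$ that enter through \eqref{eqn:qi} and \eqref{eq:TfsM}, since for inner forms the parameters $q_i=q_{D_{E_i}}^{m_{E_i}}$ are no longer simply powers of the residual cardinality of $F$ as in \cite{BP}. Once this invariance is established, every remaining step is a formal consequence of Morita invariance, K\"unneth, and HKR, and the composed isomorphism is natural in $\fs$.
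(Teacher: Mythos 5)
Your proposal is correct and follows essentially the same route as the paper: Morita equivalence via the semisimple $\fs$-type, the S\'echerre--Stevens tensor-product decomposition \eqref{eqn:HH}, and then the argument of \cite[Theorem~3.1]{BP} — which is precisely the deformation-invariance of $\HP_*$ for affine Hecke algebras (Baum--Nistor) followed by the equivariant HKR theorem and K\"unneth — to conclude. The paper simply cites \cite{BP} for the final stage rather than spelling it out, so your write-up supplies useful detail but no genuinely different idea; your caveat about the parameters $q_i = q_{D_{E_i}}^{m_{E_i}}$ is apt, since the Baum--Nistor result applies to any $q$ that is not a root of unity, which these positive real numbers satisfy.
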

\begin{proof}
The proof follows those of \cite[Theorem~3.1]{BP}.
Let $e_{\lambda}^\fs$ be the idempotent attached
to the semisimple $\fs$-type $(\cK^\fs,\lambda^\fs)$ 
as in \cite[Definition 2.9]{BKtyp}:
\[
e_{\lambda}^\fs(g) = 
\begin{cases}(\text{vol}\, \cK^\fs)^{-1} \dim \lambda \,\tr
(\tau(g^{-1}))& \text{ if } g \in \cK^\fs \;,
\cr 0 & \text{ if } g \in \cG \setminus \cK^\fs .
\end{cases}\]
The idempotent $e_{\lambda}^\fs$ is then a \emph{special} idempotent in the 
Hecke algebra $\cH(\cG)$ according to \cite[Definition 3.11]{BKtyp}. 
It follows from \cite[\S 3]{BKtyp} that
\[
\cH^{\fs}(\cG) = \cH (\cG) * e_{\lambda}^\fs * \cH (\cG).
\]
We then have a Morita equivalence 
\[
\cH (\cG) * e_{\lambda}^\fs * \cH (\cG) \sim_{\Morita} 
e_{\lambda}^\fs * \cH (\cG) * e_{\lambda}^\fs.
\]
By \cite[2.12]{BKtyp} we have a canonical isomorphism of unital $\Cset$-algebras:
\begin{equation}\label{eq:HGs}
\mathcal{H}(\cG, \lambda^\fs) \otimes_{\mathbb{C}} \End_{\mathbb{C}} \cV
\cong e_{\lambda}^\fs * \mathcal{H}(\cG) * e_{\lambda}^\fs ,
\end{equation}
so that the algebra $e_{\lambda}^\fs * \mathcal{H}(\cG) * e_{\lambda}^\fs$ is Morita 
equivalent to the algebra $\mathcal{H}(\cG, \lambda^\fs)$. Then, thanks to the 
description of latter given in (\ref{eqn:HH}), we may finish the proof identically as
in \cite[Theorem~3.1]{BP}.
\end{proof}

\begin{ex} {\rm 
For the group $\GL_n (F)$ we can describe the flat family of sub-varieties 
of $T^\fs/W^\fs$ from Section \ref{sec:approx} explicitly.
We assume for simplicity that $\fs=[\GL_{r}(F)^e,\sigma^{\otimes e}]_\cG$, where 
$er=n$. Then the sub-variety $R(\rho^\fs)=\rho^\fs(T^\fs\q W^\fs - T^\fs/W^\fs)$ 
is the hypersurface
$Y_1$ given by the single equation $\prod_{i \neq j}(z_i - z_j) = 0$. 
The sub-variety of reducibility $R(\pi^\fs)$ is the variety
$Y_{\sqrt q}$ given by the single equation $\prod_{i \neq j}(z_i - qz_j) = 0$, 
according to a classical theorem \cite[Theorem 4.2]{BZ1}, \cite{Ze}.  

The polynomial equation $\prod_{i\neq j}(z_i - v^2z_j) = 0$ determines a flat 
family $Y_v$ of hypersurfaces. The hypersurface $Y_1$ is the \emph{flat limit} of 
the family $Y_v$ as $v \to 1$, as in \cite[p. 77]{EH}.  
} \end{ex}

\subsection{The local Langlands correspondence for $\GL_m(D)$} \

We recall the construction of the local Langlands correspondence for $\cG=\GL_m(D)$
from \cite{HS}. It generalizes and relies on the local Langlands 
correspondence for $\cG^* = \GL_n(F)$,
\[ 
\rec_{F,n} \colon \Irr(\GL_n (F)) \to \Phi (\GL_n (F)).
\]
The latter was proven for supercuspidal representations in \cite{LRS,HT,He}, and
extended from there to $\Irr (GL_n (F))$ in \cite{Ze}.

If $\pi$ is an irreducible smooth representation of $\cG$, we shall
denote its character by $\Theta_\pi$.
Let $\cEesq(\cG)$ denote the set of equivalence classes
of irreducible essentially square-integrable representations of $\cG$.
Recall the \emph{Jacquet--Langlands correspondence} \cite{DKV,BadJL}:

\noindent
There exists a bijection
\[
\JL\colon \cEesq(\cG)\to \cEesq(\cG^*)
\]
such that for each $\pi\in \cEesq(\cG)$:
\[
\Theta_{\pi}(g)=(-1)^{n-m}\,\Theta_{\JL(\pi)}(g^*),
\]
for any pair $(g,g^*)\in \cG\times \cG^*$ of regular semisimple elements
such that $g$ and $g^*$ have the same characteristic polynomial. 
(Recall that an element in $\cG$ is called \emph{regular semisimple} 
if its characteristic polynomial admits only simple roots in
an algebraic closure of $F$.) 

Let $\Phi$ be a Langlands parameter for $\cG$. Replacing it by an equivalent
one, we may assume that there exists a standard Levi subgroup 
$M \subset \GL_n (\C)$ such that the image of $\Phi$ is contained in $M$
but not in any smaller Levi subgroup. Again replacing $\Phi$ by an
equivalent parameter, we can achieve that
\[
M = \prod_{i=1}^l(\GL_{n_i}(\C))^{e_i}
\quad\text{and}\quad
\Phi = \prod_{i=1}^l \Phi_i^{\otimes e_i} ,
\]
where $\Phi_i \in \Phi (\GL_{n_i}(F))$ is not equivalent to $\Phi_j$ for 
$i \neq j$.
Since $\Phi$ is relevant for $\cG ,\; m_i := n_i / d$ is an integer and
$M$ corresponds to the standard Levi subgroup
\[
\cM = \prod\nolimits_{i=1}^l(\GL_{m_i}(D))^{e_i} \subset \GL_m (D) .
\]
By construction the image of $\Phi_i$ is not contained in any proper Levi
subgroup of $\GL_{n_i}(F)$, so $\rec_{F,n_i}^{-1}(\Phi_i) \in \cEesq (\GL_{n_i}(F))$.
The Jacquet--Langlands correspondence produces
\begin{align*}
& \sigma_i := \JL^{-1} \big( \rec_{F,n_i}^{-1}(\Phi_i) \big) \in \cEesq (\GL_{m_i}(D)) ,\\
& \sigma := \prod\nolimits_{i=1}^l \sigma_i^{\otimes e_i} \in \cEesq (\cM) .
\end{align*}
The assignment $\Phi \mapsto (\cM,\sigma)$ sets up a bijection
\begin{equation}\label{eq:LLC2}
\Phi (\cG) \longleftrightarrow \{ (\cM,\sigma) : \cM \text{ a Levi subgroup of } \cG ,
\sigma \in \cEesq (\cM) \} / \cG .
\end{equation}
It is known from \cite[Theorem B.2.d]{DKV} and \cite{Bad2} that for inner forms of
$\GL_n (F)$ normalized parabolic induction sends irreducible square-integrable
(modulo centre) representations to irreducible tempered representations. 
Together with the Langlands classification \cite{Kon} this implies that there exists 
a natural bijection between $\Irr (\cG)$ and the right hand side of \eqref{eq:LLC2}. 
It sends $(\cM,\sigma)$ to the Langlands quotient $L \big( I_\cM^\cG (\sigma) \big)$, 
where the parabolic induction goes via a parabolic subgroup with Levi factor $\cM$, 
with respect to which the central character of $\sigma$ is "positive". 
The combination of these results yields:

\begin{thm}\label{thm:LLCGLD}
The natural bijection
\[
\Phi (\cG) \to \Irr (\cG) : 
\Phi \mapsto (\cM,\sigma) \mapsto L \big( I_\cM^\cG (\sigma) \big) 
\]
is the local Langlands correspondence for $\cG = \GL_m (D)$. 
\end{thm}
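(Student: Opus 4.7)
The statement is essentially a book-keeping assertion: it claims that the composite of the bijection \eqref{eq:LLC2} with the Langlands classification is the local Langlands correspondence for $\cG=\GL_m(D)$. My plan is therefore to verify, in turn, that each arrow of the composition
\[
\Phi(\cG)\ \longleftrightarrow\ \{(\cM,\sigma)\}/\cG\ \longleftrightarrow\ \Irr(\cG)
\]
is a well-defined bijection, and then explain why the composite deserves to be called \emph{the} local Langlands correspondence for $\cG$.

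First I would make the construction of \eqref{eq:LLC2} canonical. Given $\Phi\in\Phi(\cG)$, choose a minimal standard Levi $M\subset\GL_n(\C)$ through which $\Phi$ factors; $M$ is unique up to $\GL_n(\C)$-conjugacy. Grouping equivalent factors gives $M=\prod_{i=1}^l\GL_{n_i}(\C)^{e_i}$ and $\Phi=\prod_i\Phi_i^{\otimes e_i}$ with pairwise inequivalent $\Phi_i$, none of which factors through a proper Levi of $\GL_{n_i}(\C)$. The relevance hypothesis on $\Phi$ forces $d\mid n_i$ for all $i$; set $m_i:=n_i/d$ and $\cM:=\prod_i\GL_{m_i}(D)^{e_i}$. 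Applying $\rec_{F,n_i}^{-1}$ gives an essentially square-integrable representation of $\GL_{n_i}(F)$ (minimality of $M$ and the compatibility of $\rec_{F,n_i}$ with parabolic induction), and then the Jacquet--Langlands correspondence $\JL^{-1}$ yields $\sigma_i\in\cEesq(\GL_{m_i}(D))$. Setting $\sigma:=\bigotimes\sigma_i^{\otimes e_i}\in\cEesq(\cM)$ produces the pair $(\cM,\sigma)$. Bijectivity of \eqref{eq:LLC2} is then inherited from the bijectivity of $\rec_{F,n_i}$ (on discrete series) and of $\JL$.

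Next, for the right-hand bijection, I would invoke the Langlands classification \cite{Kon}: every $\pi\in\Irr(\cG)$ is a Langlands quotient $L(I_{\cM}^{\cG}(\sigma))$ of a standard module attached to a unique (up to conjugacy) Levi $\cM$ and a unique essentially tempered representation $\sigma$ of $\cM$ with ``positive'' central character. Specialised to inner forms of $\GL_n(F)$, the results of \cite[Thm.~B.2.d]{DKV} and \cite{Bad2} guarantee that $I_\cM^\cG$ of an irreducible essentially square-integrable representation of any Levi is already irreducible tempered; consequently every essentially tempered representation of $\cM$ is automatically essentially square-integrable, so the Langlands data reduce exactly to pairs $(\cM,\sigma)$ with $\sigma\in\cEesq(\cM)$. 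This delivers the bijection between $\{(\cM,\sigma)\}/\cG$ and $\Irr(\cG)$.

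Finally I would check that the composite satisfies the characterising properties of a local Langlands correspondence for $\cG$: compatibility with parabolic induction (by construction), compatibility with the Jacquet--Langlands correspondence (tautological, since $\JL$ is built into the map), reduction to $\rec_{F,n}$ when $d=1$ (clear), and the expected behaviour of central characters under twisting by unramified parameters (since $\rec_{F,n_i}$ and $\JL$ both respect unramified twists). The main obstacle I expect is not the bijectivity, which is almost formal once the ingredients are assembled, but rather the verification that no choices above depend on representatives; in particular one has to check that the minimality of the Levi $M$ extracted from $\Phi$ is intrinsic, and that the same Levi $\cM$ of $\cG$ is produced whichever $\GL_n(\C)$-conjugate representative of $\Phi$ one starts from. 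This is handled by the standard observation that two standard Levi subgroups of $\GL_n(\C)$ through which $\Phi$ factors minimally must be $\GL_n(\C)$-conjugate to each other by an element centralising the image of $\Phi$, so the pair $(\cM,\sigma)$ is well-defined up to $\cG$-conjugacy.
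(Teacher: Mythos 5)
Your proposal follows the same route as the paper: construct the bijection \eqref{eq:LLC2} out of $\rec_{F,n}$, the Jacquet--Langlands correspondence and the relevance condition, then combine it with the Langlands classification \cite{Kon} and the irreducibility of parabolic inductions of square-integrable representations from \cite{DKV,Bad2}. One intermediate sentence is false as stated: from ``$I_\cM^\cG$ of an irreducible essentially square-integrable representation is irreducible tempered'' you cannot deduce ``every essentially tempered representation of $\cM$ is automatically essentially square-integrable'' (already the unitary principal series of $\GL_2(F)$ are tempered but not square-integrable). What the irreducibility result actually buys you is that every tempered representation of a Levi is fully induced from a square-integrable representation of a smaller Levi, so that the Langlands data $(\cM',\tau)$ with $\tau$ essentially tempered reparametrize as pairs $(\cM,\sigma)$ with $\sigma\in\cEesq(\cM)$ and $\cM\subset\cM'$ --- which is the conclusion you wanted and is exactly the paper's (compressed) argument. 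Your closing paragraph checking that the composite satisfies the characterizing desiderata of a Langlands correspondence is a useful addition; the paper handles this implicitly by citing the construction of \cite{HS} at the start of the subsection.
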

We denote the inverse map by
\begin{equation}\label{eqn:LLCD}
\rec_{D,m} \colon \Irr (\cG) \to \Phi (\cG) . 
\end{equation}
Let $\fs = [\cM,\sigma]_{\cG}$ be an inertial equivalence class for $\cG$.
We want to understand the space of Langlands parameters
\[
\Phi (\cG)^\fs := \rec_{D,m}(\Irr^\fs (\cG)) 
\]
that corresponds to the Bernstein component $\Irr^\fs (\cG)$.
We may assume that $\cM$ and $\sigma$ are as in \eqref{eqn:Msigma}.
Via Theorem \ref{thm:LLCGLD} $\sigma_i$ corresponds to
\[
\rec_{D,m_i}(\sigma_i) = \rec_{F,n_i}(\JL (\sigma_i)) \in \Phi (\GL_{m_i}(D)) .
\]
We choose a representative $\eta_i \colon \mathbf W_F \times \SL_2 (\C) \to 
\GL_{n_i}(\C)$ and we put 
\[
\eta = \prod\nolimits_{i=1}^l (\eta_i )^{\otimes e_i} ,
\]
a representative for $\rec_{D,m}(I_\cM^\cG (\sigma))$.
Since $\JL (\sigma_i)$ is essentially square integrable, $\eta_i$ is an
irreducible representation of $\bW_F\times\SL_2(\Cset)$. 
It follows that the centralizer of $\eta_i$ in $\Mat_{n_i}(\Cset)$ is
$\Z(\Mat_{n_i}(\Cset)) = \Cset \Id$.
>From this we see that the centralizer of $(\eta_i)^{\otimes e_i}$ in 
$\Mat_{n_i e_i}(\Cset)$ is 
\[
\Z_{\Mat_{n_i e_i}(\Cset)} (\eta_i^{\otimes e_i}) = \Mat_{e_i}(\C) \otimes 
\Z(\Mat_{n_i}(\C)) 
\subset \Mat_{e_i}(\C) \otimes \Mat_{n_i}(\C) \cong \Mat_{n_i e_i}(\C) .  
\]
In this way the centralizer of $\eta_i^{\otimes e_i}$ in $\GL_{n_i e_i}(\C)$ 
becomes isomorphic to $\GL_{e_i}(\C)$ and
\begin{equation}
\Z_{\GL_n (\C)} (\eta) \cong \prod\nolimits_{i=1}^l \GL_{e_i}(\C) .
\end{equation}
This means that, for any Langlands parameter for $\GL_{e_i}(D)$
\[
\Phi_i \colon \mathbf W_F \times \SL_2 (\C) \to \GL_{e_i}(\C) \subset 
\GL_{n_i e_i}(\C) , 
\]
$\Phi_i \eta_i^{\otimes e_i}$ is a Langlands parameter for $\GL_{m_i e_i}(D)$. 
More generally, for any product $\Phi = \prod_{i=1}^l \Phi_i$ of such maps, 
\begin{equation}\label{eq:Phieta}
\Phi \eta \text{ is Langlands parameter for } \cG.
\end{equation} 
Let $\bI_F \subset \bW_F$ be the inertia group and let $\Frob \in \mathbf W_F$ 
be a Frobenius element. Since all Langlands parameters in $\Phi (\cG)^\fs$
have the same restriction to the inertia group $\mathbf I_F$ (up to equivalence), 
it suffices to consider $\Phi_i$ which are trivial on $\mathbf I_F$. We will show 
in the next subsection that $\Phi (\cG)^\fs$ indeed consists of such products 
$\Phi \eta$ (up to equivalence).

Recall from \cite[\S 10.3]{BorAut} that the local Langlands correspondence is compatible 
with twisting by central characters. The group $X_\unr (\GL_{m_i}(D))$ is naturally 
in bijection with $\Z(\GL_{n_i}(\C))$. Hence for any $\chi \in X_\unr (\GL_{m_i}(D))$
\[
\rec_{D,m_i}(\sigma_i \otimes \chi) = \Phi_\chi \rec_{D,m_i}(\sigma) = \Phi_\chi \eta_i, 
\]
where $\Phi_\chi$ is trivial on $\mathbf I_F \times \SL_2 (\C)$ and 
$\Phi_\chi (\Frob) \in \Z(\GL_{n_i}(\C))$ corresponds to $\chi$. 
By Theorem \ref{thm:LLCGLD} $\Phi_\chi \eta_i$
is $\GL_{n_i}(\C)$-conjugate to $\Phi_i$ if and only if $\chi \in \text{Stab}(\sigma_i)$.
More generally, for any $\chi \in X_\unr (\cM)$,
\begin{equation} \label{eq:etaConjugate}
\Phi_\chi \eta \text{ is $M$-conjugate to } \eta \text{ if and only if }
\chi \in \text{Stab}(\sigma) .
\end{equation} 
Thus the unramified twists of $\sigma \in \Irr (\cM)$ are naturally parametrized by
the torus $T^\fs \cong X_\unr (\cM) / \text{Stab}(\sigma)$.

\subsection{The commutative triangle for $\GL_m(D)$} \

Recall the short exact sequence 
\[
1 \to \bI_F \to \bW_F \overset{d} \to \Zset \cong \langle \Frob \rangle \to 1.
\]
A character $\nu\colon \bW_F\to\Cset^\times$ is said to be
\emph{unramified} if $\nu$ is trivial on $\bI_F$. Then $\nu(w)=z_\nu^{d(w)}$
for some $z_\nu\in\Cset^\times$ ($w\in \bW_F$).
Let $X_{\unr}(\bW_F)$ denote the group of all unramified characters of
$\bW_F$. Then the map $\nu\mapsto z_\nu$ is an isomorphism from $X_{\unr}(\bW_F)$
to $\Cset^\times$.
Denote by $\spe(a)$ the $a$-dimensional irreducible complex representation of
$\SL_2(\Cset)$. 
Recall that each $L$-parameter $\Phi$ is of the form
$\Phi=\Phi_1\oplus\cdots\oplus\Phi_h$ with each $\Phi_i$ irreducible. The next result is Theorem 1.2 in our Introduction.
   
\begin{thm} \label{thm:bijphi} 
Let $\fs\in\fB(\cG)$. There is a commutative diagram
\[
\xymatrix{   & (T^\fs/\!/W^\fs)_2 \ar[dr]^{\nnu^\fs} \ar[dl] &
\\  
\Irr(\mathcal{G})^\fs    \ar[rr] & & \Phi(G)^\fs  } 
\]
in which all the arrows are natural bijections.
\end{thm}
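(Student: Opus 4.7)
The horizontal arrow $\Irr(\cG)^\fs \to \Phi(\cG)^\fs$ is the restriction of the local Langlands bijection $\rec_{D,m}$ from Theorem \ref{thm:LLCGLD}, and the left slanted arrow $(T^\fs \q W^\fs)_2 \to \Irr(\cG)^\fs$ is the canonical bijection furnished by Theorem \ref{thm:GLDleft}. Thus the plan has two parts: construct the right slanted arrow $\nnu^\fs \colon (T^\fs \q W^\fs)_2 \to \Phi(\cG)^\fs$ as a natural bijection, and then verify commutativity of the triangle.

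For the construction of $\nnu^\fs$, I would use the factorizations $\cM \simeq \prod_{i=1}^l \GL_{m_i}(D)^{e_i}$, $T^\fs = \prod_{i=1}^l (T_i)^{e_i}$ and $W^\fs = \prod_{i=1}^l \fS_{e_i}$ from \eqref{eqn:Msigma}, \eqref{eq:TfsM} and \eqref{eq:Ws}, together with the multiplicativity of the extended quotient of the second kind, to reduce to the case of a single inertial block $\fs_i$. By \eqref{eq:Phieta} and \eqref{eq:etaConjugate}, parameters in $\Phi(\cG)^{\fs_i}$ are represented (up to $\GL_{n_i e_i}(\C)$-conjugacy) by products $\Phi_i\,\eta_i^{\otimes e_i}$ with $\Phi_i\colon \bW_F \times \SL_2(\C) \to \GL_{e_i}(\C)$ trivial on $\bI_F$, modulo $\GL_{e_i}(\C)$-conjugacy and the residual action of $\Stab(\sigma_i)^{e_i}$. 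Such a $\Phi_i$ decomposes uniquely into irreducible continuous representations as $\bigoplus_r \chi_r \boxtimes \spe(a_r)$ with $\sum_r a_r = e_i$ and each $\chi_r$ an unramified character of $\bW_F$. Grouping the summands by distinct $\chi_r$ produces a multiset of pairs $(w_j, \mu^{(j)})$, where $w_j \in \C^\times$ corresponds to $\chi_r(\Frob)$ modulo $\Stab(\sigma_i)$ (hence lies in $T_i$) and $\mu^{(j)}$ is the partition of the multiplicity of $\chi_r$ with parts given by the corresponding $a_r$'s. Via the standard bijection between partitions of $n$ and $\Irr(\fS_n)$, this is precisely an element of $((T_i)^{e_i} \q \fS_{e_i})_2$. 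Taking the product over $i$ gives a well-defined and bijective $\nnu^\fs$, which is manifestly natural in the data.

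The commutativity of the triangle comes down to checking that the composition of the left slanted arrow with the horizontal arrow agrees with $\nnu^\fs$. On the representation-theoretic side, the bijection of Theorem \ref{thm:GLDleft} factors through the isomorphism \eqref{eqn:HH} with a tensor product of affine Hecke algebras $\mathcal H_{q_i}(\tW_{e_i})$ of type $\GL_{e_i}$, whose irreducible modules are classified by Zelevinsky's segment multisets. Under the covers constructed in \cite{SeSt}, each $\mathcal H_{q_i}(\tW_{e_i})$-module corresponds to a specific subquotient of a parabolically induced representation $I_\cM^\cG(\sigma \otimes \chi)$; the Langlands quotient of this induction is in turn sent by $\rec_{D,m}$ to a parameter $\Phi \eta$ whose decomposition matches the segment description, via the Jacquet--Langlands correspondence \cite{DKV,BadJL} and the compatibility of $\rec_{F,n}$ with parabolic induction from segments proven in \cite{Ze}.

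The main obstacle is pinning down this last compatibility: one must track the parameter $q_i$ of the affine Hecke algebra (which encodes a power of the residue cardinality of $D_{E_i}$) through the Zelevinsky classification and ensure that the resulting segments of supercuspidals of $\GL_{m_i}(D)$ match, under Jacquet--Langlands, the generalized Steinberg parameters $\chi \boxtimes \spe(a) \otimes \eta_i$ appearing on the Langlands side. Once this dictionary is established for a single cuspidal line, the full theorem assembles from the product decomposition since every arrow in the triangle respects the factorization indexed by $i$.
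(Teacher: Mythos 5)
Your construction of $\nnu^\fs$ matches the paper's: reduce via the product decompositions \eqref{eqn:Msigma}, \eqref{eq:TfsM}, \eqref{eq:Ws} to a single block, decompose a parameter as $\bigoplus_j \psi_\sigma\circ\nu_j \otimes \Phi^F_{\eta,p_j}$, and encode the data $(z_1,\dots,z_h;\,p_1,\dots,p_h)$ as a point of $((T_i)^{e_i}\q\fS_{e_i})_2$ through the $\Irr(\fS_b)\leftrightarrow\{\text{partitions of }b\}$ dictionary. That part is sound and is exactly the map \eqref{eqn:parametre}.

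The gap is in the commutativity argument, and you half-notice it yourself. Saying "one must track the parameter $q_i$ through the Zelevinsky classification and ensure that the resulting segments match" is a correct description of the problem, but it does not resolve it, and the resolution is not elementary. The left slanted arrow of the triangle goes through the isomorphism \eqref{eqn:HH} with $\bigotimes_i \cH_{q_i}(\tW_{e_i})$ and then through Lusztig's asymptotic-algebra bijection of Corollary~\ref{cor:J}, which is a quite implicit map $\Irr(\cH_{\tilde q}(\widetilde W_e)) \to ((\C^\times)^e \q \fS_e)_2$. You need to know \emph{exactly} which element of the extended quotient this map assigns to a given Langlands-quotient module of the affine Hecke algebra, and there is no a priori reason for this to agree with the element that your $\nnu^\fs$ reads off from the Zelevinsky segments. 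The paper handles this in three steps that your proposal omits. First, it treats essentially square-integrable representations in isolation and shows, using the support-preserving property of \eqref{eqn:HH} and the fact (from \cite{Sol}, via Casselman's criteria) that $\kappa^\fs$ preserves temperedness and essential square-integrability plus the normalization choice of $\sigma$, that $\kappa^\fs(\rec_{D,er}^{-1}(\psi_\sigma\circ\nu_z\otimes R(e)\otimes\eta))$ is the twisted Steinberg module $(z,\dots,z)\otimes\mathrm{St}_e$. Second, it must identify where $(z,\dots,z)\otimes\mathrm{St}_e$ goes under the top map in \eqref{eq:diagramGLe}; for this the paper invokes \cite[Theorem~8.3]{KL}, but pointedly remarks that a self-contained proof is given later as a special case of Theorem~\ref{thm:S.4}. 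Third, to pass from the square-integrable case to general $(t,\tau)$ by parabolic induction and Langlands quotients, the paper appeals to Theorem~\ref{thm:ps}, the main theorem of Part~3, to pin down the top map on arbitrary modules. So the commutativity of this triangle genuinely depends on the affine Springer/Hecke-algebra machinery of Part~3; it cannot be closed by a formal "compatibility with segments" remark, and this is precisely why the paper says the proof "will use some results that will be established only in part 3." Your sketch stops exactly where the real work begins.
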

\begin{proof}
The bottom and left slanted maps where already established in Theorems 
\ref{thm:GLDleft} and \ref{thm:LLCGLD}. Since these are canonical bijections,
we could simply define the right slanted map as the composition of the other two.
Yet we prefer to give an explicit construction of $\nnu^\fs$, which highlights
its geometric origin. This is inspired by \cite[\S~1]{BP} and 
\cite[Theorem~4.1]{ABP2}, with the difference that we are using here the extended 
quotient of the second kind $(T^\fs/\!/W^\fs)_2$ instead of $(T^\fs/\!/W^\fs)$.

Let $\cM$ be a Levi subgroup of $\cG$ and $\sigma$ an irreducible unitary 
supercuspidal representation of $\cM$ such that $\fs=[\cM,\sigma]_\cG$.  
As in \eqref{eqn:Msigma}, we may assume that 
\[
\cM \simeq \prod\nolimits_{i=1}^l\GL_{m_i}(D)^{e_i} = \prod\nolimits_{i=1}^l\cG_i^{e_i}
\quad\text{and}\quad
\sigma \simeq \sigma_1^{e_1}\otimes\cdots\otimes \sigma_l^{e_l},
\]
where the $[\cG_i,\sigma_i]_\cG$ are pairwise distinct. 
Recall from \eqref{eq:TfsM} that
\[
T^\fs = \prod\nolimits_{i=1}^l (T_i )^{e_i} ,
\]
where each $T_i$ is isomorphic to $\C^\times$. To make the below
constructions well-defined, we must make a canonical choice for $\sigma_i$ in
its inertial equivalence class in $\Irr(\cG_i)$. The Hecke algebra
$\cH (\cG_i,\lambda_i)$ associated to this inertial class, as in \eqref{eqn:SS},
is canonically isomorphic to the ring of regular functions on 
$X_{\unr}(\cG_i) / \text{Stab}(\sigma_i) \cong \C^\times$. We choose $\sigma_i$ as 
\eqref{eq:cInd}, so that it corresponds to the unit element of this torus.
 
By \eqref{eq:Ws} the 
group $W^\fs$ is isomorphic to a product of symmetric groups $\fS_{e_i}$. 
Since the extended quotient of the second kind is multiplicative, we obtain
\begin{equation}\label{eq:multExtquot}
(T^\fs\q W^\fs)_2 \simeq (T_1^{e_1}\q \fS_{e_1})_2
\times \cdots \times (T_l^{e_l}\q \fS_{e_l})_2.
\end{equation}
We set $\fs_i:=[\cG_i^{e_i},\sigma_i^{\otimes e_i}]_{\cG'_i}$, 
for each $i\in\{1,\ldots,l\}$, where $\cG_i'=\GL_{e_im_i}(D)$.
We observe that
\begin{equation} \label{eqn:multPhi}
\Phi(\cG)^\fs\simeq\Phi(\cG'_1)^{\fs_1}\times\cdots\times
\Phi(\cG'_l)^{\fs_l},
\end{equation}
Put $\cM':=\cG_1'\times\cdots\times\cG_l'$. Then Theorem \ref{thm:LLCGLD} and 
\eqref{eqn:multPhi} imply that
\begin{equation}\label{eq:multIrr}
\begin{array}{ccc}
\Irr (\cG'_1)^{\fs_1} \times \cdots \times \Irr (\cG'_l)^{\fs_l} & \to & 
\Irr (\cG)^\fs ,\\
(\pi_1' ,\ldots ,\pi_l') & \mapsto & 
L \big( I_{\cM'}^\cG (\pi_1' \otimes \cdots \otimes \pi_l') \big)
\end{array}
\end{equation}
is a bijection.
Hence we are reduced to define canonical bijections 
\[
(T_i^{e_i}\q \fS_{e_i})_2 \to \Phi(\cG_i')^{\fs_i}.
\]
Thus we may, and do, assume that $m=er$ (so that $n = erd$) and 
\begin{equation}\label{eq:fsSpecial}
\fs=[\GL_{r}(D)^e,\sigma^{\otimes e}]_\cG.
\end{equation}
Then we have $T^\fs\simeq (\Cset^\times)^e$ and $W^\fs\simeq\fS_e$. 

Let $t\in(\Cset^{\times})^{e}$. We write $t$ in the form
\begin{equation} \label{eqn:descrt}
t=(\underbrace{z_1,\ldots,z_1}_{\text{$b_1$ terms}},
\underbrace{z_2,\ldots,z_2}_{\text{$b_2$ terms}},\ldots,
\underbrace{z_h,\ldots,z_h}_{\text{$b_h$ terms}}),
\end{equation}
where $z_1,\ldots,z_h\in\Cset^\times$ are such that $z_i\ne z_j$ for $i\ne j$, 
and where $b_1+b_2+\cdots+b_h=e$. 
Let $W_t^\fs$ denote the stabilizer of $t$ under the action of $W^\fs$. We  have 
\begin{equation} \label{eqn:Fixt}
W^\fs_t\simeq\fS_{b_1}\times\fS_{b_2}\times\cdots\times\fS_{b_h}.
\end{equation}
In particular every $\tau \in W^\fs$ can be written as
\begin{equation}\label{eq:multTau}
\tau = \tau_1 \otimes \cdots \otimes \tau_h \quad 
\text{with } \tau_j\in\Irr(\fS_{b_j}).
\end{equation}
Let $p_j=p(\tau_j)$ denote the partition of $b_j$ which corresponds to
$\tau_j$. We write:
\begin{equation}\label{eq:pj}
p_j = (p_{j,1},\ldots,p_{j,l_j}),\quad\text{where }
p_{j,1}+\cdots+p_{j,l_j} = b_j .
\end{equation}
Recall that we have chosen $\sigma \in \Irr (\GL_r(D))$ such that, via the Hecke 
algebra of a supercuspidal type, it corresponds to the unit element of the torus 
$X_\unr ( \GL_r (D) ) / \text{Stab}(\sigma)$. We fix a representative 
\begin{equation} \label{eqn:eta}
\eta \quad \text{for} \quad \rec_{D,r} (\sigma) \in \Irr(\WD).
\end{equation}
Corresponding to each pair $(\eta,p_j)$ as above, where $j\in \{1,\ldots,h\}$, 
we have a Langlands parameter: 
\begin{equation} \label{eqn:Phie}
\begin{aligned}
& \Phi_{\eta,p_j}^F \colon 
\WD\to \GL_{b_j r d}(\Cset) \subset \Mat_{b_j}(\C) \otimes \Mat_{rd}(\C) , \\
& \Phi_{\eta,p_j}^F = 
\big(\spe(p_{j,1}) \oplus \cdots \oplus \spe(p_{j,l_j}) \big) \otimes \eta .
\end{aligned}
\end{equation}
Recall that
\[
(T^\fs\q W^\fs)_2\simeq \left\{(t,\tau)\,:\,t\in (\Cset^{\times})^{e},
\tau \in\Irr(W^\fs_t)
\right\}/W^\fs.
\]
Define $\nu_j \in X_{\unr}(\bW_F)$ by $\nu_j(\Frob):=z_j$ for each
$j\in\{1,\ldots,h\}$. We fix a setwise section 
\[
\psi_\sigma \colon \C^\times \cong X_\unr ( \GL_r (D) ) / \text{Stab}(\sigma) 
\to \C^\times \cong X_\unr (\GL_r (D)) .
\]
We define a map
\begin{eqnarray} \label{eqn:parametre}
\begin{aligned}
& \nnu^\fs \colon (T^\fs\q W^\fs)_2\to \Phi(\cG)^\fs , \\
& \nnu^\fs (t,\tau) = 
(\psi_\sigma \circ \nu_1) \otimes \Phi_{\eta,p_1}^F \oplus \cdots \oplus 
(\psi_\sigma \circ \nu_h) \otimes \Phi_{\eta,p_h}^F,
\end{aligned}
\end{eqnarray}
where $W_t^\fs\simeq \fS_{b_1}\times\cdots\times\fS_{b_h}$ and 
$\tau=\tau_1\otimes\cdots\otimes\tau_h$, with $\tau_j\in\Irr(\fS_{b_j})$,
and $p_j=p(\tau_j)$ denotes the partition of $b_j$ which corresponds to $\tau_j$.
By \eqref{eq:etaConjugate} $\nnu^\fs (t,\tau)$ does not depend on the choice
of $\psi_\sigma$, up to equivalence of Langlands parameters.

The above process can be reversed. Let $\Phi\in\Phi(\cG)^\fs$. By 
\eqref{eq:etaConjugate} we may assume that its restriction to $\bW_F$ is of the form
\[
(\psi_\sigma )^e \circ ( \nu_1 \oplus \cdots \oplus \nu_1
\oplus \nu_2 \oplus \cdots \oplus \nu_2 \oplus \cdots \oplus \nu_h
\oplus \cdots \oplus \nu_h ) \otimes \eta,
\]
where $\nu_1,\ldots,\nu_h\in X_{\unr}(\bW_F)$ are such that $\nu_i\ne\nu_j$
if $i\ne j$ (which is equivalent, since $\nu_i$ and $\nu_j$ are unramified, 
to $\nu_i(\Frob)\ne\nu_j(\Frob)$ if $i\ne j$). For each $j\in\{1,\ldots,h\}$, 
let $b_j$ denote the number of occurrences of $\nu_j$. We set 
\[
t:=(\nu_1(\Frob),\nu_1(\Frob),\ldots,\nu_h(\Frob))\in(\Cset^\times)^e.
\] 
For each $j\in\{1,\ldots,h\}$, let $p_j$ be a partition of $b_j$, such that 
\begin{equation} \label{eqn:formPhi}
\Phi = \psi_\sigma \circ \nu_1 \otimes \Phi_{\eta,p_1}^F \oplus \cdots \oplus
\psi_\sigma \circ \nu_h \otimes \Phi_{\eta,p_h}^F .
\end{equation}
For $j=1,\ldots,h$, let $\tau_j\in\Irr(\fS_{b_j})$ be the irreducible
representation of $\fS_{b_j}$ which is parametrized by the partition $p_j$. 
We set $\tau:=\tau_1\otimes\cdots\otimes\tau_h$. The map
$\Phi\mapsto (t,\tau)$ is the inverse of the map $\nnu^\fs$. 
Thus $\nnu^\fs$ is a bijection.

Now we have all the arrows of the diagram in Theorem \ref{thm:bijphi}, it remains
to show that it commutes. Although all the arrows are canonical, this is not at 
all obvious, and our proof will use some results that will be established
only in part 3. 

In view of \eqref{eq:multExtquot}, \eqref{eqn:multPhi}
and \eqref{eq:multIrr}, it suffices to consider $\fs$ as in \eqref{eq:fsSpecial}.
Let ${\widetilde W}_e := \mathbb Z^e \rtimes \fS_e$ and $\tilde q := q_{D_E}^{m_E}$ 
be as in \eqref{eqn:qi}. The left slanted bijection in Theorem \ref{thm:bijphi}
runs via the affine Hecke algebra $\cH_{\tilde q} ({\widetilde W}_e)$ associated 
to $\fs$ in \eqref{eqn:HH}, so we must actually look at the extended diagram
\begin{equation}\label{eq:diagramGLe}
\xymatrix{
\Irr (\cH_{\tilde q} ({\widetilde W}_e)) \ar[r] & 
\big( (\C^\times )^e /\!/ \fS_e \big)_2 \ar[d]^{\varphi^\fs} \\
\Irr (\GL_{er}(D))^\fs \ar[u]^{\kappa^\fs} \ar[r]^{\rec_{D,er}} & \Phi (\GL_{er}(D))^\fs
} 
\end{equation}
First we consider the essentially square-integrable representations. For those
the Langlands parameter is an irreducible representation of $\mathbf W_F \times 
\SL_2 (\C)$, of the form
\[
\psi_\sigma \circ \nu_z \otimes \Phi_{e,\eta}^F = 
\psi_\sigma \circ \nu_z \otimes R(e) \otimes \eta ,
\]
where $\nu_z \in X_{\unr}(\mathbf W_F)$ with $\nu_z (\Frob) = z$.
The partition $(e)$ of $e$ corresponds to the sign representation of $\fS_e$, so
\begin{equation}\label{eq:nusign}
(\varphi^\fs )^{-1}(\psi_\sigma \circ \nu_z \otimes R(e) \otimes \eta) = 
(z,\ldots,z,\mathrm{sgn}_{\fS_e} ) .
\end{equation}
Let $\mathrm{St}_e$ be the Steinberg representation of $\cH_{\tilde q} 
({\widetilde W}_e)$. It is the unique irreducible, essentially square-integrable
representation which is tempered and has real infinitesimal central character.
The top map in \eqref{eq:diagramGLe}, as constructed by Lusztig \cite{LuCellsIII} 
and described in Example 3 of the Appendix, sends $(z,\ldots,z) \otimes 
\mathrm{St}_e$ to \eqref{eq:nusign}. This can be deduced with \cite[Theorem 8.3]{KL},
but we will also prove it later as a special case of Theorem \ref{thm:S.4}.

The map $\kappa^\fs$ comes from the support-preserving algebra isomorphism 
\eqref{eqn:HH}. By \cite[(3.19)]{Sol} this map respects temperedness of
representations. The argument in \cite{Sol} relies on Casselman's criteria, see 
\cite[\S 4.4]{Cas2} and \cite[\S 2.7]{Opd}. A small variation on it shows that 
$\kappa^\fs$ also preserves essential square-integrability. Both statements apply to 
$\rec_{D,er}^{-1}(R(e) \otimes \eta)$ because it is essentially square-integrable
and unitary, hence tempered. 

By the Zelevinsky classification for $\GL_n (F)$ \cite{Ze}, 
$\rec_{F,n}^{-1}(R(e) \otimes \eta)$ is a consituent of 
\[
I_{\GL_{rd} (F)^e}^{\GL_n (F)} \big( \nu_F^{(1-e)/2} \JL (\sigma) \otimes \nu_F^{(3-e)/2} 
\JL (\sigma) \otimes \cdots \otimes \nu_F^{(e-1)/2} \JL (\sigma) \big) ,
\]
where $\nu_F (g^*) = |\det (g^*)|_F$. It follows that 
$\rec_{D,er}^{-1}(R(e) \otimes \eta)$ is a constituent of
\[
I_{\GL_{r} (D)^e}^{\GL_{er} (D)} \big( \nu_D^{(1-e)/2} \sigma \otimes 
\nu_D^{(3-e)/2} \sigma \otimes \cdots \otimes \nu_D^{(e-1)/2} \sigma \big) ,
\]
where $\nu_D (g) = |\mathrm{Nrd}(g)|_F$. By our choice of $\sigma$ the infinitesimal
central character of 
$\kappa^\fs \big( I_{\GL_{r} (D)^e}^{\GL_{er} (D)} ( \sigma^{\otimes e} ) \big)$ is
$1 \in (\C^\times)^e / \fS_e$, so $\kappa^\fs \big( \rec_{D,er}^{-1}(R(e) \otimes 
\eta) \big)$ has real infinitesimal central character. Thus 
$\kappa^\fs \big( \rec_{D,er}^{-1}(R(e) \otimes \eta) \big)$ possesses all the properties 
that characterize $\mathrm{St}_e$ in $\Irr (\cH_{\tilde q} ({\widetilde W}_e))$, 
and these two representations are isomorphic.

By definition $\kappa^\fs \circ \rec_{D,er}^{-1}$ transforms a twist by 
$\psi_\sigma \circ \nu_z$ into a twist by $(z,\ldots,z)$, so
\begin{equation*}
\kappa^\fs \big( \rec_{D,er}^{-1} (\psi_\sigma \circ \nu_z \otimes R(e) \otimes \eta) 
\big) = (z,\ldots, z) \otimes \mathrm{St}_e .
\end{equation*}
We conclude that the diagram \eqref{eq:diagramGLe} commutes for all
essentially square-integrable representations.

Now we take $t$ and $\tau$ as in \eqref{eqn:descrt} and \eqref{eq:multTau}.
The construction of the Langlands correspondence for $\GL_{er}(D)$ in 
\eqref{eq:LLC2} implies
\begin{multline}\label{eq:Lttau}
\rec_{D,er}^{-1}(\varphi^\fs (t,\tau)) = \\
L \Big( I_{\GL_{b_1 r}(D) \times \cdots \times \GL_{b_h r}(D)}^{\GL_{er}(D)} \big(
\bigotimes\nolimits_{j=1}^h \rec_{D, b_j r}^{-1}(\psi_\sigma \circ \nu_j 
\otimes \Phi_{\eta,p_j}^F) \big) \Big) = \\
L \Big( I_{\prod_{j=1}^h \prod_{i=1}^{l_j} \GL_{p_{j,i} r}(D)}^{\GL_{er}(D)}
\big( \bigotimes\nolimits_{j=1}^h
\bigotimes\nolimits_{i=1}^{l_j} \rec_{D, p_{j,i} r}^{-1}(\psi_\sigma \circ \nu_j 
\otimes R(p_{j,i}) \otimes \eta) \big) \Big) .
\end{multline}
The $\GL_{p_{j,i} r}(D)$-representation $\rec_{D, p_{j,i} r}^{-1}(\psi_\sigma \circ 
\nu_j \otimes R(p_{j,i}) \otimes \eta)$ is essentially square-integrable,
so by the above we know where it goes in the diagram \eqref{eq:diagramGLe}.
Since \eqref{eqn:HH} preserves supports, it respects parabolic induction and
Langlands quotients (see \cite{Sol} for these notions in the context of affine
Hecke algebras). In particular every subgroup 
$\GL_{p_{j,i} r}(D) \subset \GL_{er}(D)$ corresponds to a subalgebra 
$\cH_{\tilde q} (\widetilde{W}_{p_{j,i}}) \subset \cH_{\tilde q} (\widetilde{W}_e)$. 
Thus the representation of $\cH_{\tilde q} (\widetilde{W}_e)$ associated to 
\eqref{eq:Lttau} via \eqref{eqn:HH} is
\begin{equation}\label{eq:LIndH}
L \Big( \Ind_{\bigotimes_{j=1}^h \bigotimes_{i=1}^{l_j} \cH_{\tilde q}
(\widetilde{W}_{p_{j,i}})}^{\cH_{\tilde q}({\widetilde W}_e)} 
\big( \bigotimes\nolimits_{j=1}^h \bigotimes\nolimits_{i=1}^{l_j} 
(z_j, \ldots,z_j) \otimes \mathrm{St}_{p_{j,i}} \big) \Big) .
\end{equation}
In view of the Zelevinsky classification for affine Hecke algebras of type 
$\GL_n$ \cite{KL,Ze}, the Langlands parameter of \eqref{eq:LIndH} is 
\[
\bigoplus\nolimits_{j=1}^h z_j \otimes \bigoplus\nolimits_{i=1}^{l_j} R(p_{j,i}) . 
\]
Now Theorem \ref{thm:ps}, in combination with the notations 
\eqref{eqn:descrt}--\eqref{eq:pj}, shows that the top map in \eqref{eq:diagramGLe} 
sends \eqref{eq:LIndH} to $(t,\tau)$. Hence the diagrams in \eqref{eq:diagramGLe} 
and in the statement of the theorem commute. 
\end{proof}

With Theorem \ref{thm:bijphi} we proved statements (1)--(5) of the conjecture in 
Section \ref{sec:statement} for inner forms of $GL_n (F)$. Statement (6), including 
properties 1--6, is also true and can proved using only the affine Hecke algebras 
\eqref{eqn:HH}. This will be a special case of results in Part 3.

\section{Symplectic groups} 
\label{sec:symp}
For symplectic and orthogonal groups, Heiermann \cite{Hei} proved that every 
Bernstein component of the category of smooth modules is equivalent to the 
module category of an affine Hecke algebra. Together with \cite[Theorem 5.4.2]{Sol} 
this proves a large part of the ABPS conjecture for such groups: 
properties 1--5 from Section \ref{sec:statement}.

For the remainder of the conjecture, additional techniques are required.
Property~6 should make use of the description of the L-packets from \cite{Moe}. 
The statements (2) and (5) involve the local Langlands correspondence for
symplectic and orthogonal groups, which is proved by Arthur in \cite{Arbook}.  
However, at present his proof is up to the stabilization of the twisted trace 
formula for the group $\GL_n(F)\rtimes\langle \epsilon_n\rangle$, where $\epsilon_n$ 
is defined in \eqref{eqn:u_n}, see \cite[\S~3.2]{Arbook}. 

In this section, in order to illustrate the fact that extended quotients can 
be easily calculated, we shall compute $T^\fs\q W^\fs$ in the case
when $\cG$ is a symplectic group and $\fs=[\cM,\sigma]_\cG$ with $\cM$ the
Levi subgroup of a maximal parabolic of $\cG$.

Let $u_{n}\in\Mat_n(F)$ be the $n\times n$ matrix defined by
\[u_{n}:=\left(\begin{matrix}
&&&&&\cdot\cr 
&&&&\cdot&\cr
&&&\cdot&&\cr
&&-1&&&\cr
&1&&&&\cr
-1&&&&&\end{matrix}\right).
\]
For $g\in\Mat_n(F)$, we denote by $\lexp{t}{g}$ the transpose matrix of
$g$, and by $\epsilon_n$ the automorphism of $\GL_n(F)$ defined by 
\begin{equation} \label{eqn:u_n}
\epsilon_n(g):=u_n\cdot\lexp{t}{g^{-1}}\cdot u_n^{-1}.
\end{equation}

We denote by $\cG=\Sp_{2n}(F)$ the symplectic group defined with respect
to the symplectic form $u_{2n}$:
\[\cG:=\left\{g\in\GL_{2n}(F)\,:\,\lexp{t}gu_{2n}g=u_{2n}\right\}.\] 
Let $k$ and $m$ two integers such that $k\ge 0$, $m\ge 0$ and $k+m=n$. 
In this section we consider a Levi subgroup $\cM$ of the following form:
\[\cM:=\left\{\left(\begin{matrix}g&&\cr
&g'&\cr
&&\epsilon_k(g)\end{matrix}\right)\,:
\,g\in\GL_k(F),\;\;g'\in \Sp_{2m}(F)\right\}\,\simeq\,
\GL_k(F)\times \cG_m.
\]
Let $\Id_n\in\Mat_n(F)$ denote the identity matrix.
We set
\begin{equation} \label{eqb:wM}
w_\cM:=\left(\begin{matrix}
&&\Id_k\cr
&\Id_{2m}&\cr
\Id_k&&\end{matrix}\right).
\end{equation}
The element $w_\cM$ has order $2$ and we have 
\begin{equation} \label{waction}
w_\cM\cdot\left(\begin{matrix}g&&\cr
&g'&\cr
&&\epsilon_k(g)\end{matrix}\right)\cdot w_\cM=
\left(\begin{matrix}\epsilon_k(g)&&\cr
&g'&\cr
&&g\end{matrix}\right).
\end{equation}
We have
\begin{equation} \label{WGM}
\Nor_\cG(\cM)/\cM=\{1,w_\cM\}.
\end{equation}

\smallskip
Let $\pi_\cM$ be an irreducible supercuspidal representation of $\cM$. We have 
$\pi_\cM=\rho\otimes\sigma$,
where $\rho$ is an irreducible supercuspidal representation of $\GL_k(F)$ and 
$\sigma$ is an irreducible supercuspidal representation of $\Sp_{2m}(F)$.
We set $\fs: =[\cM,\pi_\cM]_\cG$.

If $W^\fs=\{1\}$, then the parabolically induced representation 
$\rho\rtimes\sigma$ of $\cG$ is irreducible.

We will assume from now on that $W^\fs\ne\{1\}$. Then it follows from~(\ref{WGM}) that
\[
W^\fs=\Nor_\cG(\cM)/\cM=\{1,w_\cM\}.
\]
Let $\rho^{\epsilon_k}$ denote the representation of $\GL_k(F)$
defined by 
\[
\rho^{\epsilon_k}(g):=\rho(\epsilon_k(g)),\quad g\in\GL_k(F).
\]
Then equation~(\ref{waction}) gives: 
\begin{equation} \label{eqn:wMcontra}
\lexp{w_\cM}{(\rho\otimes\sigma)}=\rho^{\epsilon_k}\otimes\sigma.
\end{equation}
The theorem of Gel'fand and Kazhdan 
\cite[Theorem~2]{GK} says
that the representation $\rho^{\epsilon_k}$ is equivalent to $\rho^\vee$, the
contragredient representation of $\rho$.
Hence we get
\begin{equation} \label{eqn:contragredient}
\lexp{w_\cM}{(\rho\otimes\sigma)}\cong \rho^\vee\otimes\sigma.
\end{equation} 

\begin{lem} \label{Sec2:exqt}
We have
\[T^\fs\q W^\fs= T^\fs/W^\fs\sqcup\pt_1\sqcup \pt_2.\]
\end{lem}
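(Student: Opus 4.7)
The plan is to compute the extended quotient directly from its definition, using the explicit form of the $W^\fs$-action on $T^\fs$ provided by \eqref{eqn:contragredient}.

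First I would identify $T^\fs$ concretely. Since $\Sp_{2m}(F)$ is semisimple, it has no non-trivial unramified characters, so $X_\unr(\cM) = X_\unr(\GL_k(F)) \cong \C^\times$, via $z \leftrightarrow (g \mapsto |\det g|_F^{s})$ where $z = q_F^{-s}$. Consequently $T^\fs = X_\unr(\cM)/\Stab(\pi_\cM)$ is a one-dimensional complex torus, which I will continue to parametrize by a coordinate in $\C^\times$ after the finite quotient.

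Next I would analyze the action of $w_\cM$ on $T^\fs$ using \eqref{eqn:contragredient}. For the unramified character $\nu_z \in X_\unr(\GL_k(F))$ corresponding to $z \in \C^\times$, a direct computation with $\epsilon_k(g) = u_k \cdot \lexp{t}{g^{-1}} \cdot u_k^{-1}$ shows $\nu_z \circ \epsilon_k = \nu_z^{-1}$. Combined with \eqref{eqn:contragredient} this gives
\[
\lexp{w_\cM}{(\rho \otimes \nu_z \otimes \sigma)} \cong \rho^\vee \otimes \nu_z^{-1} \otimes \sigma .
\]
Since $w_\cM \in W^\fs$, the class $[\cM, \rho^\vee \otimes \sigma]_\cM$ equals $[\cM, \rho \otimes \sigma]_\cM$, so there exists an unramified character $\chi_0$ of $\GL_k(F)$ such that $\rho^\vee \cong \rho \otimes \chi_0$. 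Thus the action of $w_\cM$ on $T^\fs$ is the involution $z \mapsto c z^{-1}$ for some fixed $c \in \C^\times$ determined by $\chi_0$ (and well-defined on the quotient by $\Stab(\pi_\cM)$).

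Now I would read off the fixed point set. The equation $z = c z^{-1}$, that is $z^2 = c$, has exactly two solutions in $\C^\times$, so $w_\cM$ has precisely two fixed points on $T^\fs$, call them $\pt_1$ and $\pt_2$. Applying the definition of Section~\ref{sec:extquot},
\[
\widetilde{T^\fs} \;=\; \{(1,t) : t \in T^\fs\} \;\sqcup\; \{(w_\cM, \pt_1),\, (w_\cM, \pt_2)\} .
\]
The conjugation-plus-action of $W^\fs = \{1, w_\cM\}$ on $\widetilde{T^\fs}$ preserves each of the two pieces: on the first piece it reduces to the action on $T^\fs$, while on the second piece it acts trivially (the conjugation on $w_\cM$ is trivial and the two base points are by construction $w_\cM$-fixed). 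Passing to the quotient yields
\[
T^\fs \q W^\fs \;=\; T^\fs / W^\fs \;\sqcup\; \pt_1 \;\sqcup\; \pt_2 ,
\]
as claimed. The only subtle point in this proof is verifying that the action descends correctly to the quotient by $\Stab(\pi_\cM)$, but this is automatic from the fact that $\Stab(\pi_\cM)$ is $w_\cM$-stable.
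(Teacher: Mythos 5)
Your argument is correct and follows essentially the same route as the paper: identify $T^\fs$ with a one-dimensional torus, use the Gel'fand--Kazhdan identity $\rho^{\epsilon_k}\cong\rho^\vee$ to show that $w_\cM$ acts by an involution of the form $z\mapsto cz^{-1}$, and count the two fixed points. The only cosmetic difference is that the paper first replaces $\rho$ by $\rho\,\nu^{1/2}$ to make it self-contragredient (so that $c=1$ and the fixed points are $\{\pm 1\}$, written as $\{[1],[\zeta_{n(\rho)}]\}$), whereas you carry a general constant $c$ throughout; both yield the same two-point fixed locus and hence the same extended quotient.
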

\begin{proof}
Since the group $\Sp_{2m}(F)$ does not have non-trivial unramified characters,
we get 
\[X_{\unr}(\cM)\cong X_{\unr}(\GL_k(F)) \cong \C \Big/ \frac{2 \pi \sqrt{-1}}{\log q} 
\mathbb Z \cong \C^\times .
\]
We have
\[\Stab(\pi_\cM)=\Stab(\rho).\]
It follows that
\[
T^\fs \cong X_{\unr}(\cM)/\Stab(\pi_\cM)=
X_{\unr}(\GL_k(F))/\Stab(\rho)\cong
\C \Big/ \frac{2 \pi \sqrt{-1}}{n(\rho) \log q} \mathbb Z \cong \C^\times,
\]
where $n(\rho)$ is the order of $\Stab(\rho)$.

On the other hand, since $w_\cM$ belongs to $W^\fs$, we have 
\[
\lexp{w_\cM}{\pi_\cM}\cong\nu_\cM\,\pi_\cM,
\]
for some $\nu_\cM\in X_{\unr}(\cM)$. Hence 
\begin{equation} \label{eqn:wMend}
\lexp{w_\cM}{\pi_\cM}\cong\nu\rho\otimes\sigma,
\end{equation}
for some $\nu\in X_{\unr}(F^\times)$, where we have put
$\nu\rho:=(\nu\circ\det)\otimes\rho$.
Then it follows from \eqref{eqn:contragredient} that
\[
\rho^\vee\otimes\sigma\cong\nu\rho\otimes\sigma,
\]
that is,
\[
\nu^{1/2}\rho\otimes\sigma \cong(\nu^{1/2}\rho)^\vee \otimes\sigma.
\]
It implies that
\[
\rho\nu^{1/2}\cong(\rho\nu^{1/2})^\vee.
\]
Hence, by replacing $\rho$ by $\rho\nu^{1/2}$ if necessary, we can
always assume that the representation $\rho$ is
\emph{self-contragredient}, that is, $\rho\cong\rho^\vee$.

We shall assume from now on, that $\rho$ is self-contragredient. 
Let $\nu_k\in X_{\unr}(\GL_k(F))$. 
We shall denote by $[\nu_k]$ the image of $\nu_k$ in the quotient
$X_{\unr}(\GL_k(F))/\Stab(\rho)$.

We have $\nu_k=\nu\circ\det$ for some
$\nu\in X_{\unr}(F^\times)$. Let $g\in\GL_k(F)$. We obtain
\[
\lexp{w_\cM}{\nu}_k(g)=\nu(\det(\epsilon_k(g)))=\nu(\det(\lexp{t}{g}^{-1}))=
\nu(\det (g^{-1}))=\nu_k(g)^{-1}.
\]
Hence $\nu_k\Stab(\rho)$ is fixed by $w_\cM$ if and only if
$\nu_k^2\in \Stab(\rho)$.  On the other hand, we have
$\Cent_{W^\fs}(w_\cM)=W^\fs$.
It follows that
\[
(T^\fs)^{w_\cM}/\Cent_{W^\fs}(w_\cM)=\left\{[1],[\zeta_{n(\rho)}]\right\},
\]
where we have put
\[
\displaystyle
\zeta_{n(\rho)}:=|\det(\cdot)|_F^{\frac{\pi\sqrt{-1}}{n(\rho)\log q}},
\]
that is,
\[
(T^\fs)^{w_\cM}/\Cent_{W^\fs}(w_\cM)\simeq\{-1,1\} \subset \Cset^\times . \qedhere 
\] 
\end{proof}

Then by using \cite[Theorem~5.4.2]{Sol} (which is closely related to
Property 4 of the bijection $\mu^\fs$) we recover from
Lemma~\ref{Sec2:exqt} the well-known fact that $\nu_k\rho\rtimes\sigma$ 
reduces for exactly two unramified characters $\nu_k$.

\section{The Iwahori spherical representations of $\rG_2$}
\label{sec:G2}
Let $\cG$ be the exceptional group $\rG_2$.   Let $\fs_0 = [\cT,1]_\cG$ where 
$\cT\simeq F^\times\times F^\times$ is a maximal $F$-split torus of $\cG$.  
The following result is a special case of Theorem \ref{thm:ps} in Part 3.

\begin{thm}  
The conjecture (as stated in Section \ref{sec:statement}) is true   
for the point $\fs_0 =[\cT,1]_{\rG_2}$.
\end{thm}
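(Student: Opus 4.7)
The plan is to obtain this as a direct specialization of Theorem \ref{thm:ps} --- the principal-series theorem of Part 3 --- to the Iwahori-spherical Bernstein block of $\rG_2$. First, one checks that the hypotheses of that theorem are met: $\fs_0 = [\cT,1]_\cG$ with $\cT$ a maximal $F$-split torus is visibly a principal-series inertial class, and the torsion primes of $\rG_2$ are $2$ and $3$, so the residual-characteristic restriction of Part 3 translates here into the standard mild condition $p>3$. Granting this, Theorem \ref{thm:ps} directly yields the desired commutative triangle of natural bijections between $(T^{\fs_0}/\!/W^{\fs_0})_2$, $\Irr(\rG_2)^{\fs_0}$, and $\{\KLR\text{ parameters}\}^{\fs_0}/H^{\fs_0}$, together with Properties 1--6 of Section \ref{sec:statement}.

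Second, I would spell out the ingredients to make transparent what Theorem \ref{thm:ps} produces in this block. Since $\sigma=1$, the group $\Stab(1)$ is trivial, so $T^{\fs_0}=X_\unr(\cT)\cong (\Cset^\times)^2$ is precisely the complex dual torus $T^\vee\subset \rG_2(\Cset)$, and $W^{\fs_0}=N_\cG(\cT)/\cT$ is the dihedral Weyl group $W(\rG_2)$ of order $12$. The finite-type algebra $\cH^{\fs_0}$ is the Iwahori-Hecke algebra of $\rG_2$, i.e.\ an equal-parameter affine Hecke algebra of type $\widetilde{\rG}_2$ with parameter $q$; by the theorem of Borel--Casselman its module category is equivalent to the Iwahori-spherical block of $\Irr(\rG_2)$. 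The bottom horizontal arrow of the triangle --- the map from $\Irr(\rG_2)^{\fs_0}$ to KLR parameters --- is then the Kazhdan--Lusztig--Deligne--Langlands classification of \cite{KL}, sending an irreducible module to a triple $(s,u,\rho)$ with $s\in T^\vee$ semisimple, $u\in \rG_2(\Cset)$ unipotent satisfying $sus^{-1}=u^q$, and $\rho$ an irreducible representation of the component group of the simultaneous centralizer $Z_{\rG_2(\Cset)}(s)\cap Z_{\rG_2(\Cset)}(u)$.

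The main obstacle in the argument is the verification of Property 6 (L-packets), because $\rG_2$ carries the celebrated non-singleton L-packet coming from the subregular unipotent orbit: its component group is the symmetric group $\fS_3$, and one of the three constituents of the resulting L-packet is Iwahori-spherical. Matching this constituent to its correct fibre of $(T^{\fs_0}/\!/W^{\fs_0})_2$ with the right label in the sense of Property 6 requires the correcting cocharacter arising from the restriction of the Langlands parameter to the diagonal torus of $\SL_2(\Cset)$, together with the explicit generalized Springer correspondence for $\rG_2$. Both of these are exactly the data packaged in Theorem \ref{thm:ps} and its proof in Part 3; once they are applied, the triangle becomes an honest commutative diagram of natural bijections, and Properties 1--5 follow immediately from the affine Hecke algebra computations underlying the same theorem.
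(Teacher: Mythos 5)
Your approach matches the paper's: the result is stated there as a direct special case of Theorem \ref{thm:ps}, and your identification of the pieces ($T^{\fs_0}$ isomorphic to the dual torus $T\subset \rG_2(\Cset)$, $W^{\fs_0}=W(\rG_2)$ of order $12$, $\cH^{\fs_0}$ the Iwahori--Hecke algebra of type $\widetilde{\rG}_2$, the horizontal arrow given by the Kazhdan--Lusztig classification, Property~6 via correcting cocharacters and the subregular L-packet) is exactly what that theorem delivers in this block. One inaccuracy, however: the residual-characteristic restriction you impose is superfluous. Since $\chi=1$ is unramified, $c^{\fs_0}=1$, so $H=\Cent_G(\mathrm{im}\,c^{\fs_0})=G=\rG_2(\Cset)$, which is connected; Theorem \ref{thm:ps} invokes the constraint from Roche's work only ``in the case when $H\ne G$'', which does not hold here. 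Indeed, the underlying block equivalence $\Irr(\cG)^{\fs_0}\cong\Irr(\cH(G))$ is Borel's classical Iwahori-spherical result, valid without any condition on $p$. As written, your argument proves a weaker statement (the theorem only for $p>3$), whereas the theorem as stated has no such hypothesis; the fix is simply to observe $H=G$ and note that the $p$-restriction in Theorem \ref{thm:ps} is vacuous in this case.
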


This is such an illustrative example that we include some of the calculations in \cite{ABP3}.

We note that $X_{\unr}(\cT) \cong \,T$ with $T$ a maximal
torus in the Langlands dual group $G = \rG_2(\mathbb{C})$.
The Weyl group $W$ of $\rG_2$ is the dihedral group of order $12$.
The extended quotient is \[ T\q W = T/W \sqcup
\mathfrak{C}_1 \sqcup \mathfrak{C}_2 \sqcup \pt_1 \sqcup \pt_2
\sqcup \pt_3 \sqcup \pt_4 \sqcup \pt_5.\]
The flat family is
$Y_\alpha: = (1-\alpha^2 y)(x - \alpha^2 y) = 0$.
Note that $Y_{\sqrt q} = \fR$ the curve of reducibility points
in the quotient variety $T/W$. The restriction of $\pi_\alpha$
to $T\q W - T/W$ determines a finite morphism
\[
\mathfrak{C}_1 \sqcup \mathfrak{C}_2 \sqcup \pt_1 \sqcup \pt_2
\sqcup \pt_3 \sqcup \pt_4 \sqcup \pt_5 \longrightarrow Y_\alpha.\]

{\sc Example}. The fibre of the point $(q^{-1},1) \in \fR$ via the
map $\pi_{\sqrt q}$ is a set with $5$ points, corresponding to the
fact that there are $5$ smooth irreducible representations of
$\rG_2$ with infinitesimal character $(q^{-1},1)$.

The map $\pi_\alpha$ restricted to the one affine line $\mathfrak{C}_1$ 
is induced by the map $(z,1) \mapsto (\alpha z,\alpha^{-2})$, and restricted 
to the other affine line $\mathfrak{C}_2$ is induced by the map 
$(z,z) \mapsto (\alpha z,\alpha^{-1}z)$. With regard to the second map: 
the two points $(\omega/\sqrt q,\omega/\sqrt q),\, (\omega^2/\sqrt
q,\omega^2/\sqrt q)$ are distinct points in $\mathfrak{C}_2$ but
become identified via $\pi_{\sqrt q}$ in the quotient variety
$T/W$. This implies that the image $\pi_{\sqrt
q}(\mathfrak{C}_2)$ of one affine line has a
\emph{self-intersection point} in the quotient variety
$T/W$. Also, the curves $\pi_{\sqrt q}(\mathfrak{C}_1),
\pi_{\sqrt q}(\mathfrak{C}_2)$ intersect in $3$ points.  These
intersection points account for the number of distinct
constituents in the corresponding induced representations.

\part{The principal series of split reductive $p$-adic groups}

\section{Introduction to Part 3}
\label{sec:intro3}
Let $\cG$ be a connected reductive $p$-adic group, split over $F$, and
let $\mathcal T$ be a split maximal torus in $\cG$. The principal series consists 
of all $\mathcal G$-representations that are obtained with parabolic induction
from characters of $\mathcal T$. 

We denote the collection of all Bernstein components of $\cG$ of the form
$[\cT,\chi ]_\cG$ by $\mathfrak B (\cG,\cT)$ and call these the Bernstein
components in the principal series. The union 
\[
\Irr (\cG,\cT) := \bigcup_{\fs \in \mathfrak B (\cG,\cT)} \Irr (\cG )^\fs
\]
is by definition the set of all irreducible subquotients of principal series
representations of $\mathcal G$.

Let $T$ be the Langlands dual group of $\mathcal T$ and choose a uniformizer $\varpi_F 
\in F$. There is a bijection $t \mapsto \nu$ between points in $T$ and unramified 
quasicharacters of $\mathcal{T}$, determined by the relation 
\[
\nu (\lambda(\varpi_F)) = \lambda (t) 
\]
where $\lambda \in X_* (\mathcal{T}) = X^* (T)$.   
The space $\Irr (\cT )^{[\cT ,\chi]_\cT}$ is in bijection with $T$ via 
$t \mapsto \nu \mapsto \sigma \otimes \nu$. Hence Bernstein's torus $T^\fs$ is isomorphic
to $T$. However, because the isomorphism is not canonical and the action of the group
$W^\fs$ depends on it, we prefer to denote it $T^\fs$.

For each $\fs\in\mathfrak{B}(\cG,\cT)$ we will construct 
a commutative triangle of bijections 
\[ 
\xymatrix{   
& (T^\fs/\!/W^\fs)_2 \ar[dr]\ar[dl] & \\ 
\Irr (\cG)^\fs \ar[rr] & & \{\KLR\:\:\mathrm{parameters}\}^\fs / H }
\]
Here \{KLR parameters$\}^\fs$ is the set of Kazhdan--Lusztig--Reeder 
parameters associated to $\fs\in\mathfrak{B}(\cG)$ and $H$ is the stabilizer of this set  
of parameters in the dual group $G$.

In examples, $T^\fs /\!/ W^\fs$ is much simpler to directly calculate than either 
$\Irr (G)^\fs$ or \{KLR parameters$ \}^\fs$.
 
\vspace{2mm}
\noindent Let us discuss the triangle in the case that $H$ is connected.
The bijectivity of the right slanted arrow (see Section \ref{sec:affSpringer}) 
is essentially a reformulation of results of Kato \cite{Kat}. It involves Weyl
groups of possibly disconnected reductive groups. We will extend the Springer 
correspondence to such groups in Section \ref{sec:celldec}.
 
The left slanted arrow is defined (and by construction bijective) in
\cite{LuCellsIII}. The horizontal map is defined and proved 
to be a bijection in \cite{KL,R}, see Section \ref{sec:repAHA}. The results 
in \cite{R} are based on and extend those of \cite{KL}. 
Thus there are three logically independent definitions and bijectivity proofs.\\

The bijectivity of the horizontal arrow shows that the local Langlands correspondence 
is valid for each such Bernstein component $\Irr (\cG)^\fs$ and describes the 
intersections of L-packets with $\Irr (\cG)^\fs$. Once a $c$-$\Irr$ system has been 
chosen for the action of $W^ \fs$ on $T^\fs$, there is the bijection
\[
T^\fs/\!/W^\fs \longrightarrow (T^{\fs}\q W^\fs)_2 ,
\]
so the L-packets can be described in terms of the extended quotient of the first kind.
In Section \ref{sec:unip} we check that the labelling of the irreducible components  
of $T^\fs/\!/W^\fs$ predicted by the conjecture is provided by the unipotent classes of $H$.

\section{Twisted extended quotient of the second kind}  
\label{sec:teq}
Let $J$ be a finite group and let $\alpha \in \H^2(J; \Cset^{\times})$.  
Let $V$ be a finite dimensional vector space over $\Cset$.   Consider all maps
$\tau \colon J \to \GL(V)$  such that there exists a  
$\Cset^{\times}$-valued $2$-cocycle $c$ on $J$ with 
\[
\tau(j_1) \circ \tau(j_2)  = c(j_1, j_2)\,\tau(j_1j_2),\quad \quad [c] = \alpha
\]
and $\tau$ is irreducible, where $[c]$ is the class of $c$ in 
$\H^2(J; \Cset^{\times})$.   

For such a map $\tau$ let $f \colon J \to \Cset^{\times}$ be any map and consider 
the map $(f\tau)(j): =  f(j)\tau(j)$. This is again such a map 
$\tau$ with cocycle   $c\cdot f$ defined by
\[(c\cdot f)(j_1,j_2):=c(j_1,j_2)\, \frac{f(j_1)f(j_2)}{f(j_1j_2)}.\]  
Given two such maps $\tau_1, \tau_2$, an \emph{isomorphism} is an intertwining 
operator $V_1 \to V_2$.   
Note that $\tau_1 \simeq \tau_2 \Rightarrow c_1 = c_2$. Given
$\tau_1$ and $\tau_2$,  we
define $\tau_1$ to be \emph{equivalent} to $\tau_2$
if and only if there exists $f \colon J \to \Cset^{\times}$  with
$\tau_1$ isomorphic to $f \tau_2$. The set of equivalence classes of maps 
$\tau$ is denoted $\Irr^{\alpha}(J)$. 

\smallskip

Now let $\Gamma$ be a finite group with a given action on a set $X$. 
Let $\square$ be a given function which assigns to each $x \in X$ an element
$\square(x) \in \H^2(\Gamma_x;\Cset^{\times})$ where 
$\Gamma_x = \{\gamma \in \Gamma: \gamma x = x\}$.  
The function $\square$ is required to satisfy the condition
\[
\square(\gamma x) = \gamma_*\square(x), \quad \quad \forall (\gamma,x) \in \Gamma \times X
\]
where $\gamma_*\colon \Gamma_x \to \Gamma_{\gamma x}, \; 
\alpha \mapsto \gamma \alpha \gamma^{-1}$. Now define
\[
\widetilde{X}_2^{\square} = \{(x,\tau) : \tau \in \Irr^{\square(x)}(\Gamma_x)\}.
\]
We have a map $\Gamma \times \widetilde{X}_2^{\square} \to \widetilde{X}_2^{\square}$ 
and we form the \emph{twisted extended quotient of the second kind}
\[
(X\q \Gamma)_2^{\square}: = \widetilde{X}_2^{\square}/\Gamma.
\]
We will apply this construction in the following two special cases.  

\smallskip

{\bf 1.} Given two finite groups $\Gamma_1$, $\Gamma$ and a group homomorphism
$\Gamma \to \Aut(\Gamma_1)$, we can form the semidirect product  $\Gamma_1 \rtimes\Gamma$.  
Let $X = \Irr \, \Gamma_1$.   Now $\Gamma$ acts on $\Irr\, \Gamma_1$ and we get $\square$ 
as follows. Given $x \in \Irr\, \Gamma_1$ choose an irreducible representation  
$\phi: \Gamma_1 \to \GL(V)$ whose isomorphism class is $x$. 
For each $\gamma \in \Gamma_x$ consider $\phi$ twisted by $\gamma$ \ie consider 
$\phi^{\gamma}: \gamma_1 \mapsto \phi(\gamma \gamma_1 \gamma^{-1})$.
Since $\gamma \in \Gamma_x$, $\phi^{\gamma}$ is equivalent to $\phi$ \ie there exists 
an intertwining operator $T_{\gamma} : \phi \simeq \phi^{\gamma}$. 
For this operator we have
\[
T_{\gamma} \circ T_{\gamma'} = c(\gamma,\gamma')T_{\gamma \gamma'}, 
\qquad \gamma, \gamma' \in \Gamma_x
\]
and $\square(x)$ is then the class in $\H^2(\Gamma_x;\Cset^{\times})$ of $c$.   

This leads to a new formulation  of a classical theorem of Clifford.
\begin{lem} 
\label{lem:Clifford} 
We have a canonical bijection
\[
\Irr (\Gamma_1 \rtimes \Gamma) \simeq (\Irr \, \Gamma_1 \q \Gamma)_2^{\square}.
\]
\end{lem}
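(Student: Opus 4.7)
The plan is to establish the bijection via classical Clifford theory, with the twisted extended quotient of the second kind serving as the natural bookkeeping device for the $2$-cocycle ambiguity.

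In the forward direction, given $(x,\tau) \in (\Irr\,\Gamma_1\q\Gamma)_2^{\square}$, I would fix a representative $\phi\colon \Gamma_1\to \GL(V)$ of the class $x$, choose intertwiners $T_\gamma\colon V\to V$ satisfying $T_\gamma\phi(\gamma_1)T_\gamma^{-1}=\phi(\gamma\gamma_1\gamma^{-1})$ for $\gamma\in\Gamma_x$, and pick a realization $\tau\colon\Gamma_x\to\GL(W)$ with cocycle $c$ in the class $\square(x)$ matching the cocycle of the $T_\gamma$'s. One then defines a genuine representation of $\Gamma_1\rtimes\Gamma_x$ on $V\otimes W$ by $(\gamma_1,\gamma)\mapsto \phi(\gamma_1)T_\gamma\otimes \tau(\gamma)^{-1}$ (the cocycles cancel, up to a harmless rescaling of $\tau$ by a function $f\colon\Gamma_x\to\Cset^\times$), and induces up to $\Gamma_1\rtimes\Gamma$. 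The target of the map sends $(x,\tau)$ to $\Ind_{\Gamma_1\rtimes\Gamma_x}^{\Gamma_1\rtimes\Gamma}(V\otimes W)$.

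Next I would verify well-definedness on equivalence classes: replacing $\phi$ by an isomorphic $\phi'$ conjugates the $T_\gamma$ and leaves $c$ unchanged; rescaling $T_\gamma$ by $f(\gamma)$ modifies $c$ by the coboundary $\delta f$, and the parallel rescaling of $\tau$ by $f^{-1}$ (the equivalence relation in $\Irr^{\square(x)}(\Gamma_x)$) leaves the tensor product operator $T_\gamma\otimes\tau(\gamma)^{-1}$ intact; finally, transporting to a different orbit representative $\gamma x$ uses the equivariance $\square(\gamma x)=\gamma_*\square(x)$ built into the definition. Irreducibility of the induced representation would follow from Mackey's criterion: the $\Gamma$-translates of $V$ are pairwise non-isomorphic as $\Gamma_1$-representations on different cosets of $\Gamma_x$, and $V\otimes W$ is irreducible as a $\Gamma_1\rtimes\Gamma_x$-representation exactly when $\tau$ is irreducible as a projective $\Gamma_x$-representation.

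For the inverse direction and bijectivity, given $\pi\in\Irr(\Gamma_1\rtimes\Gamma)$, I would decompose $\pi|_{\Gamma_1}$ into isotypic components; Clifford's theorem says the occurring classes form a single $\Gamma$-orbit, yielding a canonical orbit $\Gamma x\subset\Irr\,\Gamma_1$. The $x$-isotypic component of $\pi$ is stable under $\Gamma_1\rtimes\Gamma_x$ and decomposes as $V\otimes W$, on which $\Gamma_x$ acts by operators of the form $T_\gamma\otimes\tau(\gamma)^{-1}$ with $\tau$ an irreducible projective representation of $\Gamma_x$ whose cocycle represents $\square(x)$; Frobenius reciprocity then identifies $\pi$ with $\Ind_{\Gamma_1\rtimes\Gamma_x}^{\Gamma_1\rtimes\Gamma}(V\otimes W)$. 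Together with the injectivity statement (two inequivalent pairs $(x,\tau)$ produce non-isomorphic inductions, again by Mackey), this yields the bijection.

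The main obstacle, and the only real content beyond bookkeeping, is showing that the cohomology class $\square(x)\in \H^2(\Gamma_x;\Cset^\times)$ is intrinsic and $\Gamma$-equivariant, and that it is precisely the obstruction to extending $\phi$ from $\Gamma_1$ to $\Gamma_1\rtimes\Gamma_x$. This is a purely cohomological check: different choices of $\phi$ and of the intertwiners $T_\gamma$ change $c$ only by a coboundary, so the class is well-defined, and conjugating by $\gamma\in\Gamma$ transports $T_\delta$ to an intertwiner for $\phi^\gamma$ at $\gamma\delta\gamma^{-1}$, giving the equivariance $\square(\gamma x)=\gamma_*\square(x)$ required by the definition of the twisted extended quotient.
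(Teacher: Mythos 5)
Your argument is essentially the paper's own proof: the paper simply says "compare our construction with the classical theory of Clifford" and cites Ram--Ramagge, and what you have written is precisely that comparison carried out in detail (forward map by inducing $V\otimes W$ from the inertia group $\Gamma_1\rtimes\Gamma_x$, irreducibility via Mackey, inverse map via restriction to $\Gamma_1$ and the isotypic decomposition, and the cohomological well-definedness of $\square(x)$). One small formula needs repair: the map $\gamma\mapsto T_\gamma\otimes\tau(\gamma)^{-1}$ is not multiplicative when $\Gamma_x$ is nonabelian, since $\tau(\gamma)^{-1}\tau(\gamma')^{-1}$ differs from $c(\gamma,\gamma')^{-1}\tau(\gamma\gamma')^{-1}$ by a reversal of factors; you should instead tensor $T_\gamma$ with the contragredient $\tau^{\vee}(\gamma)={}^{t}\tau(\gamma)^{-1}$ acting on $W^{*}$, which is a projective representation with cocycle $c^{-1}$, or equivalently parametrize the multiplicity spaces by $\Irr^{-\square(x)}(\Gamma_x)$ and use the canonical bijection $\tau\mapsto\tau^{\vee}$ with $\Irr^{\square(x)}(\Gamma_x)$. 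With that adjustment the rest of your verification (independence of the choices of $\phi$, $T_\gamma$ and of the orbit representative, and injectivity via Mackey) goes through and matches the intended proof.
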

\begin{proof}  
The proof proceeds by comparing our construction with the classical theory of Clifford; 
for an exposition of Clifford theory, see \cite{RamRam}. 
\end{proof}

\begin{lem} 
If $\Gamma_1$ is abelian, then we have
\[
(\Irr \, \Gamma_1 \q \Gamma)_2^{\square} = (\Irr \, \Gamma_1 \q \Gamma)_2.
\]
\end{lem}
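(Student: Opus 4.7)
The plan is to show that when $\Gamma_1$ is abelian, the cohomology class $\square(x) \in \H^2(\Gamma_x; \Cset^\times)$ attached to each $x \in \Irr(\Gamma_1)$ is trivial, which immediately forces $\Irr^{\square(x)}(\Gamma_x) = \Irr(\Gamma_x)$ and hence $\widetilde{X}_2^{\square} = \widetilde{X}_2$.

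First I would unpack the definition of $\square(x)$ in this special setting. Since $\Gamma_1$ is abelian, every irreducible representation of $\Gamma_1$ is one-dimensional, so any $x \in \Irr(\Gamma_1)$ is represented by a character $\phi \colon \Gamma_1 \to \Cset^\times$ acting on $V = \Cset$. For $\gamma \in \Gamma_x$ the twisted representation $\phi^\gamma(\gamma_1) = \phi(\gamma \gamma_1 \gamma^{-1})$ satisfies $\phi^\gamma = \phi$ as a function on $\Gamma_1$, not merely up to isomorphism. Consequently the intertwining operator $T_\gamma \colon \phi \simeq \phi^\gamma$ is an element of $\End_\Cset(\Cset) = \Cset$, and there is a canonical choice, namely $T_\gamma = 1$ for every $\gamma \in \Gamma_x$.

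With this canonical choice, the defining identity $T_\gamma \circ T_{\gamma'} = c(\gamma,\gamma') T_{\gamma \gamma'}$ reduces to $c(\gamma,\gamma') = 1$ for all $\gamma,\gamma' \in \Gamma_x$, so the cocycle $c$ is identically trivial and its class $\square(x) \in \H^2(\Gamma_x;\Cset^\times)$ is the identity element. Therefore $\Irr^{\square(x)}(\Gamma_x)$ is exactly the set of (honest) irreducible representations of $\Gamma_x$, and $\widetilde{X}_2^{\square} = \{(x,\tau) : x \in X,\; \tau \in \Irr(\Gamma_x)\} = \widetilde{X}_2$. Since the $\Gamma$-action on $\widetilde{X}_2^\square$ was defined exactly as on $\widetilde{X}_2$, passing to quotients gives $(\Irr\,\Gamma_1 \q \Gamma)_2^\square = (\Irr\,\Gamma_1 \q \Gamma)_2$, as required.

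The only thing to double-check is that the class $\square(x)$ does not depend on the choice of intertwiner: any other choice of $T_\gamma$ differs from $1$ by a scalar $f(\gamma) \in \Cset^\times$, which only modifies $c$ by a coboundary $c \cdot f$ in the sense defined in Section \ref{sec:teq}, so the cohomology class is unchanged. This is straightforward, so I do not anticipate any real obstacle; the argument is essentially the observation that $1$-dimensional representations have canonical trivial self-intertwiners.
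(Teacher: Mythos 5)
Your proof is correct and follows essentially the same route as the paper's: the paper's argument is the terse observation that $1$-dimensional $\phi$ satisfy $\phi^\gamma = \phi$ on the nose, forcing the cocycle class to be trivial, and you have simply spelled out the intermediate steps (the canonical choice $T_\gamma = 1$, the resulting identity $c(\gamma,\gamma') = 1$, and the well-definedness of the class under a different choice of intertwiner). No gap, no substantively different idea.
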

\begin{proof} The irreducible representations $\phi$ of $\Gamma_1$ are $1$-dimensional, 
and we have $\phi^{\gamma} = \phi$.   In that case each cohomology class $\natural(x)$ 
is trivial, and the  projective representations of $\Gamma_x$ which occur in the 
construction are all true representations.
\end{proof}

{\bf 2.} Given a $\Cset$-algebra $R$, a finite group $\Gamma$ and a group
homomorphism
$\Gamma \to \Aut(R)$, we can form the crossed product algebra 
\[R\rtimes\Gamma:=\{\sum_{\gamma\in\Gamma}r_\gamma\gamma\,:\,r_\gamma\in
R\},\]
with multiplication given by the distributive law and the relation
\[\gamma r=\gamma(r)\gamma,\quad\text{for $\gamma\in \Gamma$ and $r\in R$.}\]  
Let $X = \Irr \, R$.   Now $\Gamma$ acts on
$\Irr\, R$ and as above we get $\square$ for free.
Here we have
\[\widetilde
X_2^\square=\{(V,\tau)\,:\,V\in\Irr\,R,\;\tau\in\Irr^{\square(V)}(\Gamma_V)\}.\]

\begin{lem} 
\label{lem:Clifford_algebras} 
We have a canonical bijection
\[
\Irr(R \rtimes \Gamma) \simeq (\Irr \, R \q \Gamma)_2^{\square}.
\]
\end{lem}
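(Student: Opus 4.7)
The plan is to prove this by adapting the argument of Lemma~\ref{lem:Clifford} from group algebras to general $\C$-algebras, i.e.\ to run Clifford's analysis in the module-theoretic setting. The mechanism on both sides is the same: the only ingredient of Lemma~\ref{lem:Clifford} that uses the group structure of $\Gamma_1$ is Schur's lemma applied to irreducible representations; this works verbatim for irreducible $R$-modules.

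First I would construct the forward map $\Irr(R\rtimes\Gamma) \to (\Irr R \q \Gamma)_2^{\square}$. Let $M$ be an irreducible $R\rtimes\Gamma$-module. Restrict $M$ to $R$ and pick any irreducible $R$-submodule $V\subset M$; by irreducibility of $M$ and finiteness of $\Gamma$, the restriction $M|_R$ is the sum of its $\Gamma$-translates of $V$, hence is semisimple, and its isotypic components are permuted transitively by $\Gamma$. Let $V$ be one such component (chosen as a class in $\Irr R$) and let $M_V\subset M$ be the $V$-isotypic component. Then $M_V$ is stable under $R\rtimes\Gamma_V$, and writing $M_V\cong V\otimes W$ with $W=\Hom_R(V,M_V)$, the action of $\gamma\in\Gamma_V$ intertwines $V$ with its $\gamma$-twist; by Schur's lemma this intertwiner $T_\gamma$ is unique up to scalar, so the $\gamma\mapsto T_\gamma$ assemble into a projective representation $\tau$ of $\Gamma_V$ on $W$ with cocycle representing $\square(V)$. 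Finally $M\cong\Ind_{R\rtimes\Gamma_V}^{R\rtimes\Gamma}(V\otimes W)$ forces $(W,\tau)$ to be irreducible, giving a well-defined class $[(V,\tau)]\in(\Irr R\q\Gamma)_2^{\square}$. Different choices of $V$ in its $\Gamma$-orbit produce $\Gamma$-conjugate pairs, so the class is canonical.

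For the inverse map, given $(V,\tau)$ with $\tau\in\Irr^{\square(V)}(\Gamma_V)$ realised on $W$, equip $V\otimes W$ with the $R\rtimes\Gamma_V$-action $r(v\otimes w)=rv\otimes w$ and $\gamma(v\otimes w)=T_\gamma v\otimes\tau(\gamma)w$, where $T_\gamma$ is the chosen intertwiner $V\xrightarrow{\sim} V^\gamma$; the cocycle of $T_\gamma$ is inverse to that of $\tau$, so the formula is well-defined as an honest module. Then set
\[
M(V,\tau) := \Ind_{R\rtimes\Gamma_V}^{R\rtimes\Gamma}(V\otimes W).
\]
Standard Mackey/Clifford arguments show $M(V,\tau)$ is irreducible, and that isomorphism of induced modules corresponds precisely to $\Gamma$-conjugacy of pairs $(V,\tau)$; the two constructions are mutually inverse by inspection on the $V$-isotypic component.

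The main obstacle is checking that the two operations are inverse in the twisted setting; in particular that the cocycle appearing in $\tau$ exactly matches the cocycle $\square(V)$ defined in Case~1, so that the twistings on the two sides cancel when one forms $V\otimes W$. This is the same bookkeeping as in classical Clifford theory, but one must verify that the ambiguity in the choice of the intertwiners $T_\gamma$ (multiplication by a $1$-cochain $f\colon\Gamma_V\to\C^\times$) is exactly absorbed by the equivalence relation used to define $\Irr^{\square(V)}(\Gamma_V)$. Once this is done, the previous Lemma~\ref{lem:Clifford} is recovered as the special case $R=\C[\Gamma_1]$, giving a consistency check.
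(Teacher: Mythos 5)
The paper's own ``proof'' of this lemma is a one-line citation to \cite[Theorem A.6]{RamRam}, so you are reconstructing an argument the paper leaves implicit. Your reconstruction follows exactly the route of that reference and of classical Clifford theory: restrict to $R$, observe the restriction is the sum of the $\Gamma$-translates of an irreducible submodule, extract the projective representation of the inertia group on the multiplicity space, and invert by induction. This is the same mechanism as in the paper, just spelled out, so there is no genuine divergence in method.

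There is, however, a hypothesis you are silently using that deserves to be made explicit, because it is where the group case and the algebra case genuinely differ. Your very first step, ``pick any irreducible $R$-submodule $V\subset M$,'' requires the socle of $M|_R$ to be nonzero; for a finite group $\Gamma_1$ this is automatic (any finite-dimensional module has a simple submodule), but for an arbitrary $\C$-algebra $R$ an irreducible $R\rtimes\Gamma$-module need not restrict to an $R$-module with any simple submodule at all. The same finiteness is needed twice more: to apply Schur's lemma to conclude $\End_R(V)=\C$ so that $T_\gamma$ is determined up to scalar, and to get the isotypic decomposition $M_V\cong V\otimes\Hom_R(V,M_V)$ as a bijection rather than merely a map. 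In the paper all of this is satisfied because $R$ is always a finite-type $\cO(X)$-algebra, so every irreducible module is finite-dimensional; \cite{RamRam} makes an analogous hypothesis. So your plan is sound, and your explicit flagging of the cocycle-sign/$1$-cochain bookkeeping is the right thing to worry about on the twisted side, but you should add ``assume every irreducible $R$-module (equivalently $R\rtimes\Gamma$-module) is finite-dimensional over $\C$'' to the statement you are actually proving, and invoke it at the three places above.
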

\begin{proof}  The proof proceeds by comparing our construction with the 
theory of Clifford as stated in \cite[Theorem~A.6]{RamRam}. 
\end{proof}

\begin{notation} \label{not:rtimes} 
{\rm We shall denote by $\tau_1\rtimes\tau$ (resp. $V\rtimes\tau$) the element
of $\Irr(X\rtimes \Gamma)$ which corresponds to $(\tau_1,\tau)$ (resp.
$(V,\tau)$) by the bijection of Lemma~\ref{lem:Clifford}
(resp.~\ref{lem:Clifford_algebras}).}
\end{notation}

\section{Weyl groups of disconnected groups}
\label{sec:Wdc}

Let $M$ be a reductive complex algebraic group. Then $M$ may have a finite number 
of connected components,  $M^0$ is the identity component of $M$, and $\cW^{M^0}$ 
is the Weyl group of $M^0$:
\[
\cW^{M^0}: = \Nor_{M^0}(T)/T
\]
where $T$ is a maximal torus of $M^0$.   We will need the analogue of the Weyl 
group for the possibly disconnected group $M$. 

\begin{lem} \label{lem:disconnected}
Let $M,\, M^0,\, T$ be as defined above. Then we have 
\[
\Nor_M(T)/T \cong \cW^{M^0} \rtimes \pi_0(M).
\]
\end{lem}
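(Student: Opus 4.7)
The plan is to produce a short exact sequence
\[
1 \longrightarrow \cW^{M^0} \longrightarrow \Nor_M(T)/T \longrightarrow \pi_0(M) \longrightarrow 1
\]
and then split it by using a Borel subgroup of $M^0$ containing $T$.

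First I would show that the natural map $\Nor_M(T) \to M/M^0 = \pi_0(M)$ is surjective with kernel $\Nor_{M^0}(T)$. Surjectivity uses the standard conjugacy argument: for any $m \in M$, the subgroup $mTm^{-1}$ is a maximal torus of $M^0$ (since $M^0$ is normal in $M$), so there exists $g \in M^0$ with $gmTm^{-1}g^{-1} = T$; then $gm \in \Nor_M(T)$ represents the same class in $\pi_0(M)$ as $m$. The kernel is obviously $\Nor_M(T) \cap M^0 = \Nor_{M^0}(T)$. Passing to the quotient by $T$ gives the displayed short exact sequence, where $\Nor_{M^0}(T)/T = \cW^{M^0}$ by definition.

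Next I would construct a splitting. Choose a Borel subgroup $B \subset M^0$ containing $T$, and set
\[
\Nor_M(B,T) := \{m \in M : mBm^{-1} = B \text{ and } mTm^{-1} = T\}.
\]
The map $\Nor_M(B,T) \to \pi_0(M)$ is surjective: given $m \in M$, first conjugate $mBm^{-1}$ (another Borel of $M^0$) back to $B$ by some element of $M^0$, and then conjugate the resulting maximal torus back to $T$ by an element of $B$; the composite representative lies in $\Nor_M(B,T)$ and projects to the same class as $m$. The kernel is $\Nor_M(B,T) \cap M^0 = \Nor_{M^0}(B) \cap \Nor_{M^0}(T) = B \cap \Nor_{M^0}(T) = T$, using that a Borel subgroup is self-normalizing in $M^0$ and that $B \cap \Nor_{M^0}(T) = T$. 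Consequently $\Nor_M(B,T)/T$ is a subgroup of $\Nor_M(T)/T$ that maps isomorphically onto $\pi_0(M)$, yielding the desired splitting.

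Finally, the splitting realizes $\Nor_M(T)/T$ as a semidirect product $\cW^{M^0} \rtimes \pi_0(M)$, where the action of $\pi_0(M)$ on $\cW^{M^0}$ is given by conjugation via any lift to $\Nor_M(B,T)/T$. The main potential obstacle is verifying surjectivity of $\Nor_M(B,T) \to \pi_0(M)$, but this reduces to the two standard conjugacy facts for connected reductive groups (Borel subgroups are conjugate in $M^0$, and maximal tori of a Borel are conjugate by the Borel), so the argument goes through cleanly.
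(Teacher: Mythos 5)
Your proof is correct and follows essentially the same route as the paper's: both fix a Borel subgroup $B$ of $M^0$ containing $T$ and show that $\Nor_M(B,T)/T$ is a complement to $\cW^{M^0}$ in $\Nor_M(T)/T$ mapping isomorphically onto $\pi_0(M)$, using the standard conjugacy facts for Borels and maximal tori together with $\Nor_{M^0}(B)=B$ and $B\cap\Nor_{M^0}(T)=T$. The only difference is presentational — you frame it as a short exact sequence plus a splitting, which gives you normality of $\cW^{M^0}$ for free, whereas the paper verifies normality by hand and then factors an arbitrary element of $\Nor_M(T)/T$ directly.
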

\begin{proof}
The group $\cW^{M^0}$ is a normal subgroup of $ \Nor_M(T)/T$. Indeed,
let $n\in\Nor_{M^0}(T)$ and let $n'\in\Nor_M(T)$, then $n'nn^{\prime-1}$
belongs to $M^0$ (since the latter is normal in $M$) and normalizes $T$, that is,
$n'nn^{\prime-1}\in\Nor_{M^0}(T)$. On the other hand,
$n'(nT)n^{\prime-1}=n'nn^{\prime-1}(n'Tn^{\prime-1})=n'nn^{\prime-1}T$.

Let $B$ be a Borel subgroup of $M^0$ containing $T$.   
Let $w\in \Nor_M(T)/T$. Then $wBw^{-1}$ is a Borel subgroup of $M^0$
(since, by definition, the Borel subgroups of an algebraic group are 
the maximal closed connected solvable subgroups). Moreover, $wBw^{-1}$  
contains $T$. 
In a connected reductive algebraic group, the intersection of two Borel 
subgroups always contains a maximal torus and the two Borel subgroups are 
conjugate by a element of the normalizer of that torus. Hence $B$ and
$wBw^{-1}$ are conjugate by an element $w_1$ of $\cW^{M^0}$.
It follows that $w_1^{-1}w$ normalises $B$. Hence
\[w_1^{-1}w\in \Nor_M(T)/T \cap \Nor_{M}(B)=\Nor_{M}(T,B)/T,\] 
that is, \[
\Nor_M(T)/T = \cW^{M^0}\cdot(\Nor_M(T,B)/T).\] 
Finally, we have
\[\cW^{M^0}\cap(\Nor_M(T,B)/T)=\Nor_{M^0}(T,B)/T=\{1\},\] 
since $\Nor_{M^0}(B)=B$ and $B\cap \Nor_{M^0}(T)=T$. This proves (1).

Now consider the following map:
\begin{align}\label{MM}
\Nor_{M}(T,B)/T\to M/M^0\quad\quad mT\mapsto mM^0.
\end{align}
It is injective. Indeed, let $m,m'\in\Nor_{M}(T,B)$ such that
$mM^0=m'M^0$. Then $m^{-1}m'\in M^0\cap\Nor_{M}(T,B)=\Nor_{M^0}(T,B)=T$
(as we have seen above). Hence $mT=m'T$.

On the other hand, let $m$ be an element in $M$. Then $m^{-1}Bm$ is a
Borel subgroup of $M^0$, hence there exists $m_1\in M^0$ such that
$m^{-1}Bm=m_1^{-1}Bm_1$. It follows that $m_1m^{-1}\in\Nor_M(B)$. Also
$m_1m^{-1}Tmm_1^{-1}$ is a torus of $M^0$ which is contained in 
$m_1m^{-1}Bmm_1^{-1}=B$. Hence $T$ and $m_1m^{-1}Tmm_1^{-1}$ are conjugate
in $B$: there is $b\in B$ such that $m_1m^{-1}Tmm_1^{-1}=b^{-1}Tb$. Then 
$n:=bm_1m^{-1}\in\Nor_M(T,B)$. It gives $m=n^{-1}bm_1$. Since $bm_1\in
M^0$, we obtain $mM^0=n^{-1}M^0$. Hence the map  (\ref{MM}) is surjective.
\end{proof}

Let $G$ be a connected complex reductive group and let $T$ be a maximal torus in $G$.
The Weyl group of $G$ is denoted $\cW^G$. 

\begin{lem} \label{lem:centrals} 
Let $A$ be a subgroup of $T$ and write $M = \Z_G (A)$. Then the isotropy subgroup of $A$ in 
$\cW^G$ is
\[
\cW^G_A = \Nor_M (T) / T \cong \cW^{M^0} \rtimes \pi_0(M) .
\]
In case that the group $M$ is connected, $\cW^G_A$ is the Weyl group of $M$.
\end{lem}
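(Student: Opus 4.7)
The plan is to reduce Lemma \ref{lem:centrals} directly to Lemma \ref{lem:disconnected}, once we correctly identify $\cW^G_A$ as $N_M(T)/T$. There are essentially two small steps, plus a verification of a hypothesis.

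First, I would identify the isotropy group. The Weyl group $\cW^G = N_G(T)/T$ acts on $T$ by conjugation, hence on subsets of $T$. The pointwise stabilizer $\cW^G_A$ consists of cosets $nT$ with $n \in N_G(T)$ satisfying $n a n^{-1} = a$ for every $a \in A$, i.e.\ $n \in Z_G(A) = M$. Since $A \subset T$ and $T$ is abelian, $T \subset M$, so this condition is well-defined modulo $T$. Thus the preimage of $\cW^G_A$ in $N_G(T)$ is exactly $N_G(T) \cap M$, which coincides with $N_M(T)$ because any $n \in N_G(T) \cap M$ lies in $M$ and normalizes $T$, and conversely. Passing to the quotient gives $\cW^G_A = N_M(T)/T$.

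Second, I would check that $T$ is a maximal torus of $M^0$, so that Lemma \ref{lem:disconnected} applies to $M$. Since $T$ is connected and contained in $M$, it lies in $M^0$. Any torus $T' \subset M^0$ with $T \subset T'$ is a fortiori a torus of the ambient group $G$ containing the maximal torus $T$, forcing $T' = T$. Hence $T$ is a maximal torus of $M^0$, and Lemma \ref{lem:disconnected} yields
\[
\cW^G_A \;=\; N_M(T)/T \;\cong\; \cW^{M^0} \rtimes \pi_0(M),
\]
which is the main assertion. The final clause is immediate: if $M$ is connected, $\pi_0(M)$ is trivial, and $\cW^G_A = \cW^{M^0} = \cW^M$ in the usual sense.

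There is no real obstacle here; the only point requiring care is conceptual rather than technical, namely recognizing that the relevant isotropy is the \emph{pointwise} fixer of $A$ (which produces the centralizer $M = Z_G(A)$) rather than the setwise stabilizer (which would produce a normalizer). Once that is fixed, everything reduces to Lemma \ref{lem:disconnected} applied to $M$, together with the standard fact that a maximal torus of a connected reductive subgroup of $G$ containing $T$ must equal $T$.
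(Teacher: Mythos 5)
Your argument is correct, and for the first step you take a cleaner route than the paper. You identify $\cW^G_A = \Nor_M(T)/T$ directly: the preimage of the pointwise stabilizer of $A$ in $\Nor_G(T)$ is $\Nor_G(T) \cap \Z_G(A) = \Nor_M(T)$, an elementary computation. The paper instead invokes the Springer--Steinberg structure theorem (\cite[\S 4.1]{SpringerSteinberg}) describing $\Z_G(A)$ as the subgroup generated by $T$, certain root subgroups $U_\alpha$, and Weyl representatives $n_w$ with $w$ fixing $A$, and then reads off the identification from that generation statement. Your version avoids the structure theorem entirely for this part. You also explicitly verify that $T$ is a maximal torus of $M^0$, a hypothesis of Lemma \ref{lem:disconnected} that the paper leaves unremarked.

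One thing you should add: Lemma \ref{lem:disconnected} assumes $M$ is a \emph{reductive} complex algebraic group, and you apply it to $M = \Z_G(A)$ without justifying that hypothesis. This is where the paper's appeal to Springer--Steinberg is really doing its work: that reference establishes that $\Z_G(A)^\circ$ is generated by $T$ and root subgroups, hence is reductive, and that $\Z_G(A)$ has finitely many components. Since $A$ is a subgroup of a torus and hence consists of commuting semisimple elements, this is a classical fact, but it is not automatic for centralizers of arbitrary subgroups and deserves a sentence (or a citation) before invoking Lemma \ref{lem:disconnected}. The remainder of your argument, including the final clause when $M$ is connected, is fine.
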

\begin{proof} 
Let $R(G,T)$ denote the root system of $G$. According to \cite[\S~4.1]{SpringerSteinberg},
the group $M = \Cent_G(A)$ is the reductive subgroup of $G$ generated 
by $T$ and those root groups $U_\alpha$ for which $\alpha \in R(G,T)$ has 
trivial restriction to $A$ together with those Weyl group representatives 
$n_w \in \Nor_G(T) \; (w \in \cW^G)$ for which $w(t) = t$ for all $t \in A$.
This shows that $\cW^G_A = \Nor_M (T) / T$, which by Lemma \ref{lem:disconnected}
is isomorphic to $\cW^{M^0} \rtimes \pi_0(M)$.

Also by \cite[\S~4.1]{SpringerSteinberg}, the identity component of $M$ is generated 
by $T$ and those root groups $U_\alpha$ for which $\alpha$ has trivial restriction to $A$. 
Hence the Weyl group $\mathcal W^{M^\circ}$ is the normal subgroup of $\cW^G_A$ generated by 
those reflections $s_\alpha$ and 
\[
\cW^G_A / \cW^{M^\circ} \cong M / M^\circ .
\]
In particular, if $M$ is connected then $\cW^G_A$ is the Weyl group of $M$.
\end{proof}

In summary, for $t \in T$ such that $M = \Z_G (t)$ we have 
\begin{align}
(T\q \cW^G)_2  & =  \{(t,\sigma) : t \in T, \sigma \in
\Irr(\cW_t^G)\}/\cW^G \label{EXT1}\\
\Irr\, \cW^G_t  & =  (\Irr \, \cW^{M^0}\q
\pi_0(M))_2^{\square} \label{EXT2}
\end{align}

\section{An extended Springer correspondence} 
\label{sec:celldec}
Let $M^\circ$ be a connected reductive complex group.
We take $x \in M^\circ$ unipotent and we abbreviate 
\begin{align}
A_x: = \pi_0 (\Cent_{M^0}(x)).
\end{align}
Let $x \in M^\circ$ be unipotent, $\mathcal B^x = \mathcal B^x_{M^\circ}$ the variety of 
Borel subgroups of $M^\circ$ containing $x$. All the irreducible components of 
$\mathcal B^x$ have the same dimension $d(x)$ over $\Rset$, see \cite[Corollary 3.3.24]{CG}.
Let $H_{d(x)} (\mathcal B^x,\C)$ be its top homology, let $\rho$ be an irreducible 
representation of $A_x$ and write
\begin{equation} \label{eq:tauxrho}
\tau (x,\rho) = \mathrm{Hom}_{A_x} \big( \rho, H_{d(x)}(\mathcal B^x ,\C) \big) .
\end{equation}
We call $\rho \in \Irr (A_x)$ geometric if $\tau (x,\rho) \neq 0$.
The Springer correspondence yields a one-to-one correspondence
\begin{equation} \label{eqn:Springercor}
(x,\rho) \mapsto \tau(x,\rho)
\end{equation}
between the set of $M^0$-conjugacy classes of pairs $(x,\rho)$ formed by a
unipotent element $x \in M^0$ and an irreducible geometric representation 
$\rho$ of $A_x$, and the equivalence classes of irreducible representations 
of the Weyl group $\cW^{M^0}$.

\begin{rem} \label{rem:Springer}
The Springer correspondence which employ here
sends the trivial unipotent class to the trivial $\cW^{M^\circ}$-representation
and the regular unipotent class to the sign representation.
It coincides with the correspondence constructed by Lusztig by means of intersection
cohomology. The difference with Springer's construction via a reductive group
over a field of positive characteristic consists of tensoring with
the sign representation of $\cW^{M^0}$, see \cite{Hot}.
\end{rem}

Choose a set of simple reflections for $\mathcal W^{M^\circ}$ and let $\Gamma$ be a group 
of automorphisms of the Coxeter diagram of $W$. Then $\Gamma$ acts on $\mathcal W^{M^\circ}$
by group automorphisms, so we can form the semidirect product $\mathcal W^{M^\circ} \rtimes \Gamma$. 
Furthermore $\Gamma$ acts on $\Irr (\mathcal W^{M^\circ})$, by $\gamma \cdot \tau = 
\tau \circ \gamma^{-1}$. The stabilizer of $\tau \in \Irr (\mathcal W^{M^\circ})$ is denoted 
$\Gamma_\tau$. As described in Section \ref{sec:teq}, Clifford theory for 
$\mathcal W^{M^\circ} \rtimes \Gamma$ produces a 2-cocycle 
$\natural(\tau) : \Gamma_\tau \times \Gamma_\tau \to \C^\times$.

We fix a Borel subgroup $B_0$ of $M^\circ$ containing $T$ and let $\Delta (B_0,T)$ be the
set of roots of $(M^\circ,T)$ that are simple with respect to $B_0$. We may and will assume
that this agrees with the previously chosen simple reflections in $\mathcal W^{M^\circ}$.
In every root subgroup $U_\alpha$ with $\alpha \in \Delta (B_0,T)$ we pick a nontrivial 
element $u_\alpha$. The data $(M^\circ,T,(u_\alpha)_{\alpha \in \Delta (B_0,T)})$ are
called a pinning of $M^\circ$. The action of $\gamma \in \Gamma$ on the Coxeter diagram of 
$\mathcal W^{M^\circ}$ lifts uniquely to an action of $\gamma$ on $M^\circ$ which preserves 
the pinning. In this way we construct the semidirect product $M := M^\circ \rtimes \Gamma$.
By Lemma \ref{lem:centrals} we may identify $\cW^M$ with $\cW^{M^\circ} \rtimes \Gamma$.

For any unipotent element $x \in M^\circ$ the centralizer $\Cent_M (x)$ acts on the variety
$\mathcal B^x_{M^\circ}$. We say that an irreducible representation $\rho_1$ of 
$\Cent_M (x)$ is geometric if it appears in $H_{d(x)}(\mathcal B^x_{M^\circ} ,\C)$. 
Notice that this condition forces $\rho_1$ to factor through the component group 
$\pi_0 (\Cent_M (x))$. 

\begin{prop}\label{prop:S.2}
The class of $\natural(\tau)$ in $H^2 (\Gamma_\tau , \C^\times)$ is trivial for all 
$\tau \in \Irr (\mathcal W^{M^\circ})$. Hence there are natural bijections between the
following sets:
\begin{align*}
& \Irr (\mathcal W^{M^\circ} \rtimes \Gamma) = \Irr (\mathcal W^M ) ,\\
& \big( \Irr (\mathcal W^{M^\circ}) /\!/ \Gamma \big)_2 ,\\ 
& \big\{ (x,\rho_1) \mid x \in M^\circ \text{ unipotent} , \rho_1 \in 
\Irr \big( \pi_0 (\Cent_M (x)) \big) \text{ geometric} \big\} / M .
\end{align*}
\end{prop}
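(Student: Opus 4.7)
The plan is to reduce all three sets appearing in the proposition to a common geometric parametrization via the Springer correspondence, exploiting crucially that $\Gamma$ acts on $M^\circ$ through pinning automorphisms.

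First, I would invoke the Springer correspondence \eqref{eqn:Springercor} for $M^\circ$, which parametrizes $\Irr(\cW^{M^\circ})$ by $M^\circ$-conjugacy classes of pairs $(x,\rho)$ with $x\in M^\circ$ unipotent and $\rho\in\Irr(A_x)$ geometric. Because $\Gamma$ preserves a pinning, it acts on $M^\circ$, on $\cW^{M^\circ}$, and on the Springer fibers $\cB^x$ in compatible ways, so that Springer is $\Gamma$-equivariant. In particular the stabilizer $\Gamma_\tau$ of $\tau=\tau(x,\rho)\in\Irr(\cW^{M^\circ})$ coincides with the $\Gamma$-stabilizer of the $M^\circ$-class $[x,\rho]$.

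The crux of the proof is the triviality of the cocycle $\natural(\tau)$. For this I would use the fact that $\Cent_M(x)$ acts on $\cB^x$, commuting with the intrinsic Springer action of $\cW^{M^\circ}$ on $H_*(\cB^x,\C)$. Hence $H_{d(x)}(\cB^x,\C)$ is a genuine $\cW^{M^\circ}\rtimes\pi_0(\Cent_M(x))$-module. For each $\gamma\in\Gamma_\tau$ one can produce a representative $\tilde\gamma\in\Cent_M(x)$ whose image in $\pi_0(\Cent_M(x))$ stabilizes $\rho\in\Irr(A_x)$, and which therefore acts naturally on
\[
\tau(x,\rho)=\Hom_{A_x}(\rho,H_{d(x)}(\cB^x,\C)).
\]
These actions give an honest (not merely projective) extension of the $\cW^{M^\circ}$-action on $\tau$ to $\cW^{M^\circ}\rtimes\Gamma_\tau$, which means that $\natural(\tau)$ is trivial in ${\rm H}^2(\Gamma_\tau,\C^\times)$.

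With $\natural(\tau)$ trivial for every $\tau$, Lemma \ref{lem:Clifford} immediately yields
\[
\Irr(\cW^M)=\Irr(\cW^{M^\circ}\rtimes\Gamma)\;\simeq\;(\Irr(\cW^{M^\circ})\q\Gamma)_2.
\]
For the third bijection, I would decompose
\[
H_{d(x)}(\cB^x,\C)=\bigoplus_{\rho_1}\tilde\tau(x,\rho_1)\otimes\rho_1
\]
as $\cW^{M^\circ}\rtimes\pi_0(\Cent_M(x))$-modules, with $\rho_1$ ranging over the geometric irreducibles of $\pi_0(\Cent_M(x))$. Combined with Clifford theory applied to
\[
1\to A_x\to\pi_0(\Cent_M(x))\to\Gamma_{[x]}\to 1,
\]
where $\Gamma_{[x]}$ is the $\Gamma$-stabilizer of the $M^\circ$-class of $x$, this matches $M$-orbits of pairs $(x,\rho_1)$ with the Clifford-theoretic parametrization of $\Irr(\cW^M)$ obtained above, completing the triangle of bijections.

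The main obstacle is the coherent construction of the genuine lifts. The Clifford theory a priori yields only projective representations of $\Gamma_\tau$, and one has to show that the choice of lift $\tilde\gamma\in\Cent_M(x)$ described above can be made compatibly as $\gamma$ varies over $\Gamma_\tau$, so that the resulting $\Gamma_\tau$-action on $\tau(x,\rho)$ is free of projective ambiguity. This is where the pinning hypothesis is decisive: it not only makes Springer $\Gamma$-equivariant but also furnishes a canonical $\Gamma_\tau$-equivariant structure on $\cB^x$ and on its homology. Tracking the several equivalence relations on each side (conjugation by $M$ versus $M^\circ$, and the action of $\Gamma_\tau$ on the multiplicity spaces) is then a matter of bookkeeping rather than a substantive difficulty.
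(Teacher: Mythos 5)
Your outline follows the same overall architecture as the paper (Springer correspondence, $\Gamma$-equivariance, lifting $\gamma \in \Gamma_\tau$ to $\Z_M(x)$, then Clifford theory), and your identification of the crux is correct: everything rests on producing, for each $\gamma \in \Gamma_\tau$, a representative $\tilde\gamma \in \Z_M(x)$ so that the assignment $\gamma \mapsto H_*(\Ad_{\tilde\gamma})$ is genuinely multiplicative rather than projective. But this is exactly where your proposal stops short of a proof.

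The gap is concrete. An element $\gamma \in \Gamma$ does \emph{not} centralize $x$; it sends $x$ to $\gamma x \gamma^{-1}$, which is merely $M^\circ$-conjugate to $x$. To return to $\Z_M(x)$ you must correct $\gamma$ by an element $m_\gamma \in M^\circ$ with $m_\gamma \gamma x \gamma^{-1} m_\gamma^{-1} = x$, and $m_\gamma$ is a priori only defined up to $\Z_{M^\circ}(x)$. The pinning by itself does \emph{not} resolve this ambiguity, because the pinning is a datum on $M^\circ$ relative to a fixed $(B_0,T)$, not a datum attached to the unipotent element $x$ or the fibre $\mathcal B^x$; there is no ``canonical $\Gamma_\tau$-equivariant structure on $\mathcal B^x$'' until you have actually produced a multiplicative system of lifts. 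Showing that such a system exists is precisely the content of the paper's Lemma \ref{lem:S.3}, whose proof is a genuine argument: one places $x$ as a distinguished unipotent in a Levi $L$, chooses Borel subgroups $B_L \subset L$ and $B'_L \subset L'$, produces a \emph{unique} correcting Weyl element $w_2 w_1 \in \mathcal W^{M^\circ}$ (uniqueness via the triviality of the centralizer of $\Phi(B_L,T)\cup\Phi(B'_L,T)$ in $\mathcal W^{M^\circ}$), and then invokes the Bala--Carter classification to replace $x$ by a representative $\bar x$ fixed by the pinned lift $\overline{w_1\gamma}$. Uniqueness of $w_2 w_1$ is what forces $s : \Gamma_{[x]_{M^\circ}} \to (\Z_M(x) \cap \Nor_M(T))/(\Z(M^\circ) T)$ to be a homomorphism. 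None of this is ``bookkeeping,'' and your final paragraph asserts rather than establishes it.

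Two smaller points. First, your proposed short exact sequence $1 \to A_x \to \pi_0(\Z_M(x)) \to \Gamma_{[x]} \to 1$ is not quite the right one: the paper works with $1 \to \pi_0(\Z_{M^\circ}(x,\rho)/\Z(M^\circ)) \to \pi_0(\Z_M(x,\rho)/\Z(M^\circ)) \to \Gamma_{[x,\rho]_{M^\circ}} \to 1$, because the kernel must account for the stabilizer of $\rho$ and one must mod out the connected centre for the splitting to descend. Second, the decomposition $H_{d(x)}(\mathcal B^x,\C) = \bigoplus_{\rho_1} \tilde\tau(x,\rho_1) \otimes \rho_1$ as a $\mathcal W^{M^\circ} \rtimes \pi_0(\Z_M(x))$-module already presupposes the triviality of the cocycle you are trying to prove, so it cannot be used as an ingredient in establishing it; it is a consequence, not a tool.
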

\begin{proof}
There are various ways to construct the Springer correspondence for $\mathcal W^{M^\circ}$, 
for the current proof we use the method with Borel--Moore homology. 
Let $\mathcal \Z_{M^\circ}$ be the Steinberg variety of $M^\circ$ and 
$H_{\top} (\mathcal \Z_{M^\circ})$ its homology in the top degree 
\[
2 \dim_\C \mathcal \Z_{M^\circ} = 4 \dim_\C \mathcal B_{M^\circ} = 
4 (\dim_\C M^\circ - \dim_\C B_0) ,
\]
with rational coefficients. We define a natural algebra isomorphism 
\begin{equation}\label{eq:S.5}
\Q [\mathcal W^{M^\circ}] \to H_{\top}(\mathcal \Z_{M^\circ})  
\end{equation}
as the composition of \cite[Theorem 3.4.1]{CG} and a twist by the sign representation 
of $\Q [\mathcal W^{M^\circ}]$. By \cite[Section 3.5]{CG} the action of $\mathcal W^{M^\circ}$ 
on $H_* (\mathcal B^x ,\C)$ (as defined by Lusztig) corresponds to the convolution 
product in Borel--Moore homology. 

Since $M^\circ$ is normal in $M$, the groups $\Gamma ,M$ and $M / \Z(M)$ act on the 
Steinberg variety $\mathcal \Z_{M^\circ}$ via conjugation. The induced action of the 
connected group $M^\circ$ on $H_{\top}(\mathcal \Z_{M^\circ})$ is trivial, and it easily 
seen from \cite[Section 3.4]{CG} that the action of $\Gamma$ on $H(\mathcal \Z_{M^\circ})$ 
makes \eqref{eq:S.5} $\Gamma$-equivariant.

The groups $\Gamma ,M$ and $M / \Z(M)$ also act on the pairs $(x,\rho)$ 
and on the varieties of Borel subgroups, by
\begin{align*}
& \mathrm{Ad}_m (x,\rho) = (m x m^{-1}, \rho \circ \mathrm{Ad}_m^{-1}) , \\
& \mathrm{Ad}_m : \mathcal B^x \to \mathcal B^{m x m^{-1}} ,\; B \mapsto m B m^{-1} .
\end{align*}
Given $m \in M$, this provides a linear bijection $H_* (\mathrm{Ad}_m)$ : 
\[
\mathrm{Hom}_{A_x}(\rho, H_* (\mathcal B^x ,\C)) \to
\mathrm{Hom}_{A_{mxm^{-1}}}(\rho \circ \mathrm{Ad}_m^{-1}, H_* (\mathcal B^{mxm^{-1}} ,\C)) .
\]
The convolution product in Borel--Moore homology is compatible with these 
$M$-actions so, as in \cite[Lemma 3.5.2]{CG}, the following diagram commutes 
for all $h \in H_{\top}(\mathcal \Z_{M^\circ})$:
\begin{equation}\label{eq:S.6}
\begin{array}{ccc}
H_* (\mathcal B^x ,\C) & \xrightarrow{\; h \;} & H_* (\mathcal B^x ,\C) \\
\downarrow \scriptstyle{H_* (\mathrm{Ad}_m)} & & 
\downarrow \scriptstyle{H_* (\mathrm{Ad}_m)} \\
H_* (\mathcal B^{mxm^{-1}} ,\C) & \xrightarrow{m \cdot h} & 
H_* (\mathcal B^{mxm^{-1}} ,\C) .
\end{array}
\end{equation}
In case $m \in M^\circ \gamma$ and $m \cdot h$ corresponds to $w \in \mathcal W^{M^\circ}$, 
the element $h \in H(\mathcal \Z_{M^\circ})$ corresponds to $\gamma^{-1}(w)$, 
so \eqref{eq:S.6} becomes
\begin{equation}\label{eq:S.7}
H_* (\mathrm{Ad}_m) \circ \tau (x,\rho) (\gamma^{-1}(w)) = 
\tau (mxm^{-1}, \rho \circ \mathrm{Ad}_m^{-1})(w) \circ H_* (\mathrm{Ad}_m) .
\end{equation}
Denoting the $M^\circ$-conjugacy class of $(x,\rho)$ by $[x,\rho]_{M^\circ}$, we can write
\begin{align}\label{eq:S.9}
\Gamma_{\tau (x,\rho)} & = \{ \gamma \in \Gamma \mid \tau (x,\rho) 
\circ \gamma^{-1} \cong \tau (x,\rho) \} \\
\nonumber & = \{ \gamma \in \Gamma \mid [\mathrm{Ad}_\gamma 
(x,\rho)]_{M^\circ} = [x,\rho]_{M^\circ} \} =: \Gamma_{[x,\rho]_{M^\circ}} .
\end{align}
This group fits in an exact sequence
\begin{equation}\label{eq:S.4}
1 \to \pi_0 \big( \Z_{M^\circ} (x,\rho) / \Z(M^\circ) \big) \to \pi_0 \big( \Z_M (x,\rho) / 
\Z(M^\circ) \big) \to \Gamma_{[x,\rho]_{M^\circ}} \to 1 .
\end{equation}
Assume for the moment that \eqref{eq:S.4} has a splitting 
\[
s : \Gamma_{[x,\rho]_{M^\circ}} \to \pi_0 \big( \Z_M (x,\rho) / \Z(M^\circ) \big) . 
\]
By homotopy invariance in Borel--Moore homology $H_* (\mathrm{Ad}_z) = 
\mathrm{id}_{H_* (\mathcal B^x ,\C)}$ for any $z \in \Z_{M^\circ}(x,\rho)^\circ 
\Z(M^\circ)$, so $H_* (\mathrm{Ad}_m)$ is well-defined for \\
$m \in \pi_0 \big( \Z_M (x,\rho) / \Z(M^\circ) \big)$. In particular we obtain for every 
$\gamma \in \Gamma_{\tau (x,\rho)} = \Gamma_{[x,\rho]_{M^\circ}}$ a linear bijection
\[
H_* (\mathrm{Ad}_{s(\gamma )}) : \mathrm{Hom}_{A_x}(\rho, H_{d(x)}
(\mathcal B_x ,\C)) \to \mathrm{Hom}_{A_x}(\rho, H_ (\mathcal B_x ,\C)) ,
\]
which by \eqref{eq:S.7} intertwines the $\mathcal W^{M^\circ}$-representations 
$\tau (x,\rho)$ and $\tau (x,\rho) \circ \gamma^{-1}$. By construction
\[
H_* (\mathrm{Ad}_{s(\gamma )}) \circ H_* (\mathrm{Ad}_{s(\gamma' )}) =
H_* (\mathrm{Ad}_{s(\gamma \gamma')}) .
\]
This proves the triviality of the 2-cocycle $\natural(\tau) = \natural (\tau (x,\rho))$, 
up to the existence of a splitting of \eqref{eq:S.4}. Thus it remains to show the following.

\begin{lem}\label{lem:S.3}
Let $\rho \in \Irr (\pi_0 (\Z_{M^\circ}(x)))$ and write $\Z_M (x,\rho) =
\{ m \in \Z_M (x) | \\ \rho \circ \mathrm{Ad}_m^{-1} \cong \rho \}$. 
The following short exact sequence splits:
\[
1 \to \pi_0 \big( \Z_{M^\circ}(x,\rho) / \Z(M^\circ) \big) \to 
\pi_0 \big( \Z_M (x,\rho) / \Z(M^\circ) \big) \to \Gamma_{[x,\rho]_{M^\circ}} \to 1 .
\] 
\end{lem}
\begin{proof}
First we ignore $\rho$. According to the classification of unipotent orbits in complex
reductive groups \cite[Theorem 5.9.6]{Carter} we may assume that $x$ is distinguished unipotent
in a Levi subgroup $L \subset M^\circ$ that contains $T$. Let $\mathcal D(L)$ be the derived 
subgroup of $L$ and define 
\[
L' := \Z_{M^\circ}(\mathcal D (L)) (T \cap \mathcal D (L)) = \Z_{M^\circ}(\mathcal D (L)) T .
\]
Choose Borel subgroups $B_L \subset L$ and $B'_L \subset L'$ such that $x \in B_L$ and
$T \subset B_L \cap B'_L$.
Let $[x]_{M^\circ}$ be the $M^\circ$-conjugacy class of $x$ and 
$\Gamma_{[x]_{M^\circ}}$ its stabilizer in $\Gamma$. Any $\gamma \in 
\Gamma_{[x]_{M^\circ}}$ must also stabilize the $M^\circ$-conjugacy class of $L$,
and $T = \gamma (T) \subset \gamma (L)$, so there exists a $w_1 \in \mathcal W^{M^\circ}$ with 
$w_1 \gamma (L) = L$. Adjusting $w_1$ by an element of $W(L,T) \subset \mathcal W^{M^\circ}$, 
we can achieve that moreover 
$w_1 \gamma (B_L) = B_L$. Then $w_1 \gamma (L') = L'$, so we can find a unique 
$w_2 \in W (L',T) \subset \mathcal W^{M^\circ}$ with $w_2 w_1 \gamma (B'_L) = B'_L$. 
Notice that the centralizer of $\Phi (B_L,T) \cup \Phi (B'_L,T)$ in $\mathcal W^{M^\circ}$ 
is trivial, because it is generated by
reflections and no root in $\Phi (M^\circ,T)$ is orthogonal to this set of roots.
Therefore the above conditions completely determine $w_2 w_1 \in \mathcal W^{M^\circ}$.

The element $w_1 \gamma \in \mathcal W^{M^\circ} \rtimes \Gamma$ acts on $\Delta (B_L,T)$ 
by a diagram automorphism, so upon choosing $u_\alpha \in U_\alpha \setminus \{1\}$ for 
$\alpha \in \Delta (B_L,T)$, it can be represented by a unique element 
\[
\overline{w_1 \gamma} \in \mathrm{Aut} \big( \mathcal D (L),T,
(u_\alpha )_{\alpha \in \Delta (B_L,T)} \big) .
\]
The distinguished unipotent class of $x \in L$ is determined by its Bala--Carter diagram.
The classification of such diagrams \cite[\S 5.9]{Carter} shows that there exists an element
$\bar x$ in the same class as $x$, such that $\mathrm{Ad}_{\overline{w_1 \gamma}}(\bar x) = 
\bar x$. We may just as well assume that we had $\bar x$ instead of $x$ from the start,
and that $\overline{w_1 \gamma} \in \Z_M (x)$. Clearly we can find a representative 
$\overline{w_2}$ for $w_2$ in $\Z_M (x)$, so we obtain
\[
\overline{w_2} \, \overline{w_1 \gamma} \in \Z_M (x) \cap \Nor_M (T) \quad \text{and} \quad
w_2 w_1 \gamma \in \displaystyle{\frac{\Z_M (x) \cap \Nor_M (T)}{\Z(M^\circ) \, T}} . 
\]
Since $w_2 w_1 \in \mathcal W^{M^\circ}$ is unique,
\[
s : \Gamma_{[x]_{M^\circ}} \to \displaystyle{\frac{\Z_M (x) \cap 
\Nor_M (T)}{\Z(M^\circ) \, T}} ,\; \gamma \mapsto w_2 w_1 \gamma 
\]
is a group homomorphism. 

We still have to analyse the effect of $\Gamma_{[x]_{M^\circ}}$ on $\rho \in \Irr (A_x)$. 
Obviously composing with $\mathrm{Ad}_m$ for $m \in \Z_{M^\circ}(x)$ does not change 
the equivalence class of any representation of $A_x = \pi_0 (\Z_{M^\circ}(x))$. 
Hence $\gamma \in \Gamma_{[x]_{M^\circ}}$ stabilizes $\rho$ if and only if any lift 
of $\gamma$ in $\Z_M (x)$ does. This applies in particular to 
$\overline{w_2} \, \overline{w_1 \gamma}$, and therefore
\[
s ( \Gamma_{[x,\rho]_{M^\circ}}) \subset 
\big(\Z_M (x,\rho) \cap \Nor_M (T)\big) \big/ \big( \Z(M^\circ) \, T \big) .
\]
Since the torus $T$ is connected, $s$ determines a group homomorphism from
$\Gamma_{[x,\rho]_{M^\circ}}$ to $\pi_0 \big( \Z_M (x,\rho) / \Z(M^\circ) \big)$, 
which is the required splitting.
\end{proof}

This also finishes the proof of the first part of Proposition \ref{prop:S.2}. Now the 
bijection between of $\Irr (\mathcal W^M)$ and $\big( \Irr (\mathcal W^{M^\circ}) /\!/ 
\Gamma \big)_2$ follows from Clifford theory, see Section \ref{sec:teq}. 
By \eqref{eq:S.9} and Lemma \ref{lem:S.3} the geometric representations of $\pi_0 (\Z_M (x))$ 
are precisely the representations $\rho \rtimes \sigma$ with $\rho \in \Irr (A_x)$ geometric 
and $\sigma \in \Irr (\Gamma_{[x,\rho]_{M^\circ}})$. This provides the bijection between
$\big( \Irr (\mathcal W^{M^\circ}) /\!/ \Gamma \big)_2$ and the $M$-conjugacy classes of
pairs $(x,\rho_1)$.
\end{proof}

There is natural partial order on the unipotent classes in $M$:
\[
\cO < \cO' \quad \text{when} \quad \overline{\cO} \subsetneq \overline{\cO'} .
\]
Let $\cO_x \subset M$ be the class containing $x$.
We transfer this to partial order on our extended Springer data by defining
\begin{equation}\label{eq:S.32}
(x,\rho_1) < (x',\rho'_1) \quad \text{when} \quad 
\overline{\cO_x} \subsetneq \overline{\cO_{x'}} .
\end{equation}
We will use it to formulate a property of the composition series of some 
$\cW^M$-representations that will appear later on.

\begin{lem}\label{lem:S.6}
Let $x \in M$ be unipotent and let $\rho \rtimes \sigma$ 
be a geometric irreducible representation of $\pi_0 (\Z_M (x))$.
There exist multiplicities \\ $m_{x,\rho \rtimes \sigma ,x',\rho' \rtimes \sigma'} 
\in {\mathbb Z}_{\geq 0}$ such that
\begin{multline*}
\mathrm{Ind}_{W \rtimes \Gamma_{[x,\rho]_{M^\circ}}}^{W \rtimes \Gamma} \big(
\Hom_{A_x} \big( (\rho,H_* (\mathcal B^x,\C) \big) \otimes \sigma \big) \cong \\
\tau (x,\rho) \rtimes \sigma \oplus 
\bigoplus_{(x',\rho' \rtimes \sigma') > (x,\rho \rtimes \sigma)} 
m_{x,\rho \rtimes \sigma ,x',\rho' \rtimes \sigma'}\, \tau (x',\rho') \rtimes \sigma' .
\end{multline*}
\end{lem}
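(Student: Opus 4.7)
The plan is to derive the decomposition from a Borho--MacPherson-type analysis of $H_*(\mathcal{B}^x, \C)$ and then promote it to a $\cW^M$-statement using Clifford theory together with the $\Gamma$-equivariance constructed in the proof of Proposition \ref{prop:S.2}. First I would apply the decomposition theorem to the Springer resolution $\pi\colon \widetilde{M^\circ} \to M^\circ$ and compute the stalk at $x$, obtaining as $(\cW^{M^\circ} \times A_x)$-modules
\[
H_*(\mathcal{B}^x, \C) \cong \bigoplus_{(x',\rho')} \tau(x',\rho') \otimes \mathrm{IC}(\cO_{x'}, \rho')_x,
\]
where the sum runs over $M^\circ$-conjugacy classes of pairs with $\rho'$ geometric. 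Since $\mathrm{IC}(\cO_{x'}, \rho')_x$ vanishes unless $x \in \overline{\cO_{x'}}$, only terms with $\cO_{x'} \geq \cO_x$ survive; in the top degree the only contribution at the orbit $\cO_x$ itself is $\tau(x,\rho) \otimes \rho^*$ for each geometric $\rho$. Taking the $\rho$-isotypic part under $A_x$ yields the $\cW^{M^\circ}$-module identity
\[
\Hom_{A_x}\bigl(\rho,\, H_*(\mathcal{B}^x, \C)\bigr) \cong \tau(x,\rho) \oplus
\bigoplus_{\cO_{x'} > \cO_x} n_{x,\rho,x',\rho'}\, \tau(x',\rho')
\]
with non-negative integer multiplicities $n_{x,\rho,x',\rho'}$.

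Next I would upgrade this to a $\cW^{M^\circ} \rtimes \Gamma_{[x,\rho]_{M^\circ}}$-module identity. The splitting $s$ furnished by Lemma \ref{lem:S.3}, together with the compatibility diagram \eqref{eq:S.6} of the proof of Proposition \ref{prop:S.2}, endows $\Hom_{A_x}(\rho, H_*(\mathcal{B}^x, \C))$ with a genuine action of $\Gamma_{[x,\rho]_{M^\circ}}$ intertwining the $\cW^{M^\circ}$-action; the triviality of the cocycle $\natural(\tau)$ established in Proposition \ref{prop:S.2} is precisely what guarantees this is a true action rather than a projective one. Because $\Gamma$ acts on $M^\circ$ by algebraic automorphisms preserving the stratification by unipotent orbit closures, the Borho--MacPherson decomposition is $\Gamma_{[x,\rho]_{M^\circ}}$-stable termwise, with $\Gamma_{[x,\rho]_{M^\circ}}$ permuting the summands whose $M^\circ$-orbits lie in a common $M$-orbit. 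Tensoring with $\sigma \in \Irr(\Gamma_{[x,\rho]_{M^\circ}})$ and applying Lemma \ref{lem:Clifford} identifies the top-degree contribution with $\tau(x,\rho) \rtimes \sigma$ in the sense of Notation \ref{not:rtimes}, while each higher-degree piece decomposes into a direct sum of irreducibles $\tau(x',\rho') \rtimes \sigma'$ with $\cO_{x'} > \cO_x$, and therefore with $(x',\rho' \rtimes \sigma') > (x,\rho \rtimes \sigma)$ in the order \eqref{eq:S.32}. Inducing from $\cW^{M^\circ} \rtimes \Gamma_{[x,\rho]_{M^\circ}}$ to $\cW^M = \cW^{M^\circ} \rtimes \Gamma$ is compatible with the Clifford parametrisation, so each summand $\tau(x',\rho') \rtimes \sigma'$ (now viewed in $\Irr(\cW^M)$) appears with a well-defined non-negative integer multiplicity, yielding the claimed formula.

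The main obstacle is the second step, namely verifying that the splitting $s$ is compatible with the Borho--MacPherson decomposition, so that the $\Gamma_{[x,\rho]_{M^\circ}}$-action genuinely respects both the filtration by orbit closures and the isotypic decomposition by pairs $(x',\rho')$. The underlying point is that the Springer resolution is $\Gamma$-equivariant when $\Gamma$ acts on $M$ by pinned automorphisms, whence the decomposition theorem applied to $\pi_*\C$ is $\Gamma$-equivariant and the induced $\Gamma_{[x,\rho]_{M^\circ}}$-action preserves each Springer-isotypic summand; once this equivariance is in place, the Clifford-theoretic bookkeeping needed to extract the labels $(x',\rho' \rtimes \sigma')$ is essentially routine.
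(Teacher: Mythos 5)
Your proof is correct and follows essentially the same route as the paper: obtain the partial-order-controlled decomposition of $\Hom_{A_x}(\rho, H_*(\mathcal B^x,\C))$ as a $\cW^{M^\circ}$-module, then promote it to $\cW^{M^\circ}\rtimes\Gamma_{[x,\rho]_{M^\circ}}$ via the splitting of Lemma \ref{lem:S.3} and the equivariance from the proof of Proposition \ref{prop:S.2}, and finally induce up to $\cW^M$ and decompose. The only substantive difference is that where the paper simply invokes \cite[Theorem 4.4]{BM} (attributed to Borho--MacPherson) for the statement that the lower-order terms are controlled by closures of unipotent orbits, you re-derive that fact from the decomposition theorem applied to the Springer resolution and the support/stalk properties of the IC sheaves; this is a legitimate elaboration that makes the source of the partial-order constraint transparent, at the cost of invoking heavier machinery than a direct citation. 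One small caution: your phrase "$\Gamma_{[x,\rho]_{M^\circ}}$-stable termwise, with \dots permuting the summands" is internally a bit loose (the isotypic decomposition is preserved as a whole, with individual isotypic pieces possibly permuted among those sharing an $M$-orbit), but the subsequent Clifford-theoretic bookkeeping you describe handles exactly this and yields the claimed multiplicities, matching \eqref{eq:S.31} in the paper.
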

\begin{proof}
Consider the vector space $\Hom_{A_x} \big( (\rho,H_* (\mathcal B^x,\C) \big)$
with the $\cW^{M^\circ}$-action coming from \eqref{eq:S.5}. 
The proof of Proposition \ref{prop:S.2} remains valid for these representations. 
By \cite[Theorem 4.4]{BM} (attributed to Borho and MacPherson) there exist 
multiplicities $m_{x,\rho,x',\rho'} \in {\mathbb Z}_{\geq 0}$ such that
\begin{equation}
\Hom_{A_x} \big( (\rho,H_* (\mathcal B^x,\C) \big) \cong
\tau (x,\rho) \oplus \bigoplus_{(x',\rho') > (x,\rho)} m_{x,\rho ,x',\rho'}\, \tau (x',\rho') .
\end{equation}
By \eqref{eq:S.9} and \eqref{eq:S.7} $\Gamma_{[x,\rho]_{M^\circ}}$ also
stabilizes the $\tau (x',\rho')$ with $m_{x,\rho,x',\rho'} > 0$, and by Lemma \ref{lem:S.3}
the associated 2-cocycles are trivial. It follows that 
\begin{multline}\label{eq:S.31}
\mathrm{Ind}_{W \rtimes \Gamma_{[x,\rho]_{M^\circ}}}^{W \rtimes \Gamma} \big(
\Hom_{A_x} \big( (\rho,H_* (\mathcal B^x,\C) \big) \otimes \sigma \big) \cong \\
\tau (x,\rho) \rtimes \sigma \oplus 
\bigoplus_{(x',\rho') > (x,\rho)} m_{x,\rho ,x',\rho'} 
\mathrm{Ind}_{W \rtimes \Gamma_{[x,\rho]_{M^\circ}}}^{W \rtimes \Gamma} 
\big( \tau (x',\rho') \otimes \sigma \big) .
\end{multline}
Decomposing the right hand side into irreducible representations then gives the
statement of the lemma.
\end{proof}

\section{The Langlands parameter $\Phi$} 
\label{sec:Lp}
Let $\mathbf{W}_F$ denote the Weil group of $F$, let $\mathbf{I}_F$ be the inertia
subgroup of $\mathbf{W}_F$. 
Let $\mathbf{W}_F^{\der}$ denote the closure of the commutator subgroup of $\mathbf{W}_F$, 
and write $\mathbf{W}_F^{\ab} = \mathbf{W}_F/\mathbf{W}^{\der}_F$. 
The group of units in $\mathfrak{o}_F$ will be denoted $\fo_F^\times$.

We recall the Artin reciprocity map $\mathbf{a}_F : \mathbf{W}_F \to F^{\times}$ 
which has the following properties (local class field theory):
\begin{enumerate}
\item The map $\mathbf{a}_F$ induces a topological isomorphism 
$\mathbf{W}^{\ab}_F \simeq F^{\times}$.
\item An element $x \in \mathbf{W}_F$ is a geometric Frobenius if and only if 
$\mathbf{a}_F(x)$ is a prime element $\varpi_F$ of $F$.
\item We have $\mathbf{a}_F(\mathbf{I}_F) = \fo_F^\times$.
\end{enumerate}
We now consider the  principal series of $\cG$. We recall that  
$\mathcal{G}$ denotes a connected reductive split $p$-adic group with 
maximal split torus $\mathcal{T}$, and that
$G ,\;T$ denote the Langlands dual groups of $\mathcal{G} ,\; \mathcal{T}$. 
Next, we consider conjugacy classes in $G$ of continuous morphisms
\[
\Phi\colon \mathbf{W}_F\times \SL_2 (\Cset) \to G
\] 
which are rational on $\SL_2 (\Cset)$ and such that $\Phi(\mathbf{W}_F)$ 
consists of semisimple elements in $G$. 

Let $B_2$ be the upper triangular Borel subgroup in $\SL_2 (\Cset)$.
Let $\mathcal B^{\Phi (\mathbf{W}_F \times B_2)}$ denote the variety of Borel 
subgroups of $G$ containing $\Phi(\mathbf{W}_F \times B_2)$.
The variety $\mathcal B^{\Phi (\mathbf{W}_F \times B_2)}$ is non-empty if and 
only if $\Phi$ factors through $\mathbf W_F^{\ab}$, see \cite[\S 4.2]{R}.  
In that case, we view the domain of $\Phi$ to be $F^{\times} \times \SL_2 (\Cset)$: 
\[
\Phi\colon F^{\times} \times \SL_2 (\Cset) \to G.
\]
In this section we will build such a continuous morphism $\Phi$ from $\fs$ and data 
coming from the extended quotient of second kind. In Section \ref{sec:Borel} we show
how such a Langlands parameter $\Phi$ can be enhanced with a parameter $\rho$.

Throughout this article, a Frobenius element $\Frob_F$ has been chosen and fixed.  
This determines a uniformizer $\varpi_F$ via the equation $\mathbf{a}_F(\Frob_F) = \varpi_F$.  
That in turn gives rise to a group isomorphism $\fo_F^\times \times \mathbb Z \to F^\times$,
which sends $1 \in \mathbb Z$ to $\varpi_F$.
Let $\cT_0$ denote the maximal compact subgroup of $\cT$. As the latter is $F$-split,
\begin{equation}\label{eq:cT0}
\cT \cong F^\times \otimes_{\mathbb Z} X_* (\cT) \cong (\fo_F^\times \times \mathbb Z)
\otimes_{\mathbb Z} X_* (\cT) = \cT_0 \times X_* (\cT) .
\end{equation}
Because $\cW$ does not act on $F^\times$, these isomorphisms are $\cW$-equivariant if
we endow the right hand side with the diagonal $\cW$-action.
Thus \eqref{eq:cT0} determines a $\cW$-equivariant isomorphism of character groups 
\begin{equation}\label{split}
\Irr (\cT) \cong \Irr (\cT_0) \times \Irr (X_* (\cT)) = \Irr (\cT_0) \times X_{\unr}(\cT) .
\end{equation}

\begin{lem}\label{lem:cBernstein} 
Let $\chi$ be a character of $\cT$, and let 
$[\cT,\chi]_{\cG}$ be the inertial class of the pair $(\cT,\chi)$ as in \S 3.   Let 
\begin{align}\label{artin}
\fs = [\cT,\chi]_{\cG}.
\end{align}
Then $\fs$ determines, and is determined by, the $\cW$-orbit of a smooth morphism
\[
c^\fs \colon \fo_F^\times \to T.
\]
\end{lem}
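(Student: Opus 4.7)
The strategy is to repackage the inertial class $\fs$ as a $\cW$-orbit in $\Irr(\cT_0)$ and then to use tensor--hom adjunction to identify $\Irr(\cT_0)$ with smooth homomorphisms $\fo_F^\times\to T$.

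First, I will unwind the definition of $\fs=[\cT,\chi]_\cG$. Since $\cT$ is a torus, every irreducible supercuspidal representation of $\cT$ is a character, so $\fs$ consists of all pairs $(\cT,\chi')$ with $\chi'$ an unramified twist of some $\cG$-conjugate of $\chi$; here $\Nor_\cG(\cT)/\cT=\cW$ acts on $\chi$. Thus $\fs$ corresponds to the $\cW$-orbit of $\chi\cdot X_{\unr}(\cT)$ inside $\Irr(\cT)/X_{\unr}(\cT)$. By the $\cW$-equivariant splitting \eqref{split} this quotient is canonically identified with $\Irr(\cT_0)$ via restriction $\chi\mapsto\chi|_{\cT_0}$, and therefore $\fs$ is encoded by the $\cW$-orbit of $\chi|_{\cT_0}\in\Irr(\cT_0)$.

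Second, I will apply tensor--hom adjunction to the $\cW$-equivariant isomorphism $\cT_0\cong\fo_F^\times\otimes_\Z X_*(\cT)$ of \eqref{eq:cT0}. Combined with $T=\Hom_\Z(X^*(T),\Cset^\times)$ and the duality $X^*(T)=X_*(\cT)$, this yields a natural identification
\[
\Irr(\cT_0)\;=\;\Hom\bigl(\fo_F^\times\otimes_\Z X_*(\cT),\,\Cset^\times\bigr)
\;\cong\;\Hom\bigl(\fo_F^\times,\,\Hom_\Z(X_*(\cT),\Cset^\times)\bigr)
\;=\;\Hom(\fo_F^\times,T).
\]
Explicitly, $\chi\in\Irr(\cT_0)$ corresponds to the unique homomorphism $c:\fo_F^\times\to T$ satisfying $\lambda(c(u))=\chi(\lambda(u))$ for every $u\in\fo_F^\times$ and every $\lambda\in X_*(\cT)=X^*(T)$. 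Because $\fo_F^\times$ is profinite and $T$ is a complex algebraic torus, which has no small subgroups, any such continuous $c$ automatically factors through some finite quotient $\fo_F^\times/(1+\fp_F^n)$ and is therefore smooth in the sense of $p$-adic representation theory. Conversely, a smooth $c:\fo_F^\times\to T$ evidently produces a character of $\cT_0$ by the same formula.

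Finally, I will check that the two sides carry matching $\cW$-actions. The canonical perfect pairing between $X_*(\cT)$ and $X^*(T)$ is $\cW$-equivariant, so the action $w\cdot\chi=\chi\circ w^{-1}$ on $\Irr(\cT_0)$ corresponds under the adjunction to post-composition by $w$ on $\Hom(\fo_F^\times,T)$. Passing to $\cW$-orbits and combining with the first step yields a canonical bijection $\fs\leftrightarrow \cW\cdot c^\fs$, completing the lemma. I expect no serious obstacle; the one point that deserves explicit verification is the compatibility of the $\cW$-actions under the duality $T=\Hom(X_*(\cT),\Cset^\times)$, which is the step where the tautological $\cW$-action on $T$ gets matched with the natural $\cW$-action on characters of $\cT_0$ coming from conjugation in $\cG$.
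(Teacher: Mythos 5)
Your proof is correct and takes essentially the same route as the paper's: both rest on the $\cW$-equivariant splitting \eqref{split} combined with the tensor--hom adjunction identifying $\Irr(\cT_0)$ with $\Hom(\fo_F^\times,T)$, and then pass to $\cW$-orbits. You add two worthwhile clarifications the paper leaves implicit — that smoothness of $c$ is automatic from continuity because $\fo_F^\times$ is profinite and $T$ has no small subgroups, and an explicit check that the duality $X_*(\cT)=X^*(T)$ matches the two $\cW$-actions — but these are refinements of, not departures from, the paper's argument.
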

\begin{proof}
There is a natural isomorphism
\[
\Irr (\cT) = \Hom (F^\times \otimes_{\Zset} X_* (\cT),\C^\times) \cong
\Hom (F^\times ,\C^\times \otimes_\Zset X^* (\cT)) = \Hom (F^\times ,T) . 
\]
Together with \eqref{split} we obtain isomorphisms
\begin{align*}
& \Irr (\cT_0) \cong \Hom (\fo_F^\times ,T) , \\
& X_{\unr}(\cT) \cong \Hom (\Zset ,T) = T . 
\end{align*}
Let $\hat \chi \in \Hom (F^\times ,T)$ be the image of $\chi$ under these isomorphisms. 
By the above the restriction of $\hat \chi$ to $\fo_F^\times$ is not disturbed by 
unramified twists, so we take that as $c^\fs$. Conversely, by \eqref{split} $c^\fs$ 
determines $\chi$ up to unramified twists. Two elements of $\Irr (\cT)$ are 
$\cG$-conjugate if and only if they
are $\cW$-conjugate so, in view of \eqref{artin}, the $\cW$-orbit
of the $c^\fs$ contains the same amount of information as $\fs$.
\end{proof}

Let $H = \Cent_G (\text{im} \, c^\fs)$ and let $M = \Z_H (t)$ for some $t \in T$.
Recall that a unipotent element $x\in M^0$ is said to be
\emph{distinguished} if the connected center $\Z_{M^0}^0$ of $M^0$ is 
a maximal torus of $\Cent_{M^0}(x)$. Let $x\in M^0$ unipotent. 
If $x$ is not distinguished, then there is a Levi subgroup $L$ of $M^0$ 
containing $x$ and such that $x\in L$ is distinguished. 

Let $X\in \text{Lie }M^0$ such that $\exp(X) = x$.
A cocharacter $h \colon \Cset^\times\to M^0$ is said to be \emph{associated to} $x$ if 
\[
\Ad(h(t))X=t^2 X \quad\text{for each $t\in\Cset^\times$},
\]
and if the image of $h$ lies in the derived group of some Levi subgroup
$L$ for which $x\in L$ is distinguished (see \cite[Rem.~5.5]{J}
or \cite[Rem.2.12]{FR}). 

A cocharacter associated to a unipotent element $x\in M^0$ is not unique.
However, any two cocharacters associated to a given $x\in M^0$ are
conjugate under elements of $\Cent_{M^0}(x)^0$ (see for instance 
\cite[Lem.~5.3]{J}).

\smallskip

We work with the Jacobson--Morozov theorem \cite[p. 183]{CG}.  Let
$\matje{1}{1}{0}{1}$ be the standard unipotent matrix in $\SL_2(\Cset)$ and let 
$x$ be a unipotent element in $M^0$. There exist rational homomorphisms 
\begin{equation} \label{eqn:gamt}
\gamma \colon \SL_2 (\Cset) \to M^0 \quad \text{with} \quad \gamma \matje{1}{1}{0}{1} = x ,
\end{equation}
see \cite[\S 3.7.4]{CG}. Any two such homomorphisms  $\gamma$ are conjugate by
elements of $\Z_{M^\circ}(x)$. 

For $\alpha \in \Cset^{\times}$ we define the following matrix in $\SL_2 (\Cset)$:
\[
Y_{\alpha} = \matje{\alpha}{0}{0}{\alpha^{-1}} .
\]
Then each $\gamma$ as above determines a cocharacter 
$h\colon \Cset^\times\to M^0$ by setting 
\begin{equation}\label{hh}
h(\alpha):=\gamma(Y_{\alpha})\quad\text{for } \alpha\in\Cset^\times .
\end{equation}
Each cocharacter $h$ obtained in this way is associated to $x$, see \cite[Rem.~5.5]{J}
or \cite[Rem.2.12]{FR}. 
Hence each two such cocharacters are conjugate under $\Cent_{M^0}(x)^0$. 

\begin{lem} \label{lem:cocharindep}
Each cocharacter $h$ above can be 
identified with a cocharacter of $H$ associated to  
$x$, where $x$ is viewed as a unipotent element of $H$.

Any two such cocharacters of $H$ are conjugate by elements 
of $\Cent_H(x)^0$.
\end{lem}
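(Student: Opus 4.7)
The plan is to derive both statements from the general results on associated cocharacters already recalled above for $M^0$, applied this time to $H$. The only thing to check is that the construction in $M^0$ is compatible with passing to $H$, which is essentially formal.

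For the first assertion, I would start from the data used to produce $h$ in \eqref{eqn:gamt}--\eqref{hh}: a Jacobson--Morozov homomorphism $\gamma\colon \SL_2(\C)\to M^0$ with $\gamma\matje{1}{1}{0}{1}=x$, and $h(\alpha):=\gamma(Y_\alpha)$. Composing $\gamma$ with the inclusion $\iota\colon M^0\hookrightarrow H$ gives a homomorphism $\gamma':=\iota\circ\gamma\colon \SL_2(\C)\to H$ that still satisfies $\gamma'\matje{1}{1}{0}{1}=x$, where now $x$ is viewed as a unipotent element of $H$. The cocharacter $\alpha\mapsto \gamma'(Y_\alpha)$ is literally equal to $h$ regarded as a map with target $H$. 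Applying the general fact invoked just before the statement of the lemma (\cite[Rem.~5.5]{J} or \cite[Rem.~2.12]{FR}) to the connected reductive group $H$ in place of $M^0$, one concludes that $h$ is a cocharacter of $H$ associated to the unipotent element $x\in H$.

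For the second assertion, I would invoke the uniqueness-up-to-conjugacy statement that was already recorded above: in any connected reductive group, two cocharacters associated to a given unipotent element are conjugate under the identity component of the centralizer of that element (see \cite[Lem.~5.3]{J}). Applied to $H$ rather than $M^0$, together with part~(1), this immediately shows that any two cocharacters of $H$ arising from the construction of the lemma are conjugate by an element of $\Cent_H(x)^0$.

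There is no real obstacle here: the lemma is a compatibility statement, whose proof amounts to the observation that a Jacobson--Morozov $\mathfrak{sl}_2$-triple in $M^0$ is in particular one in $H$, and that the general theory of associated cocharacters recalled for $M^0$ applies verbatim to $H$.
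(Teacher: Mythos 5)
Your proof is correct, but it takes a genuinely different route from the paper's. The paper establishes the first assertion by invoking Jantzen's Claim~5.12, which is a general compatibility statement: for a connected reductive subgroup $H_2$ of a connected complex Lie group $H_1$, the cocharacters of $H_2$ associated to a unipotent $x\in H_2$ are exactly the cocharacters of $H_1$ associated to $x$ that take values in $H_2$. The paper applies this with $H_1 = H$ and $H_2 = M^0$, so the key input is a comparison theorem for the notion of ``associated'' across a pair of groups. You instead bypass Claim~5.12 entirely: you observe that $\iota\circ\gamma$ is still a Jacobson--Morozov homomorphism for $x$ after composing with the inclusion $M^0\hookrightarrow H$, and then apply the single-group statement (Jantzen's Remark~5.5, already used in the paper for $M^0$) directly to $H$ to conclude that $h$ is associated to $x$ in $H$. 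This is a bit more economical and more self-contained, since it reuses only the fact already invoked before the lemma rather than importing a second theorem; the trade-off is that it only covers cocharacters arising from Jacobson--Morozov data (which is all the lemma asks for), whereas the paper's argument via Claim~5.12 would identify \emph{all} associated cocharacters of $M^0$ with associated cocharacters of $H$ landing in $M^0$. Both proofs use \cite[Lem.~5.3]{J} identically for the second assertion. One small point worth being explicit about: since $\mathbb{C}^\times$ is connected, every cocharacter of $H$ automatically lands in $H^\circ$, so applying the results of \cite{J} to the possibly disconnected $H$ is harmless, and $\Cent_H(x)^0 = \Cent_{H^0}(x)^0$.
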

\begin{proof}
Recall J.C.~Jantzen's result \cite[Claim~5.12]{J} 
(see also \cite{FR} for a related study in
positive good characteristic): For any connected reductive 
subgroup $H_2$ of an arbitrary connected complex Lie group $H_1$, the 
cocharacters of $H_2$ associated to a unipotent element $x\in H_2$ are 
precisely the cocharacters of $H_1$ associated to $x$ which take values in $H_2$.  

Applying this with $H_1=H$ and $H_2=M^0$, we get that $h$ can be
identified with a cocharacter of $H$, and is associated to 
$x$ viewed as a unipotent element of $H$.

The last assertion follows from \cite[Lem.~5.3]{J}.
\end{proof}

>From now on we view the $h$ above as cocharacters of $H$ associated to $x$.
Any two $\gamma\colon\SL_2(\Cset)\to M^0\subset H$ as above are
conjugate by elements of $\Cent_H(x)$.

Suppose that $\gamma'\colon \SL_2 (\Cset)\to H$ is an optimal $\SL_2$-homomorphism 
for $x$ such that
\[
\gamma'(Y_\alpha)=\gamma(Y_\alpha)\quad\text{for }
\alpha\in\Cset^\times .
\]
Then $\gamma' = \gamma$, see \cite[Prop.~42]{McN}.

\smallskip

Choose a geometric Frobenius $\varpi_F$ and set $\Phi (\varpi_F) = t \in T$.
Define the Langlands parameter $\Phi$ as follows:
\begin{equation}\label{eqn:Phi}
\Phi \colon F^{\times} \times \SL_2 (\Cset) \to G, \qquad  
(u\varpi_F^n,Y) \mapsto c^\fs (u) \cdot t^n\cdot \gamma(Y) 
\end{equation}
for all  $u \in \fo_F^\times, \; n \in \Zset,\; Y \in \SL_2 (\Cset)$.

Note that the definition of $\Phi$ uses the appropriate data: 
the semisimple element $t \in T$, the map $c^\fs$, and the 
homomorphism $\gamma$ (which depends on the Springer parameter $x$).  

Since $x$ determines $\gamma$ up to $M^\circ$-conjugation, $c^\fs,x$ and $t$ 
determine $\Phi$ up to conjugation by their common centralizer in $G$. 
Notice also that one can recover $c^\fs, x$ and $t$ from $\Phi$ and that
\begin{equation}\label{eq:hPhi}
h (\alpha) = \Phi (1, Y_\alpha) . 
\end{equation}

\section{Varieties of Borel subgroups}
\label{sec:Borel}

We clarify some issues with different varieties of Borel subgroups and different
kinds of parameters arising from them. 
Let $G$ be a connected reductive complex group and let 
\[
\Phi \colon \mathbf W_F \times \SL_2 (\C) \to G 
\]
be as in \eqref{eqn:Phi}. We write 
\begin{align*}
& H = \Z_G (\Phi (\mathbf I_F)) = \Z_G (\im c^\fs), \\
& M = \Z_G (\Phi (\mathbf W_F)) = \Z_H (t). 
\end{align*}
Although both $H$ and $M$ are in general disconnected, $\Phi (\mathbf W_F)$ is always 
contained in $H^\circ$ because it lies in the maximal torus $T$ of $G$ and 
$H^\circ$. Hence $\Phi (\mathbf I_F) \subset \Z(H^\circ)$.

By construction $t$ commutes with $\Phi (\SL_2 (\C)) \subset M$. 
For any $q^{1/2} \in \C^\times$ the element 
\begin{equation}\label{eq:S.12}
t_q := t \Phi \big( Y_{q^{1/2}} \big)
\end{equation} 
satisfies the familiar relation $t_q x t_q^{-1} = x^q$. Indeed
\begin{equation}\label{eq:tqx}
\begin{split}
t_q x t_q^{-1} & = t \Phi (Y_{q^{1/2}}) \Phi \matje{1}{1}{0}{1} \Phi (Y_{q^{1/2}}^{-1}) t^{-1} \\
& = t \Phi \big( Y_{q^{1/2}} \matje{1}{1}{0}{1} Y_{q^{1/2}}^{-1} \big) t^{-1} \\
& = t \Phi \matje{1}{q}{0}{1} t^{-1} = x^q .
\end{split}
\end{equation}
Recall that $B_2$ denotes the upper triangular Borel subgroup of $\SL_2 (\C)$. In the 
flag variety of $M^\circ$ we have the subvarieties $\mathcal B^x_{M^\circ}$ and 
$\mathcal B^{\Phi (B_2)}_{M^\circ}$ of Borel subgroups containing $x$ and $\Phi (B_2)$, 
respectively. Similarly the flag variety of $H^\circ$ has subvarieties 
$\mathcal B^{t,x}_{H^\circ} ,\; \mathcal B^{t_q,x}_{H^\circ}$ and 
\[
\mathcal B^{t, \Phi (B_2)}_{H^\circ} = \mathcal B^{t_q, \Phi (B_2)}_{H^\circ} .
\]
Notice that $\Phi (\mathbf I_F)$ lies in every Borel subgroup of $H^\circ$, because it
is contained in $\Z(H^\circ)$. We abbreviate $\Z_H (\Phi) = 
\Z_H (\Phi (\mathbf W_F \times \SL_2 (\C)))$ and similarly for other groups.

\begin{prop}\label{prop:S.1}
\begin{enumerate}
\item The inclusion maps
\[
\begin{array}{ccccccc}
 & & \Z_{M^\circ}(\Phi) & \to & \Z_{M^\circ}(\Phi (B_2)) & 
\to & \Z_{M^\circ}(x) , \\
\Z_H (t_q,x) & \leftarrow & \Z_H (\Phi) & \to & \Z_H (t, \Phi (B_2)) & 
\to & \Z_H (t,x) , 
\end{array}
\]
are homotopy equivalences. In particular they induce isomorphisms between the 
respective component groups.
\item The inclusions $\mathcal B^{\Phi (B_2)}_{M^\circ} \to \mathcal B^x_{M^\circ}$ and 
$\mathcal B^{t_q,x}_{H^\circ} \leftarrow \mathcal B^{t,\Phi (B_2)}_{H^\circ} 
\to \mathcal B^{t,x}_{H^\circ}$ are homotopy equivalences.
\end{enumerate}
\end{prop}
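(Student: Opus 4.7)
The essential tool is the cocharacter $h\colon \C^\times \to M^\circ$ defined by $h(\alpha) = \Phi(1, Y_\alpha) = \gamma(Y_\alpha)$, where $\gamma := \Phi|_{\SL_2(\C)}$ and $x := \gamma\matje{1}{1}{0}{1}$; note that $\Phi(B_2) = \gamma(B_2)$ is topologically generated by $\operatorname{im}(h)$ together with $x$. Since $\Phi(\mathbf{I}_F) \subset \Z(H^\circ)$, the image $\Phi(\mathbf{W}_F)$ lies in $\Z(M)$ and is centralized pointwise by $M^\circ$, so on the $M^\circ$ side the $\Phi$- and $\Phi(B_2)$-centralizers collapse to the $\gamma(\SL_2)$- and $\gamma(B_2)$-centralizers, and likewise for the corresponding subvarieties of Borel subgroups.

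Part (1) follows from the classical decomposition $\Z_{G^\circ}(x) \cong \Z_{G^\circ}(\gamma(\SL_2)) \ltimes R_u$ with $R_u$ a contractible unipotent group (and an intermediate factorization with $\gamma(B_2)$ in place of $\gamma(\SL_2)$), see \cite{J} and \cite{McN}. Applied to $G^\circ = M^\circ$ this handles the $M^\circ$-chain directly. Applied inside the possibly disconnected group $M = \Z_H(t)$, with the retraction $m \mapsto \lim_{\alpha \to 0} h(\alpha) m h(\alpha)^{-1}$ converging because the $h$-weights on $\Lie \Z_M(x) = \ker\operatorname{ad}(e)$ are non-negative by $\mathfrak{sl}_2$-representation theory, it yields the chain $\Z_H(\Phi) \hookrightarrow \Z_H(t,\Phi(B_2)) \hookrightarrow \Z_H(t,x)$. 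The genuinely new map is $\Z_H(\Phi) \hookrightarrow \Z_H(t_q,x)$: for $m$ centralizing both $t_q$ and $x$, the same limit $m_\infty$ centralizes $h(\C^\times)$ and $x$, still centralizes $t_q$, and therefore also centralizes $t = t_q h(q^{1/2})^{-1}$, so $m_\infty \in \Z_H(t,\gamma(B_2))$. Composing with the preceding chain gives the retraction; this is the content of the key lemma of Kazhdan--Lusztig \cite[\S 2]{KL}.

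For part (2), I argue in parallel at the level of Borel varieties. Given $B$ in any of $\mathcal B^x_{M^\circ}$, $\mathcal B^{t,x}_{H^\circ}$, $\mathcal B^{t_q,x}_{H^\circ}$, the limit $B_\infty := \lim_{\alpha\to 0} h(\alpha) B h(\alpha)^{-1}$ exists in the flag variety by properness and is $h$-stable, hence contains $h(\C^\times) = \gamma(T_2)$. The root vector $e := d\gamma\matje{0}{1}{0}{0} \in \Lie B$ has strictly positive $h$-weight, so it persists in $\Lie B_\infty$, forcing $\gamma(B_2) \subset B_\infty$; since $h$ commutes with $t$ the condition $t \in B$ passes to $B_\infty$, while if $B \ni t_q$ then $B_\infty$ contains both $h(\C^\times)$ and $t_q$, hence also $t$. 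The resulting retractions are promoted to honest homotopy equivalences by the Spaltenstein--De~Concini--Lusztig--Procesi theorem that $\mathcal B^{\gamma(B_2)}$ is a strong deformation retract of the Springer fiber $\mathcal B^x$ via the Bialynicki--Birula filtration, a statement that transfers verbatim to the $H^\circ$-Borel varieties because $t$ lies in the $h$-fixed part of the ambient torus.

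The principal obstacle is precisely this last step. The naive homotopy $s \mapsto h(s) B h(s)^{-1}$ sends a Borel through $x$ to one through $\gamma\matje{1}{s^2}{0}{1}$, so it does not remain inside $\mathcal B^x$ for $s \in (0,1)$, and the analogous difficulty affects the centralizer retractions. This is resolved by the affine-fibration structure over the $h$-fixed locus coming from Kostant--Mal'cev (for centralizers) and the Bialynicki--Birula cell decomposition applied to the Springer resolution (for Borel varieties), which together furnish genuine strong deformation retracts rather than mere set-theoretic retractions onto the fixed loci.
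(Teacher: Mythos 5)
Your centralizer argument in part (1) is in the same spirit as the paper's, which writes $\Z_{H^\circ}(t_q,x) = \Z_{H^\circ}(\Phi)\,\Z_{U_x}(t_q)$ via the Levi decomposition $\Z_{H^\circ}(x) = \Z_{H^\circ}(\Phi(\SL_2))\,U_x$ and observes that $\Z_{U_x}(t_q)$ is a contractible complex unipotent group; your limit $m \mapsto \lim_{\alpha\to 0}h(\alpha)mh(\alpha)^{-1}$ is another way of exhibiting the same retraction onto the Levi factor, and this much is fine for the identity component. For the disconnected group $\Z_H(\cdot)$, though, you need to check that the limit converges on \emph{every} component (not just the identity one), which is exactly where the paper instead invokes the Kazhdan--Lusztig fact that all $\Phi$ giving rise to a fixed $(t_q,x)$ are conjugate under $\Z_{H^\circ}(t_q,x)$, and deduces the equality of component groups by a $\pi_0(H)$-equivariance argument. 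Your flow argument is silent on this.

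For part (2) there is a real gap, and curiously it is not the one you flag. The ``principal obstacle'' you describe is spurious: if $x \in B$ then $\log x \in \mathfrak n_B$, so $s^{-2}\log x \in \mathfrak n_B$ and $x^{s^{-2}} = \exp(s^{-2}\log x) \in B$, which means $x \in h(s)Bh(s)^{-1}$ for all $s \in \C^\times$. So the naive flow $s \mapsto h(s)Bh(s)^{-1}$ \emph{does} preserve $\mathcal B^x$, and likewise $\mathcal B^{t,x}_{H^\circ}$ and $\mathcal B^{t_q,x}_{H^\circ}$ (since $t$ and $t_q$ commute with $h$). The genuine problem, which you do not name, is that although the pointwise limit $B \mapsto B_\infty := \lim_{s\to 0} h(s)Bh(s)^{-1}$ exists and lands in $\mathcal B^{\gamma(B_2)}$, this limit map is in general \emph{not continuous}: Bia\l{}ynicki--Birula retraction maps are continuous only cell-by-cell, not across cell closures (the basic example is $\mathbb P^1$ with its standard $\C^\times$-action, where every point has a limit yet there is no deformation retract onto the two fixed points). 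Moreover $\mathcal B^x$ is singular, so the smooth BB cell structure you appeal to is not directly available. Citing a ``Spaltenstein--De~Concini--Lusztig--Procesi strong deformation retract via the BB filtration'' as a black box therefore does not close the argument; the literature gives isomorphisms on (co)homology by these methods, which is weaker than a homotopy equivalence. The paper avoids this entirely by combining the Kazhdan--Lusztig identity $\Z_{H^\circ}(t_q,x)\cdot\mathcal B^{t_q,\Phi(B_2)}_{H^\circ}=\mathcal B^{t_q,x}_{H^\circ}$ with the observation that the translating set $\{u\in\Z_{U_x}(t_q)\mid u\cdot B\in\mathcal B^{t_q,\Phi(B_2)}_{H^\circ}\}$ is contractible for each $B$ (proved by scaling $u\mapsto u^\lambda$ inside the unipotent group), obtaining the homotopy equivalence from a contractible-fiber/affine-fibration argument rather than from a flow.
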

\begin{proof} 
It suffices to consider the statements for $H$ and $t_q$, 
since the others can be proven in the same way.\\
(1) Our proof uses some elementary observations from \cite[\S 4.3]{R}. 
There is a Levi decomposition
\[
\Z_{H^\circ} (x) = \Z_{H^\circ} (\Phi (\SL_2 (\C))) U_x 
\]
with $\Z_{H^\circ} (\Phi (\SL_2 (\C))) = \Z_{H^\circ} (\Phi (B_2))$ reductive and $U_x$ unipotent. 
Since $t_q \in \Nor_{H^\circ} (\Phi (\SL_2 (\C)))$ and $\Z_H (x^q) = \Z_H (x)$, conjugation by $t_q$ 
preserves this decomposition. Therefore
\begin{equation}\label{eq:S.1}
\Z_{H^\circ} (t_q,x) = \Z_{H^\circ} (\Phi) \Z_{U_x}(t_q) = \Z_{H^\circ} (t_q, \Phi (B_2)) \Z_{U_x}(t_q). 
\end{equation}
We note that 
\[
\Z_{U_x}(t_q) \cap \Z_{H^\circ} (t_q, \Phi (B_2)) \subset U_x \cap \Z_{H^\circ} (\Phi (B_2)) = 1
\]
and that $\Z_{U_x}(t_q) \subset U_x$ is contractible, because it is a unipotent complex group. 
It follows that
\begin{equation}\label{eq:S.10}
\Z_{H^\circ} (\Phi) = \Z_{H^\circ} (t_q, \Phi (B_2)) \to \Z_{H^\circ} (t_q,x) 
\end{equation}
is a homotopy equivalence. If we want to replace $H^\circ$ by $H$, we find
\[
\Z_H (\Phi) / \Z_{H^\circ}(\Phi) = 
\{ h H^\circ \in \pi_0 (H) \mid h \Phi h^{-1} \in \mathrm{Ad}(H^\circ) \Phi \} ,
\]
and similarly with $(t_q, \Phi (B_2))$ or $(t_q,x)$ instead of $\Phi$. 

Let us have a closer look
at the $H^\circ$-conjugacy classes of these objects. Given any $\Phi$, we obviously know what 
$t_q$ and $x$ are. Conversely, suppose that $t_q$ and $x$ are given. We apply a refinement of
the Jacobson--Morozov theorem due to Kazhdan and Lusztig. According to \cite[\S 2.3]{KL} there
exist homomorphisms $\Phi : \mathbf W_F \times \SL_2 (\C) \to G$ as above, which return $t_q$ 
and $x$ in the prescribed way. Moreover all such homomorphisms are conjugate under 
$\Z_{H^\circ} (t_q,x)$, see \cite[\S 2.3.h]{KL} or Section 19. So from $(t_q,x)$ we can reconstruct 
the Ad$(H^\circ)$-orbit of $\Phi$, and this 
gives bijections between $H^\circ$-conjugacy classes of $\Phi ,\; (t_q, \Phi (B_2))$ and $(t_q,x)$. 
Since these bijections clearly are $\pi_0 (H)$-equivariant, we deduce 
\begin{equation}\label{eq:S.11}
\Z_H (\Phi) / \Z_{H^\circ}(\Phi) = \Z_H (t_q, \Phi (B_2)) / \Z_{H^\circ}(t_q, \Phi (B_2)) = 
\Z_H (t_q,x) / \Z_{H^\circ}(t_q,x) .
\end{equation}
Equations \eqref{eq:S.10} and \eqref{eq:S.11} imply that
\[
\Z_H (\Phi) = \Z_H (t_q, \Phi (B_2)) \to \Z_H (t_q,x) 
\]
is also a homotopy equivalence. \\
(2) By the aforementioned result \cite[\S 2.3.h]{KL} 
\begin{equation}\label{eq:S.2}
\Z_{H^\circ} (t_q,x) \cdot \mathcal B^{t_q, \Phi (B_2)}_{H^\circ} = \mathcal B^{t_q,x}_{H^\circ} .
\end{equation}
On the other hand, by \eqref{eq:S.1}
\begin{equation}\label{eq:S.3}
\Z_{H^\circ} (t_q,x) \cdot \mathcal B^{t_q, \Phi (B_2)}_{H^\circ} = 
\Z_{U_x}(t_q) \Z_H (t_q, \Phi (B_2)) \cdot \mathcal B^{t_q, \Phi (B_2)}_{H^\circ} = 
\Z_{U_x}(t_q) \cdot \mathcal B^{t_q, \Phi (B_2)}_{H^\circ} .
\end{equation}
For any $B \in \mathcal B^{t_q, \Phi (B_2)}_{H^\circ}$ and $u \in \Z_{U_x}(t_q)$ it is clear that
\[
u \cdot B \in \mathcal B^{t_q, \Phi (B_2)}_{H^\circ} \; \Longleftrightarrow \;
\Phi (B_2) \subset u B u^{-1} \; \Longleftrightarrow \;
u^{-1} \Phi (B_2) u \subset B .
\]
Furthermore, since $\Phi (B_2) \subset B$ is generated by $x$ and 
$\{ \Phi \matje{\alpha}{0}{0}{\alpha^{-1}} \mid \alpha \in \C^\times \}$, 
the right hand side is equivalent to 
\[
u^{-1} \Phi \matje{\alpha}{0}{0}{\alpha^{-1}} u \in B \quad \forall \alpha \in \C^\times .
\]
In Lie algebra terms this can be reformulated as 
\[
\mathrm{Ad}_{u^{-1}} (d \Phi \matje{\alpha}{0}{0}{-\alpha}) \in \mathrm{Lie} \, B
\quad \forall \alpha \in \C . 
\]
Because $u$ is unipotent, this happens if and only if
\[
\mathrm{Ad}_{u^\lambda} (d \Phi \matje{\alpha}{0}{0}{-\alpha}) \in \mathrm{Lie} \, B
\quad \forall \lambda,\alpha \in \C .  
\]
By the reverse chain of arguments the last statement is equivalent with
\[
u^\lambda \cdot B \in \mathcal B^{t_q, \Phi (B_2)}_{H^\circ} \quad \forall \lambda \in \C .
\]
Thus $\{ u \in \Z_{U_x}(t_q) \mid u \cdot B \in \mathcal B^{t_q, \Phi (B_2)}_{H^\circ} \}$ 
is contractible for all $B \in \mathcal B^{t_q, \Phi (B_2)}_{H^\circ}$, and we already knew 
that $\Z_{U_x}(t_q)$ is contractible. Together with \eqref{eq:S.2} and \eqref{eq:S.3} these 
imply that $\mathcal B^{t_q, \Phi (B_2)}_{H^\circ} \to \mathcal B^{t_q,x}_{H^\circ}$ 
is a homotopy equivalence.
\end{proof}

For the affine Springer correspondence we will need more precise information on the
relation between the varieties for $G$, for $H$ and for $M^\circ$.

\begin{prop}\label{prop:UP}
\begin{enumerate}
\item The variety $\mathcal B^{t,x}_{H^\circ}$ is isomorphic to $[\mathcal W^{H^\circ} : 
\mathcal W^{M^\circ}]$ copies of $\mathcal B^x_{M^\circ}$, and 
$\mathcal B^{t, \Phi (B_2)}_{H^\circ}$ is isomorphic to the same number of copies of 
$\mathcal B^{\Phi (B_2)}_{M^\circ}$.
\item The group $\Z_{H^\circ}(t,x) / \Z_{M^\circ}(x)$ permutes these two
sets of copies freely.
\item The variety $\mathcal B_G^{\Phi (\mathbf W_F \times B_2)}$ is isomorphic to
$[\mathcal W^G : \mathcal W^{H^\circ}]$ copies of $\mathcal B^{t, \Phi (B_2)}_{H^\circ}$. 
The group $\Z_G (\Phi) / \Z_{H^\circ}(\Phi)$ permutes these copies freely.
\end{enumerate}
\end{prop}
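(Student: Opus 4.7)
The three statements share a common template, applied at two different nesting depths: (1) and (2) concern the inclusion $M^\circ \subset H^\circ$ around the semisimple element $t$, while (3) applies the same template at $H^\circ \subset G$ around the semisimple subgroup $\im c^\fs$.

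For (1), I would consider the morphism $\rho \colon \mathcal{B}^t_{H^\circ} \to \mathcal{B}_{M^\circ}$ given by $B \mapsto B \cap M^\circ$. Since $M^\circ$ contains $T$ and hence has maximal rank in $H^\circ$, the root-group description of Borel subgroups shows that the Borels of $H^\circ$ above a fixed Borel $B_M$ of $M^\circ$ correspond to positive systems in the root system of $(H^\circ,T)$ extending the positive system of $(M^\circ,T)$ determined by $B_M$; there are precisely $[\cW^{H^\circ} : \cW^{M^\circ}]$ such extensions. Thus $\rho$ is a finite \'etale cover of this degree, and it is trivial because the base $\mathcal{B}_{M^\circ}$ is simply connected. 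Now $x$ and $\Phi(B_2)$ lie in $M^\circ$, so the closed conditions $x \in B$ and $\Phi(B_2) \subset B$ depend only on $B \cap M^\circ$; restricting the trivialization yields the asserted decompositions of $\mathcal{B}^{t,x}_{H^\circ}$ and $\mathcal{B}^{t,\Phi(B_2)}_{H^\circ}$.

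For (2), the key step is to compute the set-wise stabilizer in $\Z_{H^\circ}(t)$ of an $M^\circ$-orbit on $\mathcal{B}^t_{H^\circ}$. Fix a Borel $B_0$ of $H^\circ$ containing $T$. An element $z \in \Z_{H^\circ}(t)$ preserves the orbit $M^\circ \cdot B_0$ iff $zB_0 z^{-1} = mB_0 m^{-1}$ for some $m \in M^\circ$, iff $m^{-1}z \in N_{H^\circ}(B_0) = B_0$. Writing $z = mb$ with $b \in B_0$ and using the decomposition $B_0 = T \cdot \prod_{\alpha > 0} U_\alpha$ together with the fact that $U_\alpha \subset \Z_{H^\circ}(t)$ iff $\alpha(t) = 1$ (equivalently iff $\alpha$ is a root of $M^\circ$), one obtains $B_0 \cap \Z_{H^\circ}(t) = B_0 \cap M^\circ$; hence $b \in M^\circ$ and $z \in M^\circ$. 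Thus $\Z_{H^\circ}(t)/M^\circ$ acts freely on $\pi_0(\mathcal{B}^t_{H^\circ})$. Since $t \in Z(M^\circ)$, we have $\Z_{H^\circ}(t,x) \cap M^\circ = \Z_{M^\circ}(t,x) = \Z_{M^\circ}(x)$, which provides an injection $\Z_{H^\circ}(t,x)/\Z_{M^\circ}(x) \hookrightarrow \Z_{H^\circ}(t)/M^\circ$ and transfers the free action to the copies of $\mathcal{B}^x_{M^\circ}$; the same argument works with $x$ replaced by $\Phi(B_2)$.

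For (3), I would repeat the template verbatim with $(G, \im c^\fs, H^\circ)$ in place of $(H^\circ, t, M^\circ)$: the map $B \mapsto B \cap H^\circ$ realizes $\mathcal{B}_G^{\im c^\fs}$ as a trivial $[\cW^G : \cW^{H^\circ}]$-fold cover of the simply connected base $\mathcal{B}_{H^\circ}$, and the closed conditions $t \in B$ and $\Phi(B_2) \subset B$ pull back to give the asserted decomposition of $\mathcal{B}_G^{\Phi(\bW_F \times B_2)}$. The stabilizer argument of (2) shows that $H/H^\circ$ acts freely on $\pi_0(\mathcal{B}_G^{\im c^\fs})$, and the embedding $\Z_G(\Phi)/\Z_{H^\circ}(\Phi) \hookrightarrow H/H^\circ$, valid because $\Z_G(\Phi) \cap H^\circ = \Z_{H^\circ}(\Phi)$, transfers the freeness to the copies of $\mathcal{B}^{t,\Phi(B_2)}_{H^\circ}$. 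The principal technical obstacle is the stabilizer calculation in (2), specifically the identity $B_0 \cap \Z_{H^\circ}(t) = B_0 \cap M^\circ$; once it is established, the entire proposition reduces to bookkeeping with the same pattern applied at two nesting depths.
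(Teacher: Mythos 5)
Your proposal follows the same overall strategy as the paper's proof (the retraction $B \mapsto B\cap M^\circ$, degree count by Weyl group index, trivialization, restriction, stabilizer computation, applied at two nesting depths), but several steps are packaged differently; the comparison is instructive.

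For (1), you assert that $\rho\colon \mathcal B^t_{H^\circ}\to\mathcal B_{M^\circ}$ is a finite \'etale cover and then invoke simple connectedness of the base to trivialize it. This is correct, but the \'etale-ness is not free: it requires knowing that $\mathcal B^t_{H^\circ}$ is smooth and that $\rho$ is flat and unramified (or a miracle-flatness argument plus equidimensionality). The paper sidesteps this by following Chriss--Ginzburg and directly decomposing $\mathcal B^t_{H^\circ}$ into its $M^\circ$-orbits, each of which maps isomorphically onto $\mathcal B_{M^\circ}$; the degree count then comes from counting Borels containing $T$ in the fibre, exactly as you do. Your argument is essentially a gesture at the same decomposition, but as written it leaves the covering-space hypothesis unverified. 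Also note that your root-system counting of the fibre implicitly assumes $T\subset B$; for general $B$ this is recovered by $M^\circ$-equivariance, which you should say explicitly.

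For (2), your explicit computation of the set-wise stabilizer, via the identity $B_0\cap\Z_{H^\circ}(t)=B_0\cap M^\circ$ using the unique root-group decomposition $B_0 = T\prod_{\alpha>0}U_\alpha$ and the fact that $tU_\alpha t^{-1}=U_\alpha$ pointwise iff $\alpha(t)=1$, is the same key fact the paper uses, but you make it more visible. The paper's version is terser: given $h\in M$ fixing the component of $B$, it finds $m\in M^\circ$ with $mh\in N_{H^\circ}(B)=B$, then concludes $mh\in M^\circ$ from connectedness of $B$ — which is precisely the observation $B\cap\Z_{H^\circ}(t)=B\cap M^\circ$ in disguise. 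Your presentation, working directly inside $\Z_{H^\circ}(t)$, actually avoids a slight overstatement in the paper (which claims freeness of the full $M/M^\circ$-action and not only of the $\Z_{H^\circ}(t)/M^\circ$-action). Your reduction via the injection $\Z_{H^\circ}(t,x)/\Z_{M^\circ}(x)\hookrightarrow\Z_{H^\circ}(t)/M^\circ$, valid because $M^\circ\subset\Z_H(t)$ forces $\Z_{H^\circ}(t,x)\cap M^\circ=\Z_{M^\circ}(x)$, is exactly right. Part (3) is then the same template with $(G,\im c^\fs,H^\circ)$, as you say and as the paper says. In summary: correct and in the same spirit; the one place to tighten is the \'etale/trivialization claim in (1), which is cleanest via the $M^\circ$-orbit decomposition.
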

\begin{proof}
(1) Let $A$ be a subgroup of $T$ such that $M^\circ = \Z_{H^\circ} (A)^\circ$ and let 
$\mathcal B_{H^\circ}^A$ denote the variety of all Borel subgroups of $H^\circ$ 
which contain $A$. With an adaptation of \cite[p.471]{CG} we will prove that, for any 
$B \in \mathcal B_{H^\circ}^A ,\; B \cap M^0$ is a Borel subgroup of $M^0$. 

Since $B \cap M^\circ \subset B$ is solvable, it suffices to show that its Lie algebra
is a Borel subalgebra of Lie $M^\circ$. Write Lie $T = \mathfrak t$ and let 
\[
\text{Lie } H^\circ = \mathfrak n \oplus \mathfrak t \oplus \mathfrak n_- 
\]
be the triangular decomposition, where Lie $B = \mathfrak n \oplus \mathfrak t$.
Since $A \subset B$, it preserves this decomposition and
\begin{align*}
& \text{Lie } M^\circ = (\text{Lie } H )^A = 
\mathfrak n^A \oplus \mathfrak t \oplus \mathfrak n_-^A ,\\
& \text{Lie } B \cap M^\circ = \text{Lie } B^A = \mathfrak n^A \oplus \mathfrak t .
\end{align*}
The latter is indeed a Borel subalgebra of Lie $M^\circ$.
Thus there is a canonical map 
\begin{equation} \label{eqn:(7)} 
\mathcal B_{H^\circ}^A \to \Flag \, M^0, \quad B \mapsto B \cap M^0 .
\end{equation}
The group $M$ acts by conjugation on $\mathcal B_{H^\circ}^A$ and \eqref{eqn:(7)} 
clearly is $M$-equivariant. By \cite[p. 471]{CG} the $M^\circ$-orbits form a partition
\begin{equation}\label{eq:componentsBA}
\mathcal B_{H^\circ}^A = \mathcal B_1 \sqcup \mathcal B_2 \sqcup \cdots \sqcup \mathcal B_m .
\end{equation}
At the same time these orbits are the connected components of $\mathcal B_{H^\circ}^A$ 
and the irreducible components of the projective variety $\mathcal B_{H^\circ}^A$. 
The argument from \cite[p. 471]{CG} also shows that \eqref{eqn:(7)}, restricted to any one 
of these orbits, is a bijection from the $M^0$-orbit onto Flag $M^0$. 

The number of components $m$ can be determined as in the proof of
\cite[Corollary 3.12.a]{Ste1965}. The collection of Borel subgroups of $M^\circ$ that 
contain the maximal torus $T$ is in bijection with the Weyl group $\mathcal W^{M^\circ}$. 
Retracting via \eqref{eqn:(7)}, we find that every component $\mathcal B_i$ has precisely 
$|\mathcal W^{M^\circ}|$ elements that contain $T$. On the other hand, since $A \subset T ,\;
\mathcal B_{H^\circ}^A$ has $|\mathcal W^{H^\circ}|$ elements that contain $T$, so
\[
m = [\mathcal W^{H^\circ} : \mathcal W^{M^\circ}] .
\]
To obtain our desired isomorphisms of varieties, we let $A$ be the group generated by
$t$ and we restrict $\mathcal B_i \to \text{Flag } M^\circ$ to Borel subgroups
that contain $t,x$ (respectively $t,\Phi (B_2)$). \\ 
(2) By Proposition \ref{prop:S.1} 
\[
\Z_{H^\circ}(t,x) / \Z_{M^\circ}(x) \cong \Z_{H^\circ}(t, \Phi (B_2)) / \Z_{M^\circ}(\Phi (B_2)) .
\]
Since the former is a subgroup of $M / M^\circ$ and the copies under
consideration are in $M$-equivariant bijection with the components \eqref{eq:componentsBA},
it suffices to show that $M / M^\circ$ permutes these components freely.
Pick $B,B'$ in the same component $\mathcal B_i$ and assume that $B' = h B h^{-1}$ 
for some $h \in M$. Since $\mathcal B_i$ is $M^\circ$-equivariantly isomorphic 
to the flag variety of $M^\circ$ we can find $m \in M^\circ$ such that $B' = m^{-1} B m$.
Then $m h$ normalizes $B$, so $m h \in B$. As $B$ is connected, this implies
$m h \in M^\circ$ and $h \in M^\circ$. \\
(3) Apply the proofs of parts 1 and 2 with $A = \Phi (\mathbf I_F) ,\; G$ in the role of 
$H^\circ ,\; H^\circ$ in the role of $M^\circ$ and $t \Phi (B_2)$ in the role of $x$.
\end{proof}

\section{Comparison of different parameters}
\label{sec:comppar}

In the following sections we will make use of several different but related 
kinds of parameters. 
\vspace{2mm}

\noindent \textbf{Kazhdan--Lusztig--Reeder parameters}\\  
For a Langlands parameter as in \eqref{eqn:Phi}, the variety of Borel subgroups \\
$\mathcal B_G^{\Phi (\mathbf W_F \times B_2)}$ is nonempty, and the centralizer
$\Z_G (\Phi)$ of the image of $\Phi$ acts on it. Hence the group of components
$\pi_0 (\Z_G (\Phi))$ acts on the homology $H_* \big( \mathcal B_G^{\Phi (\mathbf W_F 
\times B_2)} ,\C \big)$. We call an irreducible representation $\rho$ of 
$\pi_0 (\Z_G (\Phi))$ geometric if it appears in $H_* \big( \mathcal B_G^{\Phi 
(\mathbf W_F \times B_2)} ,\C \big)$. We define a Kazhdan--Lusztig--Reeder parameter 
for $G$ to be a such pair $(\Phi,\rho)$. The group $G$ acts on these parameters by 
\begin{equation}\label{eq:defKLRparameter}
g \cdot (\Phi,\rho) = (g \Phi g^{-1}, \rho \circ \mathrm{Ad}_g^{-1}) 
\end{equation}
and we denote the corresponding equivalence class by $[\Phi,\rho ]_G$.
\vspace{2mm}

\noindent \textbf{Affine Springer parameters}\\ 
As before, suppose that $t \in G$ is semisimple and that $x \in \Z_G (t)$ is unipotent. 
Then $\Z_G (t,x)$ acts on $\mathcal B_G^{t,x}$ and $\pi_0 (\Z_G (t,x))$ acts on the
homology of this variety. In this setting we say that $\rho_1 \in \Irr \big( 
\pi_0 (\Z_G (t,x)) \big)$ is geometric if it appears in $H_{\top}( \mathcal B_G^{t,x} ,\C )$,
where $\top$ refers to highest degree in which the homology is nonzero, the real
dimension of $\mathcal B_G^{t,x}$.
We call such triples $(t,x,\rho_1)$ affine Springer parameters for $G$,
because they appear naturally in the representation theory of the affine Weyl group
associated to $G$. The group $G$ acts on such parameters by conjugation, and we 
denote the conjugacy classes by $[t,x,\rho_1]_G$.
\vspace{2mm}

\noindent \textbf{Kazhdan--Lusztig parameters}\\ 
Next we consider a unipotent element $x \in G$ and a semisimple element $t_q \in G$ 
such that $t_q x t_q^{-1} = x^q$. As above, $\Z_G (t_q,x)$ acts on the variety
$\mathcal B_G^{t_q,x}$ and we call $\rho_q \in \Irr \big( \pi_0 (\Z_G (t_q,x)) \big)$
geometric if it appears in $H_* \big( \mathcal B_G^{t_q,x} ,\C \big)$. Triples 
$(t_q,x,\rho_q)$ of this kind are known as Kazhdan--Lusztig parameters for $G$.
Again they are endowed with an obvious $G$-action and we denote the equivalence classes
by $[t_q,x,\rho_q]_G$.
\vspace{2mm}

In \cite{KL,R} there are some indications that these three kinds of parameters are
essentially equivalent. Proposition \ref{prop:S.1} allows us to make this precise
in the necessary generality.

\begin{lem}\label{lem:compareParameters}
Let $\fs$ be a Bernstein component in the principal series, associate 
$c^\fs \colon \fo_F^\times \to T$ 
to it as in Lemma \ref{lem:cBernstein} and write $H = \Z_G (c^\fs(\fo_F^\times))$.
There are natural bijections between $H^\circ$-equivalence classes of:
\begin{itemize}
\item Kazhdan--Lusztig--Reeder parameters for $G$ with $\Phi \big|_{\fo_F^\times} = c^\fs$ and\\ 
$\Phi (\varpi_F) \in H^\circ$;
\item affine Springer parameters for $H^\circ$;\
\item Kazhdan--Lusztig parameters for $H^\circ$.
\end{itemize}
\end{lem}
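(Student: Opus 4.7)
The plan is to build the three bijections from the explicit recipes of Sections~\ref{sec:Lp}--\ref{sec:Borel} and verify them by invoking Propositions~\ref{prop:S.1} and~\ref{prop:UP}. Starting from a KLR parameter $(\Phi,\rho)$ satisfying the stated conditions, extract $t := \Phi(\varpi_F) \in H^\circ$, $x := \Phi\matje{1}{1}{0}{1}$, and $t_q := t\cdot \Phi(Y_{q^{1/2}})$; equation~\eqref{eq:tqx} gives $t_q x t_q^{-1} = x^q$, so $(t,x)$ is an affine Springer datum for $H^\circ$ and $(t_q,x)$ is a Kazhdan--Lusztig datum for $H^\circ$. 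Conversely, given a semisimple-unipotent pair $(t,x)$ in $H^\circ$, Jacobson--Morozov inside the reductive group $\Z_{H^\circ}(t)^\circ = M^\circ$ produces a homomorphism $\gamma \colon \SL_2(\C) \to M^\circ$ with $\gamma\matje{1}{1}{0}{1} = x$, unique up to $\Z_{H^\circ}(t,x)^\circ$-conjugacy, and the formula~\eqref{eqn:Phi} reconstructs $\Phi$; given a KL pair $(t_q,x)$, the Kazhdan--Lusztig refinement of Jacobson--Morozov \cite[\S~2.3]{KL} (already used in the proof of Proposition~\ref{prop:S.1}) reconstructs $\Phi$ up to $\Z_{H^\circ}(t_q,x)$-conjugacy. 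In both cases the reconstruction ambiguity is absorbed by $H^\circ$-equivalence on the target, so the three underlying orbits of data are put into canonical bijection.

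To match the representations $\rho,\rho_1,\rho_q$, Proposition~\ref{prop:S.1}(1) supplies the homotopy-equivalent chain
$$\Z_H(t_q,x) \xleftarrow{\sim} \Z_H(\Phi) = \Z_H(t,\Phi(B_2)) \xrightarrow{\sim} \Z_H(t,x),$$
yielding canonical isomorphisms of the component groups, while part~(2) supplies the parallel chain of homotopy equivalences between $\mathcal{B}^{t_q,x}_{H^\circ}$, $\mathcal{B}^{t,\Phi(B_2)}_{H^\circ}$, and $\mathcal{B}^{t,x}_{H^\circ}$, hence isomorphisms of their homologies as modules over those component groups. This identifies the affine Springer and KL geometric representations. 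To bring in the KLR side, invoke Proposition~\ref{prop:UP}(3), which decomposes $\mathcal{B}_G^{\Phi(\mathbf{W}_F\times B_2)}$ as $[\cW^G : \cW^{H^\circ}]$ freely permuted copies of $\mathcal{B}^{t,\Phi(B_2)}_{H^\circ}$ under $\Z_G(\Phi)/\Z_{H^\circ}(\Phi)$; together with the equality $\Z_G(\Phi) = \Z_H(\Phi)$ (forced by $\Phi(\mathbf{I}_F)\subset \Z(H^\circ)$), this realises $H_*\bigl(\mathcal{B}_G^{\Phi(\mathbf{W}_F\times B_2)},\C\bigr)$ as an induced $\pi_0(\Z_G(\Phi))$-module, and Frobenius reciprocity matches the geometric irreducible constituents on both sides after the identification $\pi_0(\Z_{H^\circ}(\Phi)) \cong \pi_0(\Z_{H^\circ}(t,x)) \cong \pi_0(\Z_{H^\circ}(t_q,x))$.

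The main obstacle is the apparent mismatch between the three senses of ``geometric'': affine Springer uses only top-degree homology, while KL and KLR use homology in all degrees. This is resolved by the degree-by-degree nature of the homotopy equivalences in Proposition~\ref{prop:S.1}(2) combined with the Borho--MacPherson decomposition invoked in Lemma~\ref{lem:S.6}: every irreducible component-group representation occurring in the full homology is already accounted for by the top-degree constituents once the stratification by unipotent classes is recorded, so the affine Springer and KL counts agree. A secondary care-point is that when $\pi_0(H)$ acts non-trivially, the passage from $\Z_H$- to $\Z_{H^\circ}$-representations must be handled using Clifford theory as in Section~\ref{sec:teq}; this is routine once the component-group isomorphisms are in hand.
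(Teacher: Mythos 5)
Your overall strategy coincides with the paper's: extract $(t,x,t_q)$ from $\Phi$, use the refined Jacobson--Morozov statements to recover the conjugacy class of $\Phi$ from either $(t,x)$ or $(t_q,x)$, and transport the enhancement $\rho$ across the chain of homotopy equivalences supplied by Proposition~\ref{prop:S.1}, with Proposition~\ref{prop:UP} mediating between the varieties for $G$ and for $H^\circ$.

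There is, however, one concrete gap in the way you dispose of the mismatch between ``all-degree'' and ``top-degree'' geometricity. You invoke the Borho--MacPherson decomposition as recorded in Lemma~\ref{lem:S.6}, but that lemma is stated and proved under the hypothesis that $\rho\rtimes\sigma$ is \emph{already} geometric in the top-degree sense, so it cannot be used to conclude that every $\rho$ occurring somewhere in $H_*\bigl(\mathcal B^{t,x},\C\bigr)$ must already occur in the top degree. Even the unconditional form of the decomposition quoted in its proof only says that $\mathrm{Hom}_{A_x}\bigl(\rho,H_*(\mathcal B^x,\C)\bigr)$ is $\tau(x,\rho)$ plus contributions from strictly larger orbits; if $\tau(x,\rho)=0$ this does not force the left side to vanish. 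The homotopy equivalences of Proposition~\ref{prop:S.1}(2) do not help either: they transport the component-group action degree by degree, but say nothing about which degrees carry a given isotypic piece. The fact you need is genuinely nontrivial and is precisely what the paper imports from Shoji \cite[bottom of p.~296 and Remark~6.5]{Shoji}, after using Propositions~\ref{prop:S.1} and~\ref{prop:UP} to reduce to ordinary Springer fibers. You should quote that result explicitly rather than attempt to derive it from Lemma~\ref{lem:S.6}.

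One point in your favour: you flag the passage from $\Z_H$- to $\Z_{H^\circ}$-representations when $\pi_0(H)\neq 1$, which the paper glosses over with the phrase ``canonical bijection'' in its first reduction step. Your sketch---Proposition~\ref{prop:UP}(3) realises the relevant homology over $G$ as induced from $H^\circ$, and Clifford theory then sorts the enhancements---is consistent with the more careful treatment the paper eventually supplies in the proof of Theorem~\ref{thm:main}, and is worth keeping.
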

\begin{proof}
Since $\SL_2 (\C)$ is connected and commutes with $\fo_F^\times$, its image under $\Phi$ must
be contained in the connected component of $H$. Therefore KLR-parameters with these 
properties are in canonical bijection with KLR-parameters for $H^\circ$ and it suffices to 
consider the case $H^\circ = G$.

As in \eqref{eqn:Phi} and \eqref{eq:S.12}, any KLR-parameter gives rise to the 
ingredients $t,x$ and $t_q$ for the other two kinds of parameters. As we discussed
after \eqref{eqn:Phi}, the pair $(t,x)$ is enough to recover the conjugacy class
of $\Phi$. A refined version of the Jacobson--Morozov theorem says that the same
goes for the pair $(t_q,x)$, see \cite[\S 2.3]{KL} or \cite[Section 4.2]{R}.

To complete $\Phi, (t,x)$ or $(t_q,x)$ to a parameter of the appropriate kind,
we must add an irreducible representation $\rho ,\rho_1$ or $\rho_q$.
For the affine Springer parameters it does not matter whether we 
consider the total homology or only the homology in top degree. Indeed, it follows 
from Propositions \ref{prop:S.1} and \ref{prop:UP} and \cite[bottom of page~296 and 
Remark 6.5]{Shoji} that any irreducible representation $\rho_1$ which appears in 
$H_* \big( \mathcal B_G^{t,x} ,\C \big)$, already appears 
in the top homology of this variety.

This and Proposition \ref{prop:S.1} show that there is a natural correspondence 
between the possible ingredients $\rho,\rho_1$ and $\rho_q$. 
\end{proof}

\section{The affine Springer correspondence}
\label{sec:affSpringer}

An interesting instance of Proposition \ref{prop:S.2} arises when $M$ is the 
centralizer of a semisimple element $t$ in a connected reductive 
complex group $G$. As before we assume that $t$ lies in a maximal torus $T$ of 
$G$ and we write $\mathcal W^G = W(G,T)$. By Lemma \ref{lem:centrals}
\begin{equation}\label{eq:S.8}
\mathcal W^M := \Nor_M (T) / \Z_M (T) \cong W \rtimes \pi_0 (M) 
\end{equation}
is the stabilizer of $t$ in $\mathcal W^G$, so the role of $\Gamma$ is played by the 
component group $\pi_0 (M)$. In contrast to the setup in Proposition \ref{prop:S.2},
it is possible that some elements of $\pi_0 (M) \setminus \{1\}$ fix $W$ pointwise.
This poses no problems however, as such elements never act trivially on $T$. 
For later use we record the following consequence of \eqref{eq:S.9}:
\begin{equation}\label{eq:23.7}
\pi_0 (M)_{\tau (x,\rho)} \cong \big( \Z_M (x) / \Z_{M^\circ}(x) \big)_\rho .
\end{equation}
Recall from Section \ref{sec:extquot2} that 
\begin{align*}
& \widetilde{T}_2: = \{(t,\sigma) \,:\, t \in T, \sigma \in \Irr(\cW_t^G)\}, \\
&(T\q \cW^G)_2: = \widetilde{T}_2/\cW^G.
\end{align*}
We note that the rational characters of the complex torus $T$ span the regular 
functions on the complex variety $T$:
\[
\mathcal{O}(T) = \C [X^*(T)].
\]
>From \eqref{EXT1} and \eqref{EXT2}, we infer the following rough form of the extended 
Springer correspondence for the affine Weyl group $X^*(T) \rtimes \cW^G$. 

\begin{thm} 
There are canonical bijections
\[
(T\q \cW^G )_2 \simeq \Irr \, (X^*(T) \rtimes \cW^G ) \simeq 
\{(t,\tau(x, \varrho) \rtimes \psi) \}/\cW^G
\]
with $t \in T, \tau(x, \varrho) \in
\Irr \,\cW^{M^0}, \psi \in \Irr(\pi_0(M)_{\tau (x,\varrho)})$.
\end{thm}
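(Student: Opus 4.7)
The plan is to chain two identifications that have already been set up in the paper. The first bijection will come from Clifford theory for the semidirect product of $X^*(T)$ by $\cW^G$, and the second from applying Lemma~\ref{lem:centrals} together with the extended Springer correspondence (Proposition~\ref{prop:S.2}) fiberwise over $T$.

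For the first bijection, I apply Lemma~\ref{lem:Clifford} with $\Gamma_1 = X^*(T)$ and $\Gamma = \cW^G$ to get a canonical bijection
\[
\Irr\bigl(X^*(T) \rtimes \cW^G\bigr) \simeq \bigl(\Irr X^*(T) \q \cW^G \bigr)_2^{\square}.
\]
Since $X^*(T)$ is abelian, the lemma immediately following Lemma~\ref{lem:Clifford} shows that every cocycle $\square(x)$ is trivial, so the twisted extended quotient of the second kind coincides with the untwisted one. Together with the canonical identification $\Irr X^*(T) = T$ (the character variety of the cocharacter lattice of $\cT$), this yields the first isomorphism of the theorem.

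For the second bijection, unpack the definition
\[
(T \q \cW^G)_2 = \{(t,\sigma)\,:\,t\in T,\ \sigma\in\Irr(\cW^G_t)\}/\cW^G .
\]
Fix $t\in T$ and write $M=\Z_G(t)$. By Lemma~\ref{lem:centrals} we have a canonical identification $\cW^G_t \cong \cW^{M^0}\rtimes\pi_0(M)$, and the proof of Lemma~\ref{lem:disconnected} exhibits $\pi_0(M)$ as $\Nor_M(T,B)/T$ for a Borel $B$ of $M^0$ containing $T$; hence $\pi_0(M)$ acts on $\cW^{M^0}$ by Coxeter diagram automorphisms, so Proposition~\ref{prop:S.2} applies with $\Gamma = \pi_0(M)$. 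That proposition supplies a bijection between $\Irr\cW^G_t$ and equivalence classes of pairs $(\tau,\psi)$, where $\tau\in\Irr\cW^{M^0}$ and $\psi\in\Irr\bigl(\pi_0(M)_\tau\bigr)$, and the resulting element of $\Irr\cW^G_t$ is precisely $\tau\rtimes\psi$ in the notation of \ref{not:rtimes}. Using the Springer correspondence \eqref{eqn:Springercor} to write $\tau = \tau(x,\varrho)$ with $x\in M^0$ unipotent and $\varrho\in\Irr A_x$ geometric, and noting that $\pi_0(M)_{\tau(x,\varrho)}$ is intrinsically defined, gives the third parametrization.

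It remains to verify that taking the $\cW^G$-orbit on pairs $(t,\sigma)$ corresponds correctly to equivalence of triples $(t,\tau(x,\varrho)\rtimes\psi)$. This is essentially formal: if $w\in\cW^G$ carries $t$ to $t'=wtw^{-1}$, then $\Ad_w$ identifies $M = \Z_G(t)$ with $\Z_G(t')$, $\cW^{M^0}$ with its analogue, the Springer parameters $(x,\varrho)$ with their $\Ad_w$-conjugates, and the Clifford data $\psi$ by transport of structure, so the diagram is $\cW^G$-equivariant throughout. The main technical obstacle is ensuring Proposition~\ref{prop:S.2} really applies in the disconnected setting at each $t$; this is secured by the explicit description of $\pi_0(M)$ as pinning-preserving automorphisms coming from Lemmas~\ref{lem:disconnected} and~\ref{lem:centrals}, which is exactly the hypothesis needed for the extended Springer correspondence to be applicable and cocycle-free.
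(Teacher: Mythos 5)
Your proof is correct and takes the same route the paper intends, where the theorem is left essentially as an inference from \eqref{EXT1}, \eqref{EXT2}, and the identification $\mathcal{O}(T)=\C[X^*(T)]$: Clifford theory for $X^*(T)\rtimes\cW^G$ together with triviality of the cocycle for the abelian normal subgroup $X^*(T)$ gives the left bijection, and Lemma~\ref{lem:centrals} plus Proposition~\ref{prop:S.2} applied fiberwise over $T/\cW^G$ (with the Springer parametrization of $\Irr\,\cW^{M^0}$) gives the right one. The only small imprecision is your claim that the hypotheses of Proposition~\ref{prop:S.2} are \emph{exactly} satisfied: as the paper remarks immediately after stating the theorem, some elements of $\pi_0(M)\setminus\{1\}$ may act trivially on the Coxeter diagram of $\cW^{M^0}$, so the setup is not literally that of Proposition~\ref{prop:S.2}; nevertheless the cocycle-triviality argument of Lemma~\ref{lem:S.3} still goes through, which is what the parametrization actually requires.
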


Now we recall the geometric realization of irreducible representations of 
$X^* (T) \rtimes \mathcal W^G$ by Kato \cite{Kat}. 
For a unipotent element $x \in M^\circ$ let $\mathcal B^{t,x}_G$ be the variety of Borel 
subgroups of $G$ containing $t$ and $x$. Fix a Borel subgroup $B$ of $G$ containing $T$ 
and let $\theta_{G,B} : \mathcal B^{t,x}_G \to T$ be the morphism defined by
\[
\theta_{G,B}(B') = g^{-1} t g \text{ if } B' = g B g^{-1} \text{ and } t \in g T g^{-1}.
\]
The image of $\theta_{G,B}$ is $\mathcal W^G t$, the map is constant on the irreducible
components of $\mathcal B^{t,x}_G$ and it gives rise to an action of $X^* (T)$ on the 
homology of $\mathcal B^{t,x}_G$. Furthermore $\Q [ \mathcal W^G ] \cong H (\mathcal Z_G)$ 
acts on $H_{d(x)}(\mathcal B^{t,x}_G,\C)$ via the convolution product in Borel--Moore 
homology, as described in \eqref{eq:S.5}. Both actions commute with the action of 
$\Z_G (t,x)$ induced by conjugation of Borel subgroups. 

Let $\rho_1 \in$ Irr$(\Z_G (t,x))$. By \cite[Theorem 4.1]{Kat} the 
$X^* (T) \rtimes \mathcal W^G$-module 
\begin{equation}\label{eq:KatoMod}
\tau (t,x,\rho_1) := \mathrm{Hom}_{\Z_G (t,x)} \big( \rho_1, H_{d(x)}(\mathcal B^{t,x}_G,\C) \big)  
\end{equation}
is either irreducible or zero. Moreover every irreducible representation of 
$X^* (T) \rtimes \mathcal W^G$ is obtained is in this way, and the data $(t,x,\rho_1)$
are unique up to $G$-conjugacy. This generalizes the Springer correspondence for
finite Weyl groups, which can be recovered by considering the representations on 
which $X^* (T)$ acts trivially.

Propositions \ref{prop:S.2} and \ref{prop:UP} shine some new light on this:

\begin{thm}\label{thm:S.3}
\begin{enumerate}
\item There are natural bijections between the following sets:
\begin{itemize} 
\item $\mathrm{Irr}(X^* (T) \rtimes \mathcal W^G) = 
\mathrm{Irr}(\mathcal O (T) \rtimes \mathcal W^G)$;
\item $(T // \mathcal W^G )_2  = \big\{ (t, \tilde \tau ) \mid t \in T , 
\tilde \tau \in \mathrm{Irr}(\mathcal W^M) \big\} / \mathcal W^G$;
\item $\big\{ (t, \tau, \sigma) \mid t \in T , \tau \in \mathrm{Irr}(\mathcal W^{M^\circ}), 
\sigma \in \mathrm{Irr}(\pi_0 (M)_\tau) \big\} / \mathcal W^G$;
\item $\big\{ (t,x,\rho,\sigma) \mid t \in T , x \in M^\circ \, \mathrm{unipotent} , 
\rho \in \mathrm{Irr} \big( \pi_0 (\Z_{M^\circ}(x)) \big) \\ \mathrm{geometric}, 
\sigma \in \mathrm{Irr} \big( \pi_0 (M)_{\tau (x,\rho)} \big) \big\} / G$;
\item $\big\{ (t,x,\rho_1) \mid t \in T, x \in M^\circ \, \mathrm{unipotent}, 
\rho_1 \in \mathrm{Irr} \big( \pi_0 (\Z_G (t,x)) \big) \\ \mathrm{geometric} \big\} / G$.
\end{itemize}
Here a representation of $\pi_0 (\Z_{M^\circ}(x))$ (or $\pi_0 (\Z_G (t,x))$) is 
called geometric if it appears in $H_{d(x)}(\mathcal B^x_{M^\circ} ,\C)$ (respectively 
$H_{d(x)}(\mathcal B^{t,x}_G,\C)$). 
\item The $X^*(T) \rtimes \mathcal W^G$-representation corresponding to $(t,x,\rho_1)$ 
via these bijections is Kato's module \eqref{eq:KatoMod}.
\end{enumerate}
\end{thm}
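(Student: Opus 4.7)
The plan is to establish the bijections in part (1) by chaining four equivalences, each built from Clifford theory or the ordinary Springer correspondence, and then to verify in part (2) that the resulting module coincides with Kato's module by tracing a parameter through this chain.

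First I would obtain $\Irr(\cO(T) \rtimes \cW^G) \leftrightarrow (T /\!/ \cW^G)_2$ via Lemma \ref{lem:Clifford_algebras}. Since every $\C_t$ is one-dimensional, the intertwiner between $\C_t$ and its $w$-twist may be chosen to be the identity for all $w \in \cW^G_t$, so the Clifford cocycle $\natural$ is trivial and the twisted second-kind quotient reduces to the untwisted $(T /\!/ \cW^G)_2$. For the bijection $(T /\!/ \cW^G)_2 \leftrightarrow \{(t,\tau,\sigma)\}/\cW^G$, Lemma \ref{lem:centrals} identifies $\cW^G_t$ with $\cW^M \cong \cW^{M^\circ} \rtimes \pi_0(M)$, and Proposition \ref{prop:S.2} supplies a canonical bijection $\Irr(\cW^M) \leftrightarrow \bigl(\Irr(\cW^{M^\circ}) /\!/ \pi_0(M)\bigr)_2$ with trivial 2-cocycles; the $\cW^G$-orbit on $t$ combines with the $\pi_0(M)$-orbit on $(\tau,\sigma)$ to give the full $\cW^G$-orbit on triples.

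Next I would apply the Springer correspondence \eqref{eqn:Springercor} to $M^\circ$, writing $\tau = \tau(x,\rho)$ with $x \in M^\circ$ unipotent and $\rho \in \Irr(\pi_0(\Cent_{M^\circ}(x)))$ geometric, taken up to $M^\circ$-conjugacy. The identification $\pi_0(M)_{\tau(x,\rho)} = \pi_0(M)_{[x,\rho]_{M^\circ}}$ recorded in \eqref{eq:23.7} ensures that $\sigma$ carries the expected equivariance, while the promotion of $\cW^G$-orbits on $t$ to $G$-orbits on $(t,x,\rho,\sigma)$ is automatic. The final bijection $\{(t,x,\rho,\sigma)\}/G \leftrightarrow \{(t,x,\rho_1)\}/G$ combines the split short exact sequence of Lemma \ref{lem:S.3} with Clifford theory: irreducibles $\rho_1 \in \Irr(\pi_0(\Cent_G(t,x))) = \Irr(\pi_0(\Cent_M(x)))$ decompose into pairs $(\rho,\sigma)$. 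To match the two geometric conditions I use Proposition \ref{prop:UP}: $\cB^{t,x}_G$ is a disjoint union of copies of $\cB^x_{M^\circ}$ on which $\pi_0(\Cent_G(t,x)) / \pi_0(\Cent_{M^\circ}(x))$ acts freely, so Shapiro's lemma shows that $\rho_1$ is geometric if and only if the associated $\rho$ is geometric.

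For part (2), I would identify Kato's module \eqref{eq:KatoMod} with the module produced by the chain above. Proposition \ref{prop:UP} exhibits $H_{d(x)}(\cB^{t,x}_G,\C)$ as the $\cO(T) \rtimes \cW^G$-module induced from $\cO(T) \rtimes \cW^M$, where $\cO(T)$ acts through evaluation at $t$ and the underlying $\cW^{M^\circ}$-module is $H_{d(x)}(\cB^x_{M^\circ},\C)$, compatibly with the residual $\Cent_G(t,x)$-action. Taking the $\rho_1$-isotypic part and using the splitting of Lemma \ref{lem:S.3} to factor $\rho_1 \leftrightarrow (\rho,\sigma)$, the formula \eqref{eq:extquot2Module} for irreducible modules of crossed products reproduces exactly the chain: Springer's realization of $\tau(x,\rho)$, then the extended Springer correspondence of Proposition \ref{prop:S.2} to glue in $\sigma$, then induction from $\cW^M$ to $\cW^G$ along the $\cW^G$-orbit of $t$. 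The main obstacle will be checking that all of these decompositions are simultaneously compatible --- in particular that the $\pi_0(\Cent_G(t,x))$-action on the top homology really decomposes as $\rho \rtimes \sigma$ with the trivial cocycle guaranteed by Lemma \ref{lem:S.3}, and that the $\cO(T)$-action is picked up correctly by restricting to the component $\cB^x_{M^\circ} \subset \cB^{t,x}_G$ singled out by $\theta_{G,B}^{-1}(t)$ --- but this is exactly what Propositions \ref{prop:S.1}, \ref{prop:S.2}, and \ref{prop:UP} have been designed to supply.
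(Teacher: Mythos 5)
Your argument for part (1) follows the paper's chain of bijections essentially unchanged: Clifford theory for $\cO(T)\rtimes\cW^G$ with trivial cocycle since $\cO(T)$-characters are one-dimensional, Lemma~\ref{lem:centrals} and Proposition~\ref{prop:S.2} for the middle two steps, the ordinary Springer correspondence for $M^\circ$, and then a Clifford-theoretic unwinding of $\pi_0(\Z_G(t,x))\cong\pi_0(\Z_{M^\circ}(x))\rtimes\pi_0(M)_{[x]_{M^\circ}}$ for the last. Your invocation of Shapiro's lemma for the matching of the two geometricity conditions is a reasonable reformulation of what the paper does explicitly via \eqref{eq:S.26} and \eqref{eq:S.34}; the paper's extra point, that $\pi_0(M)_{[x]_{M^\circ}}$ is abelian (by \cite[\S 3.1]{R}), is what makes the induced representation appear once in the regular representation so that the $\Hom$ spaces identify cleanly, and you should state that you need this multiplicity-one statement for the $\Hom$ formula, not just the appearance-or-not dichotomy.

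There is a genuine gap in part (2). You assert that ``Proposition~\ref{prop:UP} exhibits $H_{d(x)}(\mathcal B^{t,x}_G,\C)$ as the $\cO(T)\rtimes\cW^G$-module induced from $\cO(T)\rtimes\cW^M$.'' Proposition~\ref{prop:UP} is a statement about the \emph{variety} $\mathcal B^{t,x}_G$ and about the $\Z_G(t,x)$-module structure of its homology; it says nothing about the action of $X^*(T)\rtimes\cW^G$, which is defined via the convolution product with $H_{\top}(\mathcal Z_G)$ and the morphism $\theta_{G,B}$, not via the decomposition of the variety into copies of $\mathcal B^x_{M^\circ}$. It is not at all automatic that convolving with a cycle representing $w\in\cW^{M^\circ}\subset\cW^G$ preserves the individual copies, nor that the $X^*(T)$-action restricted to a copy agrees with evaluation at $t$. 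These compatibilities are exactly the content of the induction isomorphism
\[
H_*(\mathcal B^{t,x}_G,\C)\;\cong\;\mathrm{Ind}_{X^*(T)\rtimes\cW^{M^\circ}}^{X^*(T)\rtimes\cW^G}\bigl(H_*(\mathcal B^x_{M^\circ},\C)\bigr)
\]
as $\Z_G(t,x)\times X^*(T)\rtimes\cW^G$-modules, which the paper takes from \cite[Proposition 6.2]{Kat}. Without this input your part (2) does not go through: Propositions~\ref{prop:S.1}, \ref{prop:S.2} and \ref{prop:UP}, which you invoke, supply the component-group and variety-level compatibilities but not the affine-Weyl-group module structure. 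You need to cite, or reprove, Kato's induction theorem; once that is in place the rest of your outline (taking the $\rho_1$-isotypic part, applying Lemma~\ref{lem:S.3} to split $\rho_1\leftrightarrow(\rho,\sigma)$, and matching with \eqref{eq:extquot2Module}) is sound and follows the paper.
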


We remark that in the fourth and fifth sets it would be more natural to allow $t$ to be any
semisimple element of $G$. In fact that would give the affine Springer parameters from
Lemma \ref{lem:compareParameters}.
Clearly $G$ acts on the set of such more general parameters
$(t,x,\rho,\sigma)$ or $(t,x,\rho_1)$, which gives equivalence relations $/G$. The two above 
$/G$ refer to the restrictions of these equivalence relations to parameters with $t \in T$.

\begin{proof} 
(1) Recall that the isotropy group of $t$ in $\mathcal W^G$ is 
\[
\mathcal W^G_t = \mathcal W^M = \mathcal W^{M^\circ} \rtimes \pi_0 (M) .
\]
Hence the bijection between the first two sets is an instance of Clifford theory, see 
Section 5. The second and third sets are in natural bijection by Proposition \ref{prop:S.2}.
The Springer correspondence for $\mathcal W^{M^\circ}$ provides the bijection with the
fourth collection. To establish a natural bijection with the fifth collection, we first 
recall from \eqref{eq:S.9} that $\pi_0 (M)_{\tau (x,\rho)} = 
\pi_0 (M)_{[x,\rho]_{M^\circ}}$. By that and Proposition \ref{prop:S.2} every 
irreducible representation of
\[
\pi_0 (\Z_G (t,x)) \cong \pi_0 (\Z_{M^\circ}(x)) \rtimes \pi_0 (M)_{[x]_{M^\circ}}
\]
is of the form $\rho \rtimes \sigma$ for $\rho$ and $\sigma$ as in the fourth set.
By Proposition \ref{prop:UP}
\begin{equation}\label{eq:S.26}
H_* (\mathcal B^{t,x}_G, \C) \cong H_* (\mathcal B^x_{M^\circ} ,\C) \otimes 
\C [\Z_G (t,x) / \Z_{M^\circ}(x)] \otimes \C^{ [\mathcal W^G : \mathcal W^G_t]}
\end{equation}
as $\Z_G (t,x)$-representations. By \cite[\S 3.1]{R} 
\[
\Z_G (t,x) / \Z_{M^\circ}(x) \cong \pi_0 (M)_{[x]_{M^\circ}}
\] 
is abelian. Hence 
$\mathrm{Ind}_{\pi_0 (M)_{[x,\rho]_{M^\circ}}}^{\pi_0 (M)_{[x]_{M^\circ}}} (\sigma)$ 
appears exactly once in the regular representation of this group and
\begin{multline}\label{eq:S.34}
\mathrm{Hom}_{\pi_0 (\Z_G(t,x))} \big( \rho \rtimes \sigma ,H_{d(x)} 
(\mathcal B^{t,x}_G, \C) \big) \cong \\
\mathrm{Hom}_{\pi_0 (\Z_{M^\circ}(x))} \big( \rho ,H_{d(x)} (\mathcal B^x_{M^\circ}, 
\C) \big) \rtimes \sigma \otimes \C^{[\mathcal W^G : \mathcal W^G_t]} .
\end{multline}
In particular we see that $\rho$ is geometric if and only if $\rho \rtimes \sigma$ is 
geometric, which establishes the final bijection.\\
(2) The $X^* (T) \rtimes \mathcal W^G$-representation constructed from 
$(t,x,\rho \rtimes \sigma)$ by means of our bijections is
\begin{equation}\label{eq:S.28}
\mathrm{Ind}_{X^*(T) \rtimes \mathcal W^G_t}^{X^*(T) \rtimes \mathcal W^G} 
\big( \mathrm{Hom}_{\pi_0 (\Z_{M^\circ}(x))} 
\big( \rho ,H_{d(x)} (\mathcal B^x_{M^\circ}, \C) \big) \rtimes \sigma \big) .
\end{equation}
On the other hand, by \cite[Proposition 6.2]{Kat}
\begin{multline} \label{eq:S.35}
H_* (\mathcal B^{t,x}_G,\C) \cong \mathrm{Ind}_{X^*(T) \rtimes 
\mathcal W^{M^\circ}}^{X^*(T) \rtimes \mathcal W^G} (H_* (\mathcal B^x_{M^\circ} ,\C)) \\
\cong \mathrm{Ind}_{X^*(T) \rtimes \mathcal W^G_t}^{X^*(T) \rtimes \mathcal W^G} 
\big( H_* (\mathcal B^x_{M^\circ} ,\C) \otimes \C [\Z_G (t,x) / \Z_{M^\circ}(x)] \big)
\end{multline}
as $\Z_G (t,x) \times X^* (T) \rtimes \mathcal W^G$-representations. Together with
the proof of part 1 this shows that $\tau (t,x,\rho \rtimes \sigma)$
is isomorphic to \eqref{eq:S.28}. 
\end{proof}

We can extract a little more from the above proof. Recall that $\cO_x$ 
denotes the conjugacy class of $x$ in $M$. Let us agree that the 
affine Springer parameters with a fixed $t \in T$ are partially ordered by
\[
(t,x,\rho_1) < (t,x',\rho'_1) \quad \text{when} \quad
\overline{\cO_x} \subsetneq \overline{\cO_{x'}} .
\]
\begin{lem}\label{lem:S.7}
There exist multiplicities $m_{t,x,\rho_1,x',\rho'_1} \in {\mathbb Z}_{\geq 0}$ such that
\begin{multline*}
\mathrm{Hom}_{\pi_0 (\Z_G(t,x))} \big( \rho_1 ,
H_* (\mathcal B^{t,x}_G, \C) \big) \cong \\
\tau (t,x,\rho_1) \oplus \bigoplus_{(t,x',\rho'_1) > (t,x,\rho_1)}
m_{t,x,\rho_1,x',\rho'_1} \,\tau (t,x',\rho'_1) .
\end{multline*}
\end{lem}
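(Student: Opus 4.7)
The plan is to bootstrap Lemma \ref{lem:S.7} from the finite-group decomposition of Lemma \ref{lem:S.6} via the Kato induction machinery of Theorem \ref{thm:S.3}. Using Theorem \ref{thm:S.3}(1), I write $\rho_1 = \rho \rtimes \sigma$ with $\rho \in \Irr(\pi_0(\Z_{M^\circ}(x)))$ geometric and $\sigma \in \Irr(\pi_0(M)_{[x,\rho]_{M^\circ}})$. The first step is to upgrade \eqref{eq:S.35} from $H_{d(x)}$ to $H_*$: the argument of Proposition \ref{prop:UP} applies in every homological degree, so \eqref{eq:S.26} and hence \eqref{eq:S.35} hold with $H_*$ in place of $H_{d(x)}$ as isomorphisms of $\pi_0(\Z_G(t,x)) \times (X^*(T) \rtimes \cW^G)$-modules. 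Taking $\pi_0(\Z_G(t,x))$-isotypic components against $\rho_1$, the computation behind \eqref{eq:S.34} then yields
\[
\Hom_{\pi_0(\Z_G(t,x))}\bigl(\rho_1,\, H_*(\mathcal B^{t,x}_G, \C)\bigr) \;\cong\; \Ind_{X^*(T) \rtimes \cW^G_t}^{X^*(T) \rtimes \cW^G}\!\bigl(\Hom_{A_x}(\rho,\, H_*(\mathcal B^x_{M^\circ}, \C)) \rtimes \sigma\bigr).
\]

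Next I would apply Lemma \ref{lem:S.6} to the inner $\cW^M$-module, decomposing it as $\tau(x,\rho) \rtimes \sigma$ plus a sum of $\tau(x',\rho') \rtimes \sigma'$ indexed by pairs $(x',\rho' \rtimes \sigma') > (x,\rho \rtimes \sigma)$. Inducing term by term and invoking the identification $\Ind_{X^*(T) \rtimes \cW^G_t}^{X^*(T) \rtimes \cW^G}(\tau(x',\rho') \rtimes \sigma') \cong \tau(t,x',\rho' \rtimes \sigma')$ supplied by the proof of Theorem \ref{thm:S.3}(2) produces the desired decomposition. Finally, Theorem \ref{thm:S.3}(1) together with Lemma \ref{lem:compareParameters} converts each pair $(x',\rho' \rtimes \sigma')$ into a unique affine Springer parameter $(t,x',\rho'_1)$ with $\rho'_1 = \rho' \rtimes \sigma'$; since both partial orders are given by strict containment of closures of $M$-orbits on unipotent elements with $t$ held fixed, they coincide, so one may set $m_{t,x,\rho_1,x',\rho'_1} := m_{x,\rho \rtimes \sigma,x',\rho' \rtimes \sigma'}$.

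The main technical point I anticipate is the upgrade of \eqref{eq:S.35} from top degree to all degrees as an $X^*(T) \rtimes \cW^G$-module isomorphism, not merely a $\cW^G$-module one. Since Kato's $X^*(T)$-action on $H_*(\mathcal B^{t,x}_G, \C)$ is defined via the morphism $\theta_{G,B} \colon \mathcal B^{t,x}_G \to T$ which is constant on irreducible components, the point is to check that $\theta_{G,B}$ restricts compatibly to $\theta_{M^\circ, B \cap M^\circ}$ on each of the $[\cW^{H^\circ} : \cW^{M^\circ}]$ components isolated in Proposition \ref{prop:UP}. Once that compatibility is established, the remaining steps are formal Clifford theory combined with the cocycle triviality already established in Lemma \ref{lem:S.3}.
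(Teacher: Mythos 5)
Your proposal is correct and follows essentially the same route as the paper's proof: combine \eqref{eq:S.35}, \eqref{eq:S.26} and \eqref{eq:S.34} to realize $\mathrm{Hom}_{\pi_0 (\Z_G(t,x))} \bigl( \rho_1 , H_* (\mathcal B^{t,x}_G, \C) \bigr)$ as $\mathrm{Ind}_{X^*(T) \rtimes \mathcal W^G_t}^{X^*(T) \rtimes \mathcal W^G}$ applied to the $\cW^M$-module appearing in Lemma \ref{lem:S.6}, then apply Lemma \ref{lem:S.6} and induce term by term, using that this induction functor is an equivalence on the appropriate blocks. The technical point you flag at the end — that \eqref{eq:S.35} should hold in all homological degrees as an $X^*(T) \rtimes \cW^G$-module isomorphism, not merely a $\cW^G$-one — is not actually an open gap: \eqref{eq:S.35} is already stated in all degrees as a $\Z_G(t,x) \times (X^*(T) \rtimes \cW^G)$-module isomorphism, with the reference to Kato's Proposition 6.2 (\cite{Kat}) doing exactly the work you anticipate about compatibility of the map $\theta_{G,B}$ across the pieces isolated in Proposition \ref{prop:UP}. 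Your phrase ``upgrade \eqref{eq:S.35} from $H_{d(x)}$ to $H_*$'' seems to conflate \eqref{eq:S.35} with \eqref{eq:S.34}, the latter being the one stated only in top degree; but since you correctly identify what needs to hold and why, this does not affect the validity of the argument.
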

\begin{proof}
It follows from \eqref{eq:S.35}, \eqref{eq:S.26} and \eqref{eq:S.34} that
\begin{multline}\label{eq:S.36}
\mathrm{Hom}_{\pi_0 (\Z_G(t,x))} \big( \rho \rtimes \sigma ,
H_* (\mathcal B^{t,x}_G, \C) \big) \cong \\
\mathrm{Ind}_{X^*(T) \rtimes \mathcal W^G_t}^{X^*(T) \rtimes \mathcal W^G} 
\mathrm{Ind}_{\cW^{M^\circ} \rtimes \pi_0 (M)_{[x,\rho]_{M^\circ}}}^{\cW^G_t}
\big( \mathrm{Hom}_{\pi_0 (\Z_{M^\circ}(x))} 
\big( \rho ,H_{d(x)} (\mathcal B^x_{M^\circ}, \C) \big) \otimes \sigma \big) .
\end{multline}
The functor $\mathrm{Ind}_{X^*(T) \rtimes \mathcal W^G_t}^{X^*(T) \rtimes \mathcal W^G}$
provides an equivalence between the categories 
\begin{itemize}
\item $X^*(T) \rtimes \mathcal W^G_t$-representations with $\cO (T)^{\cW^G_t}$-character $t$;
\item $X^*(T) \rtimes \mathcal W^G$-representations with $\cO (T)^{\cW^G}$-character $\cW^G t$.
\end{itemize}
Therefore we may apply Lemma \ref{lem:S.6} to the right hand side of \eqref{eq:S.36},
which produces the required formula.
\end{proof}

Let us have a look at the representations with an affine Springer parameter of
the form $(t,x=1,\rho_1 = \mathrm{triv})$. Equivalently, the fourth parameter in 
Theorem \ref{thm:S.3} is $(t,x=1,\rho = \mathrm{triv},\sigma = \mathrm{triv})$.
The $\mathcal W^{M^\circ}$-representation with Springer parameter $(x=1,\rho = 
\mathrm{triv})$ is the trivial representation, so $(x=1,\rho = \mathrm{triv},
\sigma = \mathrm{triv})$ corresponds to the trivial representation of 
$\mathcal W^G_t$. With \eqref{eq:S.28} we conclude that the $X^* (T) \rtimes 
\mathcal W^G$-representation with affine Springer parameter $(t,1,\mathrm{triv})$ is
\begin{equation} \label{eq:trivWaff}
\tau (t,1,\triv) = \mathrm{Ind}_{X^*(T) \rtimes \mathcal W^G_t}^{X^*(T) 
\rtimes \mathcal W^G} \big( \mathrm{triv}_{\mathcal W^G_t} \big) .
\end{equation}
Notice that this is the only irreducible $X^* (T) \rtimes \mathcal W^G$-representation 
with an $X^*(T)$-weight $t$ and nonzero $\mathcal W^G$-fixed vectors.

\section{Geometric representations of affine Hecke algebras}
\label{sec:repAHA}

Let $G$ be a connected reductive complex group, $B$ a Borel subgroup and $T$ a maximal
torus of $G$ contained in $B$. Let $\mathcal H (G)$ be the affine Hecke 
algebra with 
the same based root datum as $(G,B,T)$ and with a parameter 
$q \in \C^\times$ which is not a root of unity.

Since later on we will have to deal with disconnected reductive groups, we include
some additional automorphisms in the picture. In every root subgroup $U_\alpha$ with 
$\alpha \in \Delta (B,T)$ we pick a nontrivial element $u_\alpha$. Let $\Gamma$ be a 
finite group of automorphisms of $(G,T,(u_\alpha)_{\alpha \in \Delta (B,T)})$. Since $G$ need 
not be semisimple, it is possible that some elements of $\Gamma$ fix the entire root system 
of $(G,T)$. Notice that $\Gamma$ acts on the Weyl group $\mathcal W^G = W(G,T)$ because it
stabilizes $T$. 

We form the crossed product $\mathcal H (G) \rtimes \Gamma$ with respect to the 
canonical $\Gamma$-action on $\mathcal H (G)$. We define a Kazhdan--Lusztig parameter 
for $\mathcal H (G) \rtimes \Gamma$ to be a triple $(t_q,x,\rho)$ such that
\begin{itemize}
\item $t_q \in G$ is semisimple, $x \in G$ is unipotent and $t_q x t_q^{-1} = x^q$;
\item $\rho$ is an irreducible representation of the component group \\
$\pi_0 (\Z_{G \rtimes \Gamma} (t_q,x))$, such that every irreducible subrepresentation
of the restriction of $\rho$ to $\pi_0 (\Z_G (t_q,x))$ appears in $H_* (\mathcal B^{t_q,x},\C)$.
\end{itemize}
The group $G \rtimes \Gamma$ acts on such parameters by conjugation, and we denote the
conjugacy class of a parameter by $[t_q,x,\rho]_{G \rtimes \Gamma}$. 
Now we generalize \cite[Theorem 7.12]{KL} and \cite[Theorem 3.5.4]{R}:

\begin{thm}\label{thm:S.5}
There exists a natural bijection between $\mathrm{Irr}(\mathcal H (G) \rtimes \Gamma)$ and 
$G \rtimes \Gamma$-conjugacy classes of Kazhdan--Lusztig parameters. The module corresponding
to $(t_q,x,\rho)$ is the unique irreducible quotient of the 
$\mathcal H (G)\rtimes \Gamma$-module 
\[
\mathrm{Hom}_{\pi_0 (\Z_G (t_q,x))} 
\big( \rho_q, H_* (\mathcal B^{t_q,x},\C) \big) \rtimes \rho_2 ,
\]
where $\rho_q \in \mathrm{Irr} \big( \pi_0 (\Z_G (t_q,x)) \big)$ and $\rho_2 \in 
\mathrm{Irr}^{\natural(t_q,x,\rho_q)}(\Gamma_{[t_q,x,\rho_q]_G})$ are such that 
$\rho \cong \rho_q \rtimes \rho_2$.
\end{thm}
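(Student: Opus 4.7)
The plan is to leverage the classical Kazhdan--Lusztig theorem for $\cH(G)$ together with Clifford theory applied in parallel on the algebra side and the geometric side, matching up the two appearances of 2-cocycles.

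First I would invoke \cite[Theorem 7.12]{KL} (equivalently \cite[Theorem 3.5.4]{R}) to obtain a natural bijection between $\Irr(\cH(G))$ and $G$-conjugacy classes of triples $(t_q,x,\rho_q)$, where the irreducible $\cH(G)$-module attached to $(t_q,x,\rho_q)$ is the unique irreducible quotient of $M(t_q,x,\rho_q) := \Hom_{\pi_0 (\Z_G (t_q,x))} \big( \rho_q, H_* (\mathcal B^{t_q,x},\C) \big)$. The group $\Gamma$, which acts on $\cH(G)$ by pinning-preserving automorphisms, acts on KL parameters by $\gamma\cdot(t_q,x,\rho_q) = (\gamma(t_q),\gamma(x),\gamma_*\rho_q)$; naturality of the KL construction ensures that this action is intertwined with the $\Gamma$-action on $\Irr(\cH(G))$.

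Next I would apply Lemma~\ref{lem:Clifford_algebras} to the crossed product $\cH(G)\rtimes\Gamma$: this produces a canonical bijection between $\Irr(\cH(G)\rtimes\Gamma)$ and the twisted extended quotient $(\Irr\,\cH(G)\q\Gamma)_2^\square$, \ie pairs $(V,\rho_2)$ with $V\in\Irr(\cH(G))$ and $\rho_2$ an irreducible $\natural(V)$-projective representation of $\Gamma_V$, modulo $\Gamma$. Translating through the KL parametrization, this reads as pairs $((t_q,x,\rho_q),\rho_2)$ with $\rho_2 \in \Irr^{\natural(t_q,x,\rho_q)}(\Gamma_{[t_q,x,\rho_q]_G})$, modulo $G\rtimes\Gamma$. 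The corresponding module is obtained by inducing and twisting $M(t_q,x,\rho_q)\rtimes\rho_2$ in the standard Clifford-theoretic way, and its unique irreducible quotient provides the required $\cH(G)\rtimes\Gamma$-representation.

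It remains to match this description with the statement of the theorem, whose parameters involve an irreducible $\rho$ of $\pi_0 (\Z_{G\rtimes\Gamma}(t_q,x))$. Here I would argue as in Lemma~\ref{lem:S.3} and the proof of Proposition~\ref{prop:S.2}: the short exact sequence
\[
1 \to \pi_0(\Z_G(t_q,x)) \to \pi_0(\Z_{G\rtimes\Gamma}(t_q,x)) \to \Gamma_{[t_q,x]_G} \to 1
\]
allows one, via Clifford theory for this component group, to decompose every geometric $\rho$ uniquely up to equivalence as $\rho \cong \rho_q\rtimes\rho_2$, with $\rho_q$ a constituent of $\rho|_{\pi_0(\Z_G(t_q,x))}$ and $\rho_2\in \Irr^{\natural'(t_q,x,\rho_q)}(\Gamma_{[t_q,x,\rho_q]_G})$ for a 2-cocycle $\natural'$ coming from the component group extension.

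The main obstacle, and the crux of the argument, is showing that the two a priori distinct 2-cocycles coincide: the cocycle $\natural(t_q,x,\rho_q)$ produced by Clifford theory applied to $\cH(G)\rtimes\Gamma$ at the Hecke-algebra module $M(t_q,x,\rho_q)$, and the cocycle $\natural'(t_q,x,\rho_q)$ arising from the component-group extension above. To identify them I would mimic the splitting argument of Lemma~\ref{lem:S.3}: using the Jacobson--Morozov refinement of \cite[\S 2.3]{KL} to lift $\Gamma_{[t_q,x,\rho_q]_G}$ canonically into $\pi_0(\Z_{G\rtimes\Gamma}(t_q,x))$, and then invoking the compatibility \eqref{eq:S.6}--\eqref{eq:S.7} of conjugation with the convolution action on $H_*(\mathcal B^{t_q,x},\C)$, one sees that the intertwining operators supplied by this lift realize the abstract Clifford intertwiners, so the two cocycles agree. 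Once this coincidence is established, the geometric KL-parameter data $(t_q,x,\rho\cong\rho_q\rtimes\rho_2)$ and the algebraic data $((t_q,x,\rho_q),\rho_2)$ correspond bijectively, and the identification of the module $M(t_q,x,\rho_q)\rtimes\rho_2$ with the displayed Hom-space completes the proof.
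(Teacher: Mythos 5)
Your overall strategy mirrors the paper's: first use the Kazhdan--Lusztig parametrization of $\Irr(\cH(G))$, then apply Clifford theory for $\cH(G)\rtimes\Gamma$ (Lemma~\ref{lem:Clifford_algebras}) to get data $((t_q,x,\rho_q),\rho_2)$, and finally repackage $(\rho_q,\rho_2)$ into a single representation $\rho$ of $\pi_0(\Z_{G\rtimes\Gamma}(t_q,x))$. The identification of the two a priori different $2$-cocycles is indeed the crux, and you are right to flag it. But the specific step you propose for carrying it out does not work.

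You claim that one can ``mimic the splitting argument of Lemma~\ref{lem:S.3}'' to lift $\Gamma_{[t_q,x,\rho_q]_G}$ canonically \emph{as a group} into $\pi_0(\Z_{G\rtimes\Gamma}(t_q,x))$. Lemma~\ref{lem:S.3}, however, is a statement about unipotent elements $x$ in a \emph{connected} reductive group $M^\circ$, and its proof crucially uses the Bala--Carter classification of distinguished unipotent orbits in a connected group. In the present situation the relevant inner group is $\Z_G(t_q)$, and $\Z_G(t_q)$ need not be connected; so $\Z_G(t_q,x)$ is not of the form $\Z_{M^\circ}(x)$ with $M^\circ$ connected and $x\in M^\circ$, and the lemma does not apply to the extension
\[
1 \to \pi_0(\Z_G(t_q,x)) \to \pi_0(\Z_{G\rtimes\Gamma}(t_q,x)) \to \Gamma_{[t_q,x]_G} \to 1 .
\]
The paper notes this explicitly (just after \eqref{eq:S.24}) and avoids the obstacle: one only takes a \emph{set-theoretic} splitting of this extension, which still defines a $2$-cocycle $\natural$. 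The point is then not that $\natural$ is trivial (it may not be), but that the \emph{same} $\natural$ simultaneously governs the projective $\Gamma_{[t_q,x,\rho_q]_G}$-actions on $\rho_q$ and on $\pi(t_q,x,\rho_q)$, because both arise from the single $\pi_0(\Z_{G\rtimes\Gamma}(t_q,x))$-action on the decomposition $H_*(\mathcal B^{t_q,x},\C)\cong\bigoplus_{\rho_q}\rho_q\otimes M(t_q,x,\rho_q)$. This yields the equality of cohomology classes $\natural(t_q,x,\rho_q)=\natural(\pi(t_q,x,\rho_q))$ in \eqref{eq:S.25} without ever producing a group homomorphism section. Your conclusion is correct, but your route to it assumes a canonical group splitting that is not available; you should replace it with the set-theoretic argument. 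You should also add the short final observation (as the paper does) that if $\rho_q$ appears in $H_{\top}(\mathcal B^{t_q,x},\C)$ then every $\pi_0(\Z_G(t_q,x))$-constituent of $\rho=\rho_q\rtimes\rho_2$ does, so that the geometricity condition on $\rho$ matches the condition on $\rho_q$.
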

\begin{proof}
First we recall the geometric constructions of $\mathcal H (G)$-modules by Kazhdan, Lusztig
and Reeder, taking advantage of Lemma \ref{lem:S.3} to simplify the presentation somewhat. 
As in \cite[\S 1.5]{R}, let
\[
1 \to C \to \tilde G \to G \to 1
\]
be a finite central extension such that $\tilde G$ is a connected reductive group with
simply connected derived group. The kernel $C$ acts naturally on $\mathcal H (\tilde G)$ and 
\begin{equation}\label{eq:S.17}
\mathcal H (\tilde G )^C \cong \mathcal H (G) .
\end{equation}
The action of $\Gamma$ on the based root datum of $(G,B,T)$ lifts uniquely to an action on
the corresponding based root datum for $\tilde G$, so the $\Gamma$-actions on $G$ and on
$\mathcal H (G)$ lift naturally to actions on $\tilde G$ and $\mathcal H (\tilde G)$.
Let $\mathcal H_{\mathbf q} (\tilde G)$ be the variation on 
$\mathcal H (\tilde G)$ with scalars $\mathbb Z [\mathbf q ,\mathbf q^{-1}]$ instead of 
$\C$ and $q \in \C^\times$. In \cite[Theorem 3.5]{KL} an isomorphism
\begin{equation}\label{eq:S.18}
\mathcal H_{\mathbf q} (\tilde G) \cong K^{\tilde G \times \C^\times}(\mathcal Z_{\tilde G}) 
\end{equation}
is constructed, where the right hand side denotes the $\tilde G \times \C^\times$-equivariant
K-theory of the Steinberg variety $\mathcal Z_{\tilde G}$ of $\tilde G$. Since 
$G \rtimes \Gamma$ acts via conjugation on $\tilde G$ and on $\mathcal Z_{\tilde G}$, 
it also acts on $K^{\tilde G \times \C^\times}(\mathcal Z_{\tilde G})$. However, the 
connected group $G$ acts trivially, so the action factors via $\Gamma$. Now the definition 
of the generators in \cite[Theorem 3.5]{KL} shows that \eqref{eq:S.18} is 
$\Gamma$-equivariant. In particular it specializes to $\Gamma$-equivariant isomorphisms
\begin{equation}\label{eq:S.19}
\mathcal H (\tilde G) \cong \mathcal H_{\mathbf q} (\tilde G) 
\otimes_{\mathbb Z [\mathbf q,\mathbf q^{-1}]} \C_q \cong K^{\tilde G \times \C^\times}
(\mathcal Z_{\tilde G}) \otimes_{\mathbb Z [\mathbf q,\mathbf q^{-1}]} \C_q .
\end{equation}
Let $(\tilde t_q,\tilde x) \in (\tilde G)^2$ be a lift of $(t_q,x) \in G^2$ with
$\tilde x$ unipotent. The $\tilde G$-conjugacy class of $\tilde t_q$ defines a central 
character of $\mathcal H (\tilde G)$ and according to \cite[Proposition 8.1.5]{CG} 
the associated localization is
\[
\mathcal H (\tilde G) \otimes_{\Z (\mathcal H (\tilde G))} \C_{\tilde t_q} \cong
K^{\tilde G \times \C^\times}(\mathcal Z_{\tilde G}) \otimes_{R (\tilde G \times \C^\times)} 
\C_{\tilde t_q,q} \cong H_* (\mathcal Z^{\tilde t_q,q} ,\C) . 
\]
Any Borel subgroup of $\tilde G$ contains $C$, so $\mathcal B^{\tilde t_q,\tilde x} = 
\mathcal B^{\tilde t_q,\tilde x}_{\tilde G}$ and $\mathcal B^{t_q,x} = \mathcal B^{t_q,x}_G$ 
are isomorphic algebraic varieties. From \cite[p. 414]{CG} we see
that the convolution product in Borel--Moore homology leads to an action of 
$H_* (\mathcal Z^{\tilde t_q,q}_{\tilde G} ,\C)$ on $H_* ( \mathcal B^{\tilde t_q,\tilde x}, 
\C)$. Notice that for $\tilde h \in H_* (\mathcal Z^{\tilde t_q,q}_{\tilde G} ,\C)$ and 
$g \in G \rtimes \Gamma$ we have
\[
g \cdot \tilde h \in H_* (\mathcal Z^{g \tilde t_q g^{-1},q}_{\tilde G} ,\C) \cong 
\mathcal H (\tilde G) \otimes_{\Z (\mathcal H (\tilde G))} \C_{g \tilde t_q g^{-1}} .
\]
An obvious generalization of \cite[Lemma 8.1.8]{CG} says that all these 
constructions are compatible with the above actions of $G \rtimes \Gamma$,
in the sense that the following diagram commutes:
\begin{equation}\label{eq:S.20}
\begin{array}{ccc}
H_* ( \mathcal B^{\tilde t_q,\tilde x}, \C)
& \xrightarrow{\; \tilde h \;} & H_* ( \mathcal B^{\tilde t_q,\tilde x}, \C) \\
\downarrow \scriptstyle{H_* (\mathrm{Ad}_g)} & & 
\downarrow \scriptstyle{H_* (\mathrm{Ad}_g)} \\
H_* ( \mathcal B^{g \tilde t_q g^{-1},g \tilde x g^{-1}}, \C) & 
\xrightarrow{\;g \cdot \tilde h\;} & 
H_* ( \mathcal B^{g \tilde t_q g^{-1},g \tilde x g^{-1}}, \C) .
\end{array}
\end{equation}
In particular the component group $\pi_0 (\Z_{\tilde G}(\tilde t_q,\tilde x))$ acts on
$H_* ( \mathcal B^{\tilde t_q,\tilde x}, \C)$ by $\mathcal H (\tilde G)$-intertwiners.
Let $\tilde \rho$ be an irreducible representation of this component group, appearing
in $H_* ( \mathcal B^{\tilde t_q,\tilde x}, \C)$. In other words, $(\tilde t_q,
\tilde x,\tilde \rho)$ is a Kazhdan--Lusztig parameter for $\mathcal H (\tilde G)$. 
According to \cite[Theorem 7.12]{KL}
\begin{equation}\label{eq:S.16}
\mathrm{Hom}_{\pi_0 (\Z_{\tilde G}(\tilde t_q,\tilde x))} 
(\tilde \rho , H_* (\mathcal B^{\tilde t_q,\tilde x},\C)) 
\end{equation}
is a $\mathcal H (\tilde G)$-module with a unique irreducible quotient, say 
$V_{\tilde t_q,\tilde x,\tilde \rho}$. 

Following \cite[\S 3.3]{R} we define a group $R_{\tilde t_q,\tilde x}$ by
\begin{equation}\label{eq:S.21}
1 \to \pi_0 (\Z_{\tilde G}(\tilde t_q,\tilde x)) \to \pi_0 (\Z_{\tilde G} (t_q,x)) \to 
R_{\tilde t_q,\tilde x} \to 1 .
\end{equation}
Since the derived group of $\tilde G$ is simply connected, $\Z_{\tilde G} (t_q)^\circ = 
\Z_{\tilde G} (\tilde t_q)$. Furthermore $\Z (\tilde G)$ acts trivially on 
$H_* ( \mathcal B^{\tilde t_q,\tilde x}, \C)$, so we may just as well replace 
\eqref{eq:S.21} by the short exact sequence
\begin{equation*}
1 \to \pi_0 (\Z_{\Z_G (t_q)^\circ}(x) / \Z(G)) \to 
\pi_0 (\Z_G (t_q,x) / \Z(G)) \to R_{\tilde t_q,\tilde x} \to 1 .
\end{equation*}
>From Lemma \ref{lem:S.3} (with the trivial representation of $\pi_0 (\Z_G (t_q)^\circ (x))$ 
in the role of $\rho$) 
we know that the latter exact sequence splits. Hence all the 2-cocycles of subgroups of 
$R_{\tilde t_q,\tilde x}$ appearing in \cite[Section 3]{R} are trivial.

Let $\tilde \sigma$ be any irreducible representation of $R_{\tilde t_q,\tilde x,\tilde \rho}$,
the stabilizer of the isomorphism class of $\tilde \rho$ in $R_{\tilde t_q,\tilde x}$.
Clifford theory for \eqref{eq:S.21} produces $\tilde \rho \rtimes \tilde \sigma \in$ 
Irr$(\pi_0 (\Z_{\tilde G} (t_q,x)))$, a representation which factors through
$\pi_0 (\Z_G (t_q,x))$ because $C$ acts trivially. Moreover by \cite[Lemma 3.5.1]{R}
it appears in $H_* (\mathcal B^{t_q,x},\C)$, and conversely every irreducible representation
with the latter property is of the form $\tilde \rho \rtimes \tilde \sigma$.

With the above in mind, \cite[Lemma 3.5.2]{R} says that the $\mathcal H (G)$-module
\begin{equation}\label{eq:S.27}
\begin{split}
M(t_q,x,\tilde \rho \rtimes \tilde \sigma) := &\; \mathrm{Hom}_{\pi_0 (\Z_G (t_q,x))} 
\big( \tilde \rho \rtimes \tilde \sigma, H_* (\mathcal B^{t_q,x},\C) \big) \\
= &\; \mathrm{Hom}_{R_{\tilde t_q,\tilde x,\tilde \rho}} \big( \tilde \sigma, 
\mathrm{Hom}_{\pi_0 (\Z_{\tilde G}(\tilde t_q,\tilde x))} (\tilde \rho , 
H_* (\mathcal B^{\tilde t_q,\tilde x},\C))  \big)
\end{split} 
\end{equation}
has a unique irreducible quotient
\begin{equation}\label{eq:S.29}
\pi (t_q,x,\tilde \rho \rtimes \tilde \sigma) = 
\mathrm{Hom}_{R_{\tilde t_q,\tilde x,\tilde \rho}} 
(\tilde \sigma , V_{\tilde t_q,\tilde x,\tilde \rho}) .
\end{equation}
According  \cite[Lemma 3.5.3]{R} this sets up a bijection between 
Irr$(\mathcal H (G))$ and $G$-conjugacy classes of Kazhdan--Lusztig parameters for $G$.

\begin{rem}
The module \eqref{eq:S.27} is well-defined for any $q \in \C^\times$, although for roots of
unity it may have more than one irreducible quotient. For $q=1$ the algebra $\mathcal H (G)$
reduces to $\C [X^* (T) \rtimes \mathcal W^G]$ and \cite[Section 8.2]{CG} shows that Kato's
module \eqref{eq:KatoMod} is a direct summand of $M (t_1,x,\rho_1)$. 
\end{rem}

Next we study what $\Gamma$ does to all these objects. There is natural action of $\Gamma$
on Kazhdan--Lusztig parameters for $G$, namely 
\[
\gamma \cdot (t_q,x, \rho_q) =  \big( \gamma t_q \gamma^{-1}, 
\gamma x \gamma^{-1}, \rho_q\circ \mathrm{Ad}_\gamma^{-1} \big) .
\]
>From \eqref{eq:S.20} and \eqref{eq:S.27} we deduce that the diagram
\begin{equation}\label{eq:S.22}
\begin{array}{ccc}
\pi (t_q,x,\rho_q ) & \xrightarrow{\; h \;} & \pi (t_q,x,\rho_q) \\
\downarrow \scriptstyle{H_* (\mathrm{Ad}_g)} & & 
\downarrow \scriptstyle{H_* (\mathrm{Ad}_g)} \\
\!\! \pi \big( g t_q g^{-1},g x g^{-1}, \rho_q \circ \mathrm{Ad}_g^{-1} \big) & 
\xrightarrow{\; \gamma (h) \;} & 
\pi \big( g t_q g^{-1},g x g^{-1},\rho_q \circ \mathrm{Ad}_g^{-1} \big)
\end{array}
\end{equation}
commutes for all $g \in G \gamma$ and $h \in \mathcal H (G)$. Hence 
\begin{equation}\label{eq:S.23}
\text{Reeder's parametrization of Irr}(\mathcal H (G)) \text{ is }\Gamma\text{-equivariant.} 
\end{equation}
Let $\pi \in$ Irr$(\mathcal H (G))$ and choose a Kazhdan--Lusztig parameter such that
$\pi$ is equivalent with $\pi (t_q,x,\rho_q)$. Composition with $\gamma^{-1}$ on $\pi$ 
gives rise to a 2-cocycle $\natural (\pi)$ of $\Gamma_\pi$. Clifford theory tells us that 
every irreducible representation of $\mathcal H (G) \rtimes \Gamma$ is of the form 
$\pi \rtimes \rho_2$ for some $\pi \in$ Irr$(\mathcal H (G))$, unique up to 
$\Gamma$-equivalence, and a unique $\rho_2 \in \mathrm{Irr}^{\natural (\pi)}(\Gamma_\pi)$. 
By the above the stabilizer of $\pi$ in $\Gamma$ equals the stabilizer of the $G$-conjugacy 
class $[t_q,x,\rho_q]_G$. Thus we have parametrized Irr$(\mathcal H (G) \rtimes \Gamma)$ 
in a natural way with $G \rtimes \Gamma$-conjugacy classes of quadruples 
$(t_q,x,\rho_q,\rho_2)$, where $(t_q,x,\rho_q)$ is a Kazhdan--Lusztig parameter for $G$ and 
$\rho_2 \in \mathrm{Irr}^{\natural(\pi(t_q,x,\rho_q))} \Gamma_{[t_q,x,\rho_q]_G}$.

The short exact sequence
\begin{equation}\label{eq:S.24}
1 \to \pi_0 (\Z_G (t_q,x)) \to \pi_0 (\Z_{G \rtimes \Gamma} (t_q,x)) \to
\Gamma_{[t_q,x]_G} \to 1
\end{equation}
yields an action of $\Gamma_{[t_q,x]_G}$ on Irr$\big( \pi_0 (\Z_G (t_q,x)) \big)$.
Restricting this to the stabilizer of $\rho_q$, we obtain another 2-cocycle 
$\natural(t_q,x,\rho_q)$ of $\Gamma_{[t_q,x,\rho_q]_G}$, which we want to compare to 
$\natural(\pi(t_q,x,\rho_q))$. Let us decompose
\[
H_* (\mathcal B^{t_q,x},\C) \cong \bigoplus\nolimits_{\rho_q} \rho_q \otimes 
M(t_q,x,\rho_q)
\]
as $\pi_0 ((\Z_G (t_q,x)) \times \mathcal H(G)$-modules. We sum over all $\rho_q \in$
Irr$\big( \pi_0 (\Z_G (t_q,x)) \big)$ for which the contribution is nonzero, and we know
that for such $\rho_q$ the $\mathcal H (G)$-module $M(t_q,x,\rho_q)$ has a unique irreducible 
quotient $\pi (t_q,x,\rho_q)$. Since $\pi_0 (\Z_{G \rtimes \Gamma} (t_q,x))$ acts 
(via conjugation of Borel subgroups) on $H_* (\mathcal B^{t_q,x},\C)$, any splitting of 
\eqref{eq:S.24} as sets provides a 2-cocycle $\natural$ for the action of 
$\Gamma_{[t_q,x,\rho_q]_G}$ on $\rho_q \otimes M(t_q,x,\rho_q)$.
Unfortunately we cannot apply Lemma \ref{lem:S.3} to find a splitting of \eqref{eq:S.24}
as groups, because $\Z_G (t_q)$ need not be connected. Nevertheless $\natural$ 
can be used to describe the actions of $\Gamma_{[t_q,x,\rho_q]_G}$ on both $\rho_q$ and 
$\pi (t_q,x,\rho_q)$, so as cohomology classes
\begin{equation}\label{eq:S.25}
\natural(t_q,x,\rho_q) = \natural = \natural(\pi(t_q,x,\rho_q)) 
\in H^2 (\Gamma_{[t_q,x,\rho_q]_G},\C^\times) .
\end{equation}
It follows that every irreducible representation $\rho$ of $\pi_0 (\Z_{G \rtimes \Gamma} 
(t_q,x))$ is of the form $\rho_q \rtimes \rho_2$ for $\rho_q$ and $\rho_2$ as above.
Moreover $\rho$ determines $\rho_q$ up to $\Gamma_{[t_q,x]_G}$-equivalence and $\rho_2$
is unique if $\rho_q$ has been chosen. Finally, if $\rho_q$ appears in 
$H_{\top}(\mathcal B^{t_q,x},\C)$ then every irreducible $\pi_0 (\Z_G (t_q,x))
$-subrepresentation of $\rho$ does, because $\pi_0 (\Z_{G \rtimes \Gamma} (t_q,x))$ acts 
naturally on $H_* (\mathcal B^{t_q,x},\C)$. Therefore we may replace the above 
quadruples $(t_q,x,\rho_q,\rho_2)$ by Kazhdan--Lusztig parameters $(t_q,x,\rho)$. 
\end{proof}

\section{Spherical representations}
\label{sec:spherical}

Let $G,B,T$ and $\Gamma$ be as in the previous section.
Let $\mathcal H (\cW^G )$ be the Iwahori--Hecke algebra of the Weyl group 
$\cW^G$, with a parameter $q \in \C^\times$ which is not a root of unity. This
is a deformation of the group algebra $\C [\cW^G]$ and a subalgebra of the affine Hecke 
algebra $\cH (G)$. The multiplication is defined in terms of the basis 
$\{ T_w \mid w \in \cW^G\}$, as in \eqref{eq:defHA}.

Recall that $\cH (G)$ also has a commutative subalgebra $\cO (T)$, 
such that the multiplication maps
\begin{equation}\label{eq:multmaps}
\cO (T) \otimes \cH (\cW^G) \longrightarrow \cH (G) \longleftarrow 
\cH (\cW^G) \otimes \cO (T)
\end{equation}
are bijective.

The trivial representation of $\cH (\cW^G) \rtimes \Gamma$ is defined as
\begin{equation}
\triv (T_w \gamma) = q^{\ell (w)} \quad w \in \cW^G, \gamma \in \Gamma .
\end{equation}
It is associated to the idempotent
\[
p_\triv \: p_\Gamma := \sum_{w \in \cW^G} T_w P_{\cW^G}(q)^{-1} \, 
\sum_{\gamma \in \Gamma} \gamma |\Gamma |^{-1} \quad \in \cH (\cW^G) \rtimes \Gamma ,
\]
where $P_{\cW^G}$ is the Poincar\'e polynomial
\[
P_{\cW^G}(q) = \sum\nolimits_{w \in \cW^G} q^{\ell (w)} .
\]
Notice that $P_{\cW^G}(q) \neq 0$ because $q$ is not a root of unity.
The trivial representation appears precisely once in the regular representation of
$\cH (\cW^G) \rtimes \Gamma$, just like for finite groups.

An $\cH (G) \rtimes \Gamma$-module $V$ is called spherical if it is generated by
the subspace $p_\triv p_\Gamma V$ \cite[(2.5)]{HeOp}. This admits a nice
interpretation for the unramified principal series representations. Recall that
$\cH (G) \cong \cH (\cG,\mathcal I)$ for an Iwahori subgroup $\mathcal I \subset \cG$. 
Let $\mathcal K \subset \cG$ be a good maximal compact subgroup containing $\mathcal I$. 
Then $p_\triv$ corresponds to averaging over $\mathcal K$ and $p_\triv \cH (\cG,\mathcal I) 
p_\triv \cong \cH (\cG,\mathcal K)$, see \cite[Section 1]{HeOp}. Hence spherical 
$\cH (\cG,\mathcal I)$-modules correspond to smooth
$\cG$-representations that are generated by their $\mathcal K$-fixed vectors, also known as
$\mathcal K$-spherical $\cG$-representations. By the Satake transform
\begin{equation} \label{eq:Satake}
p_\triv \cH (\cG,\mathcal I) p_\triv \cong \cH (\cG,\mathcal K) \cong \cO (T / \cW^G) ,
\end{equation}
so the irreducible spherical modules of $\cH (G) \cong \cH (\cG,\mathcal I)$ are parametrized
by $T / \cW^G$ via their central characters. We want to determine the Kazhdan--Lusztig 
parameters (as in Theorem \ref{thm:S.5}) of these representations.

\begin{prop}\label{prop:spherical}
For every central character $(\cW^G \rtimes \Gamma) t \in T / (\cW^G \rtimes \Gamma)$ 
there is a unique irreducible spherical $\cH (G) \rtimes \Gamma$-module,
and it has Kazhdan--Lusztig parameter $(t,x=1,\rho = \triv)$.
\end{prop}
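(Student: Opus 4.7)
Set $A := \cH(G) \rtimes \Gamma$ and $e := p_\triv p_\Gamma$. The first step is to identify the subalgebra $eAe$. Using the factorisation $\cH(G) = \cO(T) \cdot \cH(\cW^G)$ from \eqref{eq:multmaps}, the Satake relation $p_\triv \cH(G) p_\triv = p_\triv \cdot \cO(T)^{\cW^G}$ coming from \eqref{eq:Satake}, and averaging the $\Gamma$-action via $p_\Gamma$, one obtains a canonical algebra isomorphism
\[
eAe \;\cong\; \cO(T)^{\cW^G \rtimes \Gamma} \;=\; \cO\bigl(T/(\cW^G \rtimes \Gamma)\bigr) .
\]
This is a commutative algebra whose simple modules are one-dimensional characters $\chi_t$, indexed by points $t \in T/(\cW^G \rtimes \Gamma)$.

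Next I would use the functor $V \mapsto eV$ to classify the irreducible spherical $A$-modules. For any irreducible spherical $V$, the space $eV$ is a simple $eAe$-module: any nonzero $eAe$-submodule $U \subseteq eV$ generates an $A$-submodule $AU = V$ by irreducibility of $V$, forcing $U = eU = eV$. The assignment $V \mapsto eV$ is injective on simples (because $V = A \cdot eV$ by sphericality) and surjective (for each character $\chi_t$, the spherical module $Ae \otimes_{eAe} \chi_t$ admits a unique irreducible quotient whose $e$-part is isomorphic to $\chi_t$). Since the $eAe$-character of $eV$ is precisely the central character of $V$, this establishes a bijection between irreducible spherical $A$-modules and central characters in $T/(\cW^G \rtimes \Gamma)$, proving both existence and uniqueness in the statement.

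The remaining task is to identify the Kazhdan--Lusztig parameter of this unique module as $(t,1,\triv)$. The Clifford-theoretic reduction carried out in the proof of Theorem~\ref{thm:S.5} allows us to assume $\Gamma = \{1\}$. The triple $(t,1,\triv)$ is indeed a Kazhdan--Lusztig parameter: the relation $t \cdot 1 \cdot t^{-1} = 1^q$ is automatic, and the trivial character of $\pi_0(\Z_G(t))$ occurs in $H_{\mathrm{top}}(\mathcal B^t,\C)$ because by Proposition~\ref{prop:UP} the variety $\mathcal B^t$ has $[\cW^G:\cW^G_t]$ irreducible components on which $\pi_0(\Z_G(t))$ acts by permutation, leaving the symmetrising vector fixed. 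So Theorem~\ref{thm:S.5} yields an irreducible $\cH(G)$-module $\pi(t,1,\triv)$ of central character $\cW^G t$, and it remains only to show that $\pi(t,1,\triv)$ is spherical, for then the bijection of the previous paragraph forces it to be the module we seek.

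To prove sphericality I would deform to $q=1$: Theorem~\ref{thm:S.3} together with \eqref{eq:trivWaff} identifies the $q=1$ specialisation of the standard module $M(t,1,\triv)$ with $\mathrm{Ind}_{X^*(T)\rtimes \cW^G_t}^{X^*(T)\rtimes \cW^G}(\triv_{\cW^G_t})$, whose $\cW^G$-fixed subspace is one-dimensional by Frobenius reciprocity and generates the whole module. A flatness argument for the family $\{M(t_q,1,\triv)\}_q$ then shows that $e M(t,1,\triv)$ is one-dimensional for our $q$, and because $eAe$ is commutative this line must map nontrivially into the unique irreducible quotient $\pi(t,1,\triv)$. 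The hard part will be making this deformation/continuity argument fully rigorous --- that is, verifying that the one-dimensional spherical line truly survives the passage to the irreducible quotient; a conceptually cleaner alternative is to invoke the explicit analysis of spherical vectors in Kazhdan--Lusztig standard modules carried out in \cite{KL} and \cite{R}, which shows directly that $e\pi(t_q,x,\rho) \neq 0$ if and only if $x=1$ and $\rho = \triv$.
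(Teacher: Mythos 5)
Your classification of irreducible spherical $\cH(G)\rtimes\Gamma$-modules via the corner algebra $eAe$ with $e = p_\triv p_\Gamma$ is correct and a clean alternative to the paper's route, which instead applies the Satake isomorphism to $\cH(G)$ alone and then lifts to $\cH(G)\rtimes\Gamma$ by Clifford theory (showing the relevant $2$-cocycle is trivial by normalising intertwiners on the spherical vector). Both establish the existence/uniqueness bijection with $T/(\cW^G\rtimes\Gamma)$.

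However, the second half has a genuine gap, which you partly acknowledge. First, the $q=1$ identification you invoke is not correct: the standard module $M(t,1,\triv)$ uses the \emph{total} homology $H_*(\mathcal B^t,\C)$, while $\mathrm{Ind}_{X^*(T)\rtimes\cW^G_t}^{X^*(T)\rtimes\cW^G}(\triv_{\cW^G_t})$ is Kato's module $\tau(t,1,\triv)$ from \eqref{eq:trivWaff}, built from \emph{top} homology only. By Lemma~\ref{lem:S.7} the latter is just one summand of the former, so the "generates the whole module" claim you need at $q=1$ does not follow from Frobenius reciprocity alone. Second, even granting a correct statement at $q=1$, the passage to generic $q$ is the crux: you need that the one-dimensional spherical line $eM(t_q,1,\triv)$ is not contained in the maximal proper submodule, and the assertion that this follows "because $eAe$ is commutative" is a non sequitur. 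Generation is at best an open condition in a flat family and there is no a priori reason it propagates from $q=1$ to all non-root-of-unity $q$.

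The paper closes this gap concretely: passing to a simply connected cover $\tilde G$, it replaces $t$ by the dominant representative $t^+$ (so $|\alpha(t^+)|\geq 1$ for all positive roots) and uses \cite[Prop.~2.8.2]{R} to identify $M_{\tilde t,1,\triv}$ with the full principal series $\mathrm{Ind}_{\cO(\tilde T)}^{\cH(\tilde G)}\C_{t^+}$ at the working $q$; then \cite[(1.5)]{ReeWhittaker}, which is precisely a statement about dominant inducing characters, shows the spherical vector $p_\triv$ generates this module, hence cannot lie in the kernel of the projection to $\pi(\tilde t,1,\triv)$. Descent to $G$ then requires the extra input \cite[Prop.~4.1]{Cas} that $\pi_0(\Z_G(t))$ fixes $p_\triv$. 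Your suggested fallback of "invoking explicit analysis of spherical vectors in \cite{KL,R}" is in fact the right direction, but it needs to be made precise in exactly this way — the dominance reduction is the key idea you would need to add.
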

\begin{proof}
We will first prove the proposition for $\cH (G)$, and only then consider $\Gamma$.

By the Satake isomorphism \eqref{eq:Satake} there is a unique irreducible spherical
$\cH (G)$-module for every central character $\cW^G t \in T / \cW^G$. The equivalence
classes of Kazhdan--Lusztig parameters of the form $(t,x=1,\rho = \triv)$ are also in
canonical bijection with $T / \cW^G$. Therefore it suffices to show that $\pi (t,1,\triv)$
is spherical for all $t \in T$.

The principal series of $\cH (G)$ consists of the modules 
$\mathrm{Ind}_{\cO (T)}^{\cH (G)} \C_t$ for $t \in T$. This module admits a central
character, namely $\cW^G t$. By \eqref{eq:multmaps} every such module is isomorphic to 
$\cH (\cW^G)$ as a $\cH (\cW^G)$-module. In particular it contains the trivial 
$\cH (\cW^G)$-representation once and has a unique irreducible spherical subquotient. 

As in Section \ref{sec:repAHA}, let $\tilde G$ be a finite central extension of $G$ with
simply connected derived group. Let $\tilde T ,\tilde B$ be the corresponding extensions 
of $T,B$. We identify the roots and the Weyl groups of $\tilde G$ and $G$. 
Let $\tilde t \in \tilde T$ be a lift of $t \in T$.
>From the general theory of Weyl groups it is known that there is a unique
$t^+ \in \cW^G \tilde T$ such that $|\alpha (t^+)| \geq 1$ for all $\alpha \in 
R (\tilde B, \tilde T) = R(B,T)$.
By \eqref{eq:S.20} 
\[
H_* \big( \mathcal B_{\tilde G}^{\tilde t},\C \big) \cong H_*
\big( \mathcal B_{\tilde G}^{t^+} ,\C \big)
\]
as $\cH (\tilde G)$-modules. These $t^+,\tilde B$ fulfill \cite[Lemma 2.8.1]{R}, so by
\cite[Proposition 2.8.2]{R}
\begin{equation}
M_{\tilde t,\tilde x = 1,\tilde \rho = \triv} = H_* (\mathcal B_{\tilde G}^{t^+},\C)
\cong \mathrm{Ind}_{\cO (\tilde T)}^{\cH (\tilde G)} \C_{t^+} .
\end{equation}
According to \cite[(1.5)]{ReeWhittaker}, which applies to $t^+$, the spherical vector $p_\triv$
generates $M_{\tilde t,1,\triv}$. Therefore it cannot lie in any proper 
$\cH (\tilde G)$-submodule of $M_{\tilde t,1,\triv}$ and represents a nonzero element
of $\pi (\tilde t,1,\triv)$. We also note that the central character of $\pi (\tilde t,1,
\triv)$ is that of $M_{\tilde t,1,\triv} ,\; \cW^G \tilde t = \cW^G t^+$. 

Now we analyse this is an $\cH (G)$-module. The group $R_{\tilde t,1} = 
R_{\tilde t,\tilde x = 1, \tilde \rho = \triv}$ from \eqref{eq:S.21} is just the component 
group $\pi_0 (\Z_G (t))$, so by \eqref{eq:S.29}
\[
\pi (\tilde t,1,\triv) \cong \bigoplus\nolimits_\rho \mathrm{Hom}_{\pi_0 (\Z_G (t))}
(\rho, \pi (\tilde t,1,\triv)) = \bigoplus\nolimits_\rho \pi (t,1,\triv) .
\]
The sum runs over $\Irr \big( \pi_0 (\Z_G (t)) \big)$, all these representations $\rho$
contribute nontrivially by \cite[Lemma 3.5.1]{R}. Recall from Lemma \ref{lem:centrals}
that $\pi_0 (\Z_G (t))$ can be realized as a subgroup of $\cW^G$ and from \eqref{eq:Satake}
that $p_\triv \in \pi (\tilde t,1,\triv)$ can be regarded as a function on $\tilde \cG$
which is bi-invariant under a good maximal compact subgroup $\tilde{\mathcal K}$. This brings
us in the setting of \cite[Proposition 4.1]{Cas}, which says that $\pi_0 (\Z_G (t))$ fixes 
$p_\triv \in \pi (\tilde t,1,\triv)$. Hence $\pi (t,1,\triv)$ contains $p_\triv$ and is a
spherical $\cH (G)$-module. Its central character is the restriction of the central character
of $\pi (\tilde t,1,\triv)$, that is, $\cW^G t \in T / \cW^G$.

Now we include $\Gamma$. Suppose that $V$ is a irreducible spherical
$\cH (G) \rtimes \Gamma$-module. By Clifford theory its restriction to $\cH (G)$ is a
direct sum of irreducible $\cH (G)$-modules, each of which contains $p_\triv$. Hence $V$
is built from irreducible spherical $\cH (G)$-modules. By \eqref{eq:S.23}
\[
\gamma \cdot \pi (t,1,\triv) = \pi (\gamma t,1,\triv) ,
\]
so the stabilizer of $\pi (t,1,\triv) \in \Irr (\cH (G))$ in $\Gamma$ equals the stabilizer
of $\cW^G t \in T / \cW^G$ in $\Gamma$. Any isomorphism of $\cH (G)$-modules
\[
\psi_\gamma : \pi (t,1,\triv) \to \pi (\gamma t,1,\triv) 
\]
must restrict to a bijection between the onedimensional subspaces of spherical vectors
in both modules. We normalize $\psi_\gamma$ by $\psi_\gamma (p_\triv) = p_\triv$. Then
$\gamma \mapsto \psi_\gamma$ is multiplicative, so the 2-cocycle of $\Gamma_{\cW^G t}$
is trivial. With Theorem \ref{thm:S.5} this means that the irreducible $\cH (G) \rtimes 
\Gamma$-modules whose restriction to $\cH (G)$ is spherical are parametrized by 
equivalence classes of triples $(t,1,\triv \rtimes \sigma)$ with $\sigma \in
\Irr (\Gamma_{\cW^G t})$. The corresponding module is
\begin{equation*}
\pi (t,1,\triv \rtimes \sigma) = \pi (t,1,\triv) \rtimes \sigma =
\mathrm{Ind}_{\cH (G) \rtimes \Gamma_{\cW^G t}}^{\cH (G) \rtimes \Gamma}
\big( \pi (t,1,\triv) \otimes \sigma \big) .
\end{equation*}
Clearly $\pi (t,1,\triv \rtimes \sigma)$ contains the spherical vector
$p_\triv p_\Gamma$ if and only if $\sigma$ is the trivial representation. It follows
that the irreducible spherical $\cH (G) \rtimes \Gamma$-modules are parametrized by
equivalence classes of triples $\big( t,1,\triv_{\pi_0 (\Z_{G \rtimes \Gamma}(t))} \big)$,
that is, by $T / (\cW^G \rtimes \Gamma)$.
\end{proof}

\section{Main result (the case of a connected endoscopic group)} 
\label{sec:main} 

Let $\chi$ be a smooth character of the maximal torus $\cT \subset \cG$. We recall that
\begin{align*}
\fs & = [\cT,\chi]_{\cG},\\
c^\fs & = \hat \chi \big|_{\fo_F^\times},\\
H & = \Cent_G(\im c^\fs) , \\
W^\fs &  = \Z_{\cW^G} (\im c^\fs) .
\end{align*}
Let \{KLR parameters$\}^\fs$ be the collection of Kazhdan--Lusztig--Reeder parameters 
for $G$ such that $\Phi \big|_{\fo_F^\times} = c^\fs$. Notice that the condition forces
$\Phi (\mathbf W_F \times \SL_2 (\C)) \subset H$. This collection is not closed under 
conjugation by elements of $G$, only $H = \Z_G (\im c^\fs)$ acts naturally on it.

The Bernstein centre associated to $\fs$ is $T^\fs / W^\fs$. Since $T = T^\fs$ is
a maximal torus in $H$, we can identify $T^\fs / W^\fs$ with the space $c(H)_{\ss}$ 
of semisimple conjugacy classes in $H$.

Roche \cite{Roc} proved that $\Irr (\cG)^{\fs}$ is naturally in bijection with 
$\Irr (\mathcal H (H))$, under some restrictions on the residual characteristic $p$. 
Although Roche works with a local non-archimedean field $F$ of characteristic 0, it follows
from \cite{AdRo} that his arguments apply just as well over local fields of positive 
characteristic. We note that for unramified characters $\chi$ this result is already
classical, proven (without any restrictions on $p$) by Borel \cite{Bor}.

In the current section we will prove the most important part of our conjecture in the 
case that $H$ is connected. This happens for most $\fs$, a sufficient condition is:

\begin{lem} \label{lem:Roche}   
Suppose that $G$ has simply connected derived group and that the residual 
characteristic $p$ satisfies the hypothesis in \cite[p.~379]{Roc}. 
Then $H$ is connected.
\end{lem}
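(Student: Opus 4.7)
The strategy is to apply Steinberg's classical theorem on connected centralizers in connected reductive groups whose derived group is simply connected, combined with the concrete description of centralizers of subsets of a maximal torus already recalled in the proof of Lemma~\ref{lem:centrals}.

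First I would observe that $T$ is a maximal torus of $G$ contained in $H$, since $\im c^\fs\subset T$. By \cite[\S~4.1]{SpringerSteinberg} (quoted in the proof of Lemma~\ref{lem:centrals}), the group $H=\Cent_G(\im c^\fs)$ is generated by three sorts of elements: the torus $T$; the root subgroups $U_\alpha$ for those $\alpha\in R(G,T)$ such that $\alpha|_{\im c^\fs}=1$; and representatives in $\Nor_G(T)$ of those $w\in\cW^G$ which fix $\im c^\fs$ pointwise. The first two types of generators already generate the identity component $H^\circ$, so to prove that $H=H^\circ$ it suffices to prove that every representative of the third type can be multiplied into $H^\circ$; equivalently, that the subgroup of $\cW^G$ pointwise fixing $\im c^\fs$ coincides with the Weyl group of $H^\circ$.

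The key input is Steinberg's theorem: in a connected reductive group with simply connected derived group, the centralizer of any semisimple element is connected. To apply it to the whole subgroup $\im c^\fs$ rather than to a single element, I would argue inductively. For any $s\in\im c^\fs$, the group $G_1:=\Cent_G(s)$ is connected by Steinberg, contains $T$, and has root datum a sub-root-datum of that of $G$; a direct check on character lattices shows that $G_1^\der$ inherits simple connectedness from $G^\der$. Picking another $s'\in\im c^\fs$ and setting $G_2:=\Cent_{G_1}(s')$, Steinberg applies again, and so on. Since $\im c^\fs$ is topologically finitely generated (it is the image under a smooth map of $\fo_F^\times$), this descending chain stabilizes and produces $H=\Cent_G(\im c^\fs)$ as a connected reductive subgroup.

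The main obstacle is pinning down the role of the hypothesis on the residual characteristic from \cite[p.~379]{Roc}: the connectedness argument sketched above only needs $G^\der$ simply connected, so the $p$-hypothesis here is not consumed by the connectedness proof itself. Its function is rather to place us in the regime where Roche's construction of $c^\fs$ and the identification $\Irr(\cG)^\fs\cong\Irr(\cH(H))$ are available, so that the object $H$ under consideration is in fact the one relevant to the Bernstein component $\fs$. Once that framework is granted, the argument above finishes the proof.
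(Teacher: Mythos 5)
Your iterative plan breaks down at the step you gloss over with ``a direct check on character lattices shows that $G_1^\der$ inherits simple connectedness from $G^\der$.'' This is false in general: for a semisimple $s$ in a connected reductive $G$ with simply connected derived group, $\Cent_G(s)$ is connected by Steinberg, but its derived group need \emph{not} be simply connected. (For instance, in a simply connected group of type $B$, $C$, $D$, or $E$, a suitable involution $s$ produces a centralizer whose derived group is a non--simply-connected quotient.) Once $G_1^\der$ fails to be simply connected, the next application of Steinberg is no longer licensed, and the whole iteration collapses. This failure mode is exactly what Steinberg calls \emph{torsion}: the primes $p$ for which an element of $p$-power order in $T$ can yield a centralizer with non--simply-connected derived group are the torsion primes of the root system. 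So the hypothesis on the residual characteristic is \emph{precisely} what your connectedness argument needs, rather than being, as you suggest, merely an artifact of Roche's framework.

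The paper avoids the gap by first decomposing the finite group $\im c^\fs$ according to the structure of $\fo_F^\times \cong \mu_{q-1}\times(1+\fp_F)$: one writes $\im c^\fs = A_p\cdot B_{q-1}$, with $A_p$ a finite abelian $p$-group and $B_{q-1}$ \emph{cyclic} of order dividing $q-1$. Iterating Steinberg through the $p$-group $A_p$ is safe because each element has $p$-power order and $p$ is not a torsion prime, so by \cite{Steinbergtorsion} the derived group of $H_A=\Cent_G(A_p)$ stays simply connected throughout. Then $H=\Cent_{H_A}(B_{q-1})$ requires only a \emph{single} further application of Steinberg's connectedness theorem, which is allowed because $B_{q-1}$ is cyclic; no claim about $H^\der$ being simply connected is needed (and indeed the paper's subsequent Remark shows it may fail). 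Your sketch does correctly identify the relevance of \cite[\S~4.1]{SpringerSteinberg} and of iterating Steinberg, but without the $p$-part/cyclic-part decomposition and the torsion-prime input, it does not constitute a proof.
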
 
\begin{proof}   
We consider first the case where $\fs=[\cT,1]_\cG$. 
Then we have  $c^\fs = 1, H=G$ and $W^\fs=\cW$.

We assume now that $c^\fs \ne 1$. Then $\im c^\fs$ is a finite abelian subgroup of 
$T$ which has the following structure: the direct product of a finite abelian $p$-group 
$A_p$ with a cyclic group $B_{q-1}$ whose order divides $q - 1$.  This follows from the  
well-known structure theorem for the group $\fo_F^\times$, see \cite[\S 2.2]{I}:
\[ 
\im c^\fs = A_p \cdot B_{q-1}.
\] 
We have
\[
H = \Cent_{H_A}(B_{q-1}) \quad\text{where}\quad H_A := \Cent_G(A_p).
\]
Since $G$ has simply connected derived group, it follows from Steinberg's
connectedness theorem \cite{Steinberg} that the group $H_A$
is connected.
Since $A_p$ is a $p$-group, and $p$ is not a torsion prime for the root system $R(G,T)$,
the derived group of $H_A$ is simply connected (see \cite{Steinbergtorsion}). 

Now  $B_{q-1}$ is cyclic. Applying Steinberg's connectedness theorem to the group $H_A$, 
we get that $H$ itself is connected. 
\end{proof}

\begin{rem}
Notice that $H$ does not necessarily have simply connected derived group in setting
of Lemma \ref{lem:Roche}. For instance, if $G$ is the exceptional group of type $\rG_2$
and $\chi$ is the tensor square of a ramified quadratic character of $F^\times$, 
then $H=\SO(4,\Cset)$.
\end{rem}

In the remainder of this section we will assume that $H$ is connected, Then Lemma
\ref{lem:centrals} shows that $W^\fs$ is the Weyl group of $H$. 

\begin{thm} \label{thm:ps}  
Let $\cG$ be a  split reductive $p$-adic group and let ${\fs}$ 
be a point in the Bernstein spectrum of the principal series of $\cG$. In the
case when $H\ne G$, assume that $H$
is connected and that the residual characteristic $p$ satisfies the conditions of 
\cite[Remark 4.13]{Roc}. Then there is a commutative triangle of natural bijections 
\[ 
\xymatrix{   & (T/\!/W^\fs)_2 \ar[dr]\ar[dl] & \\  
\Irr(\mathcal{G})^\fs    \ar[rr] & & \{\KLR\:\:\mathrm{parameters}\}^\fs / H} 
\] 
In this triangle, the right slanted map stems from the affine Springer correspondence, 
the bottom horizontal map is the bijection established by Reeder \cite{R}, and the 
left slanted map can be constructed via the asymptotic algebra of Lusztig.
\end{thm}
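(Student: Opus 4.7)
The plan is to reduce the whole triangle to statements about the affine Hecke algebra $\cH(H)$ with parameter $q$ equal to the residue cardinality. Since $H$ is connected, Lemma~\ref{lem:centrals} identifies $W^\fs$ with the Weyl group of $H$, so $\cH(H)$ carries the based root datum of $(H,T)$. The bridge between the $p$-adic and the Hecke-algebraic worlds is Roche's theorem \cite{Roc}: under the stated restriction on the residual characteristic it provides a type whose associated Hecke algebra is isomorphic to $\cH(H)$, and hence a canonical Morita equivalence yielding a bijection
\[
\Irr(\cG)^\fs \longleftrightarrow \Irr(\cH(H)) .
\]
The strategy is now to construct each of the three arrows from natural bijections already obtained in Sections~\ref{sec:affSpringer}--\ref{sec:spherical}, and then verify that the triangle commutes.

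For the right slanted arrow, one combines the extended Springer correspondence of Theorem~\ref{thm:S.3} (whose second and fifth bullets identify $(T\q W^\fs)_2$ with $H$-orbits of affine Springer triples $(t,x,\rho_1)$) with Lemma~\ref{lem:compareParameters}, which turns every such triple into an $H$-orbit of KLR parameters for $G$ with $\Phi|_{\fo_F^\times}=c^\fs$ via the Jacobson--Morozov construction~\eqref{eqn:Phi}. The bottom horizontal arrow is obtained by applying Theorem~\ref{thm:S.5} (with $\Gamma=1$) to parametrize $\Irr(\cH(H))$ by $H$-orbits of Kazhdan--Lusztig triples $(t_q,x,\rho_q)$, translating these into KLR parameters through $t=t_q\Phi(Y_{q^{1/2}})^{-1}$ by a second application of Lemma~\ref{lem:compareParameters}, and composing with Roche's bijection; by construction this recovers the parametrization of \cite{R}. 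The left slanted arrow uses Lusztig's asymptotic algebra $\mathbf J$ of $\cH(H)$: the canonical homomorphism $\phi_q\colon\cH(H)\to\mathbf J$ of \cite{LuCellsIII} induces a bijection on $\Irr$, while $\Irr(\mathbf J)$ is naturally identified with $\Irr(\C[X^*(T)\rtimes W^\fs])$, which is exactly $(T\q W^\fs)_2$ by Theorem~\ref{thm:S.3}. Composing with Roche yields $(T\q W^\fs)_2\to\Irr(\cG)^\fs$.

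The main obstacle is proving that the triangle commutes. The three arrows all pass through the common set $\{(t,x,\rho_1)\}/H$, but they originate from genuinely different geometric inputs: Kato's Springer module~\eqref{eq:KatoMod} at $q=1$ for the right slanted map; the Kazhdan--Lusztig $\cH(H)$-module of Theorem~\ref{thm:S.5} for the bottom map; and Lusztig's comparison between the $q=1$ and generic $q$ specializations for the left slanted map. The plan is to verify commutativity on two anchor families and then propagate. On the spherical locus ($x=1$, $\rho_1=\triv$), Proposition~\ref{prop:spherical} together with the Satake isomorphism shows that all three maps send $\cW^Ht\in T/\cW^H$ to the unique spherical module with that central character, in accordance with \eqref{eq:trivWaff}. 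On the essentially square-integrable locus (distinguished $x$ in a Levi $L\subset H$, with $t$ determined by $\gamma$), one argues as in the proof of Theorem~\ref{thm:bijphi}: Lusztig's asymptotic map preserves temperedness and essential square-integrability, and the Steinberg-type module is uniquely characterized by its real infinitesimal central character, so the three constructions match. The general case is then reduced to these anchors by the Langlands classification and parabolic induction, using Lemma~\ref{lem:S.7} to compare composition series of standard modules on both sides and the compatibility of each of the three constructions with induction from the corresponding Levi subdata.
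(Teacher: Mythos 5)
Your construction of the three arrows matches the paper exactly: Roche's type-theoretic isomorphism $\Irr(\cG)^\fs\cong\Irr(\cH(H))$, the right slanted arrow via Theorem~\ref{thm:S.3} composed with Lemma~\ref{lem:compareParameters}, the bottom arrow via Theorem~\ref{thm:S.5} (i.e.\ Kazhdan--Lusztig--Reeder), and the left slanted arrow via Lusztig's $\phi_q\colon\cH(H)\to\cJ(H)$ and $\Irr(\cJ(H))\cong\Irr(X^*(T)\rtimes\cW^H)$. Where you diverge from the paper is in the commutativity argument, and there you have a genuine gap.

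You propose to anchor on the spherical and essentially square-integrable loci and then propagate by the Langlands classification. The spherical anchor is fine and corresponds exactly to the base case $\dim\cO_x=0$ of the paper's induction. But the square-integrable anchor as you state it is wrong in general: the claim that ``the Steinberg-type module is uniquely characterized by its real infinitesimal central character'' is a type-$A$ phenomenon, used in the paper only in the proof of Theorem~\ref{thm:bijphi} for $\GL_m(D)$. For a general connected reductive $H$ (already for $H$ of type $B_2$ or $G_2$), the affine Hecke algebra $\cH(H)$ has several pairwise inequivalent discrete series representations sharing the same real infinitesimal central character, distinguished by distinct unipotent classes or by nontrivial local systems $\rho$. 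Your argument gives no mechanism for matching these up across the three constructions, so commutativity on this anchor is not established, and the subsequent propagation step inherits the problem. (Moreover, the propagation step itself would require showing that all three arrows are compatible with parabolic induction from Levi subdata, which is a nontrivial statement you assert but do not prove.)

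The paper avoids both issues by a single direct argument: after observing (via \cite[Sections 6.2, 8.2]{CG}) that $\phi_q$ and $\phi_1$ retract a given irreducible $\cJ(H)$-module to the Kazhdan--Lusztig standard module $M(t_q,x,\rho_q)$ at $q$ and to Kato's module $\Hom_{\pi_0(\Z_H(t,x))}(\rho,H_*(\cB_H^{t,x},\C))$ at $q=1$, one runs an induction on $\dim\cO_x$ using Lemma~\ref{lem:S.7} and the bijectivity of $M\mapsto\tilde M$ to show that the canonical irreducible quotient singled out by the $a$-function (Lusztig's \cite[Corollary 3.6]{LuCellsIII}) is exactly $\tau(t,x,\rho)$. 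This bypasses any explicit description of discrete series and any compatibility-with-induction lemma. If you want to keep a ``verify on anchors and propagate'' structure, you would need a replacement for the square-integrable anchor, or else adopt the $a$-function induction as your propagation mechanism — in which case the anchor becomes superfluous.
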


\begin{proof}
The right slanted map is the composition of Theorem \ref{thm:S.3}.1 (applied
to $H$) and Lemma 
\ref{lem:compareParameters} (with the condition $\Phi(\varpi_F)=t$). Since $\Irr(\mathcal{G})^\fs \cong \Irr (\mathcal H (H))$,
we can take as the horizontal map the parametrization of irreducible $\mathcal H (H)$-modules
by Kazhdan, Lusztig and Reeder as described in Section \ref{sec:repAHA}. These are
both natural bijections, so there is a unique left slanted map which makes the 
diagram commute, and it is also natural. We want to identify it in terms of Hecke algebras.

Fix a KLR-parameter $(\Phi,\rho)$ and recall from Theorem \ref{thm:S.3}.2 that the 
corresponding $X^* (T) \rtimes \mathcal W^H$-representation is  
\begin{equation}
\mathrm{Hom}_{\pi_0 (\Z_H (t,x))} \big( \rho, H_{d(x)}(\mathcal B^{t,x}_H,\C) \big) .
\end{equation}
Similarly, by Theorem \ref{thm:S.5} the corresponding $\mathcal H (H)$-module 
is the unique irreducible quotient of the $\mathcal H (H)$-module
\begin{equation}\label{eq:Mtqx}
\mathrm{Hom}_{\pi_0 (\Z_H (t_q,x))} \big( \rho_q, H_*(\mathcal B^{t_q,x}_H,\C) \big) .
\end{equation}
In view of Proposition \ref{prop:S.1} both spaces are unchanged if we replace $t$ by $t_q$ 
and $\rho$ by $\rho_q$, and the vector space \eqref{eq:Mtqx} is also naturally 
isomorphic to
\begin{equation}
\mathrm{Hom}_{\pi_0 (\Z_H (\Phi))} \big( \rho, H_* (\mathcal B_H^{t,\Phi (B_2)} ,\C) \big) .
\end{equation}
Recall Lusztig's asymptotic Hecke algebra $\mathcal J (H)$ from \cite{LuCellsIII}. 
As discussed in Corollary~\ref{cor:J} in Example~3 of the Appendix, 
there are canonical bijections
\begin{equation}\label{eq:bijectionsIrr}
\Irr (\mathcal H (H)) \longleftrightarrow \Irr (\mathcal J (H)) \longleftrightarrow
\Irr (X^* (T) \rtimes \cW^H ) .
\end{equation}
According to \cite[Theorem 4.2]{LuCellsIV} 
$\Irr (\mathcal J (H))$ is naturally parametrized by the set of $H$-conjugacy classes
of Kazhdan--Lusztig parameters for $H$. Lusztig describes the $\mathcal J (H)$-module 
associated to $(t_q,x,\rho_q)$ in terms of equivariant $K$-theory, and with 
\cite[Section 6.2]{CG} we see that its retraction to $\mathcal H (H)$ via 
\[
\cH(H)\overset{\phi_q}\longrightarrow  \cJ(H) 
\overset{\phi_1}\longleftarrow X^* (T) \rtimes \cW^H
\] 
is none other than \eqref{eq:Mtqx}. In \cite[Corollary 3.6]{LuCellsIII} the $a$-function 
is used to single out a particular irreducible quotient $\mathcal H (H)$-module of 
\eqref{eq:Mtqx}. But we saw in \eqref{eq:S.27} that there is only one such quotient,
which by definition is $\pi (t_q,x,\rho_q)$.

Let $\mathcal H_q (H)$ be the affine Hecke algebra with the same based root datum as
$H$ and with parameter $q \in \C^\times$. Thus 
\[
\mathcal H_q (H) = \mathcal H (H) \quad  \text{and} \quad
\mathcal H_1 (H) = \C [X^* (T) \rtimes \mathcal W^H] .
\]
The above describes the retraction of an irreducible $\mathcal J (H)$-module 
corresponding to $(\Phi,\rho)$ to $\mathcal H_q (H)$ for any $q \in \C^\times$ which 
is not a root of unity. But everything depends algebraically on $q$, so the description 
is valid for all $q \in \C^\times$, in particular for $q=1$. Then \cite[Section 8.2]{CG} 
implies that we obtain the $\mathcal H_1 (H)$-module
\begin{equation}\label{eq:S.38}
\mathrm{Hom}_{\pi_0 (\Z_H (t,x))} \big( \rho, H_* (\mathcal B^{t,x}_H,\C) \big) 
\end{equation}
with the action as in \eqref{eq:KatoMod}. The right bijection in \eqref{eq:bijectionsIrr}
sends \eqref{eq:Mtqx} to a certain irreducible quotient of \eqref{eq:S.38} (namely the
unique one with minimal $a$-weight).

For the opposite direction, consider an irreducible $\mathcal H_1 (H)$-module $M$ with 
$a$-weight $a_M$. According to \cite[Corollary 3.6]{LuCellsIII} the $\mathcal J (H)$-module 
\[
\tilde M := \mathcal H_1 (H)^{a_M} \otimes_{\mathcal H_1 (H)} M ,
\]
is irreducible and has $a$-weight $a_M$. See \cite[Lemma 1.9]{LuCellsIII} for the precise
definition of $\tilde M$. 

Now we fix $t \in T$ and we will prove with induction to $\dim \cO_x$ that 
$\widetilde{\tau (t,x,\rho)}$ is none other than \eqref{eq:S.38}. Our main tool is Lemma 
\ref{lem:S.7}, which says that the constituents of \eqref{eq:S.38} are $\tau (t,x,\rho)$
and irreducible representations corresponding to larger affine Springer parameters
(with respect to the partial order defined via the unipotent classes $\cO_x \subset M$).
For $\dim \cO_{x_0} = 0$ we see immediately that only the $\mathcal J^\fs$-module
\[
\mathrm{Hom}_{\pi_0 (\Z_H (t,x_0))} \big( \rho_0, H_* (\mathcal B^{t,x}_H,\C) \big) 
\]
can contain $\tau (t,x_0,\rho_0)$, so that must be $\widetilde{\tau (t,x_0,\rho_0)}$.
For $\dim \cO_{x_n} = n$ Lemma \ref{lem:S.7} says that \eqref{eq:S.38} can only contain 
$\tau (t,x_n,\rho_n)$ if $x \in \overline{\cO_{x_n}}$. But when $\dim \cO_x < n$ 
\[
\widetilde{\tau (t,x_n,\rho_n)} \not\cong 
\mathrm{Hom}_{\pi_0 (\Z_H (t,x))} \big( \rho, H_* (\mathcal B^{t,x}_H,\C) \big) ,
\]
because the right hand side already is $\widetilde{\tau (t,x,\rho)}$, by the induction
hypothesis and the bijectivity of $M \mapsto \tilde M$. So the parameter of 
$\widetilde{\tau (t,x_n,\rho_n)}$ involves an $x$ with $\dim \cO_x = n$.
Then another look at Lemma \ref{lem:S.7} shows that moreover $(x,\rho)$ must be 
$M$-conjugate to $(x_n,\rho_n)$. Hence $\widetilde{\tau (t,x,\rho)}$ is indeed
\eqref{eq:S.38}.

We showed that the bijections \eqref{eq:bijectionsIrr} work out as
\[
\begin{array}{ccccc}
\Irr (\mathcal H (H)) & \leftrightarrow & \Irr (\mathcal J (H)) & \leftrightarrow
& \Irr (X^* (T) \rtimes \cW^H ) \\
\pi (t_q,x,\rho_q) & \leftrightarrow & 
\mathrm{Hom}_{\pi_0 (\Z_H (\Phi))} \big( \rho, H_* (\mathcal B_H^{t,\Phi (B_2)} ,\C) \big) &
\leftrightarrow & \tau (t,x,\rho) ,
\end{array}
\]
where all the objects in the bottom line are determined by the KLR parameter $(\Phi,\rho)$.
\end{proof}

\section{Main result (Hecke algebra version)}
\label{sec:mainH}

In this section $q \in \C^\times$ is allowed to be any element which is not a root of unity. 
We study how the conjecture can be extended to the algebras and modules from Section 
\ref{sec:repAHA}. So let $\Gamma$ be a group of automorphisms of $G$ that
preserves a chosen pinning, which involves $T$ as maximal torus. 
With the disconnected group $G \rtimes \Gamma$ we associate three kinds of parameters:
\begin{itemize}
\item The extended quotient of the second kind $(T /\!/ \mathcal W^G \rtimes \Gamma )_2$.
\item The space $\Irr (\mathcal H (G) \rtimes \Gamma)$ of equivalence classes of irreducible 
representations of the algebra $\mathcal H (G) \rtimes \Gamma$ (with parameter $q$).
\item Equivalence classes of unramified Kazhdan--Lusztig--Reeder parameters. 
Let $\Phi : \mathbf W_F \times \SL_2 (\C) \to G$ be a group homomorphism with 
$\Phi (\mathbf I_F) = 1$ and $\Phi (\mathbf W_F) \subset T$. As in Section \ref{sec:Borel}, 
the component group 
\[
\pi_0 (\Z_{G \rtimes \Gamma}(\Phi)) = \pi_0 (\Z_{G \rtimes \Gamma}(\Phi (\mathbf W_F \times B_2)))
\]
acts on $H_* (\mathcal B^{\Phi (\mathbf W_F \times B_2)}_G ,\C)$. We take $\rho \in$
$\Irr \big( \pi_0 (\Z_{G \rtimes \Gamma}(\Phi)) \big)$ such that every irreducible
$\pi_0 (\Z_G (\Phi))$-subrepresentation of $\rho$ appears in $H_* (\mathcal B^{\Phi 
(\mathbf W_F \times B_2)}_G ,\C)$. The set \{KLR parameters for $G \rtimes \Gamma \}^{\unr}$ 
of pairs $(\Phi,\rho)$ carries an action of $G \rtimes \Gamma$ by conjugation. We consider 
the collection \{KLR parameters for $G \rtimes \Gamma \}^{\unr} / G \rtimes \Gamma$ 
of conjugacy classes $[\Phi,\rho ]_{G \rtimes \Gamma}$.
\end{itemize}

\begin{thm}\label{thm:S.4}
There exists a commutative diagram of natural bijections
\[
\xymatrix{ 
& \hspace{-7mm} (T /\!/ \mathcal W^G \rtimes \Gamma )_2 \hspace{-7mm} \ar[dr]\ar[dl] & \\  
\Irr (\mathcal H (G) \rtimes \Gamma)    \ar[rr] & & 
\{ \textup{KLR parameters for } G \rtimes \Gamma \}^{\unr} / G \rtimes \Gamma } 
\]
It restricts to bijections between the following subsets:
\begin{itemize}
\item the ordinary quotient $T / (\cW^G \rtimes \Gamma) \subset
(T /\!/ \mathcal W^G \rtimes \Gamma )_2$,
\item the collection of spherical representations in $\Irr (\mathcal H (G) \rtimes \Gamma)$,
\item equivalence classes of KLR parameters $(\Phi,\rho)$ for $G \rtimes \Gamma$ with\\
$\Phi (\mathbf I_F \times \SL_2 (\C)) = 1$ and 
$\rho = \triv_{\pi_0 (\Z_{G \rtimes \Gamma}(\Phi))}$.
\end{itemize}
\end{thm}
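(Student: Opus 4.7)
The plan is to assemble the three maps from results already established and reduce commutativity and the spherical statement to the setting of Theorem \ref{thm:ps}.

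\textbf{Constructing the three maps.} The bottom horizontal map is Theorem \ref{thm:S.5}, applied to $\mathcal H(G) \rtimes \Gamma$, after identifying Kazhdan--Lusztig parameters with unramified KLR parameters. Namely, given a KL parameter $(t_q, x, \rho)$, Lemma \ref{lem:compareParameters} (with trivial $c^\fs$, so that $H = G$) produces a Langlands parameter $\Phi : \mathbf W_F \times \SL_2 (\C) \to G$ with $\Phi \big|_{\mathbf I_F} = 1$, $\Phi (\varpi_F) = t$, and $x = \Phi \matje{1}{1}{0}{1}$, and Proposition \ref{prop:S.1} shows that the geometric condition on $\rho$ matches on both sides. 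The right slanted map is the affine Springer correspondence (Theorem \ref{thm:S.3}), applied to $G \rtimes \Gamma$: by Lemma \ref{lem:centrals} the isotropy of $t \in T$ in $\cW^G \rtimes \Gamma$ is $\cW^M$ for $M = \Z_{G \rtimes \Gamma}(t)$, so Theorem \ref{thm:S.3}.1 realizes $(T /\!/ \cW^G \rtimes \Gamma)_2$ as equivalence classes of affine Springer parameters $(t, x, \rho_1)$ for $G \rtimes \Gamma$; these match unramified KLR parameters via Lemma \ref{lem:compareParameters} again. The left slanted map is then forced to be the composition of the other two.

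\textbf{Commutativity.} I would intrinsically describe the left slanted map and verify commutativity by transplanting the argument of Theorem \ref{thm:ps}. The key point is that the Kazhdan--Lusztig isomorphism \eqref{eq:S.18} is $\Gamma$-equivariant, so Lusztig's asymptotic algebra construction provides a bridge
\[
\mathcal H(G) \rtimes \Gamma \xrightarrow{\phi_q} \mathcal J(G) \rtimes \Gamma \xleftarrow{\phi_1} (X^*(T) \rtimes \cW^G) \rtimes \Gamma ,
\]
and by Theorem \ref{thm:S.3}.2 together with \cite[Section 8.2]{CG}, the retraction of Lusztig's $\mathcal J$-module associated to $(\Phi, \rho)$ to the $q = 1$ side is precisely Kato's affine Springer module $\tau(t, x, \rho_1)$, while its retraction to $\mathcal H(G) \rtimes \Gamma$ is the standard module \eqref{eq:Mtqx}. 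Lemma \ref{lem:S.7}, combined with the induction on $\dim \cO_x$ from the proof of Theorem \ref{thm:ps}, shows that the unique irreducible quotient operation at $q$ and the choice of minimal $a$-weight constituent at $q = 1$ are compatible bijections, so the triangle commutes.

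\textbf{The spherical part and main obstacle.} By Proposition \ref{prop:spherical}, the irreducible spherical $\mathcal H(G) \rtimes \Gamma$-modules correspond bijectively to $T / (\cW^G \rtimes \Gamma)$ via the KL parameters $(t, x = 1, \rho = \triv)$. Under the bijections above, $x = 1$ translates to $\Phi \big|_{\SL_2(\C)} = 1$; combined with $\Phi \big|_{\mathbf I_F} = 1$ this gives precisely the third bulleted subset. On the extended-quotient side, the Springer parameter $(x = 1, \rho = \triv)$ corresponds to the trivial $\cW^{M^\circ}$-representation, which in \eqref{eq:trivWaff} yields the point $[t] \in T/(\cW^G \rtimes \Gamma)$ of the ordinary quotient, as required. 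The main technical obstacle is bookkeeping the 2-cocycles arising from disconnectedness: the class $\natural(t_q, x, \rho_q)$ from the sequence
\[
1 \to \pi_0(\Z_G(t_q, x)) \to \pi_0(\Z_{G \rtimes \Gamma}(t_q, x)) \to \Gamma_{[t_q,x]_G} \to 1
\]
must be matched with $\natural(\pi(t_q, x, \rho_q))$ on the Hecke side and with the corresponding cocycle on the affine Springer side. These agree as cohomology classes because all three actions of $\Gamma_{[t_q,x,\rho_q]_G}$ are implemented by the same geometric action on $H_*(\mathcal B^{t_q,x}, \C)$, by \eqref{eq:S.25} and \eqref{eq:S.20}; verifying this compatibility at the level of $\mathcal J(G) \rtimes \Gamma$-modules is the delicate step, but parallels the arguments already given in Sections \ref{sec:repAHA} and \ref{sec:affSpringer}.
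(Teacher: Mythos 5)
Your route differs from the paper's in a structurally important way. The paper never tries to build the asymptotic-algebra bridge for $\mathcal H (G) \rtimes \Gamma$ directly: instead it treats the commutative triangle \eqref{eq:S.13} for the connected group $G$ as a black box, proves that all three bijections are $\Gamma$-equivariant (Step 1), that corresponding objects have identical $\Gamma$-stabilizers (Step 2), and that the three families of Clifford 2-cocycles $\natural (\pi)$, $\natural ([t,\tilde\tau]_{\cW^G})$, $\natural ([\Phi,\rho_1]_G)$ agree as cohomology classes (Step 3, using \eqref{eq:S.25} and the comparison of \eqref{eq:S.27} with Theorem \ref{thm:S.3}). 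It then obtains the disconnected triangle by the single functorial operation $X \mapsto (X /\!/ \Gamma)_2^\natural$ applied to the whole diagram (Step 4). Commutativity is automatic from functoriality; no new asymptotic-algebra input for $G \rtimes \Gamma$ is needed. Your construction of the three maps is fine — Theorem \ref{thm:S.5}, Theorem \ref{thm:S.3}, and Lemma \ref{lem:compareParameters} do deliver them — but your commutativity argument has a genuine gap.

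Concretely: you propose to run the bridge
$\mathcal H(G) \rtimes \Gamma \xrightarrow{\phi_q} \mathcal J(G) \rtimes \Gamma \xleftarrow{\phi_1} \C [(X^*(T) \rtimes \cW^G) \rtimes \Gamma]$
and to transplant the induction on $\dim \cO_x$ from Theorem \ref{thm:ps}. But the inputs to that induction — Corollary \ref{cor:J}, the Baum--Nistor spectrum-preserving result, Lusztig's parametrization of $\Irr (\mathcal J)$, and Lemma \ref{lem:S.7} — are all stated and proved in the paper only for (extended) affine Hecke algebras of \emph{connected} reductive groups. You would need to re-establish each of them for the crossed product $\mathcal H(G) \rtimes \Gamma$: that $\Gamma$ preserves the two-sided cells and the $a$-function, that $\phi_q$ and $\phi_1$ remain spectrum-preserving with respect to filtrations after crossing with $\Gamma$, that the ``unique irreducible quotient'' and ``minimal $a$-weight constituent'' selections are still well-defined and $\Gamma$-compatible, and a $G \rtimes \Gamma$-version of Lemma \ref{lem:S.7}. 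None of this is in the paper, and asserting it ``parallels the arguments already given'' is exactly the point one must prove; the content of the paper's Step 3 (which you correctly flag as the delicate step but do not carry out) is what makes the Clifford-theoretic shortcut legitimate and makes all this extra work unnecessary. Your spherical-subset argument, by contrast, is essentially the same as the paper's and is correct.
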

\begin{proof}
The corresponding statement for $G$, proven in Theorem \ref{thm:ps}, 
is the existence of natural bijections
\begin{equation}\label{eq:S.13}
\xymatrix{ & (T /\!/ \mathcal W^G )_2 \ar[dr]\ar[dl] & \\  
\Irr (\mathcal H (G) )    \ar[rr] & & 
\{ \text{KLR parameters for } G \}^{\unr} / G} 
\end{equation}
Although in Section \ref{sec:main} $q$ was a prime power, we notice that the upper and right 
objects in \eqref{eq:S.13} do not depend on $q$. The algebra $\mathcal H (G)$ does, but 
the bottom and left slanted maps in \eqref{eq:S.13} are defined equally well for our more 
general $q \in \C^\times$, as can be seen from the proofs of Theorems \ref{thm:S.5} and 
\ref{thm:ps}. Thus we may use \eqref{eq:S.13} as our starting point.

\emph{Step 1. The bijections in \eqref{eq:S.13} are $\Gamma$-equivariant.}\\
The action of $\Gamma$ on $(T // \mathcal W^G )_2$ can be written as
\begin{equation}\label{eq:S.14}
\gamma \cdot [t,\tilde \tau]_{\mathcal W^G} = 
[\gamma (t),\tilde \tau \circ \mathrm{Ad}_\gamma^{-1}]_{\mathcal W^G} .
\end{equation}
In terms of the multiplication in $G \rtimes \Gamma$, the action on KLR parameters is
\begin{equation}\label{eq:S.15}
\gamma \cdot [\Phi ,\rho_1]_G = 
[\gamma \Phi \gamma^{-1}, \rho_1 \circ \mathrm{Ad}_\gamma^{-1} ]_G
\end{equation}
We recall the right slanted map in \eqref{eq:S.13} from Theorem \ref{thm:S.3}. 
Write $M = \Z_G (t)$ and $\mathcal W^G_t = W (M^\circ,T) \rtimes \pi_0 (M)$. Then the
$\mathcal W^G_t$-representation $\tilde \tau$ can be written as $\tau (x,\rho_3) \rtimes 
\sigma$ for a unipotent element $x \in M^\circ$, a geometric $\rho_3 \in$ Irr$(\Z_{M^\circ}(x))$ 
and a $\sigma \in$ Irr$(\pi_0 (M)_{\tau (x,\rho_3)})$. The associated KLR parameter is 
$[\Phi,\rho_3 \rtimes \sigma]_G$, where $\Phi \matje{1}{1}{0}{1} = x$ and $\Phi$ maps a 
Frobenius element of $\mathbf W_F$ to $t$.

>From \eqref{eq:S.7} we see that $\tau (x,\rho_3) \circ \mathrm{Ad}_\gamma^{-1}$ is equivalent
with $\tau (\gamma x \gamma^{-1},\rho_3 \circ \mathrm{Ad}_\gamma^{-1})$, so 
\[
\tilde \tau \circ \mathrm{Ad}_\gamma^{-1} \text{ is equivalent with } \tau (\gamma x \gamma^{-1},
\rho_3 \circ \mathrm{Ad}_\gamma^{-1}) \rtimes (\sigma \circ \mathrm{Ad}_\gamma^{-1}) .
\]
Hence \eqref{eq:S.14} is sent to the KLR parameter \eqref{eq:S.15}, which means that the right
slanted map in \eqref{eq:S.13} is indeed $\Gamma$-equivariant.

In view of Proposition \ref{prop:S.1} and \eqref{eq:S.15}, we already showed in \eqref{eq:S.23} 
that the horizontal map in \eqref{eq:S.13} is $\Gamma$-equivariant. By the commutativity of
the triangle, so is the left slanted map.

\emph{Step 2. Suppose that $\pi, [t,\tilde \tau]_{\mathcal W^G}$ and $[\Phi,\rho_1]_G$ are 
three corresponding objects in \eqref{eq:S.13}. Then their stabilizers in $\Gamma$ coincide:}
\[
\Gamma_\pi = \Gamma_{[t,\tilde \tau]_{\mathcal W^G}} = \Gamma_{[\Phi,\rho_1]_G} .
\]
This follows immediately from step 1.

\emph{Step 3. Clifford theory produces 2-cocycles $\natural (\pi) ,\; \natural \big( 
[t,\tilde \tau]_{\mathcal W^G} \big)$ and $\natural \big([\Phi,\rho_1]_G \big)$. 
These are in the same class in $H^2 (\Gamma_\pi ,\C^\times)$.}\\
For $\natural (\pi)$ and $\natural \big([\Phi,\rho_1]_G \big)$ this was already checked in
\eqref{eq:S.25}, where we use Proposition \ref{prop:S.1} to translate between $\Phi$ and
$(t_q,x)$. Comparing \eqref{eq:S.27} and Theorem \ref{thm:S.3}, we see that $\natural (\pi)$
and $\natural \big( [t,\tilde \tau]_{\mathcal W^G} \big)$ come from two very similar 
representations: the difference is that $M(t,x,\rho_1)$ is built the entire homology of a 
variety, while the corresponding $X^* (T) \rtimes \mathcal W^G$-representation uses only the 
homology in top degree. Also the $\Gamma_\pi$-actions on these modules are defined in the
same way, so the two cocycles can be chosen equal.

\emph{Step 4. Upon applying $X \mapsto (X /\!/ \Gamma)^\natural_2$ to the commutative diagram 
\eqref{eq:S.13} we obtain the corresponding diagram for $G \rtimes \Gamma$.}\\
Here $\natural$ denotes the family of 2-cocycles constructed in steps 2 and 3.
For $(T /\!/ \mathcal W^G )_2$ and Irr$(\mathcal H (G))$ we know from Lemmas \ref{lem:Clifford}
and \ref{lem:Clifford_algebras}
that this procedure yields the correct parameters. That it works for Kazhdan--Lusztig--Reeder
parameters was checked in the last part of the proof of Theorem \ref{thm:S.5}. Since we
have the same families of 2-cocycles on all three $\Gamma$-sets, the maps from \eqref{eq:S.13} 
can be lifted in a natural way to the diagram for $G \rtimes \Gamma$.

The ordinary quotient is embedded in $(T /\!/ \mathcal W^G \rtimes \Gamma )_2$ as the
collection of pairs $\big( t,\triv_{(\cW^G \rtimes \Gamma)_t} \big)$. By an obvious
generalization of \eqref{eq:trivWaff} these correspond to the affine Springer parameters
$(t,x=1,\rho = \triv)$. It is clear from the above construction that they are mapped to
KLR parameters $(\Phi,\triv)$ with $\Phi (\mathbf I_F \times \SL_2 (\C)) = 1$ and 
$\Phi (\varpi_F) = t$. By Proposition \ref{prop:spherical} the latter correspond
to the spherical irreducible $\cH (G) \rtimes \Gamma$-modules. 
\end{proof}

\section{Main result (general case)}
\label{sec:general}

We return to the notation from Section \ref{sec:main}. In general the group 
$H = \Z_G (\im c^\fs)$ need not be connected. It is well-known, and already used several 
times in the proof of Proposition \ref{prop:S.2}, that the short exact sequence
\begin{equation} \label{eq:splitH}
1 \to H^\circ / \Z(H^\circ) \to H / \Z(H^\circ) \to \pi_0 (H) \to 1
\end{equation}
is split. More precisely, any choice of a pinning of $H^\circ$ (a Borel subgroup,
a maximal torus, and a nontrivial element in every root subgroup associated to
a simple root) determines such a splitting. We fix a pinning with $T$ as maximal torus, 
and with it we fix actions of $\pi_0 (H)$ on $H^\circ$, on the Dynkin diagram of 
$H^\circ$ and on the Weyl group of $H^\circ$. Lemma \ref{lem:centrals} shows that
\begin{equation}\label{eq:WsH}
W^\fs = \mathcal W^G_{\im c^\fs} \cong \mathcal W^{H^\circ} \rtimes \pi_0 (H) .
\end{equation}
According to \cite[Section 8]{Roc} $\Irr (\mathcal G)^\fs$ is naturally in bijection with
$\Irr (\mathcal H (H))$, where 
\begin{equation}\label{eq:HeckeH}
\mathcal H (H) \cong \mathcal H (H^\circ) \rtimes \pi_0 (H) .
\end{equation}
Now we have collected all the material that is needed to prove our main result 
(Theorem \ref{thm:I3} in our Introduction).

\begin{thm}\label{thm:main}
Let $\cG$ be a  split reductive $p$-adic group and let $\fs = [\mathcal T, \chi]_\cG$ 
be a point in the Bernstein spectrum of the principal series of $\cG$. Assume that 
the residual characteristic $p$ satisfies the conditions of \cite[Remark 4.13]{Roc}
when $H \neq G$. Then there is a commutative triangle of natural bijections 
\[ 
\xymatrix{   & (T^\fs/\!/W^\fs)_2 \ar[dr]\ar[dl] & \\  
\Irr(\mathcal{G})^\fs    \ar[rr] & & \{\KLR\:\:\mathrm{parameters}\}^\fs / H} 
\] 
The slanted maps are generalizations of the slanted maps in Theorem \ref{thm:ps}
and the horizontal map stems from Theorem \ref{thm:S.5}. 

We denote the irreducible $\cG$-representation associated to a KLR parameter 
$(\Phi,\rho)$ by $\pi (\Phi,\rho)$.
\begin{enumerate}
\item The infinitesimal central character of $\pi (\Phi,\rho)$ is the $H$-conjugacy class
\[
\Phi \big( \varpi_F , \matje{q^{1/2}}{0}{0}{q^{-1/2}} \big) 
\in c(H)_{\ss} \cong T^\fs / W^\fs .
\]
\item $\pi (\Phi,\rho)$ is tempered if and only if $\Phi (\mathbf W_F)$ is bounded, which
is the case if and only if $\Phi (\varpi_F)$ lies in a compact subgroup of $H$.
\item $\pi (\Phi,\rho)$ is essentially square-integrable if and only if $\Phi 
(\mathbf W_F \times \SL_2 (\C))$ is not contained in any proper Levi subgroup of $H^\circ$.
\end{enumerate}
\end{thm}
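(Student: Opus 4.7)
The plan is to reduce the general case to Theorem \ref{thm:S.4} applied to the connected reductive group $H^\circ$ equipped with the action of the finite group $\Gamma = \pi_0(H)$ obtained from the splitting \eqref{eq:splitH}. Under the hypothesis on the residual characteristic, Roche \cite{Roc} provides a natural equivalence of categories between the Bernstein block of $\cG$ attached to $\fs$ and the category of $\cH(H)$-modules, so that $\Irr(\cG)^\fs \cong \Irr(\cH(H))$. Combined with \eqref{eq:HeckeH}, this yields $\Irr(\cG)^\fs \cong \Irr(\cH(H^\circ) \rtimes \pi_0(H))$. Meanwhile, by \eqref{eq:WsH} we have $W^\fs \cong \cW^{H^\circ} \rtimes \pi_0(H)$, and $T^\fs$ is canonically the maximal torus $T$ of $H^\circ$.

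With these identifications in place, I would apply Theorem \ref{thm:S.4} with $G$ replaced by $H^\circ$ and $\Gamma = \pi_0(H)$. This immediately produces a commutative triangle of natural bijections between $(T^\fs /\!/ W^\fs)_2$, $\Irr(\cG)^\fs$, and equivalence classes of unramified KLR parameters $(\Phi', \rho)$ for $H$. The remaining task is to match these unramified KLR parameters for $H$ with elements of $\{\KLR\}^\fs / H$. A parameter $(\Phi, \rho)$ with $\Phi|_{\fo_F^\times} = c^\fs$ has image in $H = \Z_G(\im c^\fs)$ (the centralizer condition forces $\Phi(F^\times \times \SL_2(\C)) \subset H$, and $\Phi(\SL_2(\C)) \subset H^\circ$ since $\SL_2(\C)$ is connected). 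Conversely, using the explicit formula \eqref{eqn:Phi}, any unramified KLR parameter for $H$ extends uniquely via $c^\fs$ to an element of $\{\KLR\}^\fs$, and the two notions of $H$-equivalence coincide.

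For property (1), the infinitesimal central character of $\pi(\Phi, \rho)$ is transported through Roche's equivalence to the $\cW^G$-orbit of the central character of the corresponding $\cH(H)$-module. By Theorem \ref{thm:S.5} (and the compatibility with Lemma \ref{lem:compareParameters}), this central character is the $H$-conjugacy class of $t_q = t\,\Phi(Y_{q^{1/2}})$, which under $T^\fs/W^\fs \cong c(H)_{\ss}$ is exactly $\Phi\bigl(\varpi_F, \matje{q^{1/2}}{0}{0}{q^{-1/2}}\bigr)$ by the definition of $\Phi$ in \eqref{eqn:Phi} and \eqref{eq:S.12}. For properties (2) and (3), I would invoke the standard characterizations for affine Hecke algebra modules: $\pi(t_q, x, \rho_q)$ is tempered iff all $\cO(T)$-weights lie in $T_{\cpt}$, which is equivalent to $t$ (hence $\Phi(\mathbf{W}_F)$, since $c^\fs(\fo_F^\times)$ is automatically compact) being bounded in $H$; and it is essentially square-integrable iff $x$ is a distinguished unipotent in $\Z_{H^\circ}(t)$, equivalently $\Phi(\mathbf{W}_F \times \SL_2(\C))$ is not contained in any proper Levi of $H^\circ$.

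The main obstacle will be the preservation of temperedness and essential square-integrability under Roche's equivalence. This is the analog, for arbitrary split reductive groups, of the statement \cite[(3.19)]{Sol} exploited in Section \ref{sec:GLn} for inner forms of $\GL_n(F)$. The argument should proceed via Casselman's criteria, using the fact that Roche's type-theoretic Hecke algebra isomorphism preserves supports (hence intertwines parabolic induction on both sides in a way compatible with the identification of standard Levi subgroups of $\cG$ containing $\cT$ with standard Levi subalgebras of $\cH(H)$). Given this compatibility, the Jacquet module computation that characterizes tempered and square-integrable representations translates directly into the analogous characterization for $\cH(H)$-modules in terms of $\cO(T)$-weights, and the KLR characterizations in properties (2), (3) then follow from the description of tempered and discrete series modules of affine Hecke algebras in \cite{KL,R}.
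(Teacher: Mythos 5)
Your overall strategy matches the paper's: reduce to Theorem \ref{thm:S.4} applied to $H^\circ \rtimes \pi_0(H)$, identify $W^\fs$ and $\cH(H)$ via \eqref{eq:WsH} and \eqref{eq:HeckeH}, translate between KLR parameters for $G$ and unramified KLR parameters for $H^\circ \rtimes \pi_0(H)$, and derive (1) from Theorem \ref{thm:S.5} and Lemma \ref{lem:compareParameters}. However, there are two genuine gaps. First, when you match $\{\KLR\}^\fs / H$ with $\{\KLR \text{ parameters for } H^\circ \rtimes \pi_0(H)\}^{\unr}$, you only explain why the $\Phi$'s correspond; you do not address the \emph{geometricity condition} on the enhancement $\rho$. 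The parameter $\rho$ must appear in $H_*\bigl(\cB_G^{\Phi(\mathbf W_F \times B_2)}, \C\bigr)$ on one side, and on the other side every irreducible $\pi_0(\Z_{H^\circ}(\Phi))$-constituent of $\rho$ must appear in $H_*\bigl(\cB_{H^\circ}^{\Phi(\mathbf W_F \times B_2)}, \C\bigr)$. These are compared in the paper via Proposition \ref{prop:UP}.3, which shows the first homology space is, as a $\pi_0(\Z_G(\Phi))$-representation, a sum of copies of $\mathrm{Ind}_{\pi_0(\Z_{H^\circ}(\Phi))}^{\pi_0(\Z_G(\Phi))} H_*\bigl(\cB_{H^\circ}^{\Phi(\mathbf W_F \times B_2)}, \C\bigr)$; without this step the bijection between the two parameter sets is not established.

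Second, the temperedness criterion you state is wrong. You claim $\pi(t_q,x,\rho_q)$ is tempered iff all $\cO(T)$-weights lie in $T_{\cpt}$, but the $\cO(T)$-weights lie in the $W^\fs$-orbit of $t_q = t \cdot h_x(q^{1/2})$, which for nontrivial $x$ do not lie in $T_{\cpt}$ for any tempered module. The correct criterion, from \cite[Theorem 8.2]{KL}, concerns $t = \Phi(\varpi_F)$ alone: the module is $V$-tempered iff the eigenvalues of $\mathrm{Ad}(t)$ on $\mathrm{Lie}\,H$ have absolute value $1$, and is tempered iff additionally the centre $\cH(\Z(H^0))$ acts by a unitary character, i.e.\ iff $t$ lies in the maximal compact subgroup of $T^\fs$. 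You also leave the transfer of temperedness and essential square-integrability across Roche's equivalence as an acknowledged ``obstacle''; the paper simply cites \cite{BHK} for this, so the Casselman-style route you sketch would need to be carried out in full to replace that citation.
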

Recall that $\cG$ only has irreducible square-integrable representations if $\Z (\cG)$ 
is compact. A $\cG$-representation is called essentially square-integrable if its 
restriction to the derived group of $\cG$ is square-integrable. This is more general 
than square-integrable modulo centre, because for that notion $\Z(\cG)$ needs to act by 
a unitary character. 
\begin{proof}
The larger part of the commutative triangle was already discussed in \eqref{eq:WsH}, 
\eqref{eq:HeckeH} and Theorem \ref{thm:S.4}. It remains to show that the set 
\{KLR parameters$\}^\fs / H$ (as defined on page \pageref{eq:defKLRparameter}) 
is naturally in bijection with
\{KLR parameters for $H^\circ \rtimes \pi_0 (H) \}^{\unr} / H^\circ \rtimes \pi_0 (H)$.

By \eqref{eq:splitH} we are taking conjugacy classes with respect to the
group $H / \Z(H^\circ)$ in both cases.
It is clear from the definitions that that in both sets the ingredients $\Phi$
are determined by the semisimple element $\Phi (\varpi_F) \in H$. This provides the
desired bijection between the $\Phi$'s in the two collections, so let us focus on the
ingredients $\rho$. 

For $(\Phi,\rho) \in \{\text{KLR parameters}\}^\fs$ the irreducible representation
$\rho$ of the component group $\pi_0 (\Z_H (\Phi)) = \pi_0 (\Z_G (\Phi))$ must appear in 
$H_* \big( \mathcal B_G^{\Phi (\mathbf W_F \times B_2)} ,\C \big)$. By Proposition
\ref{prop:UP}.3 this space is isomorphic, as a $\pi_0 (\Z_G (\Phi))$-representation, to
a number of copies of
\[
\mathrm{Ind}_{\pi_0 (\Z_{H^\circ} (\Phi))}^{\pi_0 (\Z_G (\Phi))} H_* \big( 
\mathcal B_{H^\circ}^{\Phi (\mathbf W_F \times B_2)} ,\C \big) .
\]
Hence the condition on $\rho$ is equivalent to requiring that every irreducible
$\pi_0 (\Z_{H^\circ} (\Phi))$-subrepresentation of $\rho$ appears in
$H_* \big( \mathcal B_{H^\circ}^{\Phi (\mathbf W_F \times B_2)} ,\C \big)$. That 
is exactly the condition on $\rho$ in an unramified KLR parameter for 
$H^\circ \rtimes \pi_0 (H)$. This establishes the properties of the commutative diagram.\\
(1) From the construction in Section \ref{sec:repAHA} we see that the 
$\mathcal H (H^\circ)$-module with Kazhdan--Lusztig parameter $(t_q,x,\rho_q)$ 
has central character
\[
t_q = \Phi \big( \varpi_F , \matje{q^{1/2}}{0}{0}{q^{-1/2}} \big) 
\in c(H^\circ)_{\ss} \cong T / \mathcal W^{H^\circ} .
\]
It follows that the $\mathcal H (H)$-module with parameter $(\Phi,\rho)$ has central 
character $t_q \in c(H)_{\ss} \cong T^\fs / W^\fs$. The corresponding $\cG$-representation 
is obtained via a suitable Morita equivalence, which by definition transforms the central 
character into the infinitesimal character.\\
(2) It was checked in \cite{BHK} that a member of $\Irr (\cG)^\fs$ is tempered if and 
only if the corresponding $\mathcal H (H)$-module is tempered.

For $z \in \C^\times$ we put $V(z) = \log |z|$. According to \cite[Theorem 8.2]{KL}
the $\mathcal H (H )$-module with parameter $(\Phi,\rho)$ is $V$-tempered if and only
if all the eigenvalues of $t = \Phi (\varpi_F)$ on Lie $H$ (via the adjoint representation)
have absolute value 1. That \cite{KL} works with simply connected complex groups is 
inessential to the argument, it also applies to our $H$. But $V$-tempered (for this $V$) 
means only that the restriction of the $\mathcal H (H)$-module to the subalgebra 
$\mathcal H (H^\circ_{\der})$ is tempered, where $H^\circ_{der}$ denotes the derived group of 
$H^\circ$. The $\mathcal H (H)$-module is tempered if and only if moreover the subalgebra
$\mathcal H (\Z(H^0))$ acts on it by a unitary character. This is the case if and only if all 
the eiqenvalues of $t$ (in some realization of $H^0$ as complex matrices) have absolute 
value 1. That in turn means that $t$ lies in the maximal compact subgroup of $T^\fs$.

Since $\Phi (\mathbf W_F)$ is generated by the finite group $\Phi (\mathbf I_F)$ and 
$t = \Phi (\varpi_F)$, the above condition on $t$ is equivalent to boundedness of
$\Phi (\mathbf W_F)$.\\
(3) This is similar to part 2, it follows from \cite[Theorem 8.3]{KL} and \cite{BHK}. 
\end{proof}

Notice that the bijectons in Theorem \ref{thm:main} satisfy the statements 
(1)--(4) from Section \ref{sec:statement} by construction. We will check the 
properties 1--6 in the upcoming sections.

Recall that $\Irr( \cG, \mathcal T)$ is the space of all irreducible 
$\cG$-representations in the principal series. Considering Theorem \ref{thm:main}
for all Bernstein components in the principal series simultaneously, we will establish
the local Langlands correspondence for this class of representations.

\begin{cor}\label{cor:LLC}
Let $\cG$ be a split reductive $p$-adic group, with restrictions on the residual
characteristic as in \cite[Remark 4.13]{Roc}. Then the local Langlands correspondence
holds for the irreducible representations in the principal series of $\cG$, and
it is the bottom row in a commutative triangles of natural bijections
\[ 
\xymatrix{   & (\Irr \, \mathcal T /\!/ \mathcal W^G )_2 \ar[dr]\ar[dl] & \\  
\Irr ( \cG, \mathcal T)    \ar[rr] & & \{\text{KLR parameters for } G \} / G} 
\]
The restriction of this diagram to a single Bernstein component recovers Theorem 
\ref{thm:main}. In particular the bottom arrow generalizes the Kazhdan--Lusztig
parametrization of the irreducible $\cG$-representations in the unramified
principal series.
\end{cor}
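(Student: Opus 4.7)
The plan is to obtain the corollary by gluing together the per-component commutative triangles from Theorem \ref{thm:main} as $\fs$ ranges over $\mathfrak{B}(\cG,\cT)$. The main task is to exhibit each of the three vertices of the desired triangle as the disjoint union, indexed by $\mathfrak{B}(\cG,\cT)$, of the corresponding vertex from Theorem \ref{thm:main}, in a manner compatible with all three arrows.

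The left vertex decomposes by definition: $\Irr(\cG,\cT) = \bigsqcup_{\fs} \Irr(\cG)^\fs$. For the top vertex, I would use that $\Irr(\cT)$ is partitioned into $\cT$-inertial classes and that the $\cG$-inertial classes in the principal series are precisely the $\cW^G$-orbits on this partition. Each orbit has the form $\cW^G\cdot T^\fs$, and since for $t\in T^\fs$ the stabilizer $(\cW^G)_t$ automatically preserves the $\cT$-inertial class of $t$ and is therefore contained in $W^\fs$, the standard induction identity for the extended quotient of the second kind gives
\[
(\cW^G\cdot T^\fs /\!/ \cW^G)_2 \cong (T^\fs /\!/ W^\fs)_2.
\]
Summing over $\fs$ produces $(\Irr(\cT) /\!/ \cW^G)_2 = \bigsqcup_\fs (T^\fs /\!/ W^\fs)_2$.

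For the right vertex I would appeal to Lemma \ref{lem:cBernstein}. After a $G$-conjugation placing $\Phi(\bW_F)$ inside $T$, the restriction $\Phi|_{\fo_F^\times}$ (composed with the Artin map and the splitting $F^\times\cong \fo_F^\times \times \langle \varpi_F\rangle$) yields a smooth morphism $\fo_F^\times\to T$ whose $\cW^G$-orbit canonically encodes a unique $\fs\in\mathfrak{B}(\cG,\cT)$. Normalizing further so that $\Phi|_{\fo_F^\times}=c^\fs$ reduces the residual $G$-conjugation action to conjugation by the simultaneous centralizer $H=\Z_G(\mathrm{im}\,c^\fs)$, giving
\[
\{\KLR\:\mathrm{parameters}\}/G \;=\; \bigsqcup_{\fs\in\mathfrak{B}(\cG,\cT)}\{\KLR\:\mathrm{parameters}\}^\fs / H.
\]

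Granted these three $\mathfrak{B}(\cG,\cT)$-decompositions, the triangle of the corollary is simply the disjoint union of the Theorem \ref{thm:main} triangles, each map being determined from data intrinsic to a single $\fs$; naturality is inherited componentwise. The horizontal arrow so constructed is the local Langlands correspondence for $\Irr(\cG,\cT)$, its compatibility with cuspidal support, temperedness and essential square-integrability being recorded in parts~(1)--(3) of Theorem \ref{thm:main}. No step is genuinely hard, since the representation-theoretic and geometric content is already contained in Theorem \ref{thm:main}; the real work is the combinatorial bookkeeping that identifies the three Bernstein-indexed decompositions above. Specializing to $\fs_0=[\cT,1]_\cG$ one finds $c^{\fs_0}=1$, $H=G$, $W^{\fs_0}=\cW^G$, and KLR parameters degenerate to triples $(t_q,x,\rho_q)$, so the diagram recovers the original Kazhdan--Lusztig parametrization of the Iwahori-spherical representations.
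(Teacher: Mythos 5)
Your approach is essentially the one the paper takes: decompose each vertex as a disjoint union over $\mathfrak B(\cG,\cT)$ and patch together the per-component triangles of Theorem~\ref{thm:main}. Your observation that $(\cW^G)_t$ automatically lies in $W^\fs$ for $t\in T^\fs$ (so that the extended quotient of $\cW^G\cdot T^\fs$ by $\cW^G$ collapses to $(T^\fs/\!/W^\fs)_2$) is correct and matches the paper's component-count argument, and your reduction of the KLR side via Lemma~\ref{lem:cBernstein} is likewise the paper's route.

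However, there is a gap in the gluing step. The per-component triangle of Theorem~\ref{thm:main} is \emph{not} literally ``intrinsic to a single $\fs$'': its construction goes through the Roche equivalence $\Irr(\cG)^\fs\cong\Irr(\cH(H))$ with $H=\Z_G(\im c^\fs)$, and $c^\fs$ depends on a choice of base point $(\cT,\chi_\fs)$ within the inertial class. Two admissible base points differ, up to unramified twist, by some $w\in\cW^G$, which replaces $H$ by $wHw^{-1}$ and yields an a priori different identification of $\Irr(\cG)^\fs$ with Kazhdan--Lusztig--Reeder data. Before you may ``simply take the disjoint union,'' you must check that the resulting bijections coincide after conjugating by $w$; otherwise the global arrows are not well defined. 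The paper handles this explicitly by invoking Reeder's compatibility of isogenies of Hecke algebras with the parametrizations (\cite[Section~6]{R}): the isomorphism $\cH(H)\cong\cH(H')$ induced by conjugation by $w$ intertwines the two bijections with $\Irr(\cG)^\fs$. Your write-up should include this verification, or at least cite it, since without it the ``disjoint union of triangles'' may depend on the chosen representatives $\chi_\fs$.
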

\begin{proof}
Let us work out what happens if in Theorem \ref{thm:main} we take the union over 
all Bernstein components $\fs \in \mathfrak B (\cG,\cT)$.

On the left we obtain (by definition) the space $\Irr (\cG ,\mathcal T)$. 
Notice that in Theorem \ref{thm:main}, instead of \{KLR parameters$\}^\fs / H$ we 
could just as well take $G$-conjugacy classes of KLR parameters $(\Phi,\rho)$ such 
that $\Phi \big|_{\mathbf I_F}$ is $G$-conjugate to $c^\fs$. The union of those clearly is 
the space of all $G$-conjugacy classes of KLR parameters for $G$. For the space at
the top of the diagram, choose a smooth character $\chi_\fs$ of $\mathcal T$ such that 
$(\mathcal T,\chi_\fs) \in \fs$.
Recall from Section \ref{sec:intro3} that the $T$ in $(T /\!/ W^\fs )_2$ is actually 
\[
T^\fs := \{ \chi_\fs \otimes t \mid t \in T \},
\]
where $t$ is considered as an unramified character of $\mathcal T$. On the other hand,
$\Irr \, \mathcal T$ can be obtained by picking representatives $\chi_\fs$ for
$( \Irr \, \mathcal T )/ T$ and taking the union of the corresponding $T^\fs$.
Two such spaces $T^\fs$ give rise to the same Bernstein component for $\cG$ 
if and only if they are conjugate by an element of $N_{\cG} (\mathcal T)$, 
or equivalently by an element of $\mathcal W^G$. Therefore
\[
(\Irr \, \mathcal T /\!/ \mathcal W^G )_2 = 
\big( \bigcup_{\fs \in \mathfrak B (\cG,\cT)} \mathcal W^G \cdot T^\fs /\!/ \mathcal W^G \big)_2 
= \bigcup_{\fs \in \mathfrak B (\cG,\cT)} \big( T^\fs /\!/ W^\fs \big)_2 .
\]
Hence the union of the spaces in the commutative triangles from Theorem \ref{thm:main} 
is as desired. The right slanted arrows in these triangles combine to a bijection
\[
(\Irr \, \mathcal T /\!/ \cW^G )_2 \to \{\text{KLR parameters for } G \} / G ,
\] 
because the $\cW^G$-action is compatible with the $G$-action. Suppose that $(\cT ,\chi'_\fs)$
is another base point for $\fs$. Up to an unramified twist, we may assume that $\chi'_\fs =
w \chi_\fs$ for some $w \in \cW^G$. Then the Hecke algebras $\cH (H)$, and $\cH (H')$ are
isomorphic by a map that reflects conjugation by $w$ and it was checked in \cite[Section 6]{R}
that this is compatible with the bijections between $\Irr (\cG)^\fs , \Irr (\cH (H))$ and
$\Irr (\cH (H'))$. It follows that the bottom maps in the triangles from Theorem \ref{thm:main} 
paste to a bijection 
\[
\Irr (\cG ,\cT) \to \{\text{KLR parameters for } G \} / G .
\]
Finally, the map
\[
(\Irr \, \mathcal T /\!/ \cW^G )_2 \to \Irr (\cG ,\cT)
\]
can be defined as the composition of the other two bijections in the above triangle. Then
it is the combination the left slanted maps from Theorem \ref{thm:main} because the triangles
over there are commutative. 
\end{proof}

In \cite[Section 10]{BorAut} Borel stated several "desiderata" for the local Langlands
correspondence. The properties (1), (2) and (3) of Theorem \ref{thm:main} prove some of 
these, whereas the others involve representations outside the principal series and 
therefore fall outside the scope of our results.

\section{The labelling by unipotent classes} 
\label{sec:unip}

Let $\fs \in \mathfrak B (\cG,\cT)$ and construct $c^\fs$ as in Section \ref{sec:Lp}. 
By Theorem \ref{thm:main} we can parametrize $\Irr (\cG)^\fs$ with
$H$-conjugacy classes of KLR parameters $(\Phi,\rho)$ such that $\Phi \big|_{\fo_F^\times} = c^\fs$.
We note that \{KLR parameters$\}^\fs$ is naturally labelled by the unipotent classes in $H$: 
\begin{equation}
\{\text{KLR parameters} \}^{\fs,[x]} := \big\{ (\Phi,\rho) \mid 
\Phi \big( 1, \matje{1}{1}{0}{1} \big) \text{ is conjugate to } x \big\} .
\end{equation}
In this way we can associate to any of the parameters in Theorem \ref{thm:main}
a unique unipotent class in $H$:
\begin{equation} \label{eq:labelling}
\Irr (\cG )^\fs = \bigcup\nolimits_{[x]} \Irr (\cG )^{\fs,[x]} ,\quad
(T^\fs /\!/ W^\fs )_2 = \bigcup\nolimits_{[x]} (T^\fs /\!/ W^\fs )_2^{[x]} .
\end{equation}
Via the affine Springer correspondence from Section \ref{sec:affSpringer} the set of
equivalence classes in \{KLR parameters$\}^\fs$ is naturally in bijection with
$(T^\fs /\!/ W^\fs )_2$. Recall from Section \ref{sec:extquot} that 
\[
\widetilde{T^\fs} = \{ (w,t) \in W^\fs \times T^\fs \mid w t = t \}
\]
and $T^\fs /\!/ W^\fs = \widetilde{T^\fs} / W^\fs$. In view of Section \ref{sec:extquots} 
$(T^\fs /\!/ W^\fs )_2$ is also in bijection with $T^\fs /\!/ W^\fs$, albeit not naturally. 

Only in special cases a canonical bijection $T^\fs /\!/ W^\fs \to (T^\fs /\!/ W^\fs)_2$ 
is available. For example when $G = \GL_n (\C)$, the finite group $\cW^H_t$  
is a product of symmetric groups: in this case there is a canonical  $c$-$\Irr$ system,    
according to the classical theory of Young tableaux. 

In general it can already be hard to define any suitable map from \{KLR parameters$\}^\fs$
to $T^\fs /\!/ W^\fs$, because it is difficult to compare the parameters $\rho$ 
for different $\Phi$'s. It goes better the other way round and with $\Irr (\cG )^\fs$
as target. In this way will transfer the labellings \eqref{eq:labelling} to
$T^\fs /\!/ W^\fs$.

>From \cite[Section 8]{Roc} we know that $\Irr (\cG )^\fs$ is naturally in 
bijection with the equivalence classes of irreducible representations of the
extended affine Hecke algebra $\mathcal H (H)$. To relate it to $T^\fs /\!/ W^\fs$
the parametrization of Kazhdan, Lusztig and Reeder is unsuitable, it is more
convenient to use the methods developed in \cite{Opd,Sol}. 

To describe it, we fix some notation. Choose a Borel subgroup $B \subset H^\circ$ 
containing $T$. Let $P$ be a set of roots of $(H^\circ,T)$ which are simple with
respect to $B$ and let $R_P$ be the root system that they span. They determine
a parabolic subgroup $W_P \subset W^\fs$ and a subtorus
\[
T^P := \{ t \in T \mid \alpha (t) = 1 \; \forall \alpha \in R_P \}^\circ .
\]
In \cite[Theorem 3.3.2]{Sol} $\Irr (\mathcal H (H))$ is mapped, in a natural
finite-to-one way, to equivalence classes of triples $(P,\delta,t)$. Here
$P$ is as above, $t \in T^P$ and $\delta$ is a discrete series
representation of a parabolic subalgebra $\mathcal H_P$ of $\mathcal H (H)$.
This $t$ is the same as in the affine Springer parameters.

The pair $(P,\delta)$ gives rise to a \emph{residual coset} $L$ in the sense of
\cite[Appendix A]{Opd}. Explicitly, it is the translation of $T^P$ by an
element $cc(\delta) \in T$ that represents the central character of $\delta$ 
(a $W_P$-orbit in a subtorus $T_P \subset T$). 
The element $cc(\delta) t \in L$ corresponds to $t_q$. 
The collection of residual cosets is stable under the action of $W^\fs$.

\begin{prop}\label{prop:residual}
\begin{enumerate}
\item There is a natural bijection between
\begin{itemize}
\item $H$-conjugacy classes of Langlands parameters $\Phi$ with 
$\Phi \big|_{\mathbf I_F} = c^\fs$;
\item $W^\fs$-conjugacy classes of pairs $(t_q,L)$ with $L$ a residual coset for
$\mathcal H (H)$ and $t_q \in L$.
\end{itemize}
\item Let $Y^P$ be the union of the residual cosets of $T^P$. The stabilizer of $Y^P$
in $W^\fs$ is the stabilizer of $R_P$.
\item Suppose that $w \in W^\fs$ fixes $cc(\delta)$. Then $w$ stabilizes $R_P$.
\end{enumerate}
\end{prop}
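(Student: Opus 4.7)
\medskip

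\noindent\textbf{Proof proposal for Proposition \ref{prop:residual}.}

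The plan is to combine the refined Jacobson--Morozov theorem (as in \cite[\S 2.3]{KL}, already used in Lemma \ref{lem:compareParameters}) with Opdam's theory of residual cosets for affine Hecke algebras, and then exploit the rigidity of residual points. For part (1), the natural source of a bijection is the chain of Kazhdan--Lusztig-type parameters: $\Phi \leftrightarrow (t_q,x) \leftrightarrow (P,\delta,t)$, and the target $(t_q,L)$ is a mild repackaging of $(P,\delta,t)$ via $L=cc(\delta)T^P$ and $t_q = cc(\delta)\cdot t$. For (2) and (3), one needs to extract $R_P$ from purely torus-theoretic data attached to a residual coset or to the central character of a discrete series, which is the content of Opdam's combinatorial characterization of residual points \cite[App.~A]{Opd}.

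For part (1), I would proceed as follows. Starting from a Langlands parameter $\Phi$ with $\Phi|_{\mathbf{I}_F}=c^\fs$, Lemma \ref{lem:compareParameters} gives an $H^\circ$-conjugacy class of pairs $(t_q,x)$ in $H^\circ$ with $t_q$ semisimple, $x$ unipotent, and $t_q x t_q^{-1}=x^q$. Passing to $H$-conjugacy absorbs the $\pi_0(H)$ action, which is exactly the action used to enlarge $\cW^{H^\circ}$ to $W^\fs$ in \eqref{eq:WsH}. Now one appeals to the Kazhdan--Lusztig classification of discrete series of affine Hecke algebras: there is a unique (up to $\cW^{H^\circ}$) parabolic subalgebra $\mathcal H_P\subset\mathcal H(H^\circ)$ and discrete series $\delta$ of $\mathcal H_P$ such that $x$ is distinguished unipotent in the centralizer of $T^P$ and $cc(\delta)\in T_P$ is the image of the ``$T_P$-part'' of $t_q$. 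Writing $t_q=cc(\delta)\cdot t$ with $t\in T^P$ yields the triple $(P,\delta,t)$, which in turn gives the residual coset $L=cc(\delta)\,T^P\ni t_q$. Conversely, from $(t_q,L)$ one recovers $(P,\delta)$ as the unique pair (mod $W^\fs$) with $L=cc(\delta)T^P$, and then $x$ is produced by the refined Jacobson--Morozov theorem applied to $t_q$ inside $\Z_H(t)^\circ$. The key check is that the two constructions are mutually inverse and that $H$-conjugacy on the left matches $W^\fs$-conjugacy on the right; this is because $W^\fs$ is the relevant Weyl group of $H$ by \eqref{eq:WsH} and residual cosets are indexed by $W^\fs$-orbits of pairs $(P,\delta)$.

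Part (2) is essentially a tautology in one direction and a rigidity statement in the other. If $w\in W^\fs$ stabilizes $R_P$, then it stabilizes $T^P$ and permutes the $(W^\fs,\mathcal H (H))$-data indexed by $P$, hence permutes the finite union of translates $cc(\delta)T^P$ which constitute $Y^P$. Conversely, if $w$ stabilizes $Y^P$, then $w$ sends residual cosets of $T^P$ to residual cosets of $T^P$; since every such coset is a translate of $T^P$, the tangent space $\mathrm{Lie}(T^P)$ is $w$-stable. The roots of $(H^\circ,T)$ vanishing identically on $T^P$ are precisely $R_P$, so $w$ preserves $R_P$.

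For part (3), recall that $cc(\delta)$ is a residual point of $T_P$ with respect to $R_P$, which by Opdam's characterization means that the multiset of values $\{\alpha(cc(\delta)):\alpha\in R_P\}$ satisfies a strict balance condition involving $q^{\pm 1}$ and codimension. If $w\in W^\fs$ fixes $cc(\delta)$, then $w$ preserves the partition of $R(H,T)$ according to the value $\alpha(cc(\delta))\in\C^\times$; the residual character of $cc(\delta)$ inside $R_P$ then forces the full subset $R_P$ to be recoverable from this partition, hence $w\cdot R_P=R_P$. The main obstacle here, and indeed the technical heart of the proposition, is making the recovery of $R_P$ from the single element $cc(\delta)$ rigorous; this is where one must invoke the full Opdam classification of residual points \cite[App.~A]{Opd} rather than any softer positional argument, because in general $cc(\delta)$ can lie on walls of $R_P$ and the naive ``support'' argument is insufficient.
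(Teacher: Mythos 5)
Your plan for part (1) has the right general shape (pass through Kazhdan--Lusztig-type parameters), but the proposed inverse map is not correct as stated. You write that ``from $(t_q,L)$ one recovers $(P,\delta)$\ldots and then $x$ is produced by the refined Jacobson--Morozov theorem applied to $t_q$.'' The refined Jacobson--Morozov theorem (in the Kazhdan--Lusztig form) takes a \emph{pair} $(t_q,x)$ with $t_q x t_q^{-1}=x^q$ and produces a homomorphism $\mathbf W_F \times \SL_2 (\C) \to G$; it does not produce a unipotent $x$ from a semisimple $t_q$ alone. The actual bridge you need is Opdam's \cite[Proposition B.3]{Opd}, which associates to each residual coset $L$ a unipotent $x \in B$ with $\ell x \ell^{-1} = x^q$ for all $\ell \in L$, and \cite[Proposition B.4]{Opd} for the opposite direction, producing $L = T^P t_q$ where $T^P = \Z_T(\Phi(\SL_2(\C)))^\circ$. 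This is much more direct than routing through the classification of discrete series $\delta$ of $\cH_P$, which, while true as background, is not what makes the bijection constructive. Your part (2) is essentially the paper's argument (stabilizing $Y^P$ stabilizes the common direction $T^P$, hence $R_P$), so that is fine.

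Part (3) is where there is a real gap, and you have in fact flagged it yourself: you say the ``recovery of $R_P$ from $cc(\delta)$'' is the technical heart and that one ``must invoke the full Opdam classification of residual points.'' That diagnosis is not right, and the argument as written does not close. The point is not a combinatorial classification of residual points; it is a positional/convexity fact. Since $cc(\delta)$ represents the central character of a \emph{discrete series} of $\cH_P$, by \cite[Lemma 2.21 and \S 4.1]{Opd} some $W_P$-conjugate $r$ of $cc(\delta)$ satisfies $\log|r| = \sum_{\alpha \in P} c_\alpha\,\alpha^\vee$ with all $c_\alpha < 0$, i.e.\ $\log|r|$ lies in the interior of the obtuse negative cone spanned by $\{\alpha^\vee : \alpha \in P\}$. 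If $w \in W^\fs$ fixes $cc(\delta)$, some $W_P$-conjugate $w'$ of $w$ fixes $r$ and hence $\log|r|$; but an element of $W^\fs$ that fixes a point in the interior of this cone must permute the set $\{\alpha^\vee : \alpha \in P\}$, hence stabilizes $R_P$, and so does $w$. Your ``balance condition on $\{\alpha(cc(\delta))\}$'' approach would run into exactly the difficulty you anticipated ($cc(\delta)$ can lie on walls), which is why the argument must be made about the interior point $\log|r|$ rather than about $cc(\delta)$ directly.
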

\begin{proof}
(1) Opdam constructed the maps in both directions for $\mathcal H (H^\circ)$. 
To go from $\mathcal H (H^\circ)$ to $\mathcal H (H)$ is easy,
one just has to divide out the action of $\pi_0 (H)$ on both sides. 

Let us describe the maps for $H^\circ$ more explicitly. 
To a residual coset $L$ Opdam \cite[Proposition B.3]{Opd} 
associates a unipotent element $x \in B$ such that $l x l^{-1} = x^q$ for all $l \in L$. 
Then $\Phi$ is a Langlands parameter with data $t_q,x$.

For the opposite direction we may assume that 
\[
\Phi \big( \mathbf W_F ,\matje{z}{0}{0}{z^{-1}} \big) \subset T \quad \forall z \in \C^\times
\]
and that $x = \Phi \big( 1,\matje{1}{1}{0}{1} \big) \in B$. Then 
\[
T^P := \Z_T \big(\Phi (\mathbf I_F \times \SL_2 (\C)) \big)^\circ = 
\Z_T \big( \Phi (\SL_2 (\C)) \big)^\circ
\]
is a maximal torus of $\Z_{H^\circ}\big(\Phi (\mathbf I_F \times \SL_2 (\C)) \big)$. We take
\[
t_q = \Phi \Big(\varpi_F , \matje{q^{1/2}}{0}{0}{q^{-1/2}} \Big) 
\quad \text{and} \quad L = T^P t_q .
\]
This is essentially \cite[Proposition B.4]{Opd}, but our way to write it down avoids
Opdam's assumption that $H^\circ$ is simply connected.\\
(2) Clear, because any element that stabilizes $Y^P$ also stabilizes $T^P$.\\
(3) Since $cc(\delta)$ represents the central character of a discrete series representation
of $\cH_P$, at least one element (say $r$) in its $W_P$-orbit lies in the obtuse 
negative cone in the subtorus $T_P \subset T$ (see Lemma 2.21 and Section 4.1 of \cite{Opd}).
That is, $\log |r|$ can be written as $\sum_{\alpha \in P} c_\alpha \alpha^\vee$ with
$c_\alpha < 0$. Some $W_P$-conjugate $w'$ of $w \in W^\fs$ fixes $r$ and hence $\log |r|$.
But an element of $W^\fs$ can only fix $\log |r|$ if it stabilizes the
collection of coroots $\{ \alpha^\vee \mid \alpha \in P\}$. It follows that $w'$ and $w$ 
stabilize $R_P$.
\end{proof}

In particular the above natural bijection associates to any $W^\fs$-conjugacy class of
residual cosets $L$ a unique unipotent class $[x]$ in $H$. 
Conversely a unipotent class $[x]$ can correspond to more than one $W^\fs$-conjugacy 
class of residual cosets, at most the number of connected components of $\Z_T (x)$
if $x \in B$.

Let $\mathfrak U^\fs \subset B$ be a set of representatives for the unipotent classes in $H$. 
For every $x \in \mathfrak U^\fs$ we choose an algebraic homomorphism
\[
\gamma_x \colon \SL_2 (\C) \to H \quad \text{with} \quad \gamma_x \matje{1}{1}{0}{1} = x
\quad \text{and} \quad \gamma_x \matje{z}{0}{0}{z^{-1}} \in T .
\]
By Lemma \ref{lem:cocharindep} all choices for $\gamma_x$ are conjugate. For each
$x \in \mathfrak U^\fs$ we define
\[
\{ \text{KLR parameters} \}^{\fs,x} = \{ (\Phi,\rho) \mid \Phi 
\big|_{\mathbf I_F \times \SL_2 (\C)} = c^\fs \times \gamma_x , \Phi (\varpi_F) \in T \} .
\]
We endow this set with the topology that ignores $\rho$ and is the Zariski topology
with respect to $\Phi (\varpi_F)$. In this way
\begin{equation} \label{eq:UsParam}
\bigsqcup_{x \in \mathfrak U^\fs} \{ \text{KLR parameters} \}^{\fs,x}
\end{equation}
becomes a nonseparated algebraic variety with maximal separated quotient 
$\sqcup_{x \in \mathfrak U^\fs} \Z_T (\im \gamma_x)$. Notice that \eqref{eq:UsParam} 
contains representatives for all equivalence classes in \{KLR parameters$\}^\fs$.

\begin{prop}\label{prop:mus}
There exists a continuous bijection
\[
\mu^\fs : T^\fs /\!/ W^\fs \to \Irr (\cG )^\fs 
\]
such that:
\begin{enumerate}
\item The diagram
\[
\xymatrix{
T^\fs /\!/ W^\fs \ar[r]^{\mu^\fs} \ar[d]^{\rho^\fs} & \Irr (\cG )^\fs \ar[r] &
\{\text{KLR parameters} \}^\fs / H \ar[d] \\
T^\fs / W^\fs & & c (H)_{\ss} \ar[ll] } 
\]
commutes. Here the unnamed horizontal maps are those from Theorem \ref{thm:main} 
and the right vertical arrow sends $(\Phi,\rho)$ to the $H$-conjugacy class of 
$\Phi (\varpi_F)$.
\item For every unipotent element $x \in H$ the preimage
\[
(T^\fs /\!/ W^\fs )^{[x]} := (\mu^\fs )^{-1} \big( \Irr (\cG )^{\fs,[x]} \big)
\]
is a union of connected components of $T^\fs /\!/ W^\fs$.
\item Let $\epsilon$ be the map that makes the diagram
\[
\xymatrix{ 
T^\fs /\!/ W^\fs \ar[rr]^\epsilon \ar[dr]^{\mu^\fs} & & 
(T^\fs /\!/ W^\fs )_2 \ar[dl] \\
& \Irr (\cG )^\fs & }
\]
commute. Then $\epsilon$ comes from a $c$-$\Irr$ system.
\item $T^\fs /\!/ W^\fs \xrightarrow{\; \mu^\fs \;} \Irr (\cG )^\fs \to 
\{\text{KLR parameters} \}^\fs / H$ lifts to a map 
\[
\widetilde{\mu^\fs} : \widetilde{T^\fs} \to 
\bigsqcup_{x \in \mathfrak U^\fs} \{ \text{KLR parameters} \}^{\fs,x}
\]
such that the restriction of $\widetilde{\mu^\fs}$ to any connected component of 
$\widetilde{T^\fs}$ is algebraic and an isomorphism onto its image.
\end{enumerate}
\end{prop}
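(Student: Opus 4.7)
The plan is to construct $\mu^\fs$ as the composition of a bijection $\epsilon : T^\fs/\!/W^\fs \to (T^\fs/\!/W^\fs)_2$ coming from a carefully chosen $c$-$\Irr$ system with the canonical bijection $(T^\fs/\!/W^\fs)_2 \to \Irr(\cG)^\fs$ furnished by Theorem \ref{thm:main}. This immediately makes property (3) tautological, and reduces the remaining properties to conditions on the $c$-$\Irr$ system. Recall that producing $\epsilon$ requires choosing, for each $t \in T^\fs$, a bijection $\psi_t : c(W^\fs_t) \to \Irr(W^\fs_t)$ sending the trivial class to the trivial representation, equivariant under $W^\fs$, and locally constant in the sense of condition (3) of Section \ref{sec:extquots}.

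To construct such $\psi_t$, first identify $W^\fs_t \cong \cW^{M^\circ} \rtimes \pi_0(M)$ with $M = Z_H(t)$ via Lemma \ref{lem:centrals}, so that by Proposition \ref{prop:S.2} the set $\Irr(W^\fs_t)$ is in bijection with $M$-conjugacy classes of geometric pairs $(x,\rho_1)$ with $x \in M^\circ$ unipotent. On each connected component $\{w\} \times C$ of $\widetilde{T^\fs}$, fix a generic point $t_C \in C$ (where $W^\fs_t$ is constant and equal to $W^\fs_C$), and choose any $c$-$\Irr$ bijection for $W^\fs_C$ (extending equivariantly across the $W^\fs$-orbit of the component). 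The extended Springer correspondence then assigns to $[w] \in c(W^\fs_C)$ a unipotent element $x_{w,C}$ of $Z_{H^\circ}(t_C)^\circ$, which sits in a unique unipotent class $[x_{w,C}] \subset H$: this defines the labelling \eqref{eq:labelling} on the component.

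For property (4) and continuity, I would then lift $\mu^\fs$ to $\widetilde{\mu^\fs}$ explicitly on the component $\{w\} \times C$ by sending $(w,t)$ to the KLR parameter $(\Phi_t,\rho)$ with $\Phi_t(\varpi_F) = t$ and $\Phi_t|_{\SL_2(\C)} = \gamma_{x_{w,C}}$ (chosen once and for all in $\mathfrak U^\fs$), and $\rho$ the irreducible representation of $\pi_0(Z_{G}(\Phi_t))$ extracted from $\psi_t([w])$ via the extended Springer correspondence and Proposition \ref{prop:UP}. Since $\Phi_t$ depends algebraically on $t$ while the $\SL_2$-part and $\rho$ are fixed on the component, this assignment is algebraic and continuous. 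Property (1) is then automatic from Theorem \ref{thm:main}, which sends $[t,\tilde\tau]$ to a KLR parameter with $\Phi(\varpi_F) = t$ under the identification $T^\fs/W^\fs \cong c(H)_{\ss}$ so that the diagram in (1) commutes.

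The hard part will be coherence of $\psi_t$ at the special points $t_0 \in \overline{C}$ where $W^\fs_{t_0} \supsetneq W^\fs_C$: since the extended Springer correspondence now lives in a different (larger) Weyl group, I need to choose $\psi_{t_0}$ so that $\psi_{t_0}([w])$, regarded as a conjugacy class of the larger group, still corresponds via Springer to an element of the same $H$-unipotent class $[x_{w,C}]$. This is possible because Lemma \ref{lem:S.7} describes the induction $\mathrm{Ind}_{W^\fs_C}^{W^\fs_{t_0}}$ in terms of a partial order on affine Springer parameters ordered by closure of unipotent orbits, and $[x_{w,C}]$ appears as a minimal constituent; the only constraint is at finitely many such jumping loci on each component, so a consistent choice exists. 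Verifying that $\mu^\fs$ is a bijection is then immediate because $\epsilon$ is a bijection (the fibre over each $[t,\tilde\tau]_{W^\fs}$ in $(T^\fs/\!/W^\fs)_2$ is a singleton by the bijectivity of $\psi_t$), completing the proof.
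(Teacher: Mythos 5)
Your high-level plan — build $\mu^\fs$ as $(\text{canonical bijection from Theorem \ref{thm:main}}) \circ \epsilon$ for a carefully chosen $c$-$\Irr$ system $\epsilon$, so that property (3) is tautological — is in the same spirit as the paper's argument. But the paper does \emph{not} build the $c$-$\Irr$ system from scratch; it appeals to the explicit construction of a continuous bijection $T^\fs/\!/W^\fs \to \Irr(\mathcal H(H))$ carried out in \cite[Section 5.4]{Sol}, using the parametrization of $\Irr(\mathcal H(H))$ by triples $(P,\delta,t)$ and residual cosets via \cite[Theorem 3.3.2]{Sol} and Proposition \ref{prop:residual}, and only tweaks that construction so that $\mu^\fs(1,T^\fs)$ lands on the spherical representations (Section \ref{sec:spherical}). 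The heavy lifting — the existence of a coherent $c$-$\Irr$ system satisfying the locally-constant and continuity constraints across all components — is done in \cite{Sol}.

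The gap in your proposal is precisely the step you flag as ``the hard part'' and then wave away. Choosing $\psi_t$ at a generic point of each component of $\widetilde{T^\fs}$ is easy; the difficulty is at a special point $t_0 \in \overline{C}$ where $W^\fs_{t_0} \supsetneq W^\fs_C$. You need a \emph{single} bijection $\psi_{t_0} : c(W^\fs_{t_0}) \to \Irr(W^\fs_{t_0})$ that is simultaneously compatible with the generic choices on \emph{every} component $C$ whose closure contains $t_0$ — and these impose constraints on overlapping subsets of $c(W^\fs_{t_0})$ via the non-injective map $c(W^\fs_C) \to c(W^\fs_{t_0})$. Your appeal to Lemma \ref{lem:S.7} shows that the minimal constituent of the induced module has the expected unipotent label, but it does not show that the finitely many constraints coming from the different $C$ (and the different $w$ within a given $C$) are mutually compatible, nor does it produce the rest of $\psi_{t_0}$ (the classes not hit from any adjacent $C$). ``Finitely many constraints, so a consistent choice exists'' is not an argument — the whole difficulty of $c$-$\Irr$ systems is that finitely many local constraints may be globally inconsistent, cf.\ the discussion of condition (4) in Section \ref{sec:extquots}. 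A second, smaller gap: for your lift $\widetilde{\mu^\fs}(w,t) = (\Phi_t,\rho)$ with $\Phi_t|_{\SL_2} = \gamma_{x_{w,C}}$ and $\Phi_t(\varpi_F) = t$ to define a genuine Langlands parameter, you need $t \in Z_T(\im\gamma_{x_{w,C}})$ for every $t$ in the component, i.e.\ $T^w_{\mathbf c} \subset Z_T(\im\gamma_x)$; this is not automatic and requires replacing $(w,t)$ by a suitable $W^\fs$-conjugate, a point the paper's proof handles explicitly and yours omits.
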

\begin{proof} 
Proposition \ref{prop:residual}.1 yields a natural finite-to-one map from $\Irr 
(\mathcal H (H))$ to $W^\fs$-conjugacy classes $(t_q,L)$, namely 
\begin{equation}\label{eq:pitqL}
\pi (\Phi,\rho) \mapsto \Phi \mapsto (t_q,L) .
\end{equation} 
In \cite[Theorem 3.3.2]{Sol} this map was obtained in 
a different way, which shows better how the representations depend on the parameters 
$t,t_q,L$. That was used in \cite[Section 5.4]{Sol} to find a continuous bijection
\begin{equation}\label{eq:mus}
\mu^\fs : T^\fs /\!/ W^\fs \to \Irr (\mathcal H (H)) \cong \Irr (\cG )^\fs .
\end{equation}
The strategy is essentially a step-by-step creation of a $c$-$\Irr$ system for
$T^\fs /\!/ W^\fs$ and $\mathcal H (H)$, only the condition on the unit element
and the trivial representation is not considered in \cite{Sol}. Fortunately
there is some freedom left, which we can exploit to ensure that $\mu^\fs (1,T^\fs)$ 
is the family of spherical $\mathcal H (H)$-representations, see Section 
\ref{sec:spherical}. 
This is possible because every principal series representation of $\mathcal H (H)$ has 
a unique irreducible spherical subquotient, so choosing that for $\mu^\fs (1,t)$ does
not interfere with the rest of the construction. Via Theorem \ref{thm:main} we can 
transfer this $c$-$\Irr$ system to a $c$-$\Irr$ system for the two extended 
quotients of $T^\fs$ by $W^\fs$, so property (3) holds.

By construction the triple $(P,\delta,t)$ associated to the representation 
$\mu^\fs (w,t)$ has the same $t \in T$, modulo $W^\fs$. 
That is, property (1) is fulfilled.

Furthermore $\mu^\fs$ sends every connected component of $T^\fs /\!/ W^\fs$ to a family
of representations with common parameters $(P,\delta)$. Hence these representation are
associated to a common residual coset $L$ and to a common unipotent class $[x]$,
which verifies property (2).

Let $\mathbf c$ be a connected component of $(T^\fs /\!/ W^\fs )^{[x]}$, with $x \in
\mathfrak U^\fs$. The proof of Proposition \ref{prop:residual}.1 shows that 
$\mathbf c$ can be realized in $\Z_T (\im \gamma_x)$. In other words, we can find a 
suitable $w = w (\mathbf c) \in W^\fs$ with $T^w \subset \Z_T (\im \gamma_x)$. Then there 
is a connected component $T^w_{\mathbf c}$ of $T^w$ such that
\[
\begin{split}
& \mathbf c := \big( w,T^w_{\mathbf c} / \Z (w,\mathbf c) \big) ,\\ 
& \text{where } \Z (w,\mathbf c) = \{ g \in \Z_{W^\fs}(w) \mid g \cdot T^w_{\mathbf c} = 
T^w_{\mathbf c} \} . 
\end{split}
\]
In this notation $\widetilde{\mathbf c} := (w,T^w_{\mathbf c})$ is a connected component of
$\widetilde{T^\fs}$. We want to define $\widetilde{\mu^\fs} : \widetilde{\mathbf c} \to
\{\text{KLR parameters} \}^{\fs,x}$. For every $[w,t] \in \mathbf c ,\;
\mu^\fs [w,t]$ determines an equivalence class in \{KLR parameters$\}^{\fs,x}$. Any
$(\Phi,\rho)$ in this equivalence class satisfies $\Phi (\varpi_F) \in \Z_T (\im \gamma_x)
\cap W^\fs t$. Hence there are only finitely many possibilities for $\Phi (\varpi_F)$,
say $t_1,\ldots,t_k$. For every such $t_i$ there is a unique 
$(\Phi_i,\rho_i) \in \{\text{KLR parameters} \}^{\fs,x}$ with $\Phi_i (\varpi_F) = t_i$ and 
$\pi (\Phi_i,\rho_i) \cong \mu^\fs [w,t]$. Every element of $\widetilde{\mathbf c}$ lying 
over $[w,t] \in \mathbf c$ is of the form $(w,t_i)$. (Not every $t_i$ is eligible though, 
for some we would have to modify $w$.) We put
\[
\widetilde{\mu^\fs} (w,t_i) := (\Phi_i ,\rho_i) .
\]
Since the $\rho$'s are irrelevant for the topology on \{KLR parameters$\}^{\fs,x} ,\;
\widetilde{\mu^\fs} (\widetilde{\mathbf c})$ is homeomorphic to $T^w_{\mathbf c}$ and 
$\widetilde{\mu^\fs} : \widetilde{\mathbf c} \to \widetilde{\mu^\fs} (\widetilde{\mathbf c})$
is an isomorphism of affine varieties. This settles the final property (4).
\end{proof}

\section{Correcting cocharacters and L-packets}
\label{sec:cochar}

In this section we construct the correcting cocharacters on the extended quotient
$T^\fs /\!/ W^\fs$. As conjectured in Section \ref{sec:statement}, these show how
to determine when two elements of $T^\fs /\!/ W^\fs$ give rise to $\cG$-representations
in the same L-packets.

Every KLR parameter $(\Phi,\rho)$ naturally determines a cocharacter $h_\Phi$ 
and elements $\theta (\Phi,\rho,z) \in T^\fs$ by
\begin{equation}\label{eq:defhPhi}
\begin{aligned}
& h_\Phi (z) = \Phi \big( 1,\matje{z}{0}{0}{z^{-1}} \big) ,\\
& \theta (\Phi,\rho,z) = \Phi \big( \varpi_F, \matje{z}{0}{0}{z^{-1}} \big) =
\Phi (\varpi_F) h_\Phi (z) .
\end{aligned}
\end{equation}
Although these formulas obviously do not depend on $\rho$, it turns out to be convenient to 
include it in the notation anyway.
However, in this way we would end up with infinitely many correcting cocharacters, most
of them with range outside $T$. To reduce to finitely many cocharacters with values in $T$, 
we will restrict to KLR parameters associated to $x \in \mathfrak U^\fs$ \eqref{eq:UsParam}.

Recall that part (2) of Proposition \ref{prop:mus} determines a labelling of the connected 
components of $T^\fs /\!/ W^\fs$ by unipotent classes in $H$. This enables us to define the 
correcting cocharacters: for a connected component $\mathbf c$ of $T^\fs /\!/ W^\fs$ with 
label (represented by) $x \in \mathfrak U^\fs$ we take the cocharacter 
\begin{equation}\label{eq:defhx}
h_{\mathbf c} = h_x : \C^\times \to T ,\quad h_x (z) = \gamma_x \matje{z}{0}{0}{z^{-1}} .
\end{equation}
Let $\widetilde{\mathbf c}$ be a connected component of $\widetilde{T^\fs}$ that projects
onto $\mathbf c$. We define
\begin{equation} \label{eq:defThetaz}
\begin{aligned}
& \widetilde{\theta_z} : \widetilde{\mathbf c} \to T^\fs ,& & (w,t) \mapsto 
\theta \big( \widetilde{\mu^\fs}(w,t),z \big) , \\
& \theta_z : \mathbf c \to T^\fs / W^\fs ,& & [w,t] \mapsto W^\fs \widetilde{\theta_z}(w,t) . 
\end{aligned}
\end{equation}
For $\widetilde{\mathbf c}$ as in the proof of Proposition \ref{prop:mus}, which we can
always achieve by adjusting by element of $W^\fs$, our construction results in
\[
\widetilde{\theta_z} (w,t) = t \, h_{\mathbf c}(z) .
\]

\begin{lem}\label{lem:Lpackets}
Let $[w,t],[w',t'] \in T^\fs /\!/W^\fs$. Then $\mu^\fs [w,t]$ and $\mu^\fs [w',t']$ are
in the same L-packet if and only if
\begin{itemize}
\item $[w,t]$ and $[w',t']$ are labelled by the same unipotent class in $H$;
\item $\theta_z [w,t] = \theta_z [w',t']$ for all $z \in \C^\times$.
\end{itemize}
\end{lem}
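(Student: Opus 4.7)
The plan is to translate the lemma into a statement about Langlands parameters, and then verify it through the explicit lift $\widetilde{\mu^\fs}$ from Proposition \ref{prop:mus}(4). By Theorem \ref{thm:main}, $\mu^\fs[w,t]$ and $\mu^\fs[w',t']$ lie in a common L-packet if and only if the KLR parameters $(\Phi,\rho) = \widetilde{\mu^\fs}(w,t)$ and $(\Phi',\rho') = \widetilde{\mu^\fs}(w',t')$ have $\Phi$ and $\Phi'$ $H$-conjugate (since restrictions to $\mathbf I_F$ agree with $c^\fs$, $G$-conjugacy coincides with $H$-conjugacy here). Because $\mathfrak U^\fs$ contains one representative per unipotent class, the label hypothesis ensures that both parameters lie in $\{\mathrm{KLR}\}^{\fs,x}$ for the same $x \in \mathfrak U^\fs$: thus $\Phi|_{\mathbf I_F \times \SL_2} = \Phi'|_{\mathbf I_F \times \SL_2} = c^\fs \times \gamma_x$ and $\Phi(\varpi_F) = t$, $\Phi'(\varpi_F) = t'$ both lie in $\Z_T(\im\gamma_x)$.

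For the easy direction, assume $g\Phi g^{-1} = \Phi'$ with $g \in H$. Agreement of the $\SL_2$-parts forces $g \in \Z_H(\im\gamma_x) \subset \Z_H(\im h_x)$, and $gtg^{-1} = t'$. Then $g$ fixes $h_x(z)$ for all $z$, so it carries $\widetilde{\theta_z}(w,t) = t\,h_x(z)$ to $t'\,h_x(z) = \widetilde{\theta_z}(w',t')$. By Lemma \ref{lem:centrals}, $W^\fs = N_H(T)/T$, so $H$-conjugate elements of $T$ are $W^\fs$-conjugate; hence $\theta_z[w,t] = \theta_z[w',t']$ for every $z$.

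For the converse, the equality of all $\theta_z$ yields, for each $z \in \C^\times$, some $w_z \in W^\fs$ with $w_z(t\,h_x(z)) = t'\,h_x(z)$. Since $W^\fs$ is finite and $z \mapsto t\,h_x(z)$ is a morphism of algebraic varieties, pigeonhole plus Zariski density forces a single $w_0 \in W^\fs$ to work for all $z$. Specialising at $z = 1$ gives $w_0\,t = t'$, and allowing $z$ to vary shows $w_0$ fixes $\im h_x$ pointwise, so $w_0 \in W^\fs_{\im h_x}$. By Lemma \ref{lem:centrals} applied inside $H$, $W^\fs_{\im h_x} = N_L(T)/T$ where $L = \Z_H(\im h_x)$, so a lift $\tilde w_0 \in N_L(T) \subset L$ satisfies $\tilde w_0\, t\, \tilde w_0^{-1} = t'$.

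The remaining step --- and the main obstacle --- is to upgrade $\tilde w_0 \in L$ to an element $g \in \Z_H(\im\gamma_x) = \Z_L(x)$ still conjugating $t$ to $t'$. For this, I would use that $x$ is distinguished in the Levi $L^\circ$ (by Lemma \ref{lem:cocharindep}, $h_x$ is a cocharacter associated to $x$ in $H$, and by construction $L^\circ = \Z_{H^\circ}(\im h_x)$ is a Levi in which $x$ is distinguished). Observe that $\tilde w_0 x \tilde w_0^{-1}$ is a unipotent element of $L$ in the same $L$-class as $x$ and commuting with $t'$. Using the Jacobson--Morozov uniqueness for $\SL_2$-homomorphisms with prescribed associated cocharacter \cite[Prop.~42]{McN}, together with Lemma \ref{lem:cocharindep}, one can find $\eta \in \Z_{L^\circ}(x)^\circ$ such that $\eta^{-1}\tilde w_0$ conjugates $\gamma_x$ to itself; the delicate point is to arrange that $\eta \in \Z_L(t')$, which relies on the fact that $t \in \Z_T(\im\gamma_x)$ was built into the parameter and on the distinguished nature of $x$ in $L^\circ$ (so that centralizers of $x$ in $\Z_L(t')$ behave well). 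Granting this, $\eta^{-1}\tilde w_0 \in \Z_L(x)$ conjugates $t$ to $t'$, hence $\Phi$ and $\Phi'$ are $H$-conjugate and $\mu^\fs[w,t], \mu^\fs[w',t']$ belong to the same L-packet.
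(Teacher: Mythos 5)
Your forward direction is correct and follows essentially the same skeleton as the paper, modulo an optional detour: the paper chooses representatives so that $\Phi = \Phi'$ and observes $\theta(\Phi,\rho,z) = \theta(\Phi,\rho',z)$ outright, whereas you keep track of the conjugating element $g \in \Z_H(\im\gamma_x)$. Both routes end up invoking the fact that elements of $T$ conjugate in $H$ are already $W^\fs$-conjugate.

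The converse, however, has a genuine gap, precisely at the point you flag as ``the delicate point.'' After extracting $w_0 \in W^\fs$ with $w_0(t) = t'$ and $w_0$ fixing $\im h_x$ pointwise, you lift to $\tilde w_0 \in \Nor_L(T)$ with $L = \Z_H(\im h_x)$. But the conjugating element needed to identify $\Phi$ and $\Phi'$ must lie in $\Z_H(\im\gamma_x) = \Z_L(x)$, which is strictly smaller than $L$. You try to shuffle $\tilde w_0$ into $\Z_L(x)$ by a correction $\eta \in L^\circ$ conjugating $\tilde w_0\gamma_x\tilde w_0^{-1}$ back to $\gamma_x$, invoking \cite[Prop.~42]{McN}. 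But that result, as quoted in Section \ref{sec:Lp}, applies only when both $\SL_2$-homomorphisms are optimal for the same unipotent $x$; here $\tilde w_0\gamma_x\tilde w_0^{-1}$ is optimal for $\tilde w_0 x \tilde w_0^{-1}$, which need not equal $x$. Moreover, even granting the existence of $\eta$, you must ensure $\eta \in \Z_L(t')$ so that $\eta^{-1}\tilde w_0$ still carries $t$ to $t'$, and you write ``Granting this,'' which concedes the step is not established. I do not see how to close it by the elementary means you propose.

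The paper avoids this entirely by never constructing the conjugating element directly. Starting from the same $v$ with $v(t\,h_x(z)) = t'\,h_x(z)$ for all $z$, it passes to residual cosets: since $v$ fixes $h_x(q^{1/2}) = cc(\delta)$, Proposition \ref{prop:residual}(3) forces $v$ to stabilize $R_P$, hence the collection of residual cosets attached to $x$; therefore $v$ carries the pair $(t_q,L)$ to $(t'_q,L')$, and Proposition \ref{prop:residual}(1) --- which packages Opdam's \cite[Props.~B.3 and B.4]{Opd} --- then converts this $W^\fs$-conjugacy of residual-coset pairs into $H$-conjugacy of the Langlands parameters. That machinery is what substitutes for the ``distinguished nature of $x$'' argument you gesture at; to make the direct $\SL_2$-argument rigorous you would essentially need to reprove a piece of Opdam's Proposition B.4.
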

\begin{proof}
Suppose that the two $\cG$-representations $\mu^\fs [w,t] = \pi (\Phi,\rho)$ and \\
$\mu^\fs [w',t'] = \pi (\Phi',\rho')$ belong to the 
same L-packet. By definition this means that $\Phi$ and $\Phi'$ are $G$-conjugate. 
Hence they are labelled by the same unipotent class, say $[x]$ with $x \in \mathfrak U^\fs$. 
By choosing suitable representatives we may assume that $\Phi = \Phi'$ and that 
$\{(\Phi,\rho),(\Phi,\rho')\} \subset \{\text{KLR parameters} \}^{\fs,x}$. Then
$\theta (\Phi,\rho,z) = \theta (\Phi,\rho',z)$ for all $z \in \C^\times$. Although
in general $\theta (\Phi,\rho,z) \neq \widetilde{\theta_z} (w,t)$, they differ only 
by an element of $W^\fs$. Hence $\theta_z [w,t] = \theta_z [w',t']$ for all $z \in \C^\times$.

Conversely, suppose that $[w,t],[w',t']$ fulfill the two conditions of the lemma. Let 
$x \in \mathfrak U^\fs$ be the representative for the unipotent class which labels them.
By Proposition \ref{prop:residual}.1 we may assume that $T^w \cup T^{w'} \subset 
\Z_T (\im \gamma_x)$. Then 
\[
\widetilde{\theta_z} [w,t] = t \, h_x (z) \quad \text{and} \quad
\widetilde{\theta_z} [w',t'] = t' \, h_x (z)
\]
are $W^\fs$ conjugate for all $z \in \C^\times$. As these points depend continuously on $z$
and $W^\fs$ is finite, this implies that there exists a $v \in W^\fs$ such that
\[
v (t \, h_x (z)) = t' \, h_x (z) \quad \text{for all } z \in \C^\times .
\]
For $z = 1$ we obtain $v(t) = t'$, so $v$ fixes $h_x (z)$ for all $z$. Via the Proposition
\ref{prop:residual}.1, $h_x (q^{1/2})$ becomes an element $cc(\delta)$ for a residual coset
$L_x$. By parts (2) and (3) of Proposition \ref{prop:residual} $v$ stabilizes the collection
of residual cosets determined by $x$, namely the connected components of $\Z_T (\im \gamma_x)
h_x (q^{1/2})$. 

Let $(t_q,L), (t'_q,L')$ be associated to $\mu^\fs [w,t], \mu^\fs [w',t']$
by \eqref{eq:pitqL}. Then $t_q = t h_x (q^{1/2})$ and $t'_q = t' h_x (q^{1/2})$, so the above
applies. Hence $v$ sends $L$ to another residual coset determined by $x$. As $v(L)$ 
contains $t'_q$, it must be $L'$. Thus $(t_q,L)$ and $(t'_q,L')$ are $W^\fs$-conjugate, which
by Proposition \ref{prop:residual}.1 implies that they correspond to conjugate Langlands
parameters $\Phi$ and $\Phi'$. So $\mu^\fs [w,t]$ and $\mu^\fs [w',t']$ are
in the same L-packet.
\end{proof}

\begin{cor}\label{cor:properties}
Properties 1--6 from Section \ref{sec:statement} hold in for $\mu^\fs$ as in 
Proposition \ref{prop:mus}, with the morphism $\theta_z$ from \eqref{eq:defThetaz}
and the labelling by unipotent classes in $H$. 

Together with Theorem \ref{thm:main} this proves the conjectures from Section 
\ref{sec:statement} for all Bernstein components in the principal series of a split reductive 
$p$-adic group (with mild restrictions on the residual characteristic).
\end{cor}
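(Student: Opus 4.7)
The plan is to verify Properties 1--6 from Section \ref{sec:statement} one at a time, using Proposition \ref{prop:mus}, Theorem \ref{thm:main}, and Lemma \ref{lem:Lpackets} as the main inputs. The key observation is that, after choosing representatives of connected components $\widetilde{\mathbf{c}} \subset \widetilde{T^\fs}$ inside $\Z_T(\im \gamma_x)$ as in the proof of Proposition \ref{prop:mus}, the formula $\widetilde{\theta_z}(w,t) = t \cdot h_x(z)$ makes everything explicit, and the six properties can be read off from the construction together with parts (1)--(3) of Theorem \ref{thm:main}.

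For Properties 1, 2, and 3, I would proceed as follows. Property 3 is essentially the content of Proposition \ref{prop:mus}: continuity of $\mu^\fs$ is built into its construction, and part (4) of that proposition shows that $\widetilde{\mu^\fs}$ is algebraic on each component, so the composition with the continuous map $(\Phi,\rho) \mapsto \Phi(\varpi_F) h_\Phi(q^{1/2}) \in c(H)_{\ss} \cong T^\fs/W^\fs$ gives the morphism $\pi^\fs \circ \mu^\fs$. Property 1 follows from Theorem \ref{thm:main}(2): a representation $\pi(\Phi,\rho)$ is tempered precisely when $\Phi(\varpi_F)$ lies in a compact subgroup of $H$; for $[w,t] \in T^\fs /\!/ W^\fs$ with representative chosen in $\Z_T(\im\gamma_x)$, we have $\Phi(\varpi_F) = t$, which is compact iff $t \in T^\fs_{\cpt}$. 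Thus $\mu^\fs$ restricts to a bijection between $T^\fs_{\cpt} /\!/ W^\fs$ and the tempered part of $\Irr(\cG)^\fs$. Property 2 is an observation that becomes automatic once Property 4 is established, since $\theta_{\sqrt{q}} \neq \theta_1$ in general.

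For Properties 4 and 5, I would use the definition \eqref{eq:defThetaz} directly. Evaluating at $z=1$ gives $\widetilde{\theta_1}(w,t) = t \cdot h_x(1) = t$, so $\theta_1 = \rho^\fs$. Evaluating at $z = q^{1/2}$ gives $\widetilde{\theta_{\sqrt q}}(w,t) = t \, h_x(q^{1/2}) = t_q$, and by Theorem \ref{thm:main}(1) this is precisely the infinitesimal central character of $\pi(\Phi,\rho) = \mu^\fs[w,t]$, so $\theta_{\sqrt q} = \pi^\fs \circ \mu^\fs$. The statement that $\theta_{\sqrt q}(T^\fs /\!/ W^\fs - T^\fs / W^\fs) = R(\pi^\fs)$ follows because $\pi^\fs$ fails to be injective exactly at those parameters whose KLR-parameter has nontrivial unipotent part, i.e.\ those corresponding to components labelled by nontrivial $[x]$. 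Property 5 is then immediate from \eqref{eq:defhx}: the correcting cocharacter assigned to a component $\mathbf{c}$ with unipotent label $[x]$ is $h_x(z) = \gamma_x \bigl(\begin{smallmatrix} z & 0 \\ 0 & z^{-1}\end{smallmatrix}\bigr)$, which by Lemma \ref{lem:cocharindep} is well-defined up to $\Z_H(x)^\circ$-conjugacy, so the component $\mathbf{c}$ determines it unambiguously modulo the action on $T^\fs$.

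Property 6, the L-packet statement, is exactly Lemma \ref{lem:Lpackets}: two points $\mu^\fs[w,t]$ and $\mu^\fs[w',t']$ lie in the same L-packet iff the labels by unipotent classes of $H$ agree \emph{and} $\theta_z[w,t] = \theta_z[w',t']$ for all $z \in \C^\times$. The main obstacle I anticipate is the verification underlying Property 4 that $\theta_{\sqrt q}$ really coincides with $\pi^\fs \circ \mu^\fs$ globally rather than just on a single component --- this requires checking that the $W^\fs$-ambiguity in the choice of representative $\widetilde{\mathbf c} \subset \Z_T(\im\gamma_x)$ does not disturb the identification of $t_q = t h_x(\sqrt q)$ with the infinitesimal character; this is handled by the $W^\fs$-equivariance inherent in Proposition \ref{prop:residual}(1) and the fact that $\gamma_x$ is canonical up to $\Z_H(x)^\circ$-conjugacy. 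Once this consistency is in place, all six properties drop out, and combined with Theorem \ref{thm:main} (which gives statements (1)--(5) of the conjecture) the full conjecture of Section \ref{sec:statement} holds for every Bernstein component in the principal series.
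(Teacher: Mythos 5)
Your proposal is correct and matches the paper's own (very terse) proof: the paper simply cites Theorem \ref{thm:main}, Proposition \ref{prop:mus}, equations \eqref{eq:defhx}--\eqref{eq:defThetaz}, and Lemma \ref{lem:Lpackets} for the six properties, exactly as you do. The extra detail you supply (e.g.\ the explicit computation $\widetilde{\theta_z}(w,t)=t\,h_x(z)$, the check that $\theta_{\sqrt q}(T^\fs/\!/W^\fs - T^\fs/W^\fs)=R(\pi^\fs)$, and the $W^\fs$-equivariance consistency) is a faithful unpacking of what the paper leaves implicit, not a different route.
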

\begin{proof}
Property 1 follows from Theorem \ref{thm:main}.2 and Proposition \ref{prop:mus}.1.
The definitions of \eqref{eq:defhx} and \eqref{eq:defThetaz} establish
property 5. The construction of $\theta_z$, in combination with Theorem \ref{thm:main}.1 
and Proposition \ref{prop:mus}.1, shows that properties 2,3 and 4 are fulfilled. 
Property 6 is none other than Lemma \ref{lem:Lpackets}.
\end{proof}

\appendix
\section{Geometric equivalence}

Let $X$ be a complex affine variety and let $k=\mathcal{O}(X)$ be its coordinate algebra.
Equivalently, $k$ is a unital algebra over the complex numbers which 
is commutative, finitely generated, and nilpotent-free. A $k$-algebra is an algebra $A$ 
over the complex numbers which is a $k$-module
(with an evident compatibility between the algebra structure of $A$ and the $k$-module 
structure of $A$). $A$ is of finite type if as a $k$-module
$A$ is finitely generated. This appendix will introduce  ---  for finite type $k$-algebras --- 
a weakening of Morita equivalence called \emph{geometric equivalence}.\\

The new equivalence relation preserves the primitive ideal space
(i.e. the set of isomorphism classes of simple $A$-modules) and the periodic cyclic homology. 
However, the new equivalence relation
permits a tearing apart of strata in the primitive ideal space which is not allowed by 
Morita equivalence. The ABP conjecture
(i.e. the conjecture stated in Part(1) of this paper) asserts that the finite type algebra 
which Bernstein constructs for any given Bernstein component of a reductive p-adic group 
is geometrically equivalent to the coordinate algebra of the associated extended quotient  --- 
and that the geometric equivalence can be
chosen so that the resulting bijection between the Bernstein component and the extended 
quotient has properties as in the statement of ABP.

\subsection{$k$-algebras}

Let $X$ be a complex affine variety and $k=\mathcal{O}(X)$ its coordinate algebra.

A \emph{$k$-algebra} is a $\mathbb{C}$-algebra $A$ such that $A$ is a unital (left) $k$-module with:
\[
\lambda(\omega a)=\omega(\lambda a)=(\lambda\omega)a\quad \forall 
(\lambda,\omega,a)\in\CC \times k \times A
\]
and
\[
\omega(a_1a_2)=(\omega a_1)a_2=a_1(\omega a_2)\quad \forall (\omega,a_1,a_2)\in k\times A\times A.
\]
Denote the center of $A$ by $\Z(A)$
\[
\Z(A):=\{c\in A\mid ca=ac\,\,\forall a\in A\}
\]
$k$-algebras are not required to be unital.\\
\underline{Remark}.Let $A$ be a unital $k$-algebra. Denote the unit of $A$ by $1_A$.
$\omega\mapsto \omega1_A$ is then a unital morphism of $\CC$-algebras $k\to \Z(A)$.
Thus a unital $k$-algebra is a unital $\CC$-algebra $A$ with a given unital morphism of $\CC$-algebras
\[
k\longrightarrow \Z(A).
\]
\indent Let $A,B$ be two $k$-algebras. A morphism of $k$-algebras is a morphism of $\CC$-algebras
\[
f\colon A\to B
\]
which is also a morphism of (left) $k$-modules,
\[
f(\omega a)=\omega f(a)\quad \forall (\omega,a)\in k\times A.
\]
\indent Let $A$ be a $k$-algebra. A representation of $A$ [or a (left) $A$-module] 
is a $\CC$-vector space $V$ with given morphisms of $\CC$-algebras
\[
A\longrightarrow \mathrm{Hom}_\CC(V,V) \quad\quad\quad k\longrightarrow \mathrm{Hom}_\CC(V,V)
\]
such that \setlength{\parskip}{0pt}
\begin{enumerate}
\item $k\rightarrow\mathrm{Hom}_\CC(V,V)$ is unital
\end{enumerate} 
and
\begin{enumerate}
\setcounter{enumi}{1}
\item $(\omega a)v=\omega(av)=a(\omega v)$ $\forall (\omega,a,v)\in k\times A\times V$.
\end{enumerate}
Two representations
\[
A\longrightarrow \mathrm{Hom}_\CC(V_1,V_1) \quad\quad\quad k\longrightarrow \mathrm{Hom}_\CC(V_1,V_1)
\]
and
\[
A\longrightarrow \mathrm{Hom}_\CC(V_2,V_2) \quad\quad\quad k\longrightarrow \mathrm{Hom}_\CC(V_2,V_2)
\]
are equivalent (or isomorphic) if $\exists$ an isomorphism of $\CC$ vector spaces 
$T\colon V_1\rightarrow V_2$ which intertwines the two $A$-actions
and the two $k$-actions.\\
\begin{comment}
A representation $\varphi\colon A\to \mathrm{Hom}_\CC(V,V)$ is \emph{non-degenerate} 
iff $AV=V$. i.e. for any $v\in V$, $\exists \,\,v_1,v_2,\ldots,v_r\in V$ and 
$a_1,a_2,\ldots,a_r\in A$ with 
\[
v=a_1v_1+a_2v_2+\cdots+a_rv_r.
\]
\end{comment}
\begin{comment}
\indent\underline{Notation}. For economy of notation, a representation will be denoted \\
$\varphi\colon A\to \mathrm{Hom}_\CC(V,V)$. The unital morphism of $\mathbb{C}$-algebras
\[
k\longrightarrow\mathrm{Hom}_\CC(V, V)
\] 
is understood to be included in the given structure.\\
\end{comment}
\indent A representation is irreducible if $A\to \mathrm{Hom}_\CC(V,V)$ is not the 
zero map and $\not\exists$ a sub-$\CC$-vector space $W$ of $V$ with:
\[
\{0\}\ne W\ne V
\]
and 
\[
\omega w\in W\quad\forall\;(\omega, w)\in k\times W
\]
and
\[
aw\in W\quad\forall\;(a, w)\in A \times W 
\]
Irr($A$) denotes the set of equivalence classes of irreducible representations of $A$.

\subsection{Spectrum preserving morphisms of $k$-algebras}

\underline{Definition}.  An ideal $I$ in a $k$-algebra $A$ is a \emph{$k$-ideal} if 
$\omega a\in I$ $\forall\,(\omega,a)\in k\times I$.\\
An ideal $I\subset A$ is \emph{primitive} if $\exists$ an irreducible representation
\[
\varphi\colon A\to \mathrm{Hom}_\CC(V,V)\quad\quad\quad k\longrightarrow \mathrm{Hom}_\CC(V,V)
\]
with 
\[
I=\mathrm{Kernel}(\varphi)
\]
That is,
\[
0\to I \hookrightarrow A\xrightarrow{\varphi}\mathrm{Hom}_\CC(V,V)
\]
is exact.\\
\underline{Remark}.  Any primitive ideal is a $k$-ideal. Prim($A$) denotes the set of 
primitive ideals in $A$. The map Irr($A$)$\rightarrow$ Prim($A$) which
sends an irreducible representation to its primitive ideal is a bijection if $A$ is a 
finite type $k$-algebra. Since $k$ is Noetherian, 
any $k$-ideal in a finite type $k$-algebra $A$ is itself a finite type $k$-algebra. \\

\underline{Definition}. Let $A$, $B$ be two finite type $k$-algebras, and let
$f\colon A\to B$ be a morphism of $k$-algebras. $f$ is \emph{spectrum preserving} if
\begin{enumerate}
\item Given any primitive ideal $I\subset B$, $\exists$ a unique primitive ideal 
$L\subset A$ with $L\supset f^{-1}(I)$
\end{enumerate} 
and
\begin{enumerate}
\setcounter{enumi}{1}
\item The resulting map 
\[
\mathrm{Prim}(B)\to \mathrm{Prim}(A)
\]
is a bijection.
\end{enumerate}

\underline{Definition}. Let $A$, $B$ be two finite type $k$-algebras, and let
$f\colon A\to B$ be a morphism of $k$-algebras. $f$ is \emph{spectrum preserving 
with respect to filtrations} if $\exists$ $k$-ideals  
\[
0=I_0\subset I_1\subset\cdots\subset I_{r-1}\subset I_r=A\qquad \text{in $A$} 
\]
and $k$ ideals
\[
0=L_0\subset L_1\subset\cdots\subset L_{r-1}\subset L_r=B\qquad \text{in $B$}
\]
with $f(I_j)\subset L_j$, ($j=1,2,\ldots,r$) and $I_j/I_{j-1}\to L_j/L_{j-1}$, 
($j=1,2,\ldots,r$) is spectrum preserving.

\subsection{Algebraic variation of $k$-structure}

\underline{Notation}. If $A$ is a $\CC$-algebra, $A[t, t^{-1}]$ is the $\CC$-algebra of 
Laurent polynomials in the indeterminate $t$ with coefficients
in $A$. \\

\underline{Definition}. Let $A$ be a unital $\CC$-algebra, and let 
\[
\Psi\colon k\to \Z\left(A[t,t^{-1}]\right)
\]
be a unital morphism of $\CC$-algebras. Note that $\Z\left(A[t,t^{-1}]\right) = \Z(A)[t, t^{-1}]$.
For $\zeta\in \CC^\times=\CC-\{0\}$, ev($\zeta$) denotes the ``evaluation at $\zeta$'' map:
\begin{align*}
\mathrm{ev}(\zeta)\colon A[t,t^{-1}]&\to A\\
\sum a_jt^j                         &\mapsto \sum a_j\zeta^j 
\end{align*}
Consider the composition
\[
k\overset{\Psi}{\longrightarrow} \Z\left(A[t,t^{-1}]\right)
\overset{\mathrm{ev}(\zeta)}{\longrightarrow} \Z(A).
\]
Denote the unital $k$-algebra so obtained by $A_\zeta$. The underlying $\CC$-algebra of 
$A_\zeta$ is $A$.
Assume that for all $\zeta\in\CC^\times$,  $A_\zeta$ is a finite type $k$-algebra. 
Then for $\zeta, \zeta^\prime\in\CC^\times,
A_{\zeta^\prime}$ is obtained from $A_{\zeta}$ by an \emph{algebraic variation of $k$-structure}.

\subsection{Definition and examples}

With $k$ fixed, geometric equivalence (for finite type $k$-algebras) is the equivalence 
relation generated by the two elementary moves:
\begin{itemize}
\item Spectrum preserving morphism with respect to filtrations
\item Algebraic variation of $k$-structure
\end{itemize}
Thus two finite type $k$-algebras $A, B$ are \emph{geometrically equivalent} 
if $\exists$ a finite sequence 
$A=A_0, A_1,\ldots,A_r=B$ with each $A_j$ a finite type $k$-algebra such that for 
$j=0, 1, \ldots, r-1$ one of the following three possibilities is valid:
\begin{enumerate}
\item $A_{j+1}$ is obtained from $A_j$ by an algebraic variation of $k$-structure.
\item There is a spectrum preserving morphism with respect to filtrations\\ 
$A_j\rightarrow A_{j+1}$. 
\item There is a spectrum preserving morphism with respect to filtrations \\
$A_{j+1}\rightarrow A_j$.
\end{enumerate}
To give a geometric equivalence relating $A$ and $B$, the finite sequence of elementary 
steps (including the filtrations)
must be given. Once this has been done, a bijection of the primitive ideal spaces and 
an isomorphism of periodic cyclic homology are determined.
\[
\mathrm{Prim}(A)\longleftrightarrow \mathrm{Prim}(B)\quad\quad\quad HP_*(A)\cong HP_*(B)
\]
\underline{Example 1}. 
Two unital finite type $k$-algebras $A, B$ are 
\emph{Morita equivalent}  if there is an equivalence of categories
\begin{center}
$\Big($unital left $A$-modules$\Big) \sim \Big($unital left $B$-modules$\Big)$
\end{center}
Any such equivalence of categories is implemented by a \emph{Morita context} 
i.e. by a pair of unital bimodules
\[
{}_AV_B\quad\quad {}_BW_A
\]
together with given isomorphisms of bimodules
\begin{align*}
\alpha\colon V\otimes_B W&\to A\\
\beta\colon W\otimes _A V &\to B
\end{align*}
A Morita context is required to satisfy certain conditions. These conditions imply 
that the linking algebra formed from the Morita context is a unital finite type
$k$-algebra. The linking algebra, denoted  $M_{2\times2}({}_AV_B,{}_BW_A)$, is :
\[
\begin{tikzpicture}
\node[rotate=90] (equal) at (0,0) {$=$};
\draw (equal) node[above=.2cm] {$M_{2\times2}({}_AV_B,{}_BW_A)$};
\draw (-.4,-.6) +(-.4,-.4) rectangle ++(.4,.4);
\draw (.4,-.6) +(-.4,-.4) rectangle ++(.4,.4);
\draw (-.4,-1.4) +(-.4,-.4) rectangle ++(.4,.4);
\draw (.4,-1.4) +(-.4,-.4) rectangle ++(.4,.4);
\draw (-.4,-.6) node{$A$};
\draw (.4,-.6) node{$V$};
\draw (-.4,-1.4) node{$W$};
\draw (.4,-1.4) node{$B$};
\end{tikzpicture}
\]
The inclusions
\begin{center}
\begin{tikzpicture}[scale=.8]
\draw (1.25,.6) node {$A\hookrightarrow M_{2\times2}({}_AV_B,{}_VW_A)\hookleftarrow B$};
\draw (-1.4,-1) node {$a\mapsto$};
\draw (-.4,-.6) +(-.4,-.4) rectangle ++(.4,.4);
\draw (.4,-.6) +(-.4,-.4) rectangle ++(.4,.4);
\draw (-.4,-1.4) +(-.4,-.4) rectangle ++(.4,.4);
\draw (.4,-1.4) +(-.4,-.4) rectangle ++(.4,.4);
\draw (-.4,-.6) node{$a$};
\draw (.4,-.6) node{$0$};
\draw (-.4,-1.4) node{$0$};
\draw (.4,-1.4) node{$0$};
\begin{scope}[xshift=2.5cm]
\draw (1.4,-1) node {\reflectbox{$\mapsto$}\,\,$b$};
\draw (-.4,-.6) +(-.4,-.4) rectangle ++(.4,.4);
\draw (.4,-.6) +(-.4,-.4) rectangle ++(.4,.4);
\draw (-.4,-1.4) +(-.4,-.4) rectangle ++(.4,.4);
\draw (.4,-1.4) +(-.4,-.4) rectangle ++(.4,.4);
\draw (-.4,-.6) node{$0$};
\draw (.4,-.6) node{$0$};
\draw (-.4,-1.4) node{$0$};
\draw (.4,-1.4) node{$b$};
\end{scope} 
\end{tikzpicture}
\end{center}
are spectrum preserving morphisms of finite type $k$-algebras. Hence $A$ and $B$ are 
geometrically equivalent.\\
If Prim($A$) and Prim($B$) are given the Jacobson topology, then the bijection 
Prim($A$)$\leftrightarrow$Prim($B$) determined by a Morita equivalence is a homeomorphism.\\

\noindent \underline{Example 2}. 
Let $X$ be a complex affine variety, and let $Y$ be a sub-variety. 
$k = \cO(X)$ is the coordinate algebra of $X$, and
$\cO(Y)$ is the coordinate algebra of $Y$. $\cI_Y$ is the ideal in $\cO(X)$:\\
\[
\cI_Y=\left\{\omega\in\cO(X)\mid \omega(p)=0\,\,\forall p\in Y\right\}\\
\] Set :

\vspace{.7cm}
\begin{tikzpicture}[scale=1.2]
\draw (-1.3,-1) node {$A=$};
\draw (3,-1) node {$B=\cO(X)\oplus \cO(Y)$};
\draw (-.4,-.6) +(-.4,-.4) rectangle ++(.4,.4);
\draw (.4,-.6) +(-.4,-.4) rectangle ++(.4,.4);
\draw (-.4,-1.4) +(-.4,-.4) rectangle ++(.4,.4);
\draw (.4,-1.4) +(-.4,-.4) rectangle ++(.4,.4);
\draw (-.4,-.6) node{$\cO(\!X\!)$};
\draw (.4,-.6) node{$\cI_Y$};
\draw (-.4,-1.4) node{$\cI_Y$};
\draw (.4,-1.4) node{$\cO(\!X\!)$};
\end{tikzpicture}\vspace{4mm} 

\noindent Thus $A$ is the sub-algebra of the $2\times2$ matrices with entries in $\cO(X)$ 
consisting of all those $2\times2$ matrices whose off-diagonal entries
are in $\cI_Y$. $B=\cO(X)\oplus \cO(Y)$ is the direct sum --- as an algebra --- 
of $\cO(X)$ and $\cO(Y)$. Both $A$ and $B$ are unital finite type
$k$-algebras. $\Irr(A) = \Prim(A)$ is $X$ with each point of $Y$ doubled. 
$\Irr(B)= \Prim(B)$ is the disjoint union of $X$ and $Y$. Equipped with the Jacobson
topology, $\Prim(A)$ and $\Prim(B)$ are not homeomorphic so $A$ and $B$ are not Morita 
equivalent. However, $A$ and $B$ are geometrically equivalent.\\

\noindent \underline{Example 3}. 
Let $G$ be a connected reductive complex Lie group with maximal 
torus $T$. $\mathcal{W}$ denotes the Weyl group
\[
\mathcal{W} = N_G(T)/T
\]
and $X^*(T)$ is the character group of $T$. 
The semi-direct 
product $X^*(T)\rtimes\mathcal{W}$ is an affine Weyl group. 
In particular, it is a Coxeter group. We fix a system of
generators $S\subset X^*(T)\rtimes\mathcal{W}$ such that 
$(X^*(T)\rtimes\mathcal{W},S)$ is a
Coxeter system. Let $\ell$ denote the resulting length function on
$X^*(T)\rtimes\mathcal{W}$.

For each non-zero complex number $q$, there is the 
affine Hecke algebra $\mathcal{H}_q (G)$. 
This is an affine Hecke algebra with equal  
parameters and $\mathcal{H}_1 (G)$ is the group algebra of the affine Weyl group:
\[
\mathcal{H}_1 (G) = \mathbb{C}[X^*(T)\rtimes\mathcal{W}i,]
\]
\ie
$\cH_q$ is the algebra generated by $T_x$, $x\in X^*(T)\rtimes\mathcal{W}$, 
with relations
\begin{eqnarray} \label{eq:defHA}
T_xT_y=T_{xy}, \quad \text{if $\ell(xy)=\ell(x)+\ell(y)$, and}\cr
(T_s-\beta)(T_s+1)=0, \quad \text{if $s\in S$.}\end{eqnarray}

Using the action of $\mathcal{W}$ on $T$, form the quotient variety $T/\mathcal{W}$ 
and let $k$ be its coordinate algebra,
\[
k = \mathcal{O}(T/\mathcal{W})
\]
For all $q\in\mathbb{C}^{\times}$, $\mathcal{H}_q (G)$ is a unital finite type 
$k$-algebra. Let $\cJ$ be Lusztig's asymptotic algebra. As a
$\Cset$-vector space, $\cJ$ has a basis
$\{t_w\,:\,w\in X^*(T)\rtimes \cW\}$, and there is a canonical structure of
associative $\Cset$-algebra on $\cJ$ (see for instance
\cite[\S~8]{LuAst}).
 
Except for $q$ in a finite set of roots of unity (none of which is $1$) Lusztig 
constructs a morphism of $k$-algebras
\[\phi_q\colon
\mathcal{H}_q(G)\longrightarrow \cJ
\] 
which is spectrum preserving with respect to filtrations (see 
\cite[Theorem~9]{BN}, itself based on \cite{LuAst}). 

Let $C_w$, $w\in X^*(T)\rtimes \cW$ denote the Kazhdan-Lusztig basis of $\cH_q(G)$.
For $w$, $w'$, $w''$ in $X^*(T)\rtimes \cW$, define $h_{w,w',w''}\in A$ by
\[C_w\cdot C_{w'}=\sum_{w''\in X^*(T)\rtimes \cW}h_{w,w',w''}\,C_{w''}.\]
There is a unique function $a\colon X^*(T)\rtimes \cW\to\Nset$ such that for any
$w''\in X^*(T)\rtimes \cW$, $v^{a(w'')}h_{w,w',w''}$ is a polynomial in
$v$ for all $w$, $w'$ in $X^*(T)\rtimes \cW$ and it has non-zero constant
term for some $w$, $w'$.

Let $M$ be a simple $\cH_q(G)$-module (resp. $\cJ$-module).
Lusztig attaches to $M$ (see \cite[p.~82]{LuAst}) an integer
$a=a_M$ by the following two requirements:
\[\begin{matrix}
C_wM=0\;\text{(resp. $t_w M=0$)}&\text{for all $w\in  X^*(T)\rtimes \cW$ 
such that
$a(w)>a$};\cr
C_wM\ne 0\;\text{(resp. $t_w M\ne 0$)}&\text{ for some $w\in
X^*(T)\rtimes \cW$ such
that $a(w)=a$.}\end{matrix}\]
The map $\phi_q$ is the unique bijection \[M\mapsto
M'\] between the primitive ideal spaces of $\cH_q(G)$ and
$\cJ$, with the following properties:
$a_M=a_{M'}$ and the restriction of $M'$ to $\cH_q(G)$ via
$\phi_q$ is an $\cH_q(G)$-module with exactly one
composition factor isomorphic to $M$ and all other composition factors of
the form $M''$ with $a_{M''} > a_M$ (see \cite[Theorem~8.1]{LuAst}).

The algebra $\mathcal{H}_q(G)$ 
is viewed as a $k$-algebra via the canonical isomorphism
\[
\mathcal{O}(T/\mathcal{W})\cong \Z(\mathcal{H}_q(G))
\]
Lusztig's map $\phi_q$ maps $\Z(\mathcal{H}_q(G))$ to $\Z(\cJ)$ 
and thus determines a unique $k$-structure for $\cJ$ such that the map
$\phi_q$ is a morphism of $k$-algebras. $\cJ$ with this $k$-structure 
will be denoted $\cJ_q$. $\mathcal{H}_q (G)$ is then geometrically
equivalent to $\mathcal{H}_1 (G)$ by the three elementary steps
\[
\mathcal{H}_q (G) \to \cJ_q \to \cJ_1 \to \mathcal{H}_1 (G) .
\]
The second elementary step (i.e. passing from $\cJ_q$ to $\cJ_1$) is an algebraic 
variation of $k$-structure. Hence (provided $q$ is not in the exceptional set
of roots of unity) $\mathcal{H}_q (G)$ is geometrically equivalent to $\mathcal{H}_1 (G)
= \mathbb{C}[X^*(T)\rtimes\mathcal{W}]$. 

\begin{cor} \label{cor:J}
There is a canonical bijection
\[(T/\!/\mathcal{W})_2\longleftrightarrow \Irr(\mathcal{H}_q (G))
\]
\end{cor}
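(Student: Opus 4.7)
The plan is to assemble the corollary from the pieces already developed in Example 3 of the appendix together with the algebraic version of Clifford theory (Lemma \ref{lem:Clifford_algebras}).

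First I would invoke the chain of three elementary geometric equivalences displayed just above the corollary,
\[
\mathcal{H}_q (G)\;\xrightarrow{\;\phi_q\;}\;\mathcal{J}_q\;\rightsquigarrow\;\mathcal{J}_1\;\xleftarrow{\;\phi_1\;}\;\mathcal{H}_1(G),
\]
in which the outer arrows are spectrum preserving morphisms with respect to filtrations (by Lusztig's theorem, valid for every $q\in\Cset^\times$ outside a finite set of roots of unity that avoids $1$), and the middle step is an algebraic variation of the $k=\mathcal{O}(T/\mathcal{W})$-structure on $\mathcal{J}$. By the very definition of geometric equivalence and the fact that each of the three elementary moves induces a canonical bijection on primitive ideal spaces, one obtains a canonical bijection
\[
\Irr(\mathcal{H}_q(G))\;=\;\Prim(\mathcal{H}_q(G))\;\longleftrightarrow\;\Prim(\mathcal{H}_1(G))\;=\;\Irr(\mathcal{H}_1(G)).
\]

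Next I would use the tautological identification $\mathcal{H}_1(G)=\Cset[X^*(T)\rtimes\mathcal{W}]=\mathcal{O}(T)\rtimes\mathcal{W}$, and apply Lemma \ref{lem:Clifford_algebras} (the algebra version of Clifford theory) to the crossed product $\mathcal{O}(T)\rtimes\mathcal{W}$. This yields a canonical bijection
\[
\Irr(\mathcal{O}(T)\rtimes\mathcal{W})\;\longleftrightarrow\;(\Irr\,\mathcal{O}(T)\q\mathcal{W})_2^{\square}\;=\;(T\q\mathcal{W})_2^{\square},
\]
where $\square$ is the family of 2-cocycles of the stabilizers $\mathcal{W}_t$ produced by Clifford theory.

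The only non-formal point is to check that the cocycles $\square(t)\in H^2(\mathcal{W}_t;\Cset^\times)$ are all trivial, so that the twisted extended quotient collapses to the honest $(T\q\mathcal{W})_2$. This is the analogue, for the crossed product setting, of the lemma recorded in Section \ref{sec:teq} for groups, and is what I expect to be the one thing that requires a word of justification: since $\mathcal{O}(T)$ is commutative its irreducible representations are the one-dimensional characters $\Cset_t$ for $t\in T$, and for every $\gamma\in\mathcal{W}_t$ the conjugated representation $\Cset_t\circ\gamma^{-1}=\Cset_{\gamma^{-1}t}$ equals $\Cset_t$ on the nose, so the canonical intertwining operators $T_\gamma=\mathrm{id}_\Cset$ compose strictly multiplicatively and $\square(t)=1$. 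Composing the two canonical bijections produced above then gives the canonical bijection $(T\q\mathcal{W})_2\longleftrightarrow\Irr(\mathcal{H}_q(G))$ claimed in the corollary.
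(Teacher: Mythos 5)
Your proposal is correct and follows essentially the same route as the paper: it reads off the bijection on primitive ideal spaces from the chain of three elementary geometric equivalences $\mathcal{H}_q(G)\to\cJ_q\rightsquigarrow\cJ_1\leftarrow\mathcal{H}_1(G)$, and then identifies $\Irr(\mathcal{H}_1(G))=\Irr(\mathcal{O}(T)\rtimes\mathcal{W})$ with $(T\q\mathcal{W})_2$. The paper cites the canonical bijection $\Irr(\mathcal{O}(X)\rtimes\Gamma)\longleftrightarrow(X\q\Gamma)_2$ already recorded in Section \ref{sec:extquots}, whereas you rederive it via Lemma \ref{lem:Clifford_algebras} together with a direct check that the Clifford cocycles $\square(t)$ vanish because each character $\Cset_t$ of the commutative algebra $\mathcal{O}(T)$ is literally fixed (not merely up to isomorphism) by every $\gamma\in\mathcal{W}_t$; this is a harmless and slightly more explicit way to land on the same identification.
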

This map gives the left slanted arrow in Theorems \ref{thm:bijphi} and \ref{thm:ps}.\\

\noindent \underline{Example 4.}
Let $\cH_{\boldq}(X \rtimes W)$ be the affine Hecke algebra
of $X \rtimes W$ with unequal parameters $\boldq = \{q_1,\ldots, q_k\}$.
We assume that $q_i\in\Rset_{>0}$. Let $\cS_\boldq (X \rtimes W)$
be the Schwartz completion of $\cS_\boldq (X \rtimes W)$, as in \cite[\S 5.4]{Sol}.
In this setting \cite[Lemma~5.3.2]{Sol} gives a morphism of Fr\'echet algebras
\[
\cS_1 (X \rtimes W) \to \cS_\boldq (X \rtimes W) ,
\]
which is spectrum preserving with respect to filtrations.
However, the existence of a geometric equivalence between the $\cO(T / W)$-algebras
$\cO(T) \rtimes W$ and  $\cH_\boldq(T \rtimes W^\fs)$ is still an open question
in case $\boldq$ contains unequal parameters $q_i$.

\end{document}